\newtheorem{lemma}{Lemma}[section]
\newtheorem{theorem}[lemma]{Theorem}
\newtheorem{proposition}[lemma]{Proposition}
\newtheorem{definition}[lemma]{Definition}
\theoremstyle{remark}
\newtheorem{remark}[lemma]{Remark}
\def\beq{\begin{equation}}   \def\eeq{\end{equation}}
\def\bea{\begin{eqnarray}}  \def\eea{\end{eqnarray}}
\newcommand{\ii}{{\rm i}}
\newcommand{\comm}[2]{\left\llbracket #1 \,  , \,   #2 \right\rrbracket}
\newcommand{\tm}{\mathtt{m}}
\newcommand{\mS}{\mathcal{S}}
\newcommand{\mM}{\mathcal{M}}
\newcommand{\mR}{\mathcal{R}}
\newcommand{\fA}{\mathsf{A}}
\newcommand{\fB}{\mathsf{B}}
\newcommand{\fR}{\mathsf{R}}
\newcommand{\defeq}{:=}
\newcommand{\fv}{\mathfrak{v}}
\renewcommand{\bar}{\overline}
\newcommand{\cA}{{\mathcal A}}
\newcommand{\cF}{{\mathcal F}}
\newcommand{\cG}{{\mathcal G}}
\newcommand{\cL}{{\mathcal L}}
\newcommand{\cM}{{\mathcal M}}
\newcommand{\cO}{{\mathcal O}}
\newcommand{\cP}{{\mathcal P}}
\newcommand{\cR}{{\mathcal R}}
\newcommand{\cS}{{\mathcal S}}
\newcommand{\wh}[1]{{\widehat{#1}}}
\newcommand{\bF}{{\bf F}}
\newcommand{\bG}{{\bf G}}
\newcommand{\bQ}{{\bf Q}}
\newcommand{\bU}{{U}}
\renewcommand{\Re}{{\rm Re }}
\renewcommand{\Im}{{\rm Im }}
\newcommand{\fa}{{\mathfrak{a}}}
\newcommand{\fP}{{\mathcal{P}}}
\newcommand{\fV}{\mathsf{V}}
\newcommand{\sH}{\mathscr{H}}
\newcommand{\sM}{\mathscr{M}}
\newcommand{\sP}{\mathscr{P}}
\newcommand{\be}{\begin{equation}}
\newcommand{\ee}{\end{equation}}
\newcommand{\vr}{\varrho}
\newcommand{\tr}{{\mathtt r}}
\newcommand{\uno}{{\rm Id}}
\newcommand{\Opw}[1]{{\rm Op}^{\scriptscriptstyle W}\!\left(#1\right)}
\newcommand{\Opbw}[1]{{\rm Op}^{\!\scriptscriptstyle BW}\!\left(#1\right)}
\newcommand{\vOpbw}[1]{{\rm Op}^{\scriptscriptstyle BW}_{{\tt vec}}\!\left(#1\right)}
\newcommand{\zOpbw}[1]{{\rm Op}^{\scriptscriptstyle BW}_{{\tt out}}\!\left(#1\right)}
\newcommand{\vOmega}{{\mathbf{\Omega}}}
\newcommand{\pare}[1]{( #1)}
\newcommand{\bra}[1]{[ #1]}
\newcommand{\mA}{\mathcal{A}}
\newcommand{\mG}{\mathcal{G}}
\newcommand{\mF}{\mathcal{F}}
\newcommand{\mC}{\mathcal{C}}
\newcommand{\mP}{\mathcal{P}}
\newcommand{\opbw}{{{\rm Op}^{{\scriptscriptstyle{\mathrm BW}}}}}
\newcommand{\pa}{\partial}
\newcommand{\im}{{\rm i}}
\newcommand{\C}{{\mathbb C}}
\newcommand{\N}{{\mathbb N}}
\newcommand{\R}{{\mathbb R}}
\newcommand{\T}{{\mathbb T}}
\newcommand{\Z}{{\mathbb Z}}
\newcommand{\norm}[1]{\| #1 \|}
\newcommand{\abs}[1]{\left|#1 \right|}
\newcommand{\la}{\langle}
\newcommand{\ra}{\rangle}
\newcommand{\di}{{\rm d}}
\newcommand{\und}[1]{\underline{#1}}
\newcommand{\ta}{{\mathtt a}}
\newcommand{\tb}{{\mathtt b}}
\newcommand{\td}{{\mathtt d}}
\newcommand{\tf}{{\mathtt f}}
\newcommand{\tg}{{\mathtt g}}
\newcommand{\tk}{{\mathtt k}}
\newcommand{\tq}{{\mathtt q}}
\newcommand{\ts}{{\mathtt s}}
\newcommand{\tv}{{\mathtt v}}
\newcommand{\tz}{{\mathtt z}}
\newcommand{\tF}{{\mathtt F}}
\newcommand{\tI}{{\mathtt I}}
\newcommand{\tJ}{{\mathtt J}}
\newcommand{\tN}{{\mathtt N}}
\newcommand{\tQ}{{\mathtt Q}}
\newcommand{\tR}{{\mathtt R}}
\newcommand{\tV}{{\mathtt V}}
\newcommand{\e}{{\epsilon}}
\newcommand{\s}{{\sigma}}
\newcommand{\lvect}[2]{\begin{pmatrix}#1 \\#2\end{pmatrix}}
\newcommand{\vect}[2]{\Big(\begin{smallmatrix}#1 \\#2\end{smallmatrix}\Big)}
\newcommand{\ov}{\overline}
\newcommand{\x}{\xi }
\newcommand{\wt}[1]{{\widetilde{#1}}}
\newcommand{\fr}{{\mathfrak{r}}}
\newcommand{\fs}{{\mathfrak{s}}}
\numberwithin{equation}{section}
\title{One dimensional energy cascades  \\  in a fractional quasilinear  NLS}
\author{Alberto Maspero\footnote{
	International School for Advanced Studies (SISSA), Via Bonomea 265, 34136, Trieste, Italy. \newline
	\textit{Emails:} \texttt{amaspero@sissa.it}}, \  Federico Murgante\footnote{	 Università degli studi di Milano, Department of Math. Federigo Enriques,
	Via Saldini 50, 20133 Milano, Italy \newline 
	 \textit{Email:} \texttt{federico.murgante@unimi.it}}}
\begin{document}
\date{}
\maketitle 

\begin{abstract}
We consider the problem of transfer of energy to high frequencies in  a quasilinear  Schr\"odinger equation with sublinear dispersion, on the one dimensional torus. 
We exhibit  initial data undergoing finite but arbitrary large Sobolev norm explosion: their initial norm is arbitrary small in Sobolev spaces of high regularity, but at a later time becomes arbitrary large. 
We develop  a novel mechanism producing instability, which is based on extracting, via paradifferential normal forms, an effective equation driving the dynamics whose leading term is a non-trivial transport operator with non-constant coefficients. We prove that such operator is responsible for energy cascades via a positive commutator estimate inspired by  Mourre's commutator  theory. 
\end{abstract}

\tableofcontents

\section{Introduction}

A fundamental question in physics and mathematical analysis is to study  how energy is transferred and redistributed from macro to micro scales in deterministic systems, being
central to  understand the emergence of turbulent dynamics, 
 specially in fluids. 
Formal computations of energy transfers have been performed since the 1960s by  Hasselmann for the 
pure gravity water waves \cite{Hasselmann1, Hasselmann2},  by Longuet-Higgins and Gill for the  $\beta$-plane
equation \cite{LH}, and more recently for the dispersive surface quasi-geostrophic equation (SQG) \cite{SS}, but still lack rigorous mathematical  justification.

A rigorous way to effectively capture energy transfers is to
construct solutions exhibiting growth of Sobolev norms, as pointed out for example by Bourgain \cite{bou2000} 
in the context of nonlinear Hamiltonian PDEs.
Whereas an active line of research -- starting from the breakthrough  work by Colliander-Keel-Staffilani-Takaoka-Tao \cite{CKSTT}-- has rigorously proved growth of Sobolev norms  for certain  {\em semilinear} Schr\"odinger equations  \cite{hani14, guardia_kaloshin, haus_procesi15, guardia_haus_procesi16, GHMMZ, GG, Giu}, there are  no rigorous results for {\em quasilinear dispersive} equations, even though the most relevant dispersive models in fluid dynamics -- such as those mentioned at the very beginning-- are of quasilinear type.  

This is due to several difficulties. The first one,  common for all dispersive equations, is that  the linearized waves merely oscillate over time and consequently any growth in Sobolev norms is a purely  nonlinear mechanism, making  the analysis particularly challenging.
A further   difficulty, specific to  quasilinear PDEs on compact manifolds, is that  global well posedness is (usually)  not known,  in contrast with the (subcritical) semilinear setting.
In addition,   growth of Sobolev norms happens on time scales longer than those predicted by the long-time Cauchy theory (obtained  via modern quasi-linear normal forms and modified energy methods), posing the problem of constructing solutions  with a  lifespan longer than the expected one.

This paper aims to initiate a rigorous study of energy transfers in {\em quasilinear dispersive PDEs} by proposing a new paradigm for constructing solutions that exhibit growth of Sobolev norms, and which we believe could  serve as a foundational framework to rigorously study energy transfers in  dispersive fluid equations, such as those mentioned at the beginning.
Note that the pure gravity water waves, the $\beta$-plane equation and the dispersive SQG share  two common   features: 
  a nonlinear transport term and a sublinear dispersion relation.
We propose a simplified model retaining  exactly these features, and employ it as a theoretical test-bed to explore our  new mechanism.

Specifically, we consider the   fractional quasilinear NLS (nonlinear Schrödinger) equation 
\begin{equation}
\label{eq:main}
 \pa_t u    =  - \im |D|^\alpha  u +  |u|^2 u_x  , \quad  x \in \T:= \R/2\pi\Z  \ , \quad \alpha \in (0,1)  \ ,
\end{equation}
with $|D|^\alpha$ the Fourier multiplier defined by  $|D|^\alpha e^{\im k x} = |k|^\alpha e^{\im k x}$, $k \in \Z$.
 Note that, by  energy methods and  in view of the hyperbolic structure of the nonlinearity,  equation \eqref{eq:main} is locally wellposed\footnote{In particular, ill-posedness phenomena \`a la Christ \cite{Christ}, which require non-hyperbolic nonlinearities like  $u^{p-1}u_x$, do not happen for \eqref{eq:main}} in $H^s(\T, \C)$ for any  $s>\frac32$, see  Remark \ref{rem:LWP}. 
Here
$H^s:=H^s(\T, \C)$, $s \in \R$, is the Sobolev space  with norm 
$$
\norm{u(t)}_s^2:= \sum_{k \in \Z} \la k \ra^{2s} \, | u_k(t)|^2 \ , \quad   \la k \ra:=\max(1, |k|)   \ , 
$$
and $u_k(t):= \frac{1}{2\pi}\int_{\mathbb{T}}u(x) e^{- \im k x }\,\di x $ is the $k$-th Fourier coefficient. 

Equation \eqref{eq:main} is also gauge invariant, so the $L^2$-norm is constant in time. Therefore, a growth in time of the $H^s$ norm, $s \gg 1$, indicates a transfer of  energy   to high frequencies.
Our  main result  is the construction of a solution with   Sobolev norm  
 arbitrary small at  initial time, but  arbitrarily large at a later one. Precisely we prove:
\begin{theorem}\label{thm:main}
There exists $s_0> \frac32$ such that  given   any
$s >  3 s_0$,   $0< \delta \leq 1$ and $K \geq 1$, 
 there exists a solution $u(t) \in H^s(\T, \C)$ of \eqref{eq:main}  and a time $T>0$ such that 
 \begin{equation}\label{init}
 \norm{u(0)}_s \leq \delta 
 \quad
\mbox{ and }
\quad 
 \norm{u(T)}_s \geq K  \ .  
 \end{equation}
 Moreover $$\sup\limits_{0 \leq t \leq T}\norm{u(t)}_{s_0} \leq 2 \delta \ . $$
\end{theorem}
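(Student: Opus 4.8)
The plan is to distill from \eqref{eq:main}, by a paradifferential normal form, an \emph{effective equation} whose leading part is a variable-coefficient transport with a genuine attracting point, and then to feed a minute high-frequency wave packet into that point, where it cascades exponentially to high frequencies while carrying only an infinitesimal fraction of the conserved $L^2$ mass. Concretely: Bony's decomposition of the cubic term recasts \eqref{eq:main} as $\partial_t u=-\im|D|^\alpha u+\Opbw{\im\,|u|^2\,\xi}\,u+(\text{order}\le0)+(\text{smoothing})$, and the decisive structural point — the reason the sublinearity $\alpha<1$ is essential — is that the transport term has order $1>\alpha$, hence \emph{dominates} the dispersion at high frequency. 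A normal-form step (a change of variables / para-flow with \emph{small} phase, hence bounded on $H^{s_0}$ for small-amplitude solutions, and with denominators controlled by the dispersive phases, which are nonresonant off the diagonal) eliminates the nonresonant part of this transport operator, leaving a constant-coefficient transport $c_0\partial_x$ (removable by a Galilean boost) and a \emph{resonant} transport $\Opbw{\im\,\mu(u;x)\,\xi}$, whose coefficient is the low-frequency real bilinear form $\mu(u;x)\approx\sum_k u_k\,\overline{u_{-k}}\,e^{2\im kx}$ — \emph{not} sign-definite, in contrast with the naive $|u|^2$. One thus reaches the effective equation
\[
\partial_t u=-\im|D|^\alpha u+\Opbw{\im\,\mu(u;x)\,\xi}\,u+(\text{quintic})+\mathcal R(u),
\]
with $\mathcal R$ smoothing and $\mu$ quasilinear in $u$, valid as long as $\norm{u}_{s_0}$ stays bounded; its leading term, the non-constant-coefficient transport $\Opbw{\im\mu\xi}$, is the operator responsible for the cascade.

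For the datum I would take $u(0)=u_b+\psi_0$. The background is a small profile, e.g.\ $u_b\approx 2\eta\cos x$, for which $\mu(u_b;x)=2\eta^2\cos2x$ changes sign with \emph{non-degenerate} zeros; I fix one, $x_1$, with $\mu'(u_b;x_1)>0$, so that $x_1$ is an attracting equilibrium of the transport field $\dot x=-\mu(x)$ along whose inflowing characteristics the frequency grows like $e^{\mu'(x_1)t}$, at rate $\mu'(x_1)\sim\eta^2$; a separate bootstrap (mass conservation, plus the fact that the low modes of $u_b$ are dispersion-dominated and do not self-cascade) keeps $\norm{u_b(t)}_{s_0}\sim\eta$ on the relevant time span and preserves this structure. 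The perturbation $\psi_0$ is a tiny wave packet microlocalized at a frequency $N_0$ past the transport/dispersion balance $N_0\gtrsim\eta^{-2/(1-\alpha)}$ (so transport governs it), localized in space on the attracting side of $x_1$, with $L^2$-mass $\epsilon^{1/2}$ so small that $\epsilon N_0^{2s}\lesssim\delta^2$ — whence $\norm{u(0)}_s\le\delta$ — and that $\epsilon N_1^{2s_0}\lesssim\delta^2$ once the packet reaches a target frequency $N_1:=N_0(2K/\delta)^{1/s}$; the hypothesis $s>3s_0$ leaves enough room to reconcile the latter with $\epsilon N_1^{2s}\gtrsim K^2$. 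Since $\mu(u;\cdot)=\mu(u_b;\cdot)$ up to negligible errors (the packet at frequency $N_0$ is ``invisible'' to $\mu$), along the effective flow $\psi$ rides the characteristics into $x_1$ and its frequency reaches $N_1$ at a finite time $T\sim\eta^{-2}\log(N_1/N_0)$, while its $L^2$-mass stays $\epsilon^{1/2}$ (skew-adjointness of $\Opbw{\im\mu\xi}$, $\mathcal R$ harmless). Therefore $\norm{u(T)}_s^2\gtrsim\epsilon N_1^{2s}\ge K^2$, whereas $\norm{u(t)}_{s_0}^2\lesssim\norm{u_b(t)}_{s_0}^2+\epsilon N(t)^{2s_0}\le 4\delta^2$ for all $t\in[0,T]$; a continuity/bootstrap argument on $\norm{u}_{s_0}$, gluing background and packet through the lower-order and smoothing terms, yields the solution claimed.

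The rigorous engine behind the cascade is a positive commutator in the spirit of Mourre. Differentiating the Sobolev norm along the effective equation, the transport contributes
\[
\tfrac{d}{dt}\norm{u}_s^2=2s\,\mathrm{Re}\,\big\langle \Opbw{\mu'(x)\,\langle\xi\rangle^{2s}}\,u,\ u\big\rangle+(\text{lower order}),
\]
and one must bound the right-hand side \emph{below} by $\sim\mu'(x_1)\norm{u}_s^2>0$ for as long as $u$ is microlocally concentrated near $\{x=x_1,\ |\xi|\gg1\}$. Here I would import Mourre's commutator method: pick a conjugate operator / escape function adapted to the vector field $-\mu\partial_x$ near its attracting point, prove the positive-commutator estimate $\mathrm{Re}\langle\Opbw{\mu'\langle\xi\rangle^{2s}}u,u\rangle\gtrsim\mu'(x_1)\norm{u}_s^2$ on that microlocal region (non-degenerate, since $\mu'(x_1)\ne0$), and couple it with a propagation estimate forcing the solution to \emph{remain} in that region — the attracting nature of $x_1$ being precisely what makes the microlocal trapping stable, and exponential norm growth the outcome.

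I expect the main obstacle to be the paradifferential reduction intertwined with the commutator argument: the reduction must treat a term of \emph{positive} order — the transport — as the leading operator, so that the normal-form transformations and their denominators are governed by the transport structure (and by the fixed dispersive phases) rather than handled perturbatively, and the positive-commutator/propagation machinery must keep the packet's energy microlocally trapped at the attracting point over the whole time interval, all while controlling the quasilinear feedback of $u$ into $\mu$, the smoothing remainder $\mathcal R$, and the evolution (and non-cascade) of the background. Converting the positive commutator into a sustained, quantitative lower bound on $\norm{u(t)}_s$ — certifying that the cascade truly occurs, and is not merely unobstructed — is where I expect the bulk of the technical work to lie.
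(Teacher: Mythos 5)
Your overall strategy---a paradifferential normal form that exposes a variable-coefficient transport, a high-frequency perturbation microlocalized at a zero of the coefficient, and a Mourre-type positive commutator to certify the cascade---is the paper's, and the dimensional bookkeeping at the end is essentially right. But there are two genuine gaps in the implementation.

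First, the bootstrap you invoke does not close without a further Birkhoff step that puts the cubic \emph{smoothing} remainder in a ``strong $\Lambda$-normal form'', killing all monomials with at most two frequencies outside $\Lambda=\{\pm1\}$ (this is Sections 3--4 of the paper). Asserting that ``$\mathcal{R}$ is harmless'' is not sufficient: the Bony remainder still contains monomials of the shape $u_{\pm1}^{\sigma}\,u_{\pm1}^{\sigma'}\,z^\perp$, which feed into the $L^2$-energy identity for the normal part at rate $\sim\eta^2\|z^\perp\|_{L^2}^2$; over the time scale $T\sim\eta^{-2}\log(\eta^{-1})$ that the cascade needs, this gives an amplification factor $\exp(\eta^2 T)\sim\eta^{-T_0}$ and destroys the $L^2$-smallness of the packet. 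Skew-adjointness of $\Opbw{\im\mu\xi}$ only neutralizes the paradifferential transport; it does nothing to this remainder, so the $\Lambda$-normal form is not optional.

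Second, the transport coefficient in your effective equation is not what you wrote, and your Mourre argument is circular as stated. The tangential modes $u_{\pm1}(t)$ rotate at amplitude-dependent rates $1\pm|u_{\pm1}(0)|^2$, so $\mu(u(t);x)$ is a \emph{traveling wave}; in the co-moving frame the symbol is $(\tJ_1+\fv(x))\xi$ with $\tJ_1=\tfrac12(|u_1(0)|^2+|u_{-1}(0)|^2)$. Your attracting point $x_1$, a zero of $\mu(u_b;\cdot)$, is not a zero of the true coefficient and moves at speed $\tJ_1$, so a packet placed there drifts away immediately. Relatedly, the estimate $\Re\langle\Opbw{\mu'(x)\langle\xi\rangle^{2s}}u,u\rangle\gtrsim\mu'(x_1)\|u\|_s^2$ is sign-indefinite unless $u$ is already microlocally trapped near $x_1$, and the ``propagation estimate'' you defer to is exactly the thing the positive commutator is supposed to furnish, not presuppose. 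The paper solves both problems at once by choosing a sign-changing escape function $\fa(x,\xi)=\ta(x)|\xi|^{2s}\eta^2_\tR(\xi)$ with $\ta(x)=-\Im\bigl(z_1(0)\bar{z_{-1}(0)}e^{2\im x}\bigr)$, for which the Poisson bracket $\{\fa,(\tJ_1+\fv)\xi\}\geq\tI_1\,|\xi|^{2s}\eta^2_\tR$ holds \emph{globally} in $x$ (Lemma \ref{lem:pb}); the functional $\mathcal{A}(t)=\langle\Opbw{\fa}\,z^\perp,z^\perp\rangle$ is then monotone exponentially increasing with no trapping hypothesis, and $\mathcal{A}(t)\lesssim\|z^\perp(t)\|_s^2$ closes the argument. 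This is the step you should replace your propagation-plus-commutator scheme by.
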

Theorem \ref{thm:main} guarantees the existence of a solution of \eqref{eq:main} with smooth and arbitrary small initial datum  undergoing  finite but arbitrary large Sobolev norm explosion.
Such solution has constant $L^2$-norm and  stays small in the ``low'' $H^{s_0}$-norm. 
Local Cauchy theory, given by energy methods, implies that $\norm{u(t)}_s \leq 2 \delta$ for all times $|t| \leq C\delta^{-2}$, see Remark \ref{rem:LWP}; we show that Sobolev norm explosion happens on the just longer timescale  $T \sim \delta^{-2} \log(\delta^{-1})$.
Of course, one of the crucial difficulties is to ensure  existence of the solution over this longer timescale.\\
We do not know the fate of such  solution after time $T$, 
and since global existence for \eqref{eq:main} is not established, we cannot exclude the possibility that, after time $T$,  energy cascades trigger a finite-time singularity formation. 
We remark that, in similar models  such as the fractional KdV equation,  solutions with large initial data can develop shocks \cite{CE, KS, H1, H2, Y,OP, KSW}, resulting in the $H^1$ norm exploding while the  $L^\infty$ one stays bounded.
 However, these shock solutions appear distinct from those described in our Theorem\ref{thm:main}, for which we ensure that  low Sobolev norms stay small.
 
On the other end, not every  initial data gives rise to turbulent solutions of \eqref{eq:main}: consider for example the  plane waves 
$a e^{\im (k x - \omega t)}$ with $\omega= |k|^\alpha - a^2 k$, which can be made of arbitrary small size.
We also expect that KAM methods, like those developed in \cite{BBHM, BFM2, FG}, would enable the construction of globally defined, small-amplitude, time quasi-periodic solutions, demonstrating the coexistence of stable and unstable dynamics.

\smallskip

As mentioned earlier, the primary novelty of this paper is the introduction of a new mechanism for generating energy cascades,  tailored to quasilinear dispersive PDEs with a sublinear dispersion relation and a nonlinear transport term.
{In brief, such structure allows us to  extract, via a novel  quasilinear normal form,  a transport operator with {\em absolutely continuous spectrum}, that drives the dynamics of \eqref{eq:main}, inducing dispersive effects in frequency space and resulting in the growth of Sobolev norms.}

Such  mechanism is entirely distinct from the {\em only two} existing ones developed for semilinear Hamiltonian PDEs: 
the first one, pioneered by Colliander-Keel-Staffilani-Takaoka-Tao \cite{CKSTT}, exploits the dynamics of the so-called ``toy model'' and works for semilinear NLS on $\T^d$, $d \geq 2$, and some related models  \cite{CKSTT,hani14, guardia_kaloshin, haus_procesi15, guardia_haus_procesi16, GHMMZ, GG, Giu}. 
The second one, discovered by Gérard-Grellier \cite{gerard_grellier}, leverages the peculiar integrable structure of the  Szeg\H{o} equation.
We stress again that, in all these models, the nonlinearity is semilinear, in contrast to all relevant dispersive PDEs coming from fluids  which are  {quasilinear}.

Let us now describe better  our  mechanism. 
After a paradifferential normal form \`a-la Berti-Delort \cite{BD}, we conjugate equation \eqref{eq:main} to 
\be\label{eq.intro00}
\pa_t w = -\im |D|^\alpha w + \Opbw{ \im \la \und \tV \ra(u(t);x) \xi } w  {+ \mbox{ quasilinear remainders} }
\ee
where  $\Opbw{\cdot}$ is a Bony-Weyl paradifferential operator (see \eqref{BW}) of order one, coming from the nonlinearity of \eqref{eq:main}, and with the  transport term having {\em non-constant  coefficient}
\begin{equation}\label{und.V.intro}
    \langle \,\und \tV\, \rangle(u(t);x) := 
		2\,  \Re \Big(\sum_{n \in \N} u_{n}(t) \, \bar{u_{-n}(t)} \, e^{\im 2n x} \Big)  \ .
\end{equation}
This normal form is  significantly different from the one of Berti-Delort \cite{BD} and of \cite{FI, BFP, BFF, BMM, MMS},
where the symbol of the paradifferential operator has  {\em constant} coefficients (at least at low homogeneity). 
It is also very different from the normal form of \cite{CKSTT}: indeed 
 the nonlinear vector field in 
\eqref{eq.intro00} is {\em not Birkhoff-resonant}, since the main term 
$\Opbw{ \im \la \und \tV \ra(u(t);x) \xi } w$ has phases of oscillations  given by
$$
|n|^\alpha - |-n|^\alpha + |j+2n|^\alpha - |j|^\alpha \neq 0 \ , \quad \forall  n \in \N \ , j \in \Z  \ ;
$$  
 in principle it might be eliminated by a (formal) Birkhoff normal form procedure, but the required transformation is unbounded and not well defined in $H^s$, due to the quasi-linear nature of the problem. 
 Actually, it will be exactly this  term to drive the instability: 
 energy cascades are due to quasi-resonant interactions rather than exact resonances; this is 
 reminiscent, in wave turbulence,  to the fact that are quasi-resonances (rather than resonances) to play a fundamental role in the rigorous derivation of the  wave kinetic equation \cite{DH}.

Note that the normal form \eqref{eq.intro00} guarantees only a cubic lifespan $\sim \delta^{-2}$ for  initial data of size $\delta\ll 1$,  which is  too short  to observe any energy transfers phenomena. 
Here come the first novelty of our method. 
We give up the  control of  {\em any} solution for times longer than $\sim \delta^{-2}$, and   restrict to particular solutions whose initial data is mostly concentrated on the two Fourier modes $\Lambda:=\{-1, 1\}$. 
Via an ad-hoc normal form, we  decouple the dynamics of the modes in $\Lambda$ and in $\Lambda^c$, and prove that such special solutions are  long-time controlled: with this we mean that, 
on the enhanced  timescale
 $\delta^{-2}\log \delta^{-1}$, 
the modes in $\Lambda$ evolve essentially as rotations, whereas the modes on $\Lambda^c$ remain of very small size in a low $H^{s_0}$ norm. 
In addition, we prove that  long-time controlled solutions fulfill an effective system of the form 
\be\label{intro.eff}
\pa_t \zeta  =   - \im |D|^\alpha \zeta   + \im \Opbw{{(\tJ_1+\fv(x ))} \xi }\zeta  {+ \mbox{ quasilinear remainders} }
\ee
Here  $\tJ_1$ is a real number and $\fv(x)$ a real valued function, both depending nonlinearly on the initial data $u(0)$ (see \eqref{J1} and \eqref{V0}).
We  develop a new robust way to prove that    \eqref{intro.eff} has solutions undergoing  growth of Sobolev norms. 
To do so, we  extend to the nonlinear setting a  positive commutator method, inspired by  Mourre's  theory \cite{Mourre}.
Precisely, we construct a paradifferential operator $\fA$, see \eqref{fA}, such that the commutator 
$$\im [\fA, \Opbw{{(\tJ_1+\fv(x ))} \xi }]$$
is strictly positive on large frequencies up to a small remainder. 
This is possible provided the function $\tJ_1 + \fv(x)$ does not have sign, a condition that we force    by tuning the  initial datum.
This condition carries significant meaning: it ensures that the operator $\Opbw{{(\tJ_1+\fv(x ))} \xi }$ has non-trivial {\em absolutely continuous spectrum.} 
This feature is the key factor driving energy transport to high frequencies: it induces   a dispersive effect in the energy space that is directly analogous, in frequency variables, to the classical mechanism of spatial mass transport to infinity in Schr\"odinger equations on Euclidean spaces.


A further benefit of our method  is that it allows us to prove that $\zeta(t)$ grows at an exponentially fast rate.
This is  due to the quasilinear nature of equation \eqref{eq:main}: for semilinear NLS, polynomial upper bounds in time are known (see e.g. \cite{bou96, Staffilani, Sohinger, PTV}), which become subpolynomial in time  for linear  time-dependent Schr\"odinger equations (see e.g. \cite{bourgain99, Delort10, MaRo, BGMR2, BLM, BL}).



\smallskip

\noindent{\bf Related literature:}
Whereas for linear time dependent equations several results are known \cite{bou99,del, Mas19, BGMR1,LZZ, LLZ, LLZ2, BGMRV, FaouRaph, HausMaspero, Mas21, Mas22}, 
for nonlinear systems, as we already mentioned,   the results are scarce and limited to essentially two models: the semilinear Schr\"odinger equation (NLS) and certain integrable equations.
Regarding the first, after the seminal works by 
Kuksin \cite{Kuk96,Kuk97}, the
 breakthrough result by Colliander-Keel-Staffilani-Takaoka-Tao \cite{CKSTT} for the NLS on $\T^d$, $d \geq 2$,  
identified the first mechanism of growth, based on the toy-model construction. 
Such mechanism was further exploited by  
Guardia-Kaloshin \cite{guardia_kaloshin}, Haus-Procesi \cite{haus_procesi15}, Guardia-Haus-Procesi \cite{guardia_haus_procesi16}, 
 Guardia-Giuliani \cite{GG} and Giuliani \cite{Giu}. 
All these results construct solutions starting with norm arbitrally small and becoming arbitrarily large at a later time. 
We also mention 
Hani \cite{hani14} and Guardia-Haus-Hani-Maspero-Procesi \cite{GHMMZ} that construct  solutions undergoing  Sobolev norm inflation and starting arbitrary close to periodic or quasi-periodic  orbits. 
Solutions with unbounded paths have been constructed by 
 Hani-Pausader-Tzvetkov-Visciglia \cite{hani15}  for the NLS on $\R \times \T^2$,  combining dispersive effects and the resonant toy-model construction.

The second known mechanism ensuring  growth of Sobolev norms 
was pioneered by  
Gérard-Grellier  \cite{gerard_grellier} for the Szeg\H{o} equation, exploiting its peculiar  integrable structure \cite{gerard_grellier0}. 
We also mention 
Biasi-Evnin \cite{BE} for a truncated  Szeg\H{o} systems,  
 Gérard-Lenzmann \cite{GL} for the integrable Calogero-Moser derivative NLS, and long time instability results  for the cubic half-wave equation obtained by  Gérard-Grellier \cite{gerard_grellier3} on $\T$ and Gérard-Lenzmann-Popovnicu-Raphael \cite{GLPR} on $\R$ (exploiting    resonant approximations with the Szeg\H{o} equation). \\
Furthermore we mention Guardia-Giuliani \cite{GG2} for chains of infinite pendula, the recent numerical result by 
Gallone-Marian-Ponno-Ruffo \cite{Gallone}
for the FPUT chain and Elgindi- Shikh Khalil \cite{EK2022} for a completely different norm inflation mechanism in $L^\infty$.

\subsection{Scheme of the proof}
We shall now describe in more details the methods of the proof and the plan of the paper. 

\smallskip
\noindent{\bf Step 1: paradifferential normal form.} 
The first step  is to transform equation \eqref{eq:main} via  the paradifferential normal form pioneered by Berti-Delort \cite{BD},  further developed and extended in  \cite{FI, BFP, FGI, BFF, BMM, BCGS, MMS}. 
While previous applications of the Berti-Delort method aimed primarily at constructing a modified energy to establish {\em upper bounds} on the Sobolev norms of solutions,
our approach leverages the method to extract an effective equation that has unstable  solutions.

In Section \ref{sec:nf}, we perform  two paradifferential transformations to  conjugate the original equation \eqref{eq:main} to the normal form  system \eqref{teo62}, whose cubic component has the form
\be\label{eq.intro1}
\pa_t w = -\im |D|^\alpha w + \Opbw{ \im \la \und \tV \ra(u(t);x) \xi + \im a_2^{(\alpha)}(u(t);x, \xi) } w + R_2(u(t))w + h.o.t.
\ee
with   $\langle \,\und \tV\, \rangle(u(t);x)$ in \eqref{und.V.intro}, $a_2^{(\alpha)}$ a  symbol of order $\alpha$ and quadratic in $u(t)$, and $R_2(u(t))$ a smoothing operator  again quadratic in $u$. 
This normal form is  significantly different from the one of \cite{BD} and of \cite{FI, BFP, BFF, BMM, MMS},
where the symbol of the paradifferential operator has  {\em constant} coefficients (at least at low homogeneity). 
On the contrary, in \eqref{eq.intro1}, $\langle \,\und \tV\, \rangle(u;x)$ has  {\em non-constant coefficients}, and additionally it   depends on time through $u(t)$. 
This is the term who will give rise to the paradifferential operator in \eqref{intro.eff}. 
To do so, 
 we need to remove (or at least simplify) such time dependence.   
The first natural attempt, i.e.  replace
in $\langle \,\und \tV\, \rangle(u(t);x)$ the function 
$u(t)$ with its linear evolution    $e^{- \im |D|^\alpha t}u(0)$, fails because it produces an error that we cannot bound on the long time scales  needed to see growth. 
Therefore, we need to study  the nonlinear dynamics of at least two modes $u_n(t)$, $u_{-n}(t)$.
So we fix the modes in $\Lambda := \{-1, 1 \}$ and study the  nonlinear dynamics of $u_1(t)$, $u_{-1}(t)$.

\smallskip 

\noindent {\bf Step 2: the $\Lambda$-normal form.} We decompose 
the solution as follows: 
\begin{align*} u(t) = u^\top(t) + u^\perp&(t)\quad \text{where}\\     u^\top(t):= u_1(t) e^{\im x} + u_{-1}(t) e^{- \im x} \ , 
&\quad u^\perp(t) := \sum_{k \neq \pm 1} u_k(t) e^{\im k x}. \end{align*}  This decomposition separates the  {\em tangential modes} $u^\top(t)$ from the {\em normal modes} $u^\perp(t)$. To decouple the dynamics of these modes, we use a weak-normal form. The paradifferential operator in equation \eqref{eq.intro1} vanishes when restricted to $\Lambda$ (see \eqref{proj.op}). Therefore, the dynamics of $ u^\top(t) $ is governed by the smoothing operator $ R_2(u)w$.

We  decouple the dynamics of the tangential and normal modes in $R_2(u)w$ by removing from this term two types of monomials $u_{j_1}^{\sigma_1} u_{j_2}^{\sigma_2} u_{j_3}^{\sigma_3} e^{\im k x}$: 
\begin{itemize}
    \item[(i)] \textbf{Monomials with $ (j_1, j_2, j_3) \in \Lambda $ and $ k \in \Lambda^c $}:

    \noindent $\bullet$ This ensures that the set $\Lambda$  remains invariant under the cubic part dynamics of \eqref{eq.intro1};
    
    \noindent $\bullet$ It requires first-order Melnikov conditions:
    \be\label{mel}
    |j_1|^\alpha - |j_2|^\alpha + |j_3|^\alpha - |k|^\alpha \neq 0  \  , \quad j_1-j_2+j_3-k=0 \ ,
    \ee
    that actually {we verify whenever one and only one  among $(j_1, j_2, j_3, k)$ lies in $\Lambda^{{c}}$.}

    \item[(ii)] \textbf{Monomials with exactly two indexes among $(j_1, j_2, j_3)$ in $\Lambda$ and the remaining one and $k$ in $\Lambda^c$}:
    
    \noindent $\bullet$ This is needed so that the leading term in equation \eqref{eq.intro1} 
    is given by the skewadjoint paradifferential term
    $\Opbw{\im \la \und \tV \ra(u_1 \bar{u_{-1}}; x) \xi }w$ (whose monomials have exactly 2 indexes inside $\Lambda$ and 2 outside); 

    \noindent $\bullet$ It requires  second-order Melnikov conditions: 
    $$
    |j_1|^\alpha - |j_2|^\alpha + |j_3|^\alpha - |k|^\alpha \neq 0  \  , \quad j_1-j_2+j_3-k=0 \ ,
    $$  when two indexes among 
    $(j_1, j_2, j_3, k)$
    are in $\Lambda$ and the other two in $\Lambda^c$, provided 
    $j_1 \neq j_2$ or $j_1 \neq k$.
\end{itemize}
As a result, only integrable monomials of the form $|u_{j_1}|^2 u_{j_3} e^{\im j_3 x}$, with either $j_1, j_3 \in \Lambda$ or $j_1 \in \Lambda,\, j_3 \in \Lambda^c$ or viceversa are left in the smoothing operator $R_2(u)w$.
Finally, in Proposition \ref{prop.megliodemax2}, 
we identify the remaining resonant integrable 
monomials via an a-posteriori identification argument \`a la Berti-Feola-Pusateri \cite{BFP} (see also \cite{BFF}), obtaining the explicit form \eqref{Y.str}.

\smallskip

\noindent {\bf Step 3: The effective equation.}
The  
variables $z^\top(t)$ and $ z^\perp(t)$ solve   system 
\eqref{eq.ztop}--\eqref{eq.zperp}, which has roughly the form 
\be\label{intro.sys}
\begin{cases}
    \pa_t z^\top = - \im |D|^\alpha z^\top + Y^{(\Lambda)}_3(z^\top(t)) + \cO(\norm{z^\perp}_{s_0}^3, \norm{z}_{s_0}^5)\\
    \pa_t z^\perp = \- \im |D|^\alpha z^\perp + \Opbw{\im \la \und{\tV} \ra(z^\top(t);x) \xi + \im a^{(\alpha)}_2(z(t); x, \xi)}z^\perp  + \cO(\norm{z^\top}_{s_0} \norm{z^\perp}_{s_0} \norm{z^\perp}_s)
\end{cases}
\ee
where $Y^{(\Lambda)}_3(z^\top)$ is the explicit {\em integrable} vector field \eqref{Y.int}, and the symbol of the transport operator in the equation for $z^\perp$ is evaluated only on the tangential modes $z^\top(t)$.

To further understand the dynamics of system \eqref{intro.sys} and  to extract from it the effective equation \eqref{intro.eff}, we introduce a small parameter $ \e\ll \delta \leq 1$ and  we consider 
special solutions of system \eqref{intro.sys}, that we call {\em long-time controlled} (see Definition \ref{A}).
They are characterized by two properties: 
\begin{itemize}
    \item[(i)] Their initial data 
 are small in $L^2$, with most mass on the modes $z_1(0), z_{-1}(0)$:
 $$
 \norm{ z^\top(0, \cdot)}_{L^2} \leq \e, \quad \norm{ z^\perp(0, \cdot)}_{L^2} \leq \e^{3};
 $$
 \item[(ii)] Their  high $H^s$-norms have 
 large a-priori bounds:
  $$\norm{z(t)}_s \leq \e^{-\theta} \quad \text{with } 0<\theta \ll 1\ . $$
\end{itemize}
Note that  the large a-priori bound above  is not restrictive for our problem: if it  fails, it means the solution has already grown.
We then prove that {\em any} long-time controlled solution, on the enhanced timescale   $| t| \lesssim \e^{-2} \log(\e^{-1})$, has:

 $ \bullet$ The 
 modes $z_1(t)$ and $ z_{-1}(t)$ evolving very close to the rotations: 
 $$z_{\pm 1}(t)=  e^{- \im t(1\pm |z_{\pm 1}(0)|^2)} z_{\pm 1}(0)+ \cO( \e^{3-\theta});$$
 
 $\bullet$ The  ``low'' $H^{s_0}$-norm of $z^\perp(t)$ staying  very small,  i.e.  $\norm{z^\perp(t)}_{s_0} \leq \e^2$.
One key idea to obtain this is to estimate $z^\perp(t)$ in  $L^2$, exploiting the cancellation coming from the skewadjointness of the paradifferential operator,   then deducing a  bound for  $ \| z^\perp(t)\|_{s_0}$ by interpolation with the large a-priori  bound for 
$ \| z(t)\|_{s}$. 

Finally, we  approximate the evolution of $z^\top(t)$ with the rotations $ e^{- \im t(1\pm |z_{\pm 1}(0)|^2)} z_{\pm 1}(0)$  in the symbol $\la \und{\tV} \ra(z^\top(t);x) $ obtaining a negligible remainder, and, after a space translation, we arrive to an effective system of the form \eqref{intro.eff}, see Proposition \ref{prop:eff}.

\smallskip

\noindent {\bf Step 4: Growth of Sobolev norms.}
After this analysis, we have essentially reduced the problem  to 
construct solutions of the effective equation \eqref{intro.eff} undergoing  growth of Sobolev norms.
We construct a paradifferential  operator $\fA$, of order $2s$ and supported on high-frequencies, see \eqref{fA}, fulfilling the positive commutator estimate (Lemma \ref{lem:pos.comm})
\begin{equation}\label{intro.comm}
    \im [\fA, \Opbw{(\tJ_1 + \fv(x))\xi}] \geq \tI_1 \Opbw{|\xi|^{2s} \eta_\tR^2(\xi)} + h.o.t.
\end{equation}
Here $\tI_1$ is a strictly positive real number   depending  on the initial data, see  
\eqref{tI1},  and $\eta_\tR$ a cut-off function on  high frequencies.
To obtain such positive commutator estimate, the main ingredient is to find a symbol $\fa(x, \xi)$ which is an escape-function for the dynamics of $(\tJ_1 + \fv(x))\xi$, namely such that  the Poisson bracket $\{\fa(x, \xi), (\tJ_1 + \fv(x))\xi \} $
is strictly  positive. 
This is possible provided the function $\tJ_1 + \fv(x)$ does not have sign; since 
$$
\tJ_1 + \fv(x)=   \frac{|z_1(0)|^2 + |z_{-1}(0)|^2}{2}  + 2\Re\big( z_1(0) \bar{z_{-1}(0)} e^{\im 2x } \big) , 
$$
it is enough to select the values of the initial modes $z_{\pm 1}(0)$ so that 
$\frac{|z_1(0)|^2 + |z_{-1}(0)|^2}{2} < 2  |z_1(0) | \, |{z_{-1}(0)} |$. 
The same condition yields the strict positivity of the  number $\tI_1$ in \eqref{intro.comm}.
An important point is that the operator $\fA$ is chosen to be supported on very large $|\xi| \geq \tR \geq \e^{-\frac{3+\theta}{1-\alpha}}$. 
This is required so that the   dispersive term $-\ii |D|^\alpha$ and all the other lower order operators becomes perturbative with respect to the leading transport. 
To conclude, we define the functional $\cA(t):= \la \fA z^\perp, z^\perp \ra$ and show that  \eqref{intro.comm}
leads to a lower bound for the dynamics of $\frac{\di}{\di t} \cA(t)$, forcing  $\cA(t)$ to grow exponentially fast provided $\cA(0)$ is not too small, a condition that can be imposed by  well-preparing the  initial data. Being $ \mA(t)\lesssim\| z^\perp(t)\|_s^2$,  growth of Sobolev norms follows.

 \section{Functional setting}

 In this section we introduce the  paradifferential operators and smoothing remainders,  following \cite{BD, BMM}.
 We also introduce a new class of transformations, that we call {\em admissible transformations}, see Definition \ref{admtra}.
 They are maps $U \mapsto \bF(U)$ whose main property is to be of regularity $C^1$ with respect to the internal variable. 
Consequently, the nonlinear map $U \mapsto  \bF(U)U$ results invertible.
We shall prove that all the transformation generated along the normal form reduction of Section 
\ref{sec:nf} are admissible.
\smallskip 
 
\noindent{\bf Function spaces.} Along the paper we deal with real parameters 
$
s\geq s_0  \gg \vr$. {We use the following conventions for the set of natural numbers 
\begin{align*}
\N:= \{ 1, 2, \ldots \}, \quad   \N_0:= \N\cup \{0\}.
\end{align*}}
For $s\in \R$ we shall denote with $H^s(\T;\C^2)$ the space of couples of complex valued Sobolev functions  in $H^s(\T, \C)$  and with 
$$
H^s_\R(\T;\C^2):=\Big\{ U= \vect{ u^+}{ u^-}\in H^s(\T;\C^2)\colon  \ u^-= \bar{u^+}\Big\} \ .
$$
Given $r>0$ we set $B_{s}(r)$ {the ball of radius $r$} in ${H}^s\left(\mathbb{T},\mathbb{C}^2\right)$ and $B_{s,\R}(r)$ the ball of radius $r$ in ${H}^s_\R\left(\mathbb{T},\mathbb{C}^2\right)$.
Given an interval $ I\subset \R$ symmetric with respect to $ t = 0 $ and a Banach space $X$,  we use the standard notation $C(I,X)$ to denote the space of continuous functions with values in $X$. 
Given $r>0$ we set $B_s(I;r)$ the ball of radius $r$ in $C(I,{H}^s\left(\mathbb{T},\mathbb{C}^2)\right)$ and by 
 $B_{s,\R}(I;r)$ the ball of radius $r$ in $C(I,{H}^s_{\R}\left(\mathbb{T},\mathbb{C}^2)\right)$. 
We  denote $ L^2(\T,\C):=  H^0(\T,\C)$ and 
 we define
\be\label{scpr12hom}
\la u, v \ra_{L^2} := \frac{1}{2\pi} \int_\T  u(x)\, \bar {  v(x)} \, \di x \, .
\ee
	Given $N \in \N_0$, we denote by $ W^{N, \infty}  (\T) $ the space of 
	continuous functions $ u : \T \to \C $, $ 2 \pi$-periodic, 
	whose derivatives up to order  $N$ are in $L^\infty$, equipped with the norm 
	$$
	\| u \|_{W^{N,\infty}} := 
	\sum_{\ell=0}^{N} \| \pa_x^\ell u \|_{L^\infty}.
	$$
	For $ N = 0 $ the norm $ \| \cdot  \|_{W^{N,\infty}} = \| \cdot \|_{L^{\infty}}  $.
 
We denote by  $\tau_\varsigma $, $\varsigma  \in \R$, 
and by $\tg_\theta$, $\theta \in \T$,
the translation operator respectively the phase rotation given by 
\be\label{tra}
[\tau_\varsigma u](x) :=u(x + \varsigma)  \ , 
\qquad 
\big[\tg_\theta \vect{u}{\bar u}\big](x) := \lvect{e^{\im \theta }u(x) }{e^{-\im \theta }\bar u(x)}  \  . 
\ee

  \noindent{\bf Symmetries of operators and vector fields.} 
 Given a linear operator  $ A(U)$ acting on $L^2(\T;\C)$ 
we associate the linear  operator  defined by the relation 
\begin{equation}\label{opeBarrato}
\ov{A}(U)[v] := \ov{A(U)[\ov{v}]} \, ,   \quad \forall v: \T \rightarrow \C \, .
\end{equation}
An operator $A$ is {\em real } if $A = \bar A$. 
We say that a matrix of operators acting on $L^2(\T;\C^2)$  is \emph{real-to-real}, if it has the form 
\begin{equation}\label{vinello}
R(U) =
\left(\begin{matrix} R_{1}(U) & R_{2}(U) \\
\ov{R_{2}}(U) & \ov{R_{1}}(U)
\end{matrix}
\right) \, , \quad \forall U \in L^2_\R (\T, \C^2)  \ . 
\end{equation}
A real-to-real matrix of operators $R(U)$  acts in the subspace   
$  L^2_\R (\T, \C^2) $.
If $R(U)$ and $R'(U)$ are  real-to-real operators then also $R(U)\circ R'(U)$ is real-to-real.\\
A  matrix  $R(U)$ as in \eqref{vinello}  is  translation resp.
gauge  
invariant  if 
\begin{equation}
\label{mat.gauge.inv}
\tau_\varsigma \circ R(U) = R(\tau_\varsigma U )\circ \tau_\varsigma \ , 
\ \forall \varsigma \in \R 
\quad
\mbox{ resp. } 
\quad
\tg_\theta \circ R(U) = R(\tg_\theta U )\circ \tg_\theta  \ , \ \forall \theta \in \T \ . 
\end{equation}
Similarly we will say that a vector field  
\be\label{rtr}
X(U): =  \vect{X(U)^+}{X(U)^-}  \quad \text{is real-to-real if} \quad
\bar{X(U)^+}=X(U)^- \, , \quad \forall  U \in  L^2_\R (\T, \C^2) \, , 
\ee
and translation resp.
gauge   invariant if 
 \begin{equation}\label{X.gauge}
 \tau_\varsigma\circ X = X \circ  \tau_\varsigma  \ , 
\quad
\forall \varsigma \in \R \ , 
\qquad
\tg_\theta \circ X = X\circ \tg_\theta , \quad \forall \theta \in \T 
 \ .
\end{equation}
If $ R(U)$ in \eqref{vinello} is translation resp. gauge  invariant, then the  vector field
$X(U):= R(U)U$ is translation resp.
gauge invariant as well.

\smallskip
\noindent{\bf Fourier expansion.}
Given a $2\pi$-periodic function $u(x)$ in $  L^2 (\T,\C)$, we expand it in Fourier series as 
\begin{equation}\label{fourierseries}
u(x)= \sum_{j \in \mathbb{Z}} u_j \,  e^{\im j x}, \quad u_j:= \frac{1}{2\pi}\int_{\mathbb{T}}u(x) e^{- \im j x }\,\di x \, .
\end{equation}
We shall expand a function $ U\in L^2(\T;\C^2)$ as 
\be \label{u+-}
U=\vect{u^+}{u^-}= \sum_{\sigma\in \pm} \sum_{j \in \Z} \tq^\sigma u_j^\sigma e^{\ii\sigma j x},
 \quad 
   u^\sigma_j:=\frac{1}{2\pi}\int_{\mathbb{T}}u^\sigma(x) e^{- \im \sigma j x }\,\di x 
\ee
 where $ \tq^+:= \vect{1}{0}, \quad \tq^-:= \vect{0}{1} $.
 
For $ \vec{\jmath} = (j_1,\dots, j_p) \in \Z^p$, $p \geq 1$, 
and $\vec{\sigma} = (\sigma_1,\dots,\sigma_p)\in \{\pm\}^p$ we  denote 
$ |\vec{\jmath}| := \max(|j_1|.\dots, |j_p| ) $ and 
\be\label{notationuvecjvecs}
u_{\vec{\jmath}}^{\vec{\sigma}}:= u_{j_1}^{\sigma_1}\cdots u_{j_p}^{\sigma_p} \, , 
\qquad \vec{\sigma} \cdot \vec{\jmath} := \sigma_1 j_1  + \dots+ \sigma_p j_p \, , 
\quad
\vec \sigma \cdot \vec 1 := \sigma_1 + \cdots + \sigma_p  \ . 
\ee 
 We also denote by $\fP_p$ the set of indexes 
 \begin{equation}
 \label{mom1}
 \fP_p:= \left\{  ( \vec{\jmath} , \vec \sigma) \in \Z^p \times \{\pm \}^p \colon \quad   \vec{\jmath} \cdot \vec    \sigma = 0   \ , \ \ \vec \sigma \cdot \vec 1 = 0  \right\} \ . 
 \end{equation}

  \noindent{\bf  Fourier representation of homogeneous operators and  vector fields.} 
In the sequel we shall encounter  matrices of linear operators, gauge and translational invariant, of the form 
\be\label{Mupm}
M(U)= \begin{pmatrix}M^+_+(U)&M^-_+(U)\\M^+_-(U)&M^-_-(U)\end{pmatrix} , 
\ee
 depending on $U$ in a homogeneous way. We shall call them $p$-homogeneous if they are polynomials in $U$ of order $p$. 
We write them in  Fourier as
\be \label{smoocara0}
\begin{aligned}
M(U)V= \vect{(M(U)V)^+}{(M(U)V)^-}, \quad (M(U)V)^\sigma= 
  \sum_{ \sigma k = \vec \sigma_{p} \cdot \vec \jmath_{p}+ \sigma' j \atop 
\sigma = \vec \sigma_p \cdot \vec 1  + \sigma' 
   } M_{\vec \jmath_{p}, j,k}^{\vec \sigma_{p}, \sigma',\sigma} u_{\vec \jmath_{p}}^{\vec \sigma_{p}} v^{\sigma'}_{j}  {e^{\ii \sigma k x}}\, ,
\end{aligned}
 \ee 
where  the coefficients $M_{\vec \jmath_{p}, j,k}^{\vec \sigma_{p}, \sigma', \sigma} \in \C$  fulfill the  
the following symmetric property:
  for any permutation $ \pi $ of $ \{1, \ldots, p \} $, it results
\begin{equation}
\label{M.coeff.p}
M_{j_{\pi(1)}, \ldots,j_{\pi(p)}, j,k}^{ \sigma_{\pi(1)}, \ldots, \sigma_{\pi(p)},\sigma',\sigma} 
=  
M_{j_{1}, \ldots, j_{p}, j,k}^{ \sigma_{1}, \ldots, \sigma_{p},\sigma',\sigma} \, . 
\end{equation}
The operator $M(U)$ is real-to-real, according to definition  \eqref{vinello}, if and only if its coefficients fulfill
\be\label{M.realtoreal}
\bar{M_{\vec \jmath_{p}, j,k}^{\vec\sigma_{p}, \sigma', \sigma}} = M_{\vec \jmath_{p}, j,k}^{-\vec \sigma_{p}, -\sigma', -\sigma} \ .
\ee
A $ (p+1)$-homogeneous vector field, which is   gauge and translation invariant (see \eqref{X.gauge}), can be expressed in Fourier as: for any $\sigma = \pm$,  
\be\label{polvect}
X(U)^\sigma
= \sum_{k\in \Z} X(U)^\sigma_k 	\, e^{\ii\sigma k x} , 
\qquad X(U)^\sigma_k= 
\!\!\!\sum_{
	k \sigma = \vec{\sigma}_{p+1} \cdot \vec{\jmath}_{p+1} \atop
	\sigma = \vec{\sigma}_{p+1} \cdot \vec 1
	}\!\!\!\!\!\!\!\!\!
X_{\vec{\jmath}_{p+1}, k}^{ \vec{\sigma}_{p+1}, \sigma} \, u^{\vec{\sigma}_{p+1}}_{\vec{\jmath}_{p+1}} \,,
\ee
the last sum being in $(\vec \jmath_{p+1}, \vec \sigma_{p+1})$, 
and with  coefficients  $ X_{\vec{\jmath}_{p+1}, k}^{ \vec{\sigma}_{p+1}, \sigma}\in \C$  satisfying 
the symmetry condition: for any permutation $ \pi $ of $ \{1, \ldots, p+1 \} $, 
\be\label{symmetric}
X_{\ j_{\pi(1)}, \ldots,j_{\pi(p+1)}, k}^{ \sigma_{\pi(1)}, \ldots, \sigma_{\pi(p+1)},\sigma} 
=  
X_{\ j_{1}, \ldots, j_{p+1}, k}^{ \sigma_{1}, \ldots, \sigma_{p+1},\sigma} \, . 
\ee
The constraint of the indexes in \eqref{polvect} can also be  written as $(\vec \jmath_{p+1}, k, \vec \sigma_{p+1}, - \sigma) \in \cP_{p+2}$ (recall \eqref{mom1}), and we shall often  use this notation.

If $X(U)$ is real-to-real, see \eqref{rtr}, then 
\be\label{X.real}
\bar{X(U)^+_k}=X(U)^-_k\quad \mbox{i.e.}  \quad\bar{X_{\vec{\jmath}_{p+1}, k}^{ \vec{ \sigma}_{p+1}, +}}=X_{\vec{\jmath}_{p+1}, k}^{- \vec{\sigma}_{p+1}, -} \, . 
\ee

 \subsection{Paradifferential calculus}
In this section  we introduce paradifferential and smoothing operators, following \cite{BD, BMM}.

\smallskip

 \noindent{\bf Symbols.}
We define  the class of symbols which we will use along the paper. 
They correspond to the autonomous symbols of Definition 3.3 in \cite{BD}, where the dependence on time enters only through the function $U=U(t)$. In view of this, we do not need to keep track on the regularity indexes in time and we fix $K = K' =0$ with respect to Definition 3.3 of \cite{BD}.
\begin{definition}[Symbols]\label{def:sfr}
Let $ m  \in \R $, $N \in \N_0$, $p \in \N$, $s_0, r>0$.
\begin{enumerate}
\item{\bf H\"older symbols.} We denote by $\Gamma^m_{W^{N,\infty}}$ the space of 
functions $ a : \T\times \R\to \C $,  $a(x, \xi)$, 
which are $C^\infty$ with respect to $\xi$ and such that, for any  $ \beta \in \N_0 $, 
there exists a constant $C_\beta >0$ such that
\begin{equation}\label{simb-pro}
\big\| \pa_\xi^\beta \, a(\cdot, \xi) \big\|_{W^{N,\infty}} \leq C_\beta \, \la \xi \ra^{m - |\beta|}  , \quad \forall \xi \in \R \,  . 
\end{equation}
We endow $\Gamma^m_{W^{N,\infty}}$ with the  family of norms defined, for any $n \in \N_0$, by
\begin{equation}
\label{seminorm}
\abs{a}_{m, {W^{N,\infty}}, n}:=  \max_{\beta\in \{0, \dots , n\} }\, 
\sup_{\xi \in \R} 
\, \big\| \la \xi \ra^{-m+|\beta|} \, \pa_\xi^{\beta} a(\cdot,  \xi) \big\|_{W^{N,\infty}} \ . 
\end{equation} 

{   \item {\bf $p$-Homogeneous symbols.} We denote by $\widetilde{\Gamma}^m_p$ the space of  $p$-linear symmetric maps from $\left( C^{\infty}\left(\mathbb{T};\mathbb{\C}^2\right)\right)^p$ to
        $ C^\infty(\mathbb{T}\times \R; \C) $ ,
		$ (x, \xi) \mapsto a_p(U_1, \dots, U_p ;x,\xi)$ defined by
		  \be\label{espr.hom.sym}
a_p(U_1, \dots, U_p; x, \xi):=  \sum_{\substack{\vec{\jmath}\in \Z^p\\ \vec{\sigma}\in \{ \pm\}^p}} 
a_{\vec \jmath}^{\vec\sigma}(\xi)
(u_1)_{j_1}^{\sigma_1} \cdots (u_p)_{j_p}^{\sigma_p}\, e^{\ii (\vec{\sigma}\cdot \vec{\jmath}) x},
        \ee
		where $a_{\vec{\jmath}}^{\vec{\sigma}}(\xi):= a_{j_1, \dots, j_p}^{\sigma_1, \dots, \sigma_p}(\xi) $ are complex valued Fourier multipliers,  satisfying
        $$
       a_{j_1, \dots, j_p}^{\sigma_1, \dots, \sigma_p}(\xi) = a_{j_{\pi(1)}, \dots, j_{\pi(p)}}^{\sigma_{\pi(1)}, \dots, \sigma_{\pi(p)}} (\xi)\quad \text{ for any } \pi \text{ permutation of } \{1, \dots, p\}\,,
        $$
        and
        for some $	\mu\geq 0$, 
		\be \label{homosymbo}
		| \partial_\xi^\beta a_{\vec{\jmath}}^{\vec{\sigma}}(\xi) | \leq C_\beta \la \vec{\jmath}\ra^{\mu}  \langle \xi\rangle^{m-\beta},  \quad \forall \vec{\jmath}\in \Z^p,\, \vec{\sigma}\in\{\pm\}^p, \, \beta\in \N_0.
		\ee
        We shall denote by 
        $$
        a_p(U;x, \xi):= a_p(U, \cdots, U; x, \xi)
        $$
        the polynomial symbol associated to the multilinear symmetric symbol. \\
		We  denote by $\widetilde{\Gamma}^m_0 $ the space of constant coefficients symbols $ \xi \mapsto a(\xi)$ which satisfy \eqref{homosymbo} with $\mu= 0 $.}   
		
		\item  {\bf Non-homogeneous symbols. }   We denote by $\Gamma_{\geq p}^m[r]$ the space of functions  $ (U;x,\xi)\mapsto a(U;x,\xi) $,
		defined for $ U \in B_{s_0}(r)$ for some $s_0$ large enough, with complex values, such that  for any $\ts\geq s_0$,  there are $C>0$, $r':=r'(\ts)\in(0,r)$ and for any $ U \in B_{s_0}\left(r' \right)\cap H^{\ts}\left(\mathbb{T};\mathbb{C}^2\right)$,  any $\beta \in \N_0$ and  $N \leq \ts - s_0$, one has the estimate
		\begin{equation}\label{nonhomosymbo}
			\left\|\partial_\xi^\beta a\left({ U;\cdot,\xi }\right)\right\|_{W^{N,\infty}}  \leq C \langle \xi \rangle^{m-\beta} \| U\|_{{s_0}}^{p-1}\|U\|_{\ts} \, .
		\end{equation}
		  In addition we require also  the translation invariance property
\begin{equation} \label{mome}
a\left( \tau_{\varsigma} U; x,\xi\right)= a\left( U; x+\varsigma, \xi\right),\quad \forall 
\varsigma\in \R \, , 
\end{equation}
where $\tau_\varsigma$ is the translation operator in \eqref{tra}.
\item {\bf Symbols.} We denote by $\Sigma\Gamma^m_0[r]$ the class of symbols of the form
\begin{equation}
\label{symbols}
a(U; x, \xi) = a_0(\xi)  + a_2(U; x, \xi)  + a_{\geq 4}(U;x, \xi)
\end{equation}
where $ a_0(\xi)\in \wt\Gamma^m_0$ is a Fourier multiplier, $a_2(U) \in \wt \Gamma_2^m$ and  $ a_{\geq 4}(U) \in \Gamma^m_{\geq 4}[r]$.
We denote by $\Sigma\Gamma^m_2[r]$ the class of symbols of the form \eqref{symbols} with $a_0(\xi) = 0$. Finally sometimes we shall write  $\Sigma\Gamma^m_4[r]\equiv \Gamma_{\geq 4}^m[r]$.
	\end{enumerate}
	We say that a symbol  $a(U;x,\xi) $ is \emph{real} if it is real valued for any
		$ U \in B_{s_0,\R}(I;r)$.
		
		We also denote by $\wt \cF_p$ (respectively $\cF_{\geq p}[r]$)  the subspace of $\wt \Gamma^0_p$ (respectively $\Gamma^0_{\geq p}[r]$)   made of those
symbols which are independent of $\xi$, and by $\wt \cF^\R_p$ (respectively $\cF^\R_{\geq p}[r]$)  to denote functions in $\wt \cF_p$ (respectively $\cF^\R_{\geq p}[r]$)  which are real valued.
\end{definition}

\begin{remark}
{
Sometimes we shall write a symbol $a_p(U; x, \xi)$ only in polynomial form
\be \label{not-symm}
		a_p(U;x,\xi):=\sum_{\substack{\vec{\jmath}\in \Z^p\\ \vec{\sigma}\in \{ \pm\}^p}} \wt a_{\vec{\jmath}}^{\vec{\sigma}}(\xi) \, u_{\vec{\jmath}}^{\vec{\sigma}} \, e^{\ii (\vec{\sigma}\cdot \vec{\jmath}) x}
		\ee
        with some Fourier multiplier coefficients $\wt a_{\vec{\jmath}}^{\vec{\sigma}}(\xi) $ not necessarily symmetric, but fulfilling the estimates
\eqref{homosymbo}. 
One obtains the symmetric coefficients $a_{j_1, \dots, j_p}^{\sigma_1, \dots, \sigma_p} $ in the expression \eqref{espr.hom.sym} by symmetrizing, i.e., denoting by $\cS_p$ the symmetric group of permutations of $\{1, \ldots, p\}$, 
$$
a_{j_1, \dots, j_p}^{\sigma_1, \dots, \sigma_p} = 
\frac{1}{p!}\sum_{\pi \in \cS_p} \wt a_{j_{\pi(1)}, \dots, j_{\pi(p)}}^{\sigma_{\pi(1)}, \dots, \sigma_{\pi(p)}}  \ . 
$$
We shall use the  notation \eqref{not-symm} for example  in formulas \eqref{vuoneunder} and  for the resonant  transport term in \eqref{VresZ}; the reason is that the transport term \eqref{VresZ} is perhaps the most important object, being the term responsible for the growth, and we prefer to express it in the simplest possible form.
}
\end{remark}

\noindent
$ \bullet $  If $a$ is a symbol in $  \Gamma^m_{W^{N,\infty}} $ 
then $ \partial_x a  \in  \Gamma^{m}_{W^{N-1,\infty}}$ and  
$ \partial_\xi a \in   \Gamma^{m-1}_{W^{N,\infty}}$.
If  $b $ is a symbol in $  \Gamma^{m'}_{W^{N,\infty}}$  then 
$a b \in  \Gamma^{m+m'}_{W^{N,\infty}}$.
If $a \in \Gamma^{m}_{\geq p}[r]$ and $b \in \Gamma^{m'}_{\geq q}[r]$, then  $ab  \in  \Gamma^{m+m'}_{\geq p+q}[r]$ .

\noindent
$\bullet$  $p$-homogeneous symbols in $ \widetilde \Gamma^m_p$ and non-homogeneous symbols in $\Gamma^m_{\geq p}[r]$  are actually functions with values in $\Gamma^m_{W^{N,\infty}}$ for some $N \in \N$, whose seminorms \eqref{seminorm} are bounded by
\begin{equation}
|a_p|_{m, W^{N,\infty}, n} \leq C_n \, \| U\|_{{1}}^{p-1}\|U\|_{{N+\mu+1}}  \ , 
\quad
|a|_{m, W^{N,\infty}, n} \leq C_n \, \norm{U}_{{s_0}}^{p-1} \norm{U}_{\ts} \ , \ \ N \leq \ts - s_0  \ .
\end{equation}

\noindent
$\bullet$ A $p$-homogeneous symbol $a_p(U,x,\xi)$ is a non-homogeneous symbol, since \eqref{espr.hom.sym}--\eqref{homosymbo} imply
\begin{equation}\label{nonhomosymbo.homo}
			\left\|\partial_\xi^\beta a_p\left({ U;\cdot,\xi }\right)\right\|_{W^{N,\infty}}  \leq C \langle \xi \rangle^{m-\beta} \| U\|_{{1}}^{p-1}\|U\|_{{N+\mu+1}} \,  \ ,
\end{equation}
and \eqref{espr.hom.sym} implies the translation invariance property \eqref{mome}.

\smallskip

\noindent{\bf Paradifferential quantization.}
Given $p\in \N_0$ we consider   functions
  $\chi_{p}\in C^{\infty}(\R^{p}\times \R;\R)$ and $\chi\in C^{\infty}(\R\times\R;\R)$, 
  even with respect to each of their arguments, satisfying, for $0<\delta_0\leq \tfrac{1}{10}$,
\begin{align*}
&{\rm{supp}}\, \chi_{p} \subset\{(\xi',\xi)\in\R^{p}\times\R; |\xi'|\leq\delta_0 \langle\xi\rangle\} \, ,\qquad \chi_p (\xi',\xi)\equiv 1\,\,\, \rm{ for } \,\,\, |\xi'|\leq \delta_0 \langle\xi\rangle / 2 \, ,
\\
&\rm{supp}\, \chi \subset\{(\xi',\xi)\in\R\times\R; |\xi'|\leq\delta_0 \langle\xi\rangle\} \, ,\qquad \quad
 \chi(\xi',\xi) \equiv 1\,\,\, \rm{ for } \,\,\, |\xi'|\leq \delta_0   \langle\xi\rangle / 2 \, . 
\end{align*}
For $p=0$ we set $\chi_0\equiv1$. 
We assume moreover that 
$$ 
|\partial_{\xi}^{\ell}\partial_{\xi'}^{\beta}\chi_p(\xi',\xi)|\leq C_{\ell,\beta}\langle\xi\rangle^{-\ell-|\beta|} \, , \  \forall \ell\in \N_0, \,\beta\in\N_0^{p} \, ,  
\ \ 
|\partial_{\xi}^{\ell}\partial_{\xi'}^{\beta}\chi(\xi',\xi)|\leq C_{\ell,\beta}\langle\xi\rangle^{-\ell-\beta}, \  \forall \ell, \,\beta\in\N_0 \, .
$$ 
If $ a (x, \xi) $ is a smooth symbol 
we define its Weyl quantization  as the operator
acting on a
$ 2 \pi $-periodic function
$u(x)$ (written as in \eqref{fourierseries})
 as
$$
{\rm Op}^{W}(a)u= \sum_{k\in \Z}
\Big(\sum_{j\in\Z}\hat{a}\big(k-j, \tfrac{k+j}{2}\big) u_j \Big){e^{\im k x}}
$$
where $\hat{a}(k,\xi)$ is the $k^{th}-$Fourier coefficient of the $2\pi-$periodic function $x\mapsto a(x,\xi)$.

\begin{definition}{\bf (Bony-Weyl quantization)}\label{quantizationtotale}
If $a(U;x,\xi)$ is a symbol in $\widetilde{\Gamma}^{m}_{p}$, 
respectively in $\Gamma^m_{W^{N,\infty}}$ or $\Gamma^{m}_{\geq p}[r]$,
we set
\be\label{regula12}
a_{\chi_{p}}(U;x,\xi) := \!\!\!\!\! \sum_{\substack{\vec{\jmath}\in \Z^p\\ \vec{\sigma}\in \{ \pm\}^p}}\chi_p (\vec{\jmath},\xi) a_{\vec{\jmath}}^{\vec{\sigma}}(\xi) u_{\vec{\jmath}}^{\vec{\sigma}} e^{\ii (\vec{\sigma}\cdot \vec{\jmath}) x},
\quad 
 a_{\chi}(U;x,\xi) :=\sum_{j\in \Z} 
\chi (j,\xi )\hat{a}(U;j,\xi)e^{\im j x}  \, 
\ee
where in the last equality $  \hat a(U; j,\xi) $ stands for $j^{th}$ Fourier coefficient of $a(U;x, \xi)$ with respect to the $ x $ variable, and 
we define the \emph{Bony-Weyl} quantization of $ a(U; \cdot)  $ as 
\begin{align}\label{BW}
&\opbw(a({U};\cdot))v= {\rm Op}^{W} (a_{\chi_{p}}({U};\cdot))v= \!\!\!\!\!\!\!\!\!\sum_{\substack{(\vec{\jmath},j,k)\in \Z^{p+2}\\ \vec{\sigma}\in \{ \pm\}^p\\ \vec{\sigma}\cdot \vec{\jmath}+j=k}}\chi_p 
\left(\vec{\jmath},\frac{j+k}{2}\right)
 a_{\vec{\jmath}}^{\vec{\sigma}}\left(\frac{j+k}{2}\right) 
 u_{\vec{\jmath}}^{\vec{\sigma}} v_j
{e^{\im k x}} \, ,\\
&\opbw(a(U;\cdot))v= {\rm Op}^{W} (a_{\chi}(U;\cdot))v=
\!\!\!\!\!\! \sum_{(j,k)\in \Z^2} \!\!\!  \chi \left(k-j,\frac{j+k}{2} \right)
\hat{a}\left(U; k-j, \frac{k+j}{2}\right)v_j {e^{\im k x}} \, .\label{BWnon}
\end{align}
\end{definition}
Note that if 
$ \chi \Big( k-j, \tfrac{ k + j }{2}\Big) \neq 0 $
then $ |k-j| \leq \delta_0 \langle \frac{j + k}{2} \rangle  $
and therefore, for $ \delta_0 \in (0,1)$, 
\begin{equation}\label{rela:para}
\frac{1-\delta_0}{1+\delta_0} |k| \leq 
|j| \leq \frac{1+\delta_0}{1-\delta_0}|k| \, , \quad \forall j, k \in \Z\, . 
\end{equation}
This relation shows that the action of a paradifferential  operator does not spread much the Fourier support of functions.

\noindent
$ \bullet $  If $ a$ is a homogeneous  symbol, the two definitions  of quantization in \eqref{BW} and \eqref{BWnon} differ by a  smoothing operator according to 
Definition \ref{omosmoothing} below. 

\noindent
$\bullet$
Definition \ref{quantizationtotale} 
is  independent of the cut-off functions $\chi_{p}$, $\chi$,  
up to smoothing operators that we define below (see Definition \ref{omosmoothing}), see the remark at page 50  of  \cite{BD}. 

\noindent
$\bullet$
Given  a paradifferential  operator
$ A = \Opbw{a(x,\xi)} $ it results
\be\label{A1b}
\bar A = \Opbw{\bar{a(x, - \xi)}} \, , \quad 
A^\top = \Opbw{a(x, - \xi)} \, , \quad
A^*= \Opbw{\bar{a(x,  \xi)}} \, , 
\ee
where $ A^\top $  and $ A^* $ denote respectively the transposed and  adjoint operator with respect to the complex, respectively real,  scalar product 
of $  L^2(\T, \C) $ in \eqref{scpr12hom}. It results $ A^* = \bar A^\top $. 

\noindent
$\bullet$
 A paradifferential operator $A= \Opbw{a(x,\xi)}$ is {\it real} (i.e. $A = \bar A$) if 
\be \label{realetoreale}
 \bar{a(x,\xi)}= a^\vee(x,\xi) \quad \text{ where} \quad a^{\vee}(x,\xi) := a(x,-\xi) \, .
 \ee
\noindent
$ \bullet $
A matrix of paradifferential operators $ \Opbw{A( x,\x)}$ is real-to-real, i.e. \eqref{vinello} holds, if and only if 
the  matrix of symbols $A(x,\x)$ has the form 
\begin{equation}\label{prodotto}
{\footnotesize A(x,\x) =
\left(\begin{matrix} {a}(x,\x) & {b}(x,\x)\\
{\ov{b^\vee(x,\x)}} & {\ov{a^\vee(x,\x)}}
\end{matrix} 
\right)=\left(\begin{matrix} {a}(x,\x) & 0\\
0 & {\ov{a^\vee(x,\x)}}
\end{matrix} 
\right)+\left(\begin{matrix} 0& {b}(x,\x)\\
{\ov{b^\vee( x,\x)}} & 0
\end{matrix} 
\right) \,   \, . }
\end{equation}

\noindent
$ \bullet $
A  real-to-real matrix of $U$-dependent paradifferential operators $ \Opbw{A(U; x,\x)}$ is gauge invariant, i.e. \eqref{mat.gauge.inv} holds, if and only if 
the symbols in  \eqref{prodotto} fulfill, with $\tg_\theta$ in \eqref{tra},
\begin{equation}\label{sym.gauge}
a(U; x, \xi) = a(\tg_\theta U; x, \xi) \ , 
\quad
e^{\im 2 \theta } \, b(U; x, \xi) = b(\tg_\theta U; x, \xi)  \ , \quad \forall \theta \in \T \ , 
\end{equation}
If, in addition,  $a, b\in \wt \Gamma^m_p$, then
$\Opbw{a}$ in  \eqref{BW} have indexes restricted to $\vec \sigma \cdot 1=0$, whereas $\Opbw{b}$ to $\vec\sigma \cdot 1 = 2$.

\smallskip
We will use also the notations
\begin{equation}\label{vecop}
\begin{aligned} 
&\vOpbw{a(x,\xi)}:= \Opbw{\footnotesize \begingroup\setlength\arraycolsep{1 pt}\begin{bmatrix} a(x,\xi)&0\\0& \bar{ a^{\vee}(x,\xi)}\end{bmatrix}\endgroup} \, , \ 
\zOpbw{b(x,\xi)}:= \Opbw{\footnotesize\begingroup\setlength\arraycolsep{1 pt}\begin{bmatrix} 0&b(x,\xi)\\ \bar{ b^{\vee}(x,\xi)}&0\end{bmatrix}\endgroup}
\end{aligned}
\end{equation}

Along the paper 
we shall use the following  results concerning the action of a paradifferential operator 
in Sobolev spaces. 
 We refer to 
\cite[Theorem A.7]{BMM} for the proof of $(i)$ and to  
 \cite[Proposition 3.8]{BD}   for the proof of $(ii)$, $(iii)$.

\smallskip

\begin{theorem}{\bf (Continuity of Bony-Weyl operators)}
\label{thm:contS}
Let $m\in \R$, $p\in \N$, $r>0$. Then:

\smallskip
$(i)$ Let  $ a \in \Gamma^m_{L^\infty} $.
Then $\Opbw{a}$ extends to a bounded operator 
$ H^{s} \to  H^{s-m}$ for any $ s \in \R $  satisfying the estimate, for any $ u \in H^s $,  
\begin{align} \label{cont00}
& \norm{\Opbw{a}u}_{{s-m}} \lesssim \, \abs{a}_{m, L^\infty, 4} \, \norm{u}_{{s}} \ .
\end{align}

\smallskip
$(ii)$ Let  $a\in \widetilde{\Gamma}_{p}^{m}$. 
There is $ s_0 > 0 $ such that for any $s \in \R$,  
there is a constant $C>0$, depending only on $s$ and on \eqref{homosymbo} with $\ell=\beta=0$,
such that for any $U_1,\ldots,U_{p} \in H^{s_0}(\T, \C^2)$ and $v \in H^s(\T, \C)$, one has 
\begin{equation}\label{stimapar}
\|\Opbw{a(U_1,\dots, U_p;\cdot)}v\|_{{s-m}}\leq C\prod_{j=1}^{p}\|U_{j}\|_{{s_0}}\|v\|_{{s}} \, ,
\end{equation}
for $p\geq 1$, while for $p=0$ the \eqref{stimapar} holds by replacing the right hand side with $C\|v\|_{{s}}$.

\smallskip
$(iii)$ Let $a\in \Gamma^{m}_{\geq p}[r]$.
There is $ s_0 > 0 $ such that  for any $s \in \R$
there is a constant $C>0$  such that
for any $U \in B_{s_0}(r)$  one has 
\be\label{stimapar2}
\|\Opbw{a(U;\cdot)}\|_{\mathcal{L}({H}^{s},{H}^{s-m})}\leq C\|U\|_{{s_0}}^{p} \, .
\ee
\end{theorem}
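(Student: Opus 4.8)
\textbf{Proof plan for Theorem \ref{thm:contS}.}

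\medskip

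\emph{Part $(i)$.} The plan is to reduce the H\"older case to a known symbolic calculus estimate by observing that $\Opbw{a}$ differs from the standard Weyl quantization $\Opw{a_\chi}$ only through the regularizing cut-off $\chi$, which forces the Fourier support condition \eqref{rela:para}. Concretely, writing $a$ in Fourier in $x$ and using \eqref{BWnon}, I would estimate the operator norm by a Schur-type test on the kernel: the matrix element connecting mode $j$ to mode $k$ is $\chi\bigl(k-j,\tfrac{j+k}{2}\bigr)\,\hat a\bigl(k-j,\tfrac{j+k}{2}\bigr)$, and \eqref{rela:para} gives $\la k\ra\sim\la j\ra$ on the support, so the operator maps $H^s$ to $H^{s-m}$ with norm controlled by $\sup_{j,k}\la\tfrac{j+k}{2}\ra^{-m}|\hat a(k-j,\cdot)|$ times a summable factor in $k-j$. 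Summability in the frequency gap comes from integrating by parts in $x$ finitely many times (this is where the index $4$ in $|a|_{m,L^\infty,4}$ enters: one needs enough $x$-derivatives of $a$, equivalently decay of $\hat a(k-j,\cdot)$ in $k-j$, to beat the one-dimensional sum). Since the excerpt explicitly cites \cite[Theorem A.7]{BMM} for this, I would simply invoke that reference after setting up the reduction; no new work is needed.

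\medskip

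\emph{Parts $(ii)$ and $(iii)$.} For the $p$-homogeneous case, the idea is that a symbol $a\in\widetilde\Gamma^m_p$, evaluated on $U_1,\dots,U_p\in H^{s_0}$, produces a symbol in $\Gamma^m_{L^\infty}$ (indeed $\Gamma^m_{W^{N,\infty}}$) whose seminorms are controlled by $\prod_j\|U_j\|_{s_0}$: this is exactly the content of the bullet point \eqref{nonhomosymbo.homo} in the excerpt, which bounds $|a_p|_{m,W^{N,\infty},n}$ by $\|U\|_1^{p-1}\|U\|_{N+\mu+1}$, and $s_0$ is chosen large enough ($s_0\geq\mu+5$, say) that $\|U\|_{N+\mu+1}\lesssim\|U\|_{s_0}$ for the finitely many $N$ needed (here $N=4$ suffices by part $(i)$). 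Then one applies part $(i)$ with this $L^\infty$-symbol. The multilinearity and the symmetry of the estimate in $U_1,\dots,U_p$ are immediate from the multilinear structure \eqref{espr.hom.sym}. Part $(iii)$ is identical: a non-homogeneous symbol $a\in\Gamma^m_{\geq p}[r]$ evaluated at $U\in B_{s_0}(r)$ lies in $\Gamma^m_{L^\infty}$ with seminorms bounded by $\|U\|_{s_0}^p$ directly from \eqref{nonhomosymbo} with $N=4$ and $\mathtt s=s_0$ (choosing $s_0$ large enough that $N\leq\mathtt s-s_0$), and again part $(i)$ finishes. These are the arguments of \cite[Proposition 3.8]{BD}, which the excerpt cites.

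\medskip

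\emph{Main obstacle.} The only genuinely technical point is the frequency-gap summability in part $(i)$: establishing that $\Opbw{a}\colon H^s\to H^{s-m}$ with the loss of exactly $m$ derivatives and with a constant depending on only finitely many ($\leq 4$) seminorms of $a$. This requires the interplay between (a) the localization \eqref{rela:para} which keeps input and output frequencies comparable, so that $\la\xi\ra^m$ can be pulled out uniformly, and (b) the decay of $\hat a(k-j,\xi)$ in the gap $k-j$, obtained from $W^{N,\infty}$ regularity of $a$ in $x$, to ensure the convolution against the kernel is bounded on $\ell^2$. Everything else is bookkeeping: converting homogeneous/non-homogeneous symbol seminorms into $L^\infty$-symbol seminorms via the already-stated bounds \eqref{nonhomosymbo.homo} and \eqref{nonhomosymbo}, and choosing $s_0$ once and for all large enough to absorb the auxiliary Sobolev indices. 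Since both hard inputs are quoted from \cite{BMM} and \cite{BD}, the proof in the paper will be short, essentially a pointer to those references together with the reduction just described.
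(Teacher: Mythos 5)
Your proposal correctly identifies that the paper does not prove Theorem~\ref{thm:contS} but merely cites \cite[Theorem A.7]{BMM} for part~$(i)$ and \cite[Proposition 3.8]{BD} for parts~$(ii)$ and $(iii)$, and your sketch of the underlying argument (Schur test on the Fourier kernel using the frequency localization \eqref{rela:para}, then reduction of the homogeneous and non-homogeneous cases to the $L^\infty$-symbol case via the seminorm bounds \eqref{nonhomosymbo.homo} and \eqref{nonhomosymbo}) is the standard route those references take. Since the paper's own ``proof'' is the citation itself, your proposal agrees with it and correctly anticipates both the structure of the external proofs and the role of each hypothesis.
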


\smallskip 

\noindent{\bf Classes of $m$-operators and smoothing operators.} 
We introduce $m$-operators and smoothing operators.  This is  a small adaptation of  \cite{BD, BMM} where we consider only autonomous maps, where again the time dependence is only  through $U(t)$. 
In particular we put $K,K' = 0$ with respect to the notation in \cite{BD,BMM}.
 Given integers $(n_1,\ldots,n_{p+1})\in \N^{p+1}$, we denote by $\max_{2}(n_1 ,\ldots, n_{p+1})$ 
the second largest among  $ n_1,\ldots, n_{p+1}$.

\begin{definition}[Classes of $m$-operators]
\label{def.m-op}
 Let $m \in \R$, $p \in \N_0$ and $r >0$.
\begin{enumerate}
\item  {\bf $p$-homogeneous $m$-operators}. We denote by
$\wt \cM^{m}_p$ the class of 
 $(p+1)$-linear operators
 from $(C^{\infty}(\T;\C^{2}))^{p}\times C^{\infty}(\T;\C)$ to 
 $C^{\infty}(\T;\C)$ of the form
$ (U_{1},\ldots,U_{p}, v)\to M_p(U_1,\ldots, U_p)v$,   symmetric
 in $(U_{1},\ldots,U_{p})$,
with Fourier expansion
\begin{equation}
 \label{Mp}
M_p(U)v:=
M_p(U, \ldots, U)v= \!\!\!\!\!\!\!\!
 \sum_{\substack{  \vec\sigma_{p}\in \{\pm\}^{p} \\  k -j = \vec \sigma_{p} \cdot \vec \jmath_{p}} }
\!\!\!\!\!\!
  M_{\vec \jmath_{p}, j,k}^{\vec \sigma_{p}} \, u_{\vec \jmath_{p}}^{\vec \sigma_{p}}\,  v_{j}  \, {e^{\ii k x} }
 \end{equation}  
that satisfy the following. There are $\mu\geq0$, $C>0$ such that 
 for any $(\vec \jmath_{p},j,k) \in \Z^{p+2} $,   $ \vec \sigma_p \in \{ \pm \}^{p} $,  one has 
\be\label{smoocara}
 |M_{\vec \jmath_p, j, k}^{\vec \sigma_p} |\leq C \, 
{\rm max}_2\{ \la j_1\ra,\dots, \la j_p\ra,\la j\ra \}^{\mu}\, \max\{ \la j_1\ra,\dots, \la j_p\ra,\la j\ra \}^{m}   \, . 
 \ee

\item  {\bf Non-homogeneous $m$-operators.} We denote by $\cM^{m}_{\geq p}[r]$ the class of   
 operators  $(U, v)\to M(U) v $ defined on $B_{s_0}(r)\times{H}^{s_0}(\T,\C)$ for some $ s_0 >0  $, 
  which are  linear in the variable $v $ and such that the following holds true. 
  For any $s\geq s_0$ there are $C>0$ and 
  $r'=r'(s)\in]0,r[$ such that for any 
  $U\in B_{s_0}(r')\cap {H}^{s}(\T,\C^2)$, 
  any $ v \in  {H}^{s}(\T,\C)$, we have that
{
\begin{equation}
\label{piove}
\begin{aligned} 
& \|{ M(U)v }\|_{{s - m}} 
\leq C \left( \|{v}\|_{s}\|{U}\|_{s_0}^{p} 
 +\|{v}\|_{{s_0}}\|U\|_{{s_0}}^{p-1}\|{U}\|_{{s}} \right) \quad && \text{if} \quad p \geq 1\, ,\\
 & \|{ M(U)v }\|_{{s - m}} 
\leq C \left( \|{v}\|_{s}
 +\|{v}\|_{{s_0}} \|{U}\|_{{s}}\| U\|_{s_0} \right) \quad &&\text{if} \quad p =0\,.
 \end{aligned}
\end{equation}}

 In addition we require the translation invariance property
\begin{equation}\label{def:R-trin}
M( \tau_\varsigma  U) [\tau_\varsigma v]  =  
\tau_\varsigma \big( M(U)v \big) \, , \quad \forall \varsigma \in \R \, . 
\end{equation}
where $\tau_\varsigma$ is the translation operator in \eqref{tra}.

\item {\bf $m$-Operators.} 
We denote by $\Sigma\cM^m_0[r]$ the space of operators {$(U,v) \to M(U)v$} of the form 
\begin{equation}\label{sum.maps}
M(U) = M_0 + M_2(U)  + M_{\geq 4}(U) \ 
\end{equation}
where  $M_p(U)$ in $\wt \cM_p^m$, $p\in\{0,2\}$, and  $M_{\geq 4}(U)$ in $\cM^m_{\geq 4}[r]$.\\
We  denote  by $\Sigma\cM^m_2[r]$ the operators of the form \eqref{sum.maps} with $M_0 = 0$. Finally  sometimes we shall write $\Sigma\cM^m_4[r]\equiv \cM^m_{\geq 4}[r]$. 
\end{enumerate}
\end{definition}

\noindent
$\bullet$ A $p$-homogeneous $m$-operator $M_p$ is a non-homogeneous $m$-operator. Indeed \eqref{smoocara} implies the  quantitative estimate: 
for  $ s_0 \geq  \mu+1>0$, for any $s \geq s_0 $, 
any $U \in H^s(\T;\C^2)$,  any $v \in   H^s(\T;\C)$
	\be \label{smoothing}
	\| M_p(U)v \|_{{s-m}}\lesssim_s \| U \|_{{s_0}}^{p} 
	\| V \|_{ s}+ \| U \|_{{s_0}}^{p-1}\| U \|_{s} \|V \|_{{s_0}} 
	\ee
	which is \eqref{piove} (see Lemma 2.8  and 2.9 in \cite{BMM} for a proof).
Moreover \eqref{def:R-trin} follows from the Fourier restriction $ k -j = \vec \sigma_{p} \cdot \vec \jmath_{p}$ in \eqref{Mp}.\\
$\bullet$ 
{\bf (Paradifferential operators as $m$-operators)}
If $a(U;x,\xi)$ is a symbol in $ \Sigma \Gamma^{m}_{0}[r]$  
then the paradifferential operator  $ \opbw(a(U;x,\xi))$ is an $m$-operator
$ \Sigma  \mM_{0}^m[r]$. This is a consequence of Theorem \ref{thm:contS}--$(ii)$\&$(iii)$. \\
$\bullet$ We will meet vector fields of the form $X(U) = M(U)U$ where $M(U)$ is a matrix of $p$-homogeneous $m$-operators as in \eqref{Mupm}.
In this case the relation between the Fourier coefficients  of the vector field in \eqref{polvect} and those of the $m$-operator in  \eqref{Mp} is given by 
\begin{equation}
	\label{simmetrizzata}
	X_{\ j_1, \ldots, j_{p}, j_{p+1}, k}^{\sigma_1, \ldots, \sigma_{p}, \sigma_{p+1},  \sigma} = 
	\frac{1}{p+1} \left(
	M_{\ j_1, \ldots, j_{p}, j_{p+1}, k}^{\sigma_1, \ldots, \sigma_{p}, \sigma_{p+1},  \sigma} + 
	M_{\ j_{p+1}, \ldots, j_{p}, j_1, k}^{\sigma_{p+1}, \ldots, \sigma_{p}, \sigma_1,  \sigma} +
	\cdots
	+
	M_{\ j_1, \ldots, j_{p+1}, j_{p}, k}^{\sigma_1, \ldots, \sigma_{p+1}, \sigma_{p},  \sigma} 
	\right) \, , 
\end{equation} 
namely they are obtained symmetrizing with respect to the second last index $(j,\sigma')$  the coefficients  $M_{\ \vec \jmath_{p}, j, k}^{\vec \sigma_{p}, \sigma',\sigma}$ of $M(U)$.

\smallskip

If $m  \leq 0 $ the  $m$-operators  are referred to as smoothing operators.
 \begin{definition}{\bf (Smoothing operators)} \label{omosmoothing}
Let $ \vr\geq0$,   $p\in \N_0$ and $q \in \{0,2\}$.
We define the  $\varrho$-smoothing operators 
\be\label{smoothingoper}
\begin{aligned}
& \widetilde{\mathcal{R}}^{-\vr}_{p}:= \widetilde{\mathcal{M}}^{-\vr}_{p} \, ,
\quad
   \mathcal{R}^{-\vr}_{\geq p}[r]:= \mathcal{M}^{-\vr}_{ \geq p}[r]  \ , 
   \quad
 \Sigma\mathcal{R}^{-\vr}_q[r]:= \Sigma\mathcal{M}^{-\vr}_q[r]  \ .
 \end{aligned}
\ee
\end{definition}

\noindent
$ \bullet $ In view of \eqref{smoocara} a homogeneous $ m $-operator in $ \widetilde{\mathcal{M}}^{m}_{p} $ 
with the property that, on its support,  
$ {\rm max}_2\{ \la j_1\ra,\dots, \la j_p\ra,\la j\ra \}\sim  {\rm max}\{ \la j_1\ra,\dots, \la j_p\ra,\la j\ra \} $  is actually a 
{\it smoothing} operator in $ \widetilde{\mathcal{R}}^{-\vr}_{p} $ for {\it any} $ \varrho \geq 0 $ satisfying \eqref{smoocara} with $\mu \leadsto \mu+m+\varrho$ and $m \leadsto -\varrho$. 

\noindent
$ \bullet $ The Definition \ref{omosmoothing} 
of smoothing operators is modeled to gather remainders which 
satisfy  either the property $ \max_2 (n_1, \ldots, n_{p+1}) \sim  \max (n_1, \ldots, n_{p+1}) $
or arise as remainders of  compositions of paradifferential operators, 
see Proposition \ref{teoremadicomposizione} below, and thus have a fixed order $ \varrho $
of regularization.  
 
\smallskip

 \noindent {\bf Composition theorems.}
Let 
$D_{x}:=\frac{1}{\ii}\pa_{x}$. 
The following is   Definition 3.11 in \cite{BD}.

\begin{definition}{\bf (Asymptotic expansion of composition symbol)}
\label{def:as.ex}
Let   $ \varrho  \geq 0 $, $m,m'\in \R$, $r>0$.
Consider symbols $a \in \Sigma\Gamma^{m}_{p}[r]$   and $b\in  \Sigma\Gamma^{m'}_{p'}[r]$, $p,p' \in \{0,2\}$. For $U$ in $B_{\ts}(I;r)$ 
we define, for $\varrho< \ts- s_0$, the symbol
\begin{equation}\label{espansione2}
(a\#_{\varrho} b)(U;x,\x):=  \sum_{k=0}^{\varrho}  \frac{1}{2^k} \sum_{\ell+\beta=k}
\frac{(-1)^{\beta}}{\ell! \beta!} 
(\pa_\xi^\ell D_x^\beta a ) \cdot
(\pa_\xi^\beta D_x^\ell b )(U;x,\xi) \ .
\end{equation}
\end{definition}

\noindent
$ \bullet $
The symbol $ a\#_{\varrho} b $ belongs  to $\Sigma\Gamma^{m+m'}_{p+p'}[r]$.

\noindent
$ \bullet $
We have  that 
$ a\#_{\varrho}b =ab+\frac{1}{2 \ii }\{a,b\} $ 
up to a symbol in $\Sigma\Gamma^{m+m'-2}_{p+p'}[r]$,     
where 
\be\label{poisson}
\{a,b\}  :=  \pa_{\xi}a\pa_{x}b -\pa_{x}a\pa_{\xi}b  \in \Sigma \Gamma^{m+m'-1}_{p+p'}[r]
\ee
denotes the Poisson bracket. Moreover if $a\in \Gamma^m_{W^{N,\infty}}$ and  $b\in \Gamma^{m'}_{W^{N,\infty}}$ then $\{a,b\}\in \Gamma^{m+m'-1}_{W^{N-1,\infty}} $ with estimate
\be \label{poi.est}
| \{a,b\} |_{m+m'-1, W^{N-1,\infty}, n} \lesssim | a |_{m, W^{N,\infty}, n+1}| b |_{m', W^{N,\infty}, n+1}.
\ee

\noindent
$\bullet$  Due to  \eqref{symbols}, the symbol $a\#_\varrho b$ does not contain symbols of odd homogeneity.

\noindent
$ \bullet $
$ \overline{a^\vee} \#_{\varrho} 
\overline{b^\vee}  = \overline{a \#_{\varrho} b}^\vee  $ where $ a^\vee $ is defined in \eqref{realetoreale}.

 \smallskip

 The following proposition is proved in  \cite[Theorem A.8]{BMM} and  \cite[Proposition 3.12]{BD}.

\begin{proposition}{\bf (Composition of Bony-Weyl operators)} \label{teoremadicomposizione}
Let $m, m' \in \R$, $p,p' \in \{0,2\}$, $\varrho \geq 0$ and  $r >0$.

\smallskip
\noindent
$(i)$ 
Let $a \in \Gamma^m_{W^{\varrho, \infty}}$, $b \in \Gamma^{m'}_{W^{\varrho, \infty}}$. Then  
\begin{align}
\label{comp01A}
\Opbw{a}\Opbw{b} 
& =  \Opbw{a\#_\varrho b} + R(a,b)
\end{align}
where the linear operator $R(a,b)\colon {H}^s \to {H}^{s-(m+m')+\varrho}$, 
$\forall s \in \R$,  satisfies, for some $N=N(\varrho) >0$,
\begin{align}
\label{comp020}
\norm{R(a,b)u}_{{s -(m+m') +\varrho}} \lesssim  \left(\abs{a}_{m, W^{\varrho, \infty}, N} \, \abs{b}_{m', L^\infty, N} + \abs{a}_{m, L^\infty, N} \, \abs{b}_{m', W^{\varrho, \infty}, N}  \right) \norm{u}_{s} \ .
\end{align}
One can take $N(2) = 7$.\\
\smallskip
\noindent
$(ii)$ Let
$a\in  \Sigma{\Gamma}^{m}_p[r] $,  $b\in  \Sigma{\Gamma}^{m'}_{p'}[r]$. Then 
\begin{equation}
 \Opbw{a(U;x,\x)}\circ\Opbw {b(U;x,\xi)} = \Opbw{(a\#_{\varrho} b)(U;x,\x)}
 + R(U) 
\end{equation}
where $R(U) $
are smoothing operators in $ \Sigma {\mathcal{R}}^{-\varrho+m+m'}_{p+p'}[r]$.
\end{proposition}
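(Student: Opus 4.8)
The plan is to reduce the statement to the Weyl symbolic calculus and then to keep track of the cutoffs and of the homogeneity degrees.

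\textbf{Part (i).} The starting point is the exact composition law for Weyl quantizations, $\Opw{\alpha}\Opw{\beta}=\Opw{\alpha\#\beta}$, valid for sufficiently regular symbols, where $\alpha\#\beta$ is the twisted (Moyal) product, a bilinear oscillatory integral in $(\alpha,\beta)$. Applying it to the regularized symbols $a_\chi,b_\chi$ of Definition \ref{quantizationtotale} gives $\Opbw{a}\Opbw{b}=\Opw{a_\chi\#b_\chi}$ identically. One then Taylor-expands $a_\chi\#b_\chi$ in the phase variable to order $\varrho$: the polynomial part reproduces, up to adjusting the cutoffs, the symbol $a\#_\varrho b$ of \eqref{espansione2} (the cutoffs may be re-normalized at the cost of a smoothing operator, since the Bony-Weyl quantization is cutoff-independent modulo smoothing), while the Taylor integral remainder carries one additional power of the oscillatory phase beyond the expansion. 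Integrating by parts in the dual phase variables turns this surplus into $\varrho$ genuine orders of regularization, yielding $\Opbw{a\#_\varrho b}+R(a,b)$. To estimate $R(a,b)$ one bounds its Schwartz kernel (equivalently, the matrix of its Fourier coefficients) by Schur's test: each integration by parts costs one $x$-derivative on $a_\chi$ or $b_\chi$ and gains a factor $\langle\cdot\rangle^{-1}$, while the cutoff $\chi$ enforces $|k-j|\lesssim\langle\tfrac{k+j}{2}\rangle$ via \eqref{rela:para}, so the gained decay becomes precisely the order gain $\langle\xi\rangle^{-\varrho}$. This produces \eqref{comp020} with finitely many seminorms, the number $N(\varrho)$ growing linearly in $\varrho$; counting the integrations by parts needed to make the oscillatory integral absolutely convergent, to realize the full order $\varrho$, and to control the $\xi$-localization gives $N(2)=7$.

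\textbf{Part (ii).} Decompose each of $a$ and $b$ into its constant-coefficient, $2$-homogeneous and $\geq4$-homogeneous parts as in \eqref{symbols}, so that $\Opbw{a}\Opbw{b}$ and $\Opbw{a\#_\varrho b}$ differ by a finite sum of terms, in each of which the two factors are individually constant-coefficient, $2$-homogeneous, or of higher order. For the terms built from two homogeneous symbols one argues directly with the Fourier expansion \eqref{BW}: the discrepancy has explicit coefficients, and \eqref{rela:para} allows one to verify the bound \eqref{smoocara} with the exponent $m$ there replaced by $-\varrho+m+m'$, hence membership in $\widetilde{\mathcal R}^{-\varrho+m+m'}_{p+p'}$. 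For the terms in which at least one factor is non-homogeneous, invoke part (i) together with the seminorm bounds for non-homogeneous symbols implied by \eqref{nonhomosymbo}; the resulting remainder obeys an $m$-operator estimate of the form \eqref{piove} with order $-\varrho+m+m'$, hence lies in $\mathcal R^{-\varrho+m+m'}_{\geq p+p'}[r]$. Since $a\#_\varrho b$ has no components of odd homogeneity (remark following Definition \ref{def:as.ex}), collecting the pieces gives the asserted smoothing class $\Sigma\mathcal R^{-\varrho+m+m'}_{p+p'}[r]$.

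\textbf{Main obstacle.} The delicate step is the remainder estimate in part (i): the Moyal product is a non-absolutely-convergent oscillatory integral, so the Taylor remainder must be tamed by repeated integration by parts, and one must check that the cutoffs $\chi_p,\chi$ --- whose derivative bounds are imposed precisely so that they behave like order-zero symbols --- do not destroy the decay gained thereby. Pinning down the sharp constant $N(2)=7$ is pure accounting but the step most prone to error; by contrast the homogeneous estimates in part (ii) are routine once the Fourier-support relation \eqref{rela:para} is at hand.
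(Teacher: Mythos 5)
The paper does not prove Proposition \ref{teoremadicomposizione}: it refers to \cite[Theorem A.8]{BMM} and \cite[Proposition 3.12]{BD}. Your sketch of part $(i)$ (exact Weyl/Moyal composition of the regularized symbols, Taylor expansion to order $\varrho$, integration by parts in the oscillatory integral to realize the regularization, Schur test on the kernel) is the route taken in those references and is correct in outline. Two items deserve more care than you give them. First, the Moyal expansion of $a_\chi\#b_\chi$ produces terms of the form $(\partial_\xi^\ell D_x^\beta a_\chi)(\partial_\xi^\beta D_x^\ell b_\chi)$, which are \emph{not} of the form $(\cdot)_\chi$ for any single admissible cutoff; one must check that the discrepancy from such a cutoff is supported away from the paradifferential region $|\xi'|\lesssim\langle\xi\rangle$ and hence gives a smoothing contribution. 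Most of the seminorm bookkeeping lives in this cutoff comparison, not in the integration by parts. Second, the figure $N(2)=7$ is asserted, not derived.

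The genuine gap is in part $(ii)$, at the step ``invoke part $(i)$ together with the seminorm bounds\dots; the resulting remainder obeys an $m$-operator estimate of the form \eqref{piove}.'' Plugging the best available bound from \eqref{nonhomosymbo}, namely $|a(U)|_{m,W^{\varrho,\infty},N}\le C\,\norm{U}_{s_0}^{\,p-1}\norm{U}_{s_0+\varrho}$ (taking $\ts=s_0+\varrho$), into \eqref{comp020} yields only
\[
\norm{R(U)u}_{s-(m+m')+\varrho}\ \lesssim\ \norm{U}_{s_0+\varrho}^{\,p+p'}\,\norm{u}_s,
\]
which, after relabelling $s_0\leadsto s_0+\varrho$, is precisely the \emph{first} term of \eqref{piove} and nothing more. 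Membership of the non-homogeneous remainder in $\cR^{-\varrho+m+m'}_{\ge p+p'}[r]$ requires the \emph{full} tame estimate, including the second term $\norm{u}_{s_0}\norm{U}_{s_0}^{\,p+p'-1}\norm{U}_s$ in which the high Sobolev weight sits on $U$ rather than on $u$; this redistribution does not follow from the abstract continuity bound \eqref{comp020}, which always charges the full $\norm{u}_s$. In the cited references the tame structure is established through a direct analysis of the remainder kernel, tracking where the $U$-dependence enters, not by a blind application of part $(i)$. You would also need to record the translation-invariance property \eqref{def:R-trin}, which follows from \eqref{mome} but should be stated. By contrast, your treatment of the purely homogeneous terms --- verifying the Fourier-coefficient bound \eqref{smoocara} directly with the help of \eqref{rela:para} --- is sound, and for those terms the tame estimate then follows automatically from \eqref{smoothing}.
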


\smallskip

\noindent 
$\bullet$ Let $a(U) \in \Sigma\Gamma^m_{p}[r]$ and $b(U) \in \Sigma \Gamma^{m'}_{p'}[r]$,  with the notation in \eqref{vecop}, one has 
  \be\label{commurule}
 \begin{aligned}
 	&\left[\zOpbw{b},\vOpbw{a}\right]= \zOpbw{b\#_\vr \overline{a^\vee}- a\#_\vr b}+ R(U)\\
 	&\left[\zOpbw{a},\zOpbw{b}\right]= \vOpbw{a\#_\vr \overline{b^\vee}- b\#_\vr \overline{a^\vee}}+ R(U)\\
 	&\left[\vOpbw{a},\vOpbw{b}\right]= \vOpbw{a\#_\vr {b}- b\#_\vr {a}}+ R(U)
 \end{aligned}
 \ee
where  $R(U)$ are real-to-real matrices of  smoothing operators in $\Sigma \cR_{p+p'}^{-\varrho + m'+m}[r]$.

\smallskip
\noindent We conclude this section with 
the  paralinearization of the product (see  \cite[Lemma 7.2]{BD}).
\begin{lemma}{\bf (Bony paraproduct decomposition)}
\label{bony}
Let  $f,g,h$ be functions in $H^\s(\T;\C)$   with  
$\s >\frac12$. Then 
\begin{equation}\label{bonyeq}
fgh = \Opbw{f  g }h + \Opbw{f  h }g+\Opbw{g  h }f + 
R_1(f, g )h+  R_2(f ,h )g+R_3(g, h )f
\end{equation}
where for $j=1, 2, 3 $, $R_j$ is a homogeneous smoothing operator in $ \widetilde \mR^{-\vr}_{1}$ for any $ \vr \geq 0$. 
\end{lemma}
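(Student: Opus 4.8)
\textbf{Proof proposal for Lemma~\ref{bony} (Bony paraproduct decomposition).}
The plan is to reduce the three-factor paralinearization to the classical two-factor Bony decomposition, applied twice. First I would recall the two-factor statement: for $f, g \in H^\s$, $\s > 1/2$, one has $fg = \Opbw{f}g + \Opbw{g}f + R(f,g)$ with $R$ a homogeneous smoothing operator in $\wt\mR^{-\vr}_1$ for every $\vr \geq 0$; this is essentially the content of \cite[Lemma 7.2]{BD} in the two-factor case, and follows from splitting the product $fg = \sum_{j,k} f_j g_k e^{\im(j+k)x}$ into the regions $|j| \leq \delta_0\la j+k\ra/2$, $|k| \leq \delta_0 \la j+k\ra/2$, and the ``diagonal'' region where $|j| \sim |k| \sim \la j+k\ra$. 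The first two regions give $\Opbw{g}f$ and $\Opbw{f}g$ respectively (up to harmless modifications of the cutoff, absorbed into smoothing operators by the cutoff-independence remarked after Definition~\ref{quantizationtotale}), and the diagonal region is smoothing of any order because there $\max_2\{\la j\ra,\la k\ra\} \sim \max\{\la j\ra,\la k\ra\}$, exactly the criterion in the bullet after Definition~\ref{omosmoothing}.

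Next I would iterate. Writing $fgh = (fg)h$, apply the two-factor decomposition to the product $fg$ first, but keeping the third factor: more precisely, decompose $fgh = \Opbw{fg}h + \Opbw{h}(fg) + R(fg,h)$ is not quite what is wanted since $\Opbw{fg}$ is not among the stated terms. Instead the cleaner route is: view the frequency-trilinear sum $fgh = \sum_{j_1,j_2,j_3} f_{j_1}g_{j_2}h_{j_3} e^{\im(j_1+j_2+j_3)x}$ and partition $\Z^3$ according to which of the three frequencies $j_1, j_2, j_3$ is (comparable to) the largest. On the region where $|j_1|$ dominates, i.e. $|j_2|, |j_3| \lesssim \la j_1 + j_2 + j_3\ra$, the corresponding piece is $\Opbw{gh}f$ up to a cutoff modification; symmetrically for $|j_2|$ dominating one gets $\Opbw{fh}g$ and for $|j_3|$ dominating $\Opbw{fg}h$. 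The overlap regions (two frequencies simultaneously comparable to the output, hence also to each other) are precisely where $\max_2 \sim \max$, so they are smoothing of arbitrary order $\vr$; collecting these gives the remainders $R_1(f,g)h + R_2(f,h)g + R_3(g,h)f$, where the pairing is chosen so that $R_j$ acts on the two ``large'' factors and multiplies the small one — consistent with membership in $\wt\mR^{-\vr}_1$ as an operator in the third (linear) slot. The precise bookkeeping of which cutoff function $\chi_p$ appears is irrelevant by cutoff-independence up to smoothing operators.

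The main obstacle — really the only non-bookkeeping point — is verifying that the ``two frequencies comparable'' regions genuinely yield smoothing operators of \emph{every} order $\vr$ and fit Definition~\ref{omosmoothing}, i.e.\ that the relevant Fourier coefficients obey \eqref{smoocara} with $m \leadsto -\vr$ and some finite $\mu = \mu(\vr)$. This is where one uses that on such a region, if say $|j_1| \sim |j_2| \sim \la j_1+j_2+j_3\ra$ while $|j_3|$ is free, then $\la j_1+j_2+j_3\ra^{-\vr} \lesssim (\la j_1\ra \la j_2\ra)^{\vr/2} \cdot \la j_1+j_2+j_3\ra^{-2\vr} \lesssim \max_2\{\la j_1\ra,\la j_2\ra,\la j_3\ra\}^{2\vr} \la j_{\max}\ra^{-\vr}$, absorbing powers of the two comparable frequencies into the $\max_2$ factor; the small frequency $j_3$ never needs to be bounded from below. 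One also checks the smoothing remainder coming from the two-factor step retains the homogeneity degree $1$ in the operator sense (three input functions, one of them the argument), so that each $R_j \in \wt\mR^{-\vr}_1$. Granting these verifications, summing the pieces reproduces exactly \eqref{bonyeq}, completing the proof. $\qed$
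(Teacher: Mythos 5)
The paper does not prove this lemma at all; it simply cites \cite[Lemma 7.2]{BD}, so there is no internal argument to compare against. Your strategy---direct decomposition of the trilinear sum $\sum_{j_1,j_2,j_3} f_{j_1}g_{j_2}h_{j_3}e^{\im(j_1+j_2+j_3)x}$ into the three paraproduct regions (one frequency dominant) plus an ``off-paraproduct'' remainder, and using that on the complement $\max_2 \sim \max$ so the remainder is smoothing of every order---is the standard route and is correct in outline; the cutoff-independence bullet after Definition~\ref{quantizationtotale} does let you pass between the geometric partition and the actual $\chi$-supports, as you note.

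There is however a concrete slip in your verification paragraph. The quantity you need to bound is the \emph{coefficient} of the remainder operator, which (being $1$ times cutoff factors $\le 1$) is simply bounded by $1$; the estimate \eqref{smoocara} then requires $1 \lesssim \max_2\{\la j_1\ra,\la j_2\ra,\la j_3\ra\}^{\mu}\,\max\{\la j_1\ra,\la j_2\ra,\la j_3\ra\}^{-\vr}$, which on the off-paraproduct region ($\max_2 \gtrsim \max$) holds immediately with $\mu=\vr$ because the right-hand side is $\gtrsim \max^{\mu-\vr}\geq c>0$. Your chain instead begins from $\la j_1+j_2+j_3\ra^{-\vr}$, which is not the object to be estimated, and it is carried out under the hypothesis $|j_1|\sim|j_2|\sim\la j_1+j_2+j_3\ra$. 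That hypothesis does \emph{not} characterize the off-paraproduct region: for instance $j_1=N$, $j_2=-N$, $j_3=1$ has $\la j_1+j_2+j_3\ra=\la1\ra$ while $|j_1|,|j_2|\sim N$, yet this triple lies in the remainder. The correct characterization is simply that no single $|j_a|$ dominates (equivalently $\max_2\gtrsim\max$); the output frequency $k=j_1+j_2+j_3$ can be arbitrarily small there. With that correction the argument closes as intended, so the gap is one of execution rather than of idea.

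A last remark, not a flaw in your proof: the membership $R_j\in\widetilde\mR^{-\vr}_1$ claimed by the paper is itself a slight abuse, since $R_1(f,g)$ is bilinear in $(f,g)$ and linear in $h$, which in the bookkeeping of Definition~\ref{def.m-op} one would more naturally read as $p=2$. You correctly reproduce the paper's convention, but you should not read too much into your closing sentence about ``homogeneity degree $1$''; what matters is the coefficient bound \eqref{smoocara} you are verifying, and that part goes through.
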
 
 
 \smallskip
 
\noindent {\bf Composition of $m$-operators.}  The following lemma, which is a consequence of Proposition 2.15 (items $(ii)$ and $(iv)$) in \cite{BMM}, shall be used below.
 \begin{lemma}\label{nuovetto}
 	Let $m,m', m_0\in \R$, $\vr\geq 0$, $r>0$, $p \in \{0,2\}$. Let   $M(U) $ be a real-to-real matrix of  $m$-operators in 
 	$ \Sigma\mM_2^{m}[r] $, $ \bF(U) $ be a real-to-real matrix of  $0$-operators  $ \mM^0_{\geq 0}[r]$ and $ \mathtt{p}(\xi)$ be a matrix of Fourier multipliers in $ \wt \Gamma_0^{m_0}$.
 	Then:
 	\begin{enumerate}
 		\item If  $c(U)$ is a $2$-homogeneous symbol in $\wt{\Gamma}_2^{m'}$ and $c_{\geq 4}(U)$ is a non-homogeneous symbol in $ \Gamma_{\geq 4}^m[r]$, 
 		$$
 		b_2(U;x,\xi):= c(-\ii  \mathtt{p}(D)U;x,\xi), \quad \text{and}  \quad \begin{cases}
 		b_{\geq 4}(U;x,\xi):= c(M(U)U,U;x,\xi)\\
 		b'_{\geq 4}(U;x,\xi):= c_{\geq 4}(\bF(U)U;x,\xi)
 		\end{cases}
 		$$
 		are symbols respectively in $\wt \Gamma^{m'}_{2}$ and  $ \Gamma_{\geq 4}^{m'}[r']$ for some $r'>0$;
 		\item If $Q(U)$ is a  $2$-homogeneous smoothing operator in $\widetilde{\mR}_2^{-\vr}$, 
 		$$
 		\widetilde R_2(U):= Q(-\ii  \mathtt{p}(D)U, U )\in \widetilde{\mR}_2^{-\vr+\max\{0,m_0\}} \quad \text{and} \quad 
 		R_{\geq 4}(U):= Q(M(U)U,U)\in \mR^{-\vr+\max\{0,m\}}_{\geq 4}[r];$$
 		\item  If $ R({U}) \in \Sigma\cR^{-\vr}_{ 2}\bra{r} $ and $ \mathsf{a}\pare{U;x, \xi}\in \Sigma\Gamma^m_{ 2}\bra{r} $, $ \vr \geq m $, then
 		\begin{align*}
 			R({U})\circ \Opbw{\mathsf{a}({U;x, \xi}) } \in \cR^{-\vr + m}_{\geq 4}\bra{r},
 			&&
 			\Opbw{\mathsf{a}\pare{U;x, \xi} } \circ R\pare{U} \in \cR^{-\vr + m}_{\geq 4}\bra{r}. 
 		\end{align*}
 		\item  If  $M $ is  in 
$ \Sigma\mathcal{M}^{m}_{p}[r]$ and $M'$ is  in
$ \Sigma\mathcal{M}^{m'}_{p'}[r] $ then the composition 
$ M\circ M'$  
is  in $\Sigma\mathcal{M}^{m+\max(m',0)}_{p+p'}[r]$.
\item 
If   $M(U) $ is in $ {\mathcal{M}}_{\geq 4}^{m}[r]$, 
then  $M(\bF(U)U)$ is in  $  \mM^{m}_{\geq 4}[r']$ for some $r'>0$.
 	\end{enumerate}
 	
 \end{lemma}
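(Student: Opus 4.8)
\textbf{Proof proposal for Lemma \ref{nuovetto}.}

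The plan is to deduce all five items directly from the corresponding statements in Proposition 2.15 of \cite{BMM}, specializing the general (time-dependent, $K$-regularity) framework there to our autonomous setting (i.e. $K = K' = 0$, all time dependence entering only through $U = U(t)$). I would proceed item by item, treating the homogeneous and non-homogeneous pieces separately in each case since the symbol/operator classes are built additively as in \eqref{symbols}, \eqref{sum.maps}.

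For item $(1)$, I would first check the claim for the $2$-homogeneous symbol $b_2(U;x,\xi) := c(-\ii \mathtt{p}(D)U;x,\xi)$: since $c \in \wt\Gamma^{m'}_2$ is bilinear in its argument and $-\ii\mathtt{p}(D)$ is a Fourier multiplier of order $m_0$, plugging it in multiplies the homogeneous coefficients $c^{\vec\sigma}_{\vec\jmath}(\xi)$ by $(-\ii)^2\mathtt{p}(j_1)\mathtt{p}(j_2)$, which at worst replaces the index-weight exponent $\mu$ by $\mu + 2\max\{0,m_0\}$ while keeping the $\xi$-order $m'$ unchanged; hence \eqref{homosymbo} still holds and $b_2 \in \wt\Gamma^{m'}_2$. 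For $b_{\geq 4}(U;x,\xi) := c(M(U)U,U;x,\xi)$ one argues that $M(U)U$ is a vector field of homogeneity $\geq 3$ satisfying tame estimates in $H^s$ by \eqref{smoothing} (using that $M(U) \in \Sigma\mM^m_2[r]$, so $M(U)U$ loses $m$ derivatives and is at least cubic), so composing the bilinear $c$ with $(M(U)U, U)$ produces a non-homogeneous symbol that is at least quartic and still of $\xi$-order $m'$; the translation invariance \eqref{mome} is inherited because $M$ satisfies \eqref{def:R-trin}. The statement for $b'_{\geq 4}$ is analogous, using that $\bF(U)U$ with $\bF \in \mM^0_{\geq 0}[r]$ is a $0$-order tame map. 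For item $(2)$ the argument is identical but now $c$ is replaced by a smoothing operator $Q \in \wt\mR^{-\vr}_2$: substituting $-\ii\mathtt{p}(D)U$ in one slot degrades the regularization order $-\vr$ by at most $\max\{0,m_0\}$ (the derivative loss of the multiplier), and substituting $M(U)U$ degrades it by at most $\max\{0,m\}$; in both cases the resulting operator is still in the appropriate smoothing class with the advertised order. Item $(3)$ is the special case of the composition theorem for $m$-operators where one factor is genuinely smoothing: composing $R(U) \in \Sigma\mR^{-\vr}_2[r]$ with a paradifferential operator $\Opbw{\mathsf a} \in \Sigma\mM^m_2[r]$ (via the ``paradifferential operators are $m$-operators'' remark), using item $(4)$ with $m' = -\vr \leq 0$ so that $\max(m',0) = 0$ on one side, gives order $-\vr + m$ and homogeneity $\geq 4$; one must also observe that the composition of a homogeneity-$2$ object with a homogeneity-$2$ object has homogeneity $\geq 4$, which is automatic. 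Item $(4)$ is exactly the composition rule for $m$-operators from \cite[Prop. 2.15$(ii)$]{BMM}: the output order $m + \max(m',0)$ reflects that the rightmost operator $M'$, applied first, may lose $m'$ derivatives (only counted if $m' > 0$, since a smoothing $M'$ does not cost anything), and then $M$ loses a further $m$; the homogeneities simply add. Item $(5)$ follows from the stability of the non-homogeneous class $\mM^m_{\geq 4}[r]$ under pre-composition of the internal variable with a bounded tame map $\bF(U)U$, which shrinks the radius of validity from $r$ to some $r' > 0$; this is \cite[Prop. 2.15$(iv)$]{BMM}.

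In every case the only real content is a bookkeeping of index weights $\mu$, symbol orders $m, m', m_0$, and homogeneity degrees, together with the tame product estimates for $H^s$-norms; no new analytic input beyond Theorem \ref{thm:contS} and the composition Proposition \ref{teoremadicomposizione} is needed. The step I expect to require the most care is item $(2)$ (and the smoothing half of item $(3)$): one must verify that substituting an unbounded Fourier multiplier $\mathtt{p}(D)$ of positive order $m_0$ into a smoothing operator really costs only $\max\{0,m_0\}$ orders of regularization and not more — this hinges on the fact that in the Fourier expansion \eqref{Mp} the multiplier acts on one of the ``input'' slots $u_{j_i}$, whose index $j_i$ enters the bound \eqref{smoocara} only through $\max_2$ and $\max$, so the extra factor $\la j_i\ra^{m_0}$ is absorbed either into the $\mu$-weight (when $j_i$ is not the largest index) or into a genuine loss of $m_0$ derivatives (when it is), exactly as quantified in \cite{BMM}. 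Since all of this is carried out in detail there, the proof here amounts to pointing to the relevant items of \cite[Proposition 2.15]{BMM} and noting the specialization $K = K' = 0$. $\hfill\qed$
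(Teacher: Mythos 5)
Your overall approach — routing everything through Proposition 2.15 of \cite{BMM} with the specialization $K = K' = 0$ — is exactly what the paper does; the lemma is stated there with no proof beyond the citation, and your item-by-item bookkeeping of homogeneity degrees, symbol orders, and the $\mu$-exponent in \eqref{smoocara}/\eqref{homosymbo} is the right supporting narrative.

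There is, however, a genuine gap in your justification of item $(3)$. You claim that both compositions in item $(3)$ follow from item $(4)$ "with $m' = -\vr \leq 0$ so that $\max(m',0)=0$ on one side, gives order $-\vr+m$." This is inconsistent: if you apply item $(4)$ to $M = \Opbw{\mathsf a}$ of order $m$ and $M' = R$ of order $m' = -\vr$, the output order is $m + \max(-\vr,0) = m$, \emph{not} $-\vr + m$. Since $\vr \geq 0$ implies $-\vr + m \leq m$, the class $\cM^{-\vr+m}_{\geq 4}[r]$ you need is \emph{strictly smaller} (more smoothing) than the $\cM^{m}_{\geq 4}[r]$ you get from item $(4)$, so the implication does not hold. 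Only the composition $R \circ \Opbw{\mathsf a}$ is captured by item $(4)$ — there $M = R$ has order $-\vr$, $M' = \Opbw{\mathsf a}$ has $m' = m \geq 0$, and the formula correctly gives $-\vr + \max(m,0) = -\vr + m$. For $\Opbw{\mathsf a} \circ R$ you must instead argue directly: $R$ gains $\vr$ derivatives via \eqref{piove}, $\Opbw{\mathsf a}$ costs $m$ via Theorem \ref{thm:contS}$(iii)$, and one has to check that the tame structure (the second term in \eqref{piove}, with the high norm on $U$) survives the composition. The coarse $\max(m',0)$ in item $(4)$ exists precisely because the general $m$-operator composition cannot always exploit a smoothing right-hand factor, which is why \cite{BMM} treats the "one factor is smoothing" case as a separate statement rather than as a corollary of the generic composition rule. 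Your proposal should be amended accordingly.
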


 \subsection{Admissible transformations}
{
 In this section we introduce a class of $U$-dependent transformations, that we call {\em admissible},  that have three properties:
 $(i)$ they are bounded as maps on Sobolev spaces of sufficiently high regularity, 
 $(ii)$ they are differentiable with respect to the internal variable $U$ and  $(iii)$ their differential 
may  lose $m$-derivatives in the external variable, but gain $\varrho$-derivatives in the internal one. 
Examples are flows of paradifferential and smoothing operators, see Lemma \ref{lem:flow.ad} and Lemma \ref{flow.s.ad}.
 \begin{definition}[Admissible transformations]\label{admtra}
Let $r>0$, {$m, \vr \geq 0$}. We say that a
real-to-real matrix  $\bF(U) $ of  non-homogeneous   $0$-operators in $\cM_{\geq 0}^0[r]$
   is an $m$-{\em admissible transformation} of {\em gain} $\vr$ if the following holds:
\begin{enumerate}
\item[(i)] {\bf Linear invertibility:}  $\bF(U)$ is linearly invertible  and its inverse  $\bF(U)^{-1}$ is a  real-to-real matrix of  non-homogeneous  ${0}$-operators in $\cM_{\geq 0}^{{0}}[r]$
satisfying the following: there exists $s_0>0$ such that
for any $s\geq s_0 + \vr $ there is a constant $C:=C_{s}>0$  and $r=r_s >0$ such that for any $ U\in B_{s_0,\R}(r)\cap {H^{s-\vr}_\R(\T;\C^2)}$ and $V \in H^s_\R(\T;\C^2)$ one has 
	\begin{align}
		&\| \bF(U)V\|_{s}+ \| \bF^{-1}(U)V\|_{s}\leq C (\| V \|_s + \|U\|_{s-\vr}\| U\|_{s_0}\|V\|_{s_0}) \ .  \label{lin.est.F}
		\end{align}
	\item[(ii)] {\bf Expansion:}  $\bF(U) - \uno$ 
    is a matrix of $m$-operators in $\Sigma\cM^m_2[r]$ expanding as
	\be\label{esp:F}
	\bF(U) = \uno+ \bF_{2}(U)+ \bF_{\geq 4}(U),
 \quad \bF_{2}(U)\in \wt \mM^m_2, \quad  \bF_{\geq 4}(U) \in \mM_{\geq 4}^m[r].
	\ee
		\item[(iii)]{\bf Derivative:} there is $ s_0\geq 0$ such that for any $\sigma \geq s_0 + \vr $, 
        the map 
		$$
		B_{\sigma-\vr, \R}(r)  \ni U \mapsto \bF(U)  \in \cL\big(H^{\sigma+m}_\R(\T,\C^2), \, H^{\sigma}_\R(\T,\C^2)\big) =: X^{\sigma,m}
		$$
        is differentiable.
        Moreover its differential $\di_U \bF (U)$ satisfies  the quantitative bound: there are $C=C_\s>0$, $r'=r'(\s)>0$ such that for any $U \in B_{\sigma-\vr,\R}(r')  $ and $\hat U \in H^{\sigma-\vr}_\R(\T;\C^2)$     \begin{equation}\label{stima.dF}
          \norm{\di_U \bF (U)[\widehat U] }_{X^{\s,m}} \leq C \| U \|_{\sigma-\vr} \| \hat U\|_{\sigma-\vr}   \ .
        \end{equation}
		Moreover, for any $s\geq s_0+m$, there is $C:=C_{s}>0$  such that 	for any $U\in B_{s_0,\R}(r)\cap H^{s}_\R(\T;\C^2)$,  $Z, \hat U \in H^{s}_\R(\T;\C^2)$ one has 
		\be
		\begin{aligned}\label{stima.d.adm}
		\| &  \left(  \di_U  \bF(U)[\hat U]  - \di_U \bF_2(U)[\hat U]\right) Z\|_{s-m}=\|   \di_U  \bF_{\geq 4}(U)[\hat U] Z \|_{s-m}\\
		& \leq  C
		\left(  \| U\|_{s_0}^3\| \hat U\|_{s_0}\| Z\|_s 
		+
		 \| U\|_{s_0}^3\| \hat U\|_{s}\| Z\|_{s_0} 
		 +
		  \| U\|_{s} \| U\|_{s_0}^2 \| \hat U\|_{s_0}\| Z\|_{s_0}
		\right).
		\end{aligned}
		\ee
\end{enumerate} 
\end{definition}
\begin{remark}\label{rem.b.ad}
$(1)$ Compared to $ m $-operators in $ \mathcal{M}_{\geq 0}^0[r] $, admissible transformations exhibit a gain of $ \varrho $ derivatives in the internal variable $ U$; see the second term in estimate \eqref{lin.est.F} and compare it with \eqref{piove} for $p = 0$. 
This additional gain will be verified since   the admissible transformations we consider are linear flows generated by either paradifferential operators or smoothing operators. In both cases, the internal variable gains derivatives with respect to the external one.\\
  $(2)$ Thanks to the  bound in \eqref{lin.est.F}, $\bF(U)$  conjugates  any matrix $B_{\geq 4}(U)$ of $0$-operators in $\mM^0_{\geq 4}[r]$ into another matrix of $0$-operators in $\mM^0_{\geq 4}[r]$, namely
  $
\bF(U) B_{\geq 4}(U) \bF(U)^{-1}
  $
  is a matrix of $0$-operators  in $\mM^0_{\geq 4}[r]$.\\
$(3)$ Property $(ii)$ implies that 
\begin{align}
		&\| \left[\bF(U)-\uno \right]V\|_{s-m}+ \| \left[\bF^{-1}(U)-\uno \right]V\|_{s-m}\leq C \| U\|_{s_0}^2 \| V \|_{s}+{\| U\|_{s_0}\| U\|_s \|V\|_{s_0}} \  \label{lin.est.F1} 
	\end{align}
	and that
\begin{equation}\label{tri.est.F2}
\|\di_U \bF_2(U)[\hat U]V \|_{{s-m}}
\lesssim_s \| U \|_{{s_0}} \| \hat U \|_{{s_0}} 
\| V \|_{ s}+ \| U \|_{{s_0} }\| \hat U \|_{s} \|V \|_{{s_0}}  + 
	\| U \|_{{s}} \| \hat U \|_{{s_0}} \|V \|_{s_0}	 \ .
\end{equation}
$(4)$ The expansion \eqref{esp:F} for $\bF(U)$ implies the corresponding expansion for $\bF(U)^{-1}$:
\be\label{esp:F_inv}
\bF(U)^{-1}=\uno - 	\bF_{2}(U)+ \breve \bF_{\geq 4}(U),
\ee
where 
$
   \breve \bF_{\geq 4}(U):= -\bF(U)^{-1}\bF_{\geq 4}(U) + \bF(U)^{-1}[\bF(U)-\uno] \bF_2(U)  $
   is a real-to-real matrix of $2m$-operators in $\mM_{\geq 4}^{2m}[r]$.
 \end{remark} 
We now prove that admissible transformations are closed by composition.
\begin{lemma}\label{lem:comp}
Let $\bF^{(1)}(U)$ be $m_1$-admissible with gain $\vr_1$ and  $\bF^{(2)}(U)$ be $m_2$-admissible with gain $\vr_2$.
If 
$m_1 \leq \vr_2$, 
then  the composition 
$\bF^{(1)}(U)\bF^{(2)}(U)$ is a $m_1+m_2$-admissible transformation
with gain $\vr:= \min(\vr_2 - m_1, \vr_1)$.
\end{lemma}
\begin{proof}
 We set $m:=m_1+m_2$. 
$(i)$ and $(ii)$ follows by the composition properties of $m$-operators, see Lemma \ref{nuovetto}-4,  and by applying twice  estimate \eqref{lin.est.F} and using also $ \vr\leq \min\{ \vr_1,\vr_2\}$.
Moreover we have the expansion 
\be \label{esp:compo}
\bF^{(1)}(U)\bF^{(2)}(U)=\uno +\bF^{(1)}_2(U)+\bF^{(2)}_2(U)+\bF^{(1,2)}_{\geq 4 }(U)
\ee
where
$\bF^{(1,2)}_{\geq 4 }(U)= \bF^{(1)}_{\geq 4 }(U)+\bF^{(2)}_{\geq 4 }(U)+ \left(\bF^{(1)}_2(U)+\bF^{(1)}_{\geq 4 }(U)\right)\left(\bF^{(2)}_2(U)+\bF^{(2)}_{\geq 4 }(U)\right)\in \mM^{m}_{\geq 4}[r]$.\\
 $(iii)$  
Set $s_0:= s_0^{(1)}+s_0^{(2)}$ with $s_0^{(j)}$, $j=1,2$,   the regularity threshold in property $(iii)$ for $ \bF^{(j)}$.
We first prove that, for any $ \sigma \geq s_0+\vr$, $U \mapsto \bF^{(1)}(U) \bF^{(2)}(U)$ is differentiable at $U \in B_{\sigma-\vr, \R}(r) $, $r>0$ sufficiently small,   and its differential is given by
 \be\label{dFG}
 \di_U \big( \bF^{(1)}(U) \bF^{(2)}(U)  \big)[\widehat U]  = (\di_U \bF^{(1)}(U)[\widehat U]) \, \bF^{(2)}(U)   
 +
\bF^{(1)}(U) \, (\di_U \bF^{(2)}(U)[\widehat U])   \ . 
 \ee
 Indeed fix $U\in B_{\sigma-\vr, \R}(r)$, take $\wh U$ with  $\norm{\hat U}_{\s-\vr} \ll   r$ and put
  \begin{align*}
 \bQ(U,\hat U):=&\bF^{(1)}(U+ \hat U)\bF^{(2)}(U+ \hat U)\\
 &-\bF^{(1)}(U)\bF^{(2)}(U)- \left((\di_U \bF^{(1)}(U)[\widehat U]) \, \bF^{(2)}(U)+\bF^{(1)}(U) \, (\di_U \bF^{(2)}(U)[\widehat U]) \right)\\   
 =&\left( \bF^{(1)}(U+ \hat U)- \bF^{(1)}(U)- \di_U \bF^{(1)}(U)[\hat U]\right)\bF^{(2)}(U+ \hat U)\\
 &+ \bF^{(1)}(U) \left( \bF^{(2)}(U+\hat U)-\bF^{(2)}(U)- \di_U \bF^{(2)}(U)[\hat U]\right)+\di_U \bF^{(1)}(U)[\hat U]\left( \bF^{(2)}(U+\hat U )-\bF^{(2)}(U)\right) \\
 & = \bQ_1(U,\hat U) +\bQ_2(U,\hat U) + \bQ_3(U,\hat U).
\end{align*}
We show that for $j=1,2,3$
\be\label{est.Q}
\norm{\bQ_j(U,\hat U)}_{X^{\s,m}}\lesssim \norm{\hat U}_{\s-\vr}^2
\ee
proving formula \eqref{dFG}.
Consider first $\bQ_1(U,\hat U) V$ with $V\in H^{\s + m}_{\R}(\T, \C^2)$.
Using the differentiability of $ \bF^{(1)}(U)$,  estimate  \eqref{lin.est.F} for $ \bF^{(2)}(U+ \hat U)$ and that $\vr =\min( \vr_2 - m_1, \vr_1)$
we get that 
\begin{align*}
\norm{\bQ_1(U,\hat U) V}_{\s} & \lesssim \norm{\bF^{(1)}(U+ \hat U) - \bF^{(1)}(U)- \di_U \bF^{(1)}(U)[\hat U]}_{X^{\s,m_1}} \| \bF^{(2)}(U+ \hat U)V \|_{\sigma+m_1}\\
  & \lesssim \| \hat U\|_{\s-\vr_1}^2 (\|V\|_{\s+m_1}+ \| U+ \hat U\|_{\s-(\vr_2-m_1)}\| U+ \hat U\|_{s_0}\| V\|_{s_0}) {\lesssim} \| \hat U\|_{\s-\vr}^2 \|V\|_{\s+m_1},
\end{align*}
proving \eqref{est.Q} for $j=1$ as $m \geq m_1$. 
We now prove the estimate for $j=2$. Using \eqref{lin.est.F} and  the differentiability of $ \bF^{(2)}$, we get 
\begin{align*}
\norm{\bQ_2(U,\hat U) V}_{\s}  \lesssim &\norm{\left( \bF^{(2)}(U+\hat U)-\bF^{(2)}(U)- \di_U \bF^{(2)}(U)[\hat U]\right)V}_{\s}\\
&+ \| U \|_{\s -\vr_1}\| U\|_{s_0} \| \left( \bF^{(2)}(U+\hat U)-\bF^{(2)}(U)- \di_U \bF^{(2)}(U)[\hat U]\right)V\|_{s_0} \\
  \lesssim & \| \hat U\|_{\s-\vr_2}^2 \|V\|_{\s+m_2}+ \| U\|_{\s-\vr_1}\|U\|_{s_0}\| \hat U\|_{\s-\vr_2}^2\| V\|_{\s+m_2} {\lesssim} \| \hat U\|_{\s-\vr}^2 \|V\|_{\s+m}
\end{align*}
proving  also \eqref{est.Q} for $j=2$
Consider now $j=3$. 
Applying first \eqref{stima.dF} for $\di_U \bF^{(1)}(U)[\hat U]$  with $ m \leadsto m_1 $, then 
writing $\bF^{(2)}(U + \hat U) - \bF^{(2)}(U) = \int_0^1 \di_U \bF^{(2)}(U + \tau \hat U)[\hat U] \di \tau $
and using  \eqref{stima.dF} for $\di_U \bF^{(2)}(U+\tau \hat U)[\hat U]$, $\tau \in [0,1]$ with $m\leadsto m_2$ and $\s\leadsto \s+m_1$ we get
\begin{align*}
    \norm{\bQ_3(U,\hat U)V}_{\s} 
    {\lesssim}&   \norm{ U}_{\s-\vr_1}\norm{\hat U}_{\s-\vr_1}\norm{ \bF^{(2)}(U+\hat U)V-\bF^{(2)}(U)V}_{\s+m_1}\\
    \lesssim&  \norm{ U}_{\s-\vr_1} \norm{\hat U}_{\s-\vr_1}\,  
  \norm{ U}_{\s+m_1-\vr_2}\,   \| \hat U\|_{\s+m_1-\vr_2}\,  \| V\|_{\s+m} 
    {\lesssim} \norm{ U}_{\s-\vr}^2  \| \hat U \|_{\s-\vr}^2 \| V\|_{\s+m},
\end{align*}
proving  also \eqref{est.Q} for $j=3$. We conclude that  \eqref{dFG} holds.\\
Next we show that 
$\di_U \big( \bF^{(1)}(U) \bF^{(2)}(U)  \big) $
 fulfills estimate \eqref{stima.dF}. So fix $\widehat U \in H^{\s-\vr}_\R(\T, \C^2)$ and 
 $V \in H^{\s+m}_\R(\T, \C^2)$ and 
 consider the first term in the right hand side of \eqref{dFG}. We have
 \begin{align*}
    &  \norm{(\di_U \bF^{(1)}(U)[\widehat U]) \, \bF^{(2)}(U)   V  }_{\s} \stackrel{\eqref{stima.dF}}{\lesssim} \norm{U}_{\s-\vr_1} \norm{\wh U}_{\s-\vr_1} \norm{\bF^{(2)}(U)   V }_{\s+m_1} \\
     \stackrel{\eqref{lin.est.F}}{\lesssim}  & 
     \norm{U}_{\s-\vr_1} \norm{\wh U}_{\s-\vr_1} \left( \norm{V}_{\s  +m_1} + \norm{U}_{\s + m_1-\vr_2}\norm{U}_{s_0}\norm{V}_{s_0} \right)
     {\lesssim }
     \norm{U}_{\s-\vr}  \norm{\wh U}_{\s-\vr}  \norm{V}_{\s +m} 
 \end{align*}
 The second term in \eqref{dFG} has an  analogous estimate, proving \eqref{stima.dF}.\\ 
Finally we prove the estimate \eqref{stima.d.adm}. First we compute the differential 
\be
\begin{aligned}
\di_U \bF^{(1,2)}_{\geq 4 }(U)[\hat U]Z=& \di_U \bF^{(1)}_{\geq 4 }(U)[\hat U]Z+\di_U \bF^{(2)}_{\geq 4 }(U)[\hat U]Z\\
&+\left(\di_U\bF^{(1)}_2(U)[\hat U ]+\di_U \bF^{(1)}_{\geq 4 }(U)[\hat U]\right)\left(\bF^{(2)}_2(U)+\bF^{(2)}_{\geq 4 }(U)\right)Z\\
&+\left(\bF^{(1)}_2(U)+\bF^{(1)}_{\geq 4 }(U)\right)\left(\di_U\bF^{(2)}_2(U)[\hat U ]+\di_U \bF^{(2)}_{\geq 4 }(U)[\hat U]\right)Z.
\end{aligned}
\ee
Estimate \eqref{stima.d.adm} for $\di_U \bF^{(1,2)}_{\geq 4 }(U)[\hat U]Z$ follows from the corresponding estimates for $\di_U \bF^{(1)}_{\geq 4 }(U)[\hat U]Z$, $\di_U \bF^{(2)}_{\geq 4 }(U)[\hat U]Z$ in \eqref{tri.est.F2} and \eqref{piove}--\eqref{smoothing} for $\bF_2^{(1)}(U)$, $\bF_2^{(2)}(U)$, $\bF_{\geq 4}^{(1)}(U)$ and $\bF_{\geq 4}^{(2)}(U)$.
\end{proof}
Next we prove a local invertibility property of the nonlinear map $U \mapsto \bF(U)U$ when $\bF(U)$ is an admissible transformation.  
\begin{lemma}\label{loc.inv}
	Let  $\bF(U)$ be  a $m$-admissible transformation with gain $\vr\geq {\max\{ m, 1\}}$. 
    Consider the  nonlinear map 
	$\mF(U):= \bF(U)U$. 
	The following holds true:
	\begin{itemize}
	\item[(i)] There exists $s_0' \geq 0$ such that for any $s \geq s_0'$, the  map $\cF^{-1}$ is locally invertible: namely there is $r'>0$ and $\mF^{-1}: B_{s_0',\R}(r')\cap H^{s}_\R(\T;\C^2)\to H^{s}_\R(\T;\C^2)$ such that 
	$$
	\mF\circ \mF^{-1}(V)=V, \quad \mF^{-1}\circ \mF(U)=U, \quad \forall U,V\in B_{s_0',\R}(r') \ .
	$$
	\item[(ii)] One has  $\cF^{-1}(V) = \bG(V)V$ with $\bG(V) $ a matrix of non-homogeneous $0$-operators in $\cM_{\geq 0}^0[r']$ such that $\bG(V) - \uno \in \Sigma \cM_2^{2m}[r']$ for some $r' >0$ and  expands  as
	\begin{equation}\label{exp.G}
	\bG(V) = \uno  - \bF_2(V) + \bG_{\geq 4}(V) \ , 
	\quad \bG_{\geq 4}(V) \in \cM^{2m}_{\geq 4}[r'] \ .
	\end{equation}
	\end{itemize}
\end{lemma}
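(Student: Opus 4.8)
\textbf{Proof plan for Lemma \ref{loc.inv}.}
The plan is to construct $\mF^{-1}$ by a fixed-point argument in $H^s_\R$ and then to read off the claimed structure of $\bG(V)$ from the resulting formula. First I would observe that the equation $\mF(U) = V$, i.e. $\bF(U)U = V$, is equivalent, by the linear invertibility of $\bF(U)$ in $(i)$ of Definition \ref{admtra} and the bound \eqref{lin.est.F}, to the fixed-point equation
\be\label{fp.eq}
U = \bF(U)^{-1}V \; .
\ee
I would then show that, for $r'$ small enough, the map $U \mapsto \Phi_V(U):= \bF(U)^{-1}V$ is a contraction on the ball $B_{s,\R}(2\|V\|_s)$ for every $s \geq s_0'$, with $s_0'$ chosen at least as large as the $s_0$ appearing in $(iii)$ of Definition \ref{admtra} plus $m$. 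The self-map property follows immediately from \eqref{lin.est.F} (choosing $r'$ so that the constant there, times $\|V\|_s \leq r'$, stays below $2\|V\|_s$ — more precisely one uses \eqref{lin.est.F1} to get $\|\bF(U)^{-1}V\|_s \leq \|V\|_s + C\|U\|_{s_0}^2\|V\|_s \leq 2\|V\|_s$ once $C r'^2 \leq 1$). The contraction estimate is where the $C^1$ regularity of $\bF$ in the internal variable, property $(iii)$, is used: writing $\bF(U_1)^{-1} - \bF(U_2)^{-1} = \bF(U_1)^{-1}\big(\bF(U_2)-\bF(U_1)\big)\bF(U_2)^{-1}$ and using \eqref{stima.d.adm}, \eqref{tri.est.F2} together with the mean value theorem along the segment joining $U_1, U_2$, one gets $\|\Phi_V(U_1) - \Phi_V(U_2)\|_{s_0'} \lesssim \|V\|_{s_0'}\,(\|U_1\|_{s_0'}^2 + \|U_2\|_{s_0'}^2)\,\|U_1 - U_2\|_{s_0'}$, which is a contraction once $r'$ is small. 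This gives a unique fixed point $U = \mF^{-1}(V)$ depending on $V$; the two identities $\mF\circ\mF^{-1} = \mathrm{Id}$ and $\mF^{-1}\circ\mF = \mathrm{Id}$ follow from uniqueness of the fixed point and the fact that $U$ itself solves \eqref{fp.eq} with $V = \mF(U)$. This proves $(i)$.

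For $(ii)$, the natural candidate is simply $\bG(V) := \bF(\mF^{-1}(V))^{-1}$, so that $\mF^{-1}(V) = \bF(\mF^{-1}(V))^{-1}V = \bG(V)V$ by \eqref{fp.eq}. That $\bG(V)$ is a real-to-real matrix of non-homogeneous $0$-operators in $\cM^0_{\geq 0}[r']$ follows from the corresponding property of $\bF(\cdot)^{-1}$ in Definition \ref{admtra}$(i)$ composed with the map $\mF^{-1}$, using that $\mF^{-1}$ maps $B_{s_0',\R}(r')$ into itself (shrinking $r'$), together with the closure property of $m$-operators under substitution of an admissible-type internal map — concretely one invokes Lemma \ref{nuovetto}-5, noting $\mF^{-1}(V) = \bG(V)V$ is of the required form by a bootstrap. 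To get the precise expansion \eqref{exp.G}, I would substitute the expansion \eqref{esp:F_inv} of $\bF(U)^{-1}$, namely $\bF(U)^{-1} = \uno - \bF_2(U) + \breve\bF_{\geq 4}(U)$ with $\breve\bF_{\geq 4}(U) \in \mM^{2m}_{\geq 4}[r]$, evaluated at $U = \mF^{-1}(V)$, and then replace $\mF^{-1}(V)$ by its own expansion. From \eqref{fp.eq} and $\bF(U)^{-1}V = V - \bF_2(U)V + \breve\bF_{\geq 4}(U)V$ one reads $\mF^{-1}(V) = V + (\text{terms of homogeneity} \geq 3)$, hence $\bF_2(\mF^{-1}(V)) = \bF_2(V) + (\text{operators in } \cM^m_{\geq 4}[r'])$ because $\bF_2 \in \wt\mM^m_2$ is $2$-homogeneous and replacing each internal $V$ by $V + O(V^3)$ only produces corrections of homogeneity $\geq 4$, which by Lemma \ref{nuovetto}-2,5 lie in $\mM^m_{\geq 4}[r']$; similarly $\breve\bF_{\geq 4}(\mF^{-1}(V)) \in \mM^{2m}_{\geq 4}[r']$ by Lemma \ref{nuovetto}-5. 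Collecting, $\bG(V) = \uno - \bF_2(V) + \bG_{\geq 4}(V)$ with $\bG_{\geq 4}(V) \in \cM^{2m}_{\geq 4}[r']$, which is \eqref{exp.G}; in particular $\bG(V) - \uno \in \Sigma\cM^{2m}_2[r']$.

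The main obstacle I anticipate is the contraction estimate in the low norm $H^{s_0'}$: one must check that the quasilinear loss is genuinely absent, i.e. that differentiating $\bF(U)$ in its internal argument costs no derivatives, which is exactly the content of the $C^1$-regularity clause \eqref{stima.d.adm} — the whole point of the admissible-transformation notion. A secondary, more bookkeeping-type difficulty is verifying that the substitution $U \rightsquigarrow \mF^{-1}(V)$ preserves the non-homogeneous $m$-operator classes with the stated orders; this is routine given Lemma \ref{nuovetto} (especially items 4 and 5) but requires care in tracking how the order $m$ doubles to $2m$ when one composes $\bF_2$ with the cubic correction coming from $\mF^{-1}$, and in checking the tame estimates propagate to all $s \geq s_0'$ via the high-norm bounds in \eqref{stima.d.adm} and \eqref{tri.est.F2}.
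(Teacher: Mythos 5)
Your plan is in substance the paper's. In part $(i)$ the paper invokes the implicit function theorem on $\mG(U;V) := \bF(U)^{-1}V - U$, where you instead set up the equivalent contraction for $\Phi_V(U) := \bF(U)^{-1}V$ by hand; these are the same mechanism. Two bookkeeping points should be fixed: the dominant dependence on $\|U_i\|_{s_0}$ produced by the mean-value theorem and \eqref{tri.est.F2} is \emph{linear}, not quadratic (higher powers come only from \eqref{stima.d.adm} for $\bF_{\geq 4}$); and, since $\bF_2$ is an $m$-operator, the contraction cannot live with all norms at a single index $s_0'$ — the paper records $\mG \in C^1\left(B_{s_0,\R}(r)\times H^{s_0+m}_\R; H^{s_0}_\R\right)$ precisely because the $m$-derivative loss falls on $V$ while $U$ stays at level $s_0$, and it recovers $\cF^{-1}(V)\in H^s$ for $V\in H^s$ a posteriori from the identity $\cF^{-1}(V)=\bF(\cF^{-1}(V))^{-1}V$ and \eqref{lin.est.F}. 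Your plan should make that index asymmetry explicit. In part $(ii)$ both proofs take $\bG(V) := \bF(\cF^{-1}(V))^{-1}$, but the route to \eqref{exp.G} differs: the paper first builds the approximate inverse $\wt\cF^{-1}(V):=V-\bF_2(V)V$, computes $(\wt\cF^{-1}\circ\cF)(U)=U+\bF'_{\geq 4}(U)U$ with \emph{no reference to $\cF^{-1}$}, and only then substitutes $U=\cF^{-1}(V)$; you instead expand $\bF(\cF^{-1}(V))^{-1}$ directly and peel off $\bF_2(V)$ by bilinearity. Your route works but has a mild circularity: concluding $\bF_2(\cF^{-1}(V))-\bF_2(V)\in\cM^m_{\geq 4}[r']$ presupposes that $\cF^{-1}(V)-V$ already has the structure of a cubic $m$-operator applied to $V$, which is part of what is being established. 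It can be closed by a short bootstrap (write $\cF^{-1}(V)-V=[\bF(\cF^{-1}(V))^{-1}-\uno]V$, use $\bF(\cdot)^{-1}-\uno\in\Sigma\cM^{2m}_2[r]$ from \eqref{esp:F_inv}, then apply Lemma \ref{nuovetto}-5), but the paper's approximate-inverse device bypasses it entirely. Finally, Lemma \ref{nuovetto}-2 is stated for \emph{smoothing} $Q$; the analogous substitution result you need for the $m$-operator $\bF_2$ is true by the same mechanism but is not literally that item.
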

\begin{proof}
Let $s_0, r >0$  the parameters given by Definition \ref{admtra} associated to  $\bF(U)$.\\
$(i)$  
Let $\sigma_0:= s_0 + \vr$.
We prove that there exists $r_1>0$ such that for any  $V \in B_{\sigma_0 +m, \R}(r_1)$ there is a unique solution  $U=\mF^{-1}(V) \in B_{\s_0, \R}(r)$  of the equation 
	$	V= \mF(U)= \bF(U)U$. Then we show that if $V \in H^{s}_\R(\T, \C^2)$, $s > \s_0$, also $U \in H^{s}_\R(\T, \C^2)$.\\
Exploiting the linear  invertibility  of $\bF(U)$, we recast 
$V=\bF(U)U$ 
 as the fixed point problem 
\begin{equation}\label{ift}
	\mG(U;V):=\bF(U)^{-1} V = U \ .
	\end{equation}
First we  show that 
for any $V \in  B_{\s_0+m, \R}(r_1)$, the map $U \mapsto \mG(U;V)$ is a contraction on the ball  $B_{\s_0, \R}(r)$ provided $r_1>0$ is small enough. \\
{\underline{$\mG(U;V)$ maps the ball into itself.}} 
Let $V \in  B_{\s_0+m, \R}(r_1)$ and $U \in  B_{\s_0, \R}(r)$. It follows from
\eqref{lin.est.F} that
\be
\norm{\mG(U;V)}_{\s_0} \leq C \left( \norm{V}_{\s_0} +  \norm{U}_{s_0}^2 \norm{V}_{s_0} \right) \leq C r_1 (1+r^2) \leq r
\ee
which is verified provided $r_1$ is sufficiently small.\\
{\underline{$\mG(U;V)$ is a contraction.}}
Again let $V \in  B_{\s_0+m, \R}(r_1)$ and $U_1, U_2 \in  B_{\s_0, \R}(r)$.
By $(iii)$ one has  
\begin{align}\bF(U_1) - \bF(U_2) = \int_0^1 \di_U \bF(\tau U_1 + (1-\tau)U_2)[U_1 - U_2]  \di \tau  \  , 
\label{differenza}
\end{align}  
which  applying $\bF(U_1)^{-1}$ to the left and $\bF(U_2)^{-1}$ to the right  yields
\begin{equation}\label{Fuinv.diff}
    \bF(U_1)^{-1} - \bF(U_2)^{-1} = -\int_0^1 \bF(U_1)^{-1} \, \di_U \bF(\tau U_1 + (1-\tau)U_2)[U_1 - U_2]\,  \bF(U_2)^{-1}  \di \tau  \ .
\end{equation}
Exploiting such formula we get 
\begin{align*}
\norm{\mG(U_1;V) & - \mG(U_2;V)}_{\s_0}  \stackrel{\eqref{lin.est.F}}{\leq }
\sup_{\tau \in [0,1]}\norm{\di_U \bF(\tau U_1 + (1-\tau)U_2)[U_1 - U_2]\,  \bF(U_2)^{-1} V}_{\s_0} \\
& \ \ \ +  \norm{U_1}_{s_0}^2 
\sup_{\tau \in [0,1]}\norm{\di_U \bF(\tau U_1 + (1-\tau)U_2)[U_1 - U_2]\,  \bF(U_2)^{-1} V}_{s_0} 
\\
& \stackrel{\eqref{stima.dF}}{\leq } C \left( \norm{U_1}_{s_0} + \norm{U_2}_{s_0} \right) \, \norm{U_1- U_2}_{s_0} \, \norm{F(U_2)^{-1}V}_{\s_0 + m} \\
& \stackrel{\eqref{lin.est.F}}{\leq}
C \left( \norm{U_1}_{\s_0} + \norm{U_2}_{\s_0} \right) \, \norm{U_1- U_2}_{\s_0} \left( \norm{V}_{\s_0+m} + \norm{U_2}_{s_0} \norm{U_2}_{s_0 + m } \norm{V}_{s_0} \right) \\
& \leq C \left( \norm{U_1}_{\s_0} + \norm{U_2}_{\s_0} \right) \, \norm{U_1- U_2}_{\s_0} \,  \norm{V}_{\s_0+m} \leq \frac12 \norm{U_1- U_2}_{\s_0} 
\end{align*}
where in the last step we chose $r_1>0$ small enough.  
By Banach fixed point theorem, there is a unique 
$U \in B_{\s_0,\R}(r)$ solving the fixed point problem \eqref{ift}, and so we put 
\be\label{sol.imp}
\cF^{-1}(V) := U \ , \mbox{ so that } \cG(\cF^{-1}(V); V) =  \cF^{-1}(V) \  . 
\ee
\underline{Upgraded regularity.} We now show that for any $s\geq \s_0+m$,
if $V\in B_{\s_0+m,\R}(r_1) \cap  H^s_\R(\T, \C^2)$,  then  $\mF^{-1}(V)$ belongs to
$ H^s_\R(\T, \C^2)$   and
\begin{equation}\label{vs.us}
 \|\cF^{-1}(V)\|_s \leq 2 C_s \|V\|_s\,.
\end{equation}
First, from the fixed point, $U:=\cF^{-1}(V) \in B_{\s_0,\R}(r)$. 
Now fix  $\underline{n}\in \N$ so that  $s \in(\s_0+\underline{n}\vr , \s_0+\underline{n}\vr+1]$. 
Then, from equation \eqref{ift} and estimate \eqref{lin.est.F}, 
we get
$$
\norm{U}_{\s_0+n\vr} \leq C \big( \norm{V}_{\s_0+n\vr} + \norm{U}_{\s_0+(n-1)\vr} \norm{U}_{s_0}\norm{V}_{s_0} \big)  \ , \quad n =1 , \ldots, \underline{n} \ . 
$$
This shows that  $U\in H^{\s_0+\underline{n}\vr}_{\R}(\T;\C^2)$ and, using also that $U\in B_{s_0,\R}(r)$, $V\in B_{s_0+m,\R}(r_1)$, we get
\begin{align}
\norm{U}_{\s_0+n\vr} \leq C \norm{V}_{\s_0+n\vr}, \quad n=1,\ldots, \underline{n}. 
 \label{junatarella}   
\end{align}
Finally, using $ s-\vr \leq \s_0+\underline{n}\vr<s$ and again that
 $U\in B_{s_0,\R}(r)$, $V\in B_{s_0+m,\R}(r_1)$, we deduce
$$
\| U\|_s 
= 
\| \bF^{-1}(U) V \|_s 
\stackrel{\eqref{lin.est.F}}{\leq}C\left(\| V \|_s+ \| U\|_{s-\vr}\|U\|_{s_0}\|V\|_{s_0}\right)\stackrel{\eqref{junatarella}}{\leq} C \| V\|_{s}.
$$
So far we have shown that
$\cF\circ \cF^{-1}(V) = V$ for any $V \in B_{s_0'}(r_1)$, 
where  $s_0' = s_0 + \vr + m$. Now we show that
$\cF^{-1}\circ \cF(U) = U$ provided $U \in B_{s_0',\R}(r')$, with a smaller $r'$. 
First of all, note that $ \cF^{-1} \circ \cF(U) $ solves the fixed point equation \eqref{ift} with $ V \leadsto \cF(U) $ and $ U \leadsto \cF^{-1} \circ \cF(U) $.
When $ \mF(U) \in B_{s_0', \R}(r_1) $, the map $ \cG(\,\cdot\,;\cF(U)) $ is a contraction. As a result, the associated fixed point problem admits a unique solution, which must therefore coincide with $ U $.
We prove now that $ \mF(U) \in B_{s_0', \R}(r_1) $. Indeed 
estimate
 \eqref{lin.est.F}, for some $C>1$, gives
$$
\| \mF(U)\|_{s_0'}=\| \bF\left(U\right)U\|_{s_0'}\leq C\|U\|_{s_0'}\leq r_1
$$
for any $U\in B_{s_0' ,\R}(r_1/C)$. The thesis of item $(i)$ follows by choosing $r':= r_1/C$.\\
$(ii)$  It follows from \eqref{sol.imp}  and \eqref{ift}
\begin{equation}\label{cF-1}
\cF^{-1}(V) = \bG(V) V \ , \quad  \bG(V):= \bF(\cF^{-1}(V))^{-1}\in \cM^0_{\geq 0}[r'] .  
\end{equation}
Since by definition $r'=r_1/C\leq r_1$, by the fixed point theorem $ \mF^{-1}(V)\in B_{s_0+\vr,\R}(r)$ for any $V \in B_{s_0',\R}(r')$. 
Then, since $\bF(U)^{-1}$ is a a real-to-real matrix of non-homogeneous $0$-operators in $\cM^0_{\geq 0}[r]$, it follows that $ \bG(V)$ is a real-to-real matrix of non-homogeneous $0$-operators in $\cM^0_{\geq 0}[r']$ (with $s_0\leadsto s_0'$).\\
Next we  show that $\bG(V)$ expands as in \eqref{exp.G}. 
Put  $\wt \cF^{-1}(V) := V - \bF_2(V)V$.  Then, using the expansion 
$\cF(U) = U + \bF_2(U)U + \bF_{\geq 4}(U)U$ and Lemma \ref{nuovetto}, we get 
$$
(\wt \cF^{-1}\circ \cF)(U) = U  + \bF'_{\geq 4}(U)U \ , \quad \mbox{ with }\bF'_{\geq 4}(U) \in \cM^{2m}_{\geq 4}[r]. \ 
$$
Substituting $U = \cF^{-1}(V)$ and using \eqref{cF-1}  and Lemma \ref{nuovetto},  we obtain
$$
\cF^{-1}(V) =  V - \bF_2(V) V + \bG_{\geq 4}(V) V, \quad 
 \bG_{\geq 4}(V):= -
\bF'_{\geq 4}(\cF^{-1}(V)) \bG(V)  \in \cM^{2m}_{\geq 4}[r'] \ . 
$$
This proves the expansion in 
\eqref{exp.G}.
\end{proof}
An immediate consequence of the above lemma is that the inverse $\mF^{-1}$ of an admissible transformation $\mF$ fulfills the estimate  
\begin{equation}\label{stima.inv.adm}
\|\mF^{-1}(V)\|_s\leq C_s  \|V\|_s, \quad \text{for any} \ V\in B_{s_0',\R}(r')\cap H^s(\T;\C^2)  \ .
\end{equation}
 We now show that the linear flows generated by two types of  paradifferential operators are admissible transformations. 
 Consider the flows 
 \be\label{flussoG}
\begin{cases}
\pa_\tau \Phi^\tau(U)= G(\tau,U)\Phi^\tau(U)\\
\Phi^0(U)=\uno
\end{cases} \quad \text{where}\quad
 G(\tau,U)=
 \begin{cases}
 \vOpbw{ \frac{\beta(U;x)}{1+\tau \beta_x(U;x)}\ii \xi},&\beta\in \wt \mF_2^\R \ \  \text{or}\\
\zOpbw{ g(U;x,\xi)},& g\in \wt \Gamma_2^0.
 \end{cases}
 \ee
 \begin{remark}\label{rem:gauge.inv}
The map $\Phi^\tau(U)$	is gauge invariant if 
the generator $G(U;\tau)$ is gauge invariant.
Indeed 
$\Phi^\tau(\tg_\theta U)\tg_\theta$ and 
$\tg_\theta \Phi(U) $ solve the same equation, thus coinciding. 
\end{remark}
 The following lemma ensures that the  flow map $\Phi^\tau(U)$  generated by $G(\tau, U)$ is an admissible transformation for any $\tau\in [0,1]$.
\begin{lemma}\label{lem:flow.ad}
Let $\Phi^\tau(U)$ be the flow map in \eqref{flussoG}. Fix an arbitrary $\vr \geq 0$. Then\\
  $(i)$ if $G(\tau,U)= \vOpbw{ \frac{\beta(U;x)}{1+\tau \beta_x(U;x)}\ii \xi}$ then 
  $\Phi^\tau(U)$  is a $2$-admissible transformation with gain $\vr$;\\
 $(ii)$ if  $G(\tau,U)= \zOpbw{ g(U;x,\xi)}$ then 
 $\Phi^\tau(U)$ is a $0$-admissible transformation with gain $\vr$.
\end{lemma}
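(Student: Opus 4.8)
The plan is to check, for $\bF(U):=\Phi^\tau(U)$ with $\tau\in[0,1]$ fixed, the three properties of Definition \ref{admtra}, with $m=2$ in case $(i)$ and $m=0$ in case $(ii)$; note that $\bF(U)$ is real-to-real simply because its generator $G(\tau,U)$ is. \textbf{Step 1 (boundedness and linear invertibility, property $(\mathrm{i})$).} First I would show that, for $r$ small, $\Phi^\tau(U)$ is well defined for $\tau\in[0,1]$, $U\in B_{s_0,\R}(r)$, and extends to a bounded operator on $H^s$ for all $s\geq s_0$ with $\|\Phi^\tau(U)^{\pm1}v\|_s\leq C\|v\|_s$. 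In case $(ii)$ the generator $G(U)=\zOpbw{g(U;x,\xi)}$ is $\tau$-independent and, by Theorem \ref{thm:contS}$(ii)$, bounded on each $H^s$ with norm $\lesssim\|U\|_{s_0}^2$; hence $\Phi^\tau(U)=e^{\tau G(U)}$ and the claim is immediate, the inverse being $e^{-\tau G(U)}$. In case $(i)$ the symbol $c(\tau,U;x):=\beta(U;x)/(1+\tau\beta_x(U;x))$ is real, with denominator $\geq\tfrac12$ uniformly in $\tau\in[0,1]$ once $\|U\|_{s_0}$ is small, so by \eqref{A1b} the operator $G(\tau,U)=\vOpbw{c\,\ii\xi}$ is skew-adjoint for the real $L^2$ pairing; conjugating by $\la D\ra^s$ produces, via Proposition \ref{teoremadicomposizione}, only a bounded commutator (of order $s$, the top order cancelling because $\pa_x\la\xi\ra^s=0$), so the energy estimate $\tfrac{\di}{\di\tau}\|\Phi^\tau(U)v\|_s^2\lesssim\|U\|_{s_0}^2\|\Phi^\tau(U)v\|_s^2$ and Gronwall give the flow on $[0,1]$, the backward equation giving the inverse. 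Translation invariance of $\Phi^\tau(U)$ follows from \eqref{mome} for $\beta,g$, so $\bF(U)^{\pm1}\in\cM^{0}_{\geq 0}[r]$ and \eqref{lin.est.F} holds.

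\textbf{Step 2 (homogeneous expansion, property $(\mathrm{ii})$).} I would then read off \eqref{esp:F} by iterating once the Volterra identity $\Phi^\tau(U)=\uno+\int_0^\tau G(s,U)\Phi^s(U)\,\di s$. In case $(i)$, expanding $c=\beta+\sum_{n\geq 1}(-s\beta_x)^n\beta$ gives $\int_0^\tau G(s,U)\,\di s=\vOpbw{\tau\beta(U;x)\ii\xi}+\vOpbw{\tilde r(U;x)\ii\xi}$ with $\tilde r\in\cF_{\geq 4}[r]$, so $\bF_2(U):=\vOpbw{\tau\beta(U;x)\ii\xi}\in\wt\cM^{1}_{2}$ (hence also in $\wt\cM^{2}_{2}$), while the remaining $\ii\xi$-term together with the double integral $\int_0^\tau G(s)\int_0^s G(s')\Phi^{s'}\,\di s'\,\di s$ --- a composition of two order-$1$, quadratic paradifferential $m$-operators with the bounded flow $\Phi^{s'}$ --- lies in $\cM^{2}_{\geq 4}[r]$ by Lemma \ref{nuovetto} and Step 1; this is $\bF_{\geq 4}(U)$. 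In case $(ii)$ one uses $\Phi^\tau(U)=e^{\tau G(U)}=\uno+\tau\zOpbw{g(U;x,\xi)}+\sum_{n\geq 2}\tfrac{\tau^n}{n!}G(U)^n$, with $\bF_2(U):=\tau\zOpbw{g(U;x,\xi)}\in\wt\cM^{0}_{2}$ and the tail --- a norm-convergent series of compositions of at least two quadratic order-$0$ $m$-operators --- in $\cM^{0}_{\geq 4}[r]$. This proves property $(\mathrm{ii})$.

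\textbf{Step 3 (the derivative estimate, property $(\mathrm{iii})$ --- the main point).} Differentiating the flow equation yields $\pa_\tau\big(\di_U\Phi^\tau(U)[\hat U]\big)=G(\tau,U)\di_U\Phi^\tau(U)[\hat U]+\big(\di_U G(\tau,U)[\hat U]\big)\Phi^\tau(U)$, whence by Duhamel
\[
\di_U\Phi^\tau(U)[\hat U]=\int_0^\tau\Phi^\tau(U)\,\Phi^s(U)^{-1}\,\big(\di_U G(s,U)[\hat U]\big)\,\Phi^s(U)\,\di s .
\]
Here $\di_U G(s,U)[\hat U]$ equals $\vOpbw{\di_U c[\hat U]\,\ii\xi}$ in case $(i)$, with $\di_U c[\hat U]=2\,\beta(U,\hat U;x)+(\text{higher homogeneity})$, and $\zOpbw{\di_U g[\hat U]}$ (of order $0$) in case $(ii)$; in both cases it is bilinear in $(U,\hat U)$ at leading order. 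Combined with the uniform bounds of Step 1 for $\Phi^\tau(U)\Phi^s(U)^{-1}$ and $\Phi^s(U)$, the Duhamel formula shows that $\di_U\bF(U)[\hat U]$ exists, depends continuously on $U$, and satisfies \eqref{tri.est.F2}, which gives the $C^1$ statement. For the refined bound \eqref{stima.d.adm} I would subtract the bilinear leading contribution: replacing in the integral $\Phi^\tau(U)\Phi^s(U)^{-1}\leadsto\uno$, $\Phi^s(U)\leadsto\uno$, and $\di_U G(s,U)[\hat U]$ by its bilinear part reproduces exactly $\di_U\bF_2(U)[\hat U]$, so $\di_U\bF_{\geq 4}(U)[\hat U]=\di_U\Phi^\tau(U)[\hat U]-\di_U\bF_2(U)[\hat U]$ is the sum of the three resulting error terms. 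Each of these contains a factor $\Phi^\tau(U)\Phi^s(U)^{-1}-\uno$, or the super-bilinear part of $\di_U G(s,U)[\hat U]$, or $\Phi^s(U)-\uno$, hence carries at least three factors of $U$ together with one of $\hat U$; the tame estimates of Steps 1--2 then distribute the single high Sobolev index on $U$, on $\hat U$ or on $Z$, producing precisely the three terms on the right-hand side of \eqref{stima.d.adm}. The expansion \eqref{esp:F_inv} of $\bF(U)^{-1}$ follows as in Remark \ref{rem.b.ad}$(4)$.

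The hard part is Step 3: Steps 1--2 are routine energy estimates and symbolic calculus in the style of \cite{BD,BMM}, whereas identifying and cancelling the bilinear part of $\di_U\Phi^\tau(U)[\hat U]$, and then controlling the homogeneity and tame-norm structure of the genuinely higher-order remainder so as to land exactly in the form \eqref{stima.d.adm}, requires care.
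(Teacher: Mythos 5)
Your Steps 1 and 2 are correct and essentially the same as the paper's (the paper simply cites \cite{BMM2} for the boundedness/invertibility that you establish by an energy estimate, and your Volterra expansion of $\Phi^\tau$ reproduces \eqref{G:exp}). The gap is in Step 3, and it is specific to case $(i)$, where $m=2>0$.

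Starting from the same Duhamel formula as the paper, you decompose the integrand $\Phi^\tau(U)\Phi^s(U)^{-1}\,\di_U G(s,U)[\hat U]\,\Phi^s(U)$ by ``replacing $\Phi^\tau(U)\Phi^s(U)^{-1}\leadsto\uno$, $\Phi^s(U)\leadsto\uno$, $\di_U G\leadsto\di_U G_2$'' and collecting the three remainders. This produces in particular the pieces $\big(\Phi^\tau(U)\Phi^s(U)^{-1}-\uno\big)\circ\di_U G(s,U)[\hat U]\circ\Phi^s(U)$ and $\di_U G_2(U)[\hat U]\circ\big(\Phi^s(U)-\uno\big)$. The homogeneity count is fine, but the derivative count is not: by \eqref{esp:F}--\eqref{lin.est.F1} the factors $\Phi^\tau(U)\Phi^s(U)^{-1}-\uno$ and $\Phi^s(U)-\uno$ are $\Sigma\cM^m_2[r]$--operators whose $\geq 4$--homogeneous parts $\bF_{\geq 4}$ lose the full $m$ derivatives, while $\di_U G(s,U)[\hat U]$ is an order--one paradifferential operator losing another $m/2$. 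So these error terms lose $m+m/2 = 3m/2$ derivatives, exceeding the $m$ allowed by \eqref{stima.d.adm}; trying to apply \eqref{piove} to $\bF_{\geq 4}$ requires $\di_U G[\hat U]\Phi^s Z$ in $H^s$, but this object is only in $H^{s-1}$, and the estimate does not close at level $\|\cdot\|_{s-m}$.

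This is exactly what the paper's rearrangement \eqref{minale} is engineered to avoid. Rather than subtracting $\uno$ from $\Phi^\tau$ or $\Phi^s$ and treating the difference as a single black--box $m$--operator, the paper keeps every flow factor \emph{bounded}: it expands $\Phi^\theta(U)^{-1}\,\di_U G(\tau,U)[\hat U]\,\Phi^\theta(U)$ through the conjugation identity $\pa_\theta\big(\Phi^\theta(U)^{-1}A\Phi^\theta(U)\big)=\Phi^\theta(U)^{-1}\big[A,G(\theta,U)\big]\Phi^\theta(U)$ (producing the commutator term, last line of \eqref{minale}), and it peels off only a single factor $G(\theta,U)\Phi^\theta(U)$ from $\Phi^\varsigma=\uno+\int_0^\varsigma G(\theta,U)\Phi^\theta(U)\,\di\theta$ (producing the middle term). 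In every resulting piece the derivative loss comes from at most two $G$--type factors of order $m/2$, sandwiched between bounded flow maps, so the total loss is exactly $m$, in agreement with \eqref{claimata}--\eqref{claimata1}. To repair your Step 3 in the same spirit, replace the black boxes $\Phi^\tau(U)\Phi^s(U)^{-1}-\uno$ and $\Phi^s(U)-\uno$ by their Volterra representations $\int_s^\tau G(\theta,U)\Phi^\theta(U)\Phi^s(U)^{-1}\,\di\theta$ and $\int_0^s G(\theta,U)\Phi^\theta(U)\,\di\theta$, so that each introduces only one extra order--$(m/2)$ operator $G$ together with bounded conjugators.
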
 
 \begin{proof}
 Along the proof we put  $m=2$ if $G(\tau, U)$ is as in $(i)$, and $m=0$ in case $(ii)$.\\
It is classical that $\Phi^\tau(U)$ is a matrix of $0$-operators in $ \cM^0_{\geq 0}[r]$ as well as its linear inverse,  see e.g. Lemma 3.16 of \cite{BMM2}.
In particular, estimate (3.53) in  \cite{BMM2} (with $k = K' =K=0$) gives that  for any 
$ U\in B_{s_0,\R}(r)\cap {H^{s-\vr}_\R(\T;\C^2)}$ and $V \in H^s_\R(\T;\C^2)$,  $\sup_{\tau \in [0,1]}\norm{\Phi^\tau(U)^{\pm 1}V}_s \leq C \norm{V}_s$,  which clearly implies both 
\eqref{lin.est.F} and the second of \eqref{piove}.

We prove now the  expansion \eqref{esp:F}; first expand 
 \begin{equation}\label{G:exp}
 G(\tau,U)=G_2(U)+  G_{\geq 4}(\tau,U)=  \begin{cases}
 	\vOpbw{ {\beta(U;x)}\ii \xi}+G_{\geq 4}(\tau,U)\\
 	\zOpbw{ g(U;x,\xi)} \, ,
 \end{cases} 
 \end{equation}
so  the expansion of $\Phi^\tau(U)$ reads 
$$
\Phi^\tau(U)= \uno + \tau G_2(U) + \bF_{\geq 4}(\tau, U), \quad \bF_{\geq 4}(\tau, U)\in \mM^m_{\geq 4}[r].
$$
We prove now $(iii)$. 
First we claim that, for both choices of $G(\tau, U)$ in \eqref{flussoG}, there is $s_0 >0$ such that   for any  $s\in \R$ 
\begin{align} 
	&\sup_{\tau \in [0,1]}\norm{\di_U G(\tau,U)[\widehat{U}]W }_{s-\frac{m}{2}} \lesssim \norm{U}_{s_0}\norm{\widehat{U}}_{s_0}\norm{W}_{s}, \label{claimata} \\
&\sup_{\tau \in [0,1]}\norm{\di_U G_{\geq 4}(\tau,U)[\widehat{U}]W }_{s-\frac{m}{2}} \lesssim \norm{U}_{s_0}^3\norm{\widehat{U}}_{s_0}\norm{W}_{s}.
\label{claimata1}
\end{align}
Assuming for the moment such properties, consider the differential 
 ${\rm d}_U \Phi^\varsigma(U)[\widehat U]$. 
 It   fulfills the variational equation 
\be 
\begin{cases}
\partial_\varsigma {\rm d}_U \Phi^\varsigma (U)[\widehat U] =G(\varsigma,U) {\rm d}_U \Phi^\varsigma(U)[\widehat U]
+\di_U G(\varsigma,U)[\widehat{U}] \Phi^\varsigma(U) \\
{\rm d}_U \Phi^0(U)[\widehat U]=0 \, ,
\end{cases}   
\ee
 whose solution is given by the  Duhamel formula 
\footnotesize
\begin{align}
{\rm d}_U \Phi^\varsigma(U)[\widehat U] =& \Phi^\varsigma(U) \int_0^\varsigma  \Phi^\tau (U)^{-1} \ \di_U G(\tau,U)[\widehat{U}]\   \Phi^\tau (U)\, {\rm d}\tau \label{prima_minale} \\
\stackrel{\eqref{G:exp}, \eqref{flussoG}}{=} &\varsigma \di_U G_2(U)[\widehat{U}]+\varsigma \int_0^\varsigma G(\theta,U)\Phi^\theta(U)\di_U G_2(U)[\widehat{U}]\, \di \theta + \Phi^\varsigma(U)\int_0^\varsigma \di_U G_{\geq 4}(\tau,U)[\widehat{U}]\, \di \tau \\
&+\Phi^\varsigma(U) \int_0^\varsigma \int_0^\tau  \Phi^\theta  (U)^{-1} \  \left[ \di_U G(\tau,U)[\widehat{U}], G(\theta,U)\right]\   \Phi^\theta (U)\,\di \theta  \,{\rm d}\tau\label{minale}
\end{align}
\normalsize
where in the second equality we also used the expansion
\footnotesize
$$\Phi^\theta (U)^{-1} \ \di_U G(\tau,U)[\widehat{U}]\   \Phi^\theta (U)_{|\theta=\tau}= \di_U G(\tau,U)[\widehat{U}]+ \int_0^\tau  \Phi^\theta  (U)^{-1} \  \left[ \di_U G(\tau,U)[\widehat{U}], G(\theta,U)\right]\   \Phi^\theta (U)\,\di \theta.$$
\normalsize
Inserting  estimates \eqref{claimata}--\eqref{claimata1} in  \eqref{minale} and using \eqref{piove} for $\Phi^\varsigma(U)$ and   \eqref{stimapar2} for $G(\tau, U)$,  one checks that  for any $\s \geq s_0 + \vr$
\be
\norm{\di_U \Phi^\varsigma(U)[\widehat U] V }_\s \leq C \norm{U}_{s_0} \, \norm{\wh U}_{s_0} \norm{W}_{\s+2}
\leq \norm{U}_{\s-\vr} \, \norm{\wh U}_{\s - \vr} \norm{W}_{\s+2}
\ee
showing the validity of  \eqref{stima.dF}. \\
Similarly one checks that 
the term  $\left(\di_U\Phi^\tau(U)[\hat U]- \tau\di_U G_2(U)[\hat U]\right)W$ fulfills \eqref{stima.d.adm}.
We now prove {\eqref{claimata}--\eqref{claimata1}}. Consider  first $ G(\tau,U)= \zOpbw{ g(U;x,\xi)}$, for which \eqref{claimata1} is trivial (being $G_{\geq 4 }(\tau,U)\equiv 0$).
 Since $g(U;\cdot)$ is homogeneous of degree 2, 
$$
\di_U G(\tau,U)[\widehat{U}]=\zOpbw{ 2 g(\widehat{U}, U;x,\xi)}=\di_U G_2(U)[\hat U] \ ,
$$
and  \eqref{claimata} follows from  Theorem \ref{thm:contS}.\\
Next we analyze  the case  $ G(\tau,U)=  \vOpbw{ \frac{\beta(U;x)}{1+\tau \beta_x(U;x)}\ii \xi}$. Its differential is given by 
\footnotesize
\begin{align*}
\di_U G(\tau,U)[\widehat{U}]
& = 
2\vOpbw{ 
\beta(\widehat{U},U;x)\ii \xi}  -{2\tau}  \vOpbw{\frac{\beta(\widehat{U},U;x) \beta_x(U;x)}{1+\tau \beta_x(U;x)}
+ \frac{\beta(U;x)\beta_x(\widehat{U}, U;x)}{(1+\tau \beta_x(U;x))^2}\ii \xi}\\
&= \di_U G_2(U)[\hat U]+ \di_U G_{\geq 4}(\tau, U)[\hat U]
\end{align*}
\normalsize
Now notice that $\beta(\widehat{U},U;x)  \in \wt \cF^\R_{2}$ and
$$
\tb(\tau, \widehat U, U):= \frac{\beta(\widehat{U},U;x) \beta_x(U;x)}{1+\tau \beta_x(U;x)}
+ \frac{\beta(U;x)\beta_x(\widehat{U}, U;x)}{(1+\tau \beta_x(U;x))^2}
\in L^\infty(\T; \R)$$ with bound  
$ \sup_{\tau \in [0,1]} \norm{\tb(\tau, \widehat U, U)}_{L^\infty}
\lesssim \norm{\widehat{U}}_{s_0}\norm{U}_{s_0}^3.$ 
Then Theorem  \ref{thm:contS} gives 
 \eqref{claimata} and \eqref{claimata1}.
 \end{proof}
Next we consider the flow map  generated by a matrix of  smoothing operators:
 \be\label{flussoR}
\begin{cases}
\pa_\tau \Phi^\tau(U)= R(U)\Phi^\tau(U)\\
\Phi^0(U)=\uno
\end{cases} \quad \text{where}\quad
 R(U) \in \wt \cR^{-\varrho}_2 \ . 
 \ee
\begin{lemma}\label{flow.s.ad}
Let $\vr>0$. The flow $\Phi^\tau(U)$ in \eqref{flussoR} is a $0$-admissible transformation with gain $\vr$. 
\end{lemma}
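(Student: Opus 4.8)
The plan is to follow the proof of Lemma \ref{lem:flow.ad}$(ii)$ almost verbatim, the only new input being that a smoothing operator $R(U)\in\wt\cR^{-\varrho}_2=\wt\cM^{-\varrho}_2\subset\wt\cM^0_2$ has exactly the mapping properties used there for the generator $\zOpbw{g}$, $g\in\wt\Gamma^0_2$: it is bounded $H^s\to H^{s+\varrho}\hookrightarrow H^s$ with a quadratic-in-$U$, polarized estimate of the type \eqref{smoothing}. Accordingly we aim to show that $\Phi^\tau(U)$ is a $0$-admissible transformation for $\tau\in[0,1]$; note first that the flow preserves the real-to-real structure, since compositions of real-to-real operators are real-to-real and the Volterra series of $\Phi^\tau$ starts from $\uno$. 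That $\Phi^\tau(U)$ and its linear inverse are matrices of $0$-operators in $\cM^0_{\geq0}[r]$ is classical: it follows from the Volterra/Neumann series $\Phi^\tau(U)=\sum_{n\geq0}\int_{0\leq\tau_n\leq\cdots\leq\tau_1\leq\tau}R(U)^n\,\di\tau_1\cdots\di\tau_n$ together with the composition rule for $m$-operators (Lemma \ref{nuovetto}), as in Lemma 3.16 of \cite{BMM2}. This gives property $(i)$.

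For property $(ii)$, since $R(U)$ is $2$-homogeneous, every term of the Volterra series has even homogeneity; isolating the $2$-homogeneous, linear-in-time contribution,
\begin{equation*}
\Phi^\tau(U)=\uno+\tau R(U)+\bF_{\geq4}(\tau,U),\qquad \bF_{\geq4}(\tau,U):=\int_0^\tau R(U)\bigl(\Phi^{\tau'}(U)-\uno\bigr)\,\di\tau'.
\end{equation*}
By a bootstrap, $\Phi^{\tau'}(U)-\uno=\tau'R(U)+\bF_{\geq4}(\tau',U)\in\Sigma\cM^{-\varrho}_2[r]$, whence Lemma \ref{nuovetto} yields $R(U)\circ(\Phi^{\tau'}(U)-\uno)\in\cM^{-\varrho}_{\geq4}[r]\subset\cM^0_{\geq4}[r]$ and, integrating in $\tau'$, $\bF_{\geq4}(\tau,U)\in\cM^0_{\geq4}[r]$ (it is in fact $(-\varrho)$-smoothing). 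Thus $\bF_2(U)=R(U)\in\wt\cM^0_2$ and the expansion \eqref{esp:F} holds with $m=0$.

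For property $(iii)$, differentiate \eqref{flussoR} in $U$; Duhamel gives
\begin{equation*}
\di_U\Phi^\varsigma(U)[\hat U]=\Phi^\varsigma(U)\int_0^\varsigma\Phi^\tau(U)^{-1}\,\di_UR(U)[\hat U]\,\Phi^\tau(U)\,\di\tau,
\end{equation*}
and since $R(U)$ is $2$-homogeneous, $\di_UR(U)[\hat U]=2R(\hat U,U)=\di_U\bF_2(U)[\hat U]$. Writing $\Phi^\varsigma(U)=\uno+(\Phi^\varsigma(U)-\uno)$ and using $\partial_\theta\bigl(\Phi^\theta(U)^{-1}A\,\Phi^\theta(U)\bigr)=\Phi^\theta(U)^{-1}[A,R(U)]\Phi^\theta(U)$ with $A=\di_UR(U)[\hat U]$, one obtains
\begin{equation*}
\di_U\Phi^\varsigma(U)[\hat U]-\varsigma\,\di_U\bF_2(U)[\hat U]=\varsigma\bigl(\Phi^\varsigma(U)-\uno\bigr)\di_UR(U)[\hat U]+\Phi^\varsigma(U)\int_0^\varsigma\!\!\int_0^\tau\Phi^\theta(U)^{-1}\bigl[\di_UR(U)[\hat U],R(U)\bigr]\Phi^\theta(U)\,\di\theta\,\di\tau.
\end{equation*}
Here $R(\hat U,U)$, a $2$-homogeneous $(-\varrho)$-operator, obeys the polarized bound \eqref{smoothing} with at most one of $\hat U,U,W$ in the high norm $\|\cdot\|_s$; the commutator $[\di_UR(U)[\hat U],R(U)]$, a composition of $(-\varrho)$-operators, is by Lemma \ref{nuovetto} again $(-\varrho)$-smoothing, of homogeneity $1$ in $\hat U$ and $3$ in $U$, with the analogous polarized bound carrying three factors $\|U\|$; finally $\Phi^\varsigma(U),\Phi^\theta(U)$ and their inverses are bounded on $H^s$ with norm $\lesssim1$, and $\Phi^\varsigma(U)-\uno\in\Sigma\cM^0_2[r]$ satisfies \eqref{piove}. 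Inserting these bounds, both summands on the right-hand side are controlled by the right-hand side of \eqref{stima.d.adm} with $m=0$; the extra powers of $\|U\|_{s_0}\le r'<1$ produced by the $\Phi-\uno$ corrections are harmlessly absorbed into the constant. The same Duhamel formula, together with the continuity of $U\mapsto\Phi^\tau(U)W$, shows $U\mapsto\Phi^\tau(U)W\in C^1\bigl(B_{s_0,\R}(r);H^{s_0}_\R(\T;\C^2)\bigr)$ for every $W\in H^{s_0}_\R(\T;\C^2)$, which is property $(iii)$, so $\Phi^\tau(U)$ is $0$-admissible. The one point demanding care is to check that the conjugated commutator $\Phi^\theta(U)^{-1}[\di_UR(U)[\hat U],R(U)]\Phi^\theta(U)$ stays smoothing with the Sobolev indices correctly distributed — precisely one slot in the high norm — which is exactly what the $m$-operator composition calculus of Lemma \ref{nuovetto} provides.
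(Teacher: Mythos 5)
Your proof is correct and follows essentially the same route as the paper's (which is a terse one‑liner: use the expansion \eqref{minale} with $G(\tau,U)\leadsto R(U)$, note $\di_U R(U)[\widehat U]=2R(U,\widehat U)$, and replace \eqref{claimata}--\eqref{claimata1} by the estimate \eqref{smoothing} with $m=-\varrho$). Your reorganization of the Duhamel expansion — writing $\varsigma(\Phi^\varsigma(U)-\uno)\di_U R(U)[\hat U]$ in place of the integral $\varsigma\int_0^\varsigma R(U)\Phi^\theta(U)\di_U R(U)[\hat U]\,\di\theta$ — is a valid algebraic rearrangement (since $\di_U R(U)[\hat U]$ is $\theta$-independent), and all the bounds you invoke match the paper's.
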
 
 \begin{proof}
Since  $R(U) \in \wt\cR^{-\varrho}_2$,
 	for any 
	$ U \in { B_{s_0, \R}(r)}$  with $s_0>0$ sufficiently large  
	the problem \eqref{flussoR} admits a unique solution  
	$\Phi^{\tau}(U)$ fulfilling
    $\norm{\Phi^\tau(U)V}_{s_0} \leq C \norm{V}_{s_0}$ uniformly for $\tau \in [-1,1]$. 
    We now prove that
    $\Phi^\tau(U)$ fulfills
    \eqref{lin.est.F}. 
Let $s \geq s_0+ \vr$, 
  $ U \in { B_{s_0, \R}(r_s)} \cap H^{s-\vr}_{\R}(\T, \C^2)$ with a sufficiently small $r_s>0$  {and} $V \in H^{s}_\R(\mathbb{T};\C^2)$. 
   Then   the integral formula  $\Phi^\tau(U) = \uno  + \int_0^\tau R(U) \Phi^{\tau'}(U) \di \tau'$ and estimate  \eqref{smoothing} (with $m = -\vr$ and $s \leadsto s-\vr$) yield
	\begin{equation}\label{flowsmoothestimate}
		\begin{aligned}
			 \norm{ \Phi^\tau(U)V }_{s}\leq & C_s  \left(\norm{ V}_{s }
            +
             \norm{V}_{s_0} \norm{U}_{s-\vr} \norm{U}_{s_0}\right)
            +
             C_s\sup_{\tau \in [-1,1]}\norm{\Phi^\tau(U)V}_{s-\vr} \norm{U}_{s_0}^2 \ .
		\end{aligned}
	\end{equation}
Then, possibly shrinking $r_s$ so that $C_s r_s^2 < \frac{1}{2}$, we obtain
$$ \sup_{\tau \in [-1,1]} \norm{ \Phi^\tau(U)V }_{s}\leq 2  C_s  \left(\norm{ V}_{s } + \norm{V}_{s_0} \norm{U}_{s-\vr} \norm{U}_{s_0}\right),
             $$
            proving \eqref{lin.est.F}.
The rest of the proof follows along the same lines as the previous one.
 The algebraic expansion \eqref{minale} holds with $ G(\tau,U)\leadsto R(U)$ and, since  
 $ \di_U R(U)[\widehat U] = 2 R(U, \widehat U)$,
 we replace  \eqref{claimata} and \eqref{claimata1} with  the bound 
 $$
\norm{ \di_U R(U)[\widehat U]W}_\sigma \leq C \norm{U}_{\s-\vr} \norm{\wh U}_{\s-\vr} \norm{W}_{\s-\vr}
 $$
 obtained from \eqref{smoothing} with $m =- \vr$ and $s-m \leadsto \sigma$.
 Then both \eqref{stima.dF}
and 
 \eqref{stima.d.adm} follow.
 \end{proof} 
}
 \section{Analysis of weak resonances}\label{sec:Lnormal}
Equation \eqref{eq:main} is Hamiltonian, with Hamiltonian function given by 
\begin{equation}
\label{hamiltonian}
\sH(u) := 
\int_\T  (|D|^\alpha u ) \bar u + \frac{\im}{4} \int_\T |u|^2 \, (\bar u \, u_x - u \bar u_x) \, \di x  \ . 
\end{equation}
Due to the gauge and translation invariance of equation \eqref{eq:main}, any sufficiently regular solution $u(t)$ of  \eqref{eq:main}  conserves the   total mass and momentum, namely 
\begin{equation}
\label{MP}
\begin{aligned}
& \sM(u(t)) :=\frac{1}{2\pi}  \norm{u(t)}_{L^2}^2 \equiv  \frac{1}{2\pi} \int_\T |u(t,x)|^2 \di x = \sum_{k \in \Z} | u_k(t)|^2 = \sM(u(0)) ,   \\
& \sP(u(t)) := \frac{1}{2\pi} \int_\T  \ii  (\pa_x u(t,x))  \, \bar{u(t,x)}  \, \di x = -\sum_{k \in \Z} k | u_k(t)|^2 = \sP(u(0))  \ . 
\end{aligned}
\end{equation}
In view of this  
we introduce the new  variable
\be 
v(t,x):= e^{\ii t \sP(u(t))}u\big(t,x-\sM(u(t))t \big) \ .
\ee
Clearly $v(t,x)$ and $u(t,x)$ have same Sobolev norms,
same magnitude,  mass and the momentum, 
 i.e.
$$
\norm{v(t, \cdot)}_{s} = \norm{u(t,\cdot)}_s \ , \quad \forall s \in \R 
$$
and 
$$
|v(t,x)|= |u(t,x-\sM(u(t))t )|, \quad \sM(v(t))=\sM(u(t)), \quad \sP(v(t)) = \sP(u(t)) \ , $$
and one readily checks that $v(t,\cdot)$ fulfills the re-normalized equation 
 \be \label{rinormalizzata}
  \pa_t v   =  - \im |D|^\alpha  v+  |v|^2 v_x  - \sM(v)v_x+ \ii \sP(v)v \  . 
 \ee
 This is the equation that we shall consider from now on, and we will relabel $v\leadsto u$.
Also \eqref{rinormalizzata} is a Hamiltonian PDE with  Hamiltonian function 
 \be
 \widetilde \sH(v):= \sH(v)- \sM(v)\sP(v) \ .
 \ee
 
\begin{remark}
The reason we renormalize  equation  \eqref{eq:main} is that the vector field of  \eqref{rinormalizzata} does not contain   integrable resonant monomials of the form $|u_k|^2 u_\ell e^{\im \ell x}$  with $k \neq \ell$.
Although not strictly necessary, it simplifies the analysis of the resonant part of  \eqref{rinormalizzata} in Lemma \ref{lem:wX}.
\end{remark}

\smallskip

\noindent{\bf Analysis of 4-waves interactions.}
Denote by $ \fR$ the subset of $\fP_4$ (recall \eqref{mom1})  consisting in 4-waves resonant indexes, namely
\begin{equation}
 \label{res1}
 \fR := \left\{
 ( \vec{\jmath} , \vec \sigma) \in \fP_4 
  \colon 
  \quad 
 \sigma_1 |j_1|^\alpha + \sigma_2  |j_2|^\alpha + \sigma_3 |j_3|^\alpha  +\sigma_4 |j_4|^\alpha = 0
   \right\}  \ .
 \end{equation} 
When  $\alpha \in (0,1)$ is irrational, one can expect  the  set $\fR$ to contain only integrable  resonances, namely indexes of the form 
$\big( (k, k, \ell, \ell), (+, -, +, -)  \big)$ with $k, \ell \in \Z$
and their permutations.
For $\alpha$  rational, instead,  nonintegrable resonances do exist in general:  for example, when $\alpha = \frac12$,  one has the  non-integrable  Zakharov-Dyachenko resonances \cite{ZakD}.
We do not care if such non-integrable resonances exist or not, since, as we discussed in the introduction,  our energy cascades will be due to {\em quasi-resonances}, rather than exact resonances. 
What we really are  interested in, is to study the resonances between frequencies in a fixed set $\Lambda$ and those in its complementary set, with  at most two frequencies in $\Lambda^c$.

We shall now study resonant sets with indexes  constrained to belong to certain subsets. 
\begin{definition}Given a set $\Lambda \subseteq \Z$ and $n \in \{0, \ldots, 4\}$,  we  denote by 
$\fP_\Lambda^{(n)}$ the elements of $\cP_4$ (see \eqref{mom1}) having  exactly $n$ indexes {\em outside} the set $\Lambda$: 
\begin{equation}\label{momj}
\fP_\Lambda^{(n)} := \{ ( j_1, j_2, j_3, j_4 , \vec \sigma) \in \fP_4 \colon   \mbox{exactly } n\mbox{ indexes  among  } j_1, j_2, j_3, j_4 \mbox{ are  outside } \Lambda  \} \ . 
\end{equation}
We denote by $\fR_\Lambda^{(n)} $ the subset of $\fP_\Lambda^{(n)}$ made of $4$-waves resonances: with $\tR$ in \eqref{res1}, 
 \begin{equation}\label{resj}
\fR_\Lambda^{(n)} := \{ (  j_1, j_2, j_3, j_4 , \vec \sigma) \in \fR\colon  \mbox{exactly } n\mbox{ indexes  among }  j_1, j_2, j_3, j_4  \mbox{ are  outside } \Lambda  \} \ .
\end{equation} 
\end{definition}
We shall now study in detail the sets $\fR_\Lambda^{(n)}$, $n = 0,1,2$, when $\Lambda$ is given by 
\begin{equation}
  \label{Lambda}
  \Lambda := \{ -1, +1\} \ . 
  \end{equation}

\begin{lemma}\label{lem:wres}
Let $\Lambda$ in \eqref{Lambda} and $\cP^{(n)}_\Lambda$, $\fR^{(n)}_\Lambda$  defined in \eqref{momj} and \eqref{resj}.
\begin{itemize}
\item[(i)] The set $\fP_\Lambda^{(0)} \equiv \fR_\Lambda^{(0)}$ and it contains only integrable resonances: 
\be\label{R0}
\begin{aligned}
\fR_\Lambda^{(0)} & = \left\{ 
\big( \pi (\mathtt{k}, \mathtt{k}, \ell, \ell), \  \pi(+,-, +, -) \big), \colon \mathtt{k}, \ell\in \Lambda  \ , 
\ \  \pi \in \cS_4 \right\} 
\end{aligned}
\ee
and  $\cS_4$ is the symmetric group of  permutations of  four symbols.
\item[(ii)] The set $\fR_\Lambda^{(1)} = \emptyset$. Moreover
$\fP_\Lambda^{(1)}$ has finite cardinality and  there exists $c >0$ such that 
\begin{equation}
\label{lower.R1}
(\vec{\jmath}, \vec \sigma)  \in \fP_\Lambda^{(1)} \quad \Rightarrow
\quad
\abs{ \sigma_1 |j_1|^\alpha + \sigma_2  |j_2|^\alpha + \sigma_3 |j_3|^\alpha  +\sigma_4 |j_4|^\alpha } \geq c \ .  
\end{equation}
\item[(iii)] The set 
\be
\begin{aligned}\label{fR2Lambda}
\fR_\Lambda^{(2)} & = \{ 
\big( \pi (\mathtt{k}, \mathtt{k}, \ell, \ell), \  \pi (+,-, +, -) \big)
 \colon  \ \mathtt k \in \Lambda, \  \ell  \in \Lambda^c , \  \pi \in \cS_4 \}  \ .
\end{aligned}
\ee
 Moreover there exists $c >0$ such that 
\begin{equation}
\label{lower.R2}
(\vec{\jmath}, \vec \sigma) \in \fP_\Lambda^{(2)}\setminus \fR_\Lambda^{(2)} \quad \Rightarrow
\quad
\abs{  \sigma_1 |j_1|^\alpha + \sigma_2  |j_2|^\alpha + \sigma_3 |j_3|^\alpha  +\sigma_4 |j_4|^\alpha  } \geq \frac{c}{\max\limits_{a=1,\ldots, 4} ( |j_a|)^{1-\alpha} } \ .  
\end{equation}
\end{itemize}
\end{lemma}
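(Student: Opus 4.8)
The plan is to reduce everything to a short list of index configurations by exploiting symmetry. The constraints defining $\fP_4$ and $\fR$ in \eqref{mom1}--\eqref{res1}, and the requirement that exactly $n$ of $j_1,\dots,j_4$ lie outside $\Lambda$, are all invariant under simultaneous permutations of $(\vec\jmath,\vec\sigma)$ and under the global sign flip $\vec\sigma\mapsto-\vec\sigma$; moreover $\vec\sigma\cdot\vec 1=0$ forces exactly two $+$ and two $-$ among the $\sigma_a$. So in each item I would first normalise the positions of the ``outside'' indices, then normalise the sign pattern, after which only a finite check remains. Write $M:=\max_a|j_a|$. For item (i) all $|j_a|=1$, so $\sum_a\sigma_a|j_a|^\alpha=\sum_a\sigma_a=0$ identically, giving $\fP_\Lambda^{(0)}=\fR_\Lambda^{(0)}$ at once; normalising to $\vec\sigma=(+,-,+,-)$, the momentum relation $j_1-j_2+j_3-j_4=0$ with each $j_a\in\{\pm1\}$ leaves (using $j_1+j_3\in\{-2,0,2\}$) only the permutations of $(\mathtt{k},\mathtt{k},\ell,\ell)$ with signs $(+,-,+,-)$, i.e. \eqref{R0}.

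For item (ii) I would normalise so that $j_4\in\Lambda^c$ and $j_1,j_2,j_3\in\Lambda$. Then momentum gives $|j_4|=|\sigma_1 j_1+\sigma_2 j_2+\sigma_3 j_3|\le 3$, and a parity count ($\sigma_1 j_1+\sigma_2 j_2+\sigma_3 j_3$ is odd) rules out $j_4=0$; hence $|j_4|\in\{2,3\}$ and $\fP_\Lambda^{(1)}$ is finite. A vanishing small divisor would require $|j_4|^\alpha=|\sigma_1+\sigma_2+\sigma_3|\in\{1,3\}$, impossible for $\alpha\in(0,1)$ since $2^\alpha,3^\alpha\in(1,3)$ and $2^\alpha<2<3$, $3^\alpha<3$. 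Thus $\fR_\Lambda^{(1)}=\emptyset$, and since $\fP_\Lambda^{(1)}$ is finite the positive quantity $|\sum_a\sigma_a|j_a|^\alpha|$ attains a positive minimum $c=c(\alpha)$ on it, which is \eqref{lower.R1}.

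Item (iii) is where the work is. I would normalise so that $j_1,j_2\in\Lambda$, $j_3,j_4\in\Lambda^c$, and then, swapping within $\{1,2\}$, within $\{3,4\}$, and possibly applying the global flip, reduce the sign pattern to $(+,-,+,-)$ or $(+,+,-,-)$. In the pattern $(+,-,+,-)$ the small divisor is $|j_3|^\alpha-|j_4|^\alpha$ and momentum reads $j_3-j_4=j_2-j_1\in\{-2,0,2\}$; if $|j_3|=|j_4|$ then necessarily $j_3=j_4$ (since $j_3=-j_4\ne0$ would force $2|j_3|\ge4$) and then $j_1=j_2$, so the resonances of this pattern are exactly the $(\mathtt{k},\mathtt{k},\ell,\ell)$-family, and permuting gives \eqref{fR2Lambda}. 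For a non-resonant element $|j_3|\ne|j_4|$, and the key estimate — strict concavity of $t\mapsto t^\alpha$ — is that $a^\alpha-b^\alpha\ge a^\alpha-(a-1)^\alpha\ge\alpha\,a^{\alpha-1}$ for integers $a>b\ge0$; applying it with $\{a,b\}=\{|j_3|,|j_4|\}$ and $\max(|j_3|,|j_4|)=M$ gives $\bigl||j_3|^\alpha-|j_4|^\alpha\bigr|\ge\alpha/M^{1-\alpha}$, i.e. \eqref{lower.R2}. In the pattern $(+,+,-,-)$ the small divisor is $2-|j_3|^\alpha-|j_4|^\alpha$ and momentum reads $j_3+j_4=j_1+j_2\in\{-2,0,2\}$; if $j_3,j_4$ are both nonzero then $|j_3|,|j_4|\ge2$, so the small divisor equals $|j_3|^\alpha+|j_4|^\alpha-2\ge M^\alpha-1=(M-M^{1-\alpha})M^{\alpha-1}\ge(2-2^{1-\alpha})/M^{1-\alpha}$ (as $M\mapsto M-M^{1-\alpha}$ is increasing and $M\ge2$), while if at least one vanishes then momentum pins the other to $0$ or $\pm2$, so $M\le2$ and the small divisor is one of the finitely many positive numbers $2$, $2-2^\alpha$. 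In every sub-case it is bounded below by $c(\alpha)/M^{1-\alpha}$ and is never $0$, so $\fR_\Lambda^{(2)}$ really is the family in \eqref{fR2Lambda}; taking $c$ to be the minimum of the finitely many constants arising yields \eqref{lower.R2}.

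I expect the only genuinely quantitative input — and the reason the gap in (iii) is polynomially small in the frequency, rather than uniform as in (i)--(ii) — to be the concavity bound $a^\alpha-(a-1)^\alpha\ge\alpha a^{\alpha-1}$: in the pattern $(+,-,+,-)$ the two outside frequencies may be arbitrarily large while staying nearly equal. The main thing requiring care is to arrange the sign/momentum case split in (iii) so that all of $\fR_\Lambda^{(2)}$ is recovered and no small divisor in $\fP_\Lambda^{(2)}\setminus\fR_\Lambda^{(2)}$ is missed, in particular the ``zero-mode'' sub-cases in which some $j_a=0\in\Lambda^c$.
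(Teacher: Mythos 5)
Your proposal is correct and follows essentially the same strategy as the paper: normalize by permutations, reduce to a short case list via momentum, and bound small divisors directly, with the only quantitative input being the concavity bound for $t\mapsto t^\alpha$; the paper fixes the sign pattern $(+,-,+,-)$ first and then splits on where the $\Lambda^c$-indices sit (its Cases I and II in item (iii)), whereas you fix the positions of the $\Lambda$-indices first and then split on the two inequivalent sign patterns, but the resulting case lists correspond exactly. One harmless slip in (ii): your own parity argument forces $|j_4|$ to be odd and $j_4\in\Lambda^c$ with $|j_4|\le 3$, so in fact $|j_4|=3$, not $|j_4|\in\{2,3\}$, but this does not affect the conclusion.
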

\begin{proof} 
The gauge condition $ \sum_{a=1}^4 \sigma_a = 0 $ implies that exactly two $\sigma_a$'s are $+$, the other are $-$. 
So, up to permutation, we can always assume that $\sigma_1 = \sigma_3 = 1$ and $\sigma_2  = \sigma_4 = -1$.

$(i)$ In this case all indexes $j_1, j_2, j_3, j_4 \in \Lambda$, so  automatically   $|j_1|^\alpha - |j_2|^\alpha + |j_3|^\alpha-|j_4|^\alpha = 0$, so $\fP^{(0)}_\Lambda = \fR^{(0)}_\Lambda$. 
Next the momentum condition  $j_1-j_2 +j_3-j_4 =0$ gives that either $j_1 = j_2 = \mathtt{k}$, $j_3 =j_4 = \ell$,  yielding $\Big( (\mathtt{k}, \mathtt{k}, \ell, \ell), (+,-,+,-)\Big) $,  or 
 $j_1 = j_4 = \mathtt{k}$, $j_2 = j_3 = \ell$, yielding $\Big( (\mathtt{k},\ell,\ell, \mathtt{k}), (+,-,+,-)\Big) $, which is a permutation of the previous one.
 
 $(ii)$ We can always assume that $j_1, j_2, j_3 \in \Lambda$ and $j_4 \in \Lambda^c$.
 Then the resonant condition $|j_1|^\alpha - |j_2|^\alpha + |j_3|^\alpha - |j_4|^\alpha$ reduces to $|j_3|^\alpha - |j_4|^\alpha$, for which we have the lower bound
 $$
 \abs{|j_3|^\alpha - |j_4|^\alpha} \geq 
 \begin{cases}
 2^\alpha -1 , & \mbox{ if } |j_4| \geq 2  \ , \\
 1 , &\mbox{ if } j_4 = 0
 \end{cases} \  \ \ .
 $$
This proves  both $\fR_\Lambda^{(1)} = \emptyset$ and  \eqref{lower.R1}.

$(iii)$ We have two different cases. 

\noindent \underline{Case I}: W.l.o.g.  assume $j_1, j_3 \in \Lambda$, $j_2, j_4 \in \Lambda^c$. 
The momentum condition reads $j_1 + j_3 = j_2 + j_4$. We examine further subcases.\\
$\bullet$ If $j_2 = j_4 = 0$, then 
$
\abs{ |j_1|^\alpha - |j_2|^\alpha + |j_3|^\alpha - |j_4|^\alpha} = 2$.\\
$\bullet$ If $j_2 = 0$ and $j_4 \neq 0$,  from the momentum condition we get $|j_4| \leq 2$, so actually $j_4 = \pm 2$. Then 
$
\abs{ |j_1|^\alpha - |j_2|^\alpha + |j_3|^\alpha - |j_4|^\alpha}  = 2 - 2^\alpha >0$.\\
$\bullet$ If $j_2, j_4 \neq 0$, then $|j_2|, |j_4| \geq 2$. Then 
$
\abs{ |j_1|^\alpha - |j_2|^\alpha + |j_3|^\alpha - |j_4|^\alpha} 
\geq 
  2(2^\alpha -1) >0$.\\
Hence in Case I there are no resonances and the lower bound \eqref{lower.R2} holds.

\noindent \underline{Case II}: W.l.o.g.  assume that $j_1, j_2 \in \Lambda$, $j_3, j_4 \in \Lambda^c$.
The momentum condition reads $j_1 - j_2 = j_4- j_3$. Again we examine further subcases.\\
$\bullet$ If $j_1=j_2 = \tk \in \Lambda$, then,  by the momentum,  $j_3 = j_4 = \ell \in \Lambda^c$ and they form an element  of $\fR^{(2)}_\Lambda$. 
All other cases in \eqref{fR2Lambda} are obtained by permutations.\\
$\bullet$ If $j_1 \neq j_2$, then $j_4 = j_3 \pm 2$. Consider the ``+'' case, the other being analogous. 
The term $ \abs{ |j_1|^\alpha -  |j_2|^\alpha + |j_3|^\alpha - |j_4|^\alpha }$ reduces to 
\begin{equation}\label{res.case2}
\begin{aligned}
 \abs{  |j_3+2|^\alpha - |j_3|^\alpha } 
& \geq 
\begin{cases}
 2^\alpha & \mbox{ if } j_3 = 0 \mbox{ or } j_3 = -2 \\
4^\alpha -2^\alpha & \mbox{ if } j_3 = 2 \\
\dfrac{c_\alpha}{\max\big(|j_3|, \, |j_3+2|\big)^{1-\alpha}}
& \mbox{ if } |j_3|\geq 3
\end{cases}
\end{aligned}
\end{equation}
proving \eqref{lower.R2}.
\end{proof}

\smallskip
\noindent{\bf Projection of cubic vector fields.}
We introduce now projections of  cubic  vector fields on the sets $\fP_\Lambda^{(n)} $ and $\fR_\Lambda^{(n)}$. 
Recall that any real-to-real cubic  vector field $X(U)$,  translation and gauge invariant,  expand in Fourier as (see \eqref{polvect})
\begin{equation}
\label{X3.fou}
X(U)^\sigma = 
\!\!\!\!\!\!
\sum_{(\vec{\jmath}, k, \vec \sigma, - \sigma) \in \fP_4} \!\!\!\!\!\!
X_{\ j_1, j_2, j_3, k }^{\sigma_1, \sigma_2, \sigma_3, \sigma} \ u_{j_1}^{\sigma_1} \, u_{j_2}^{\sigma_2} \, u_{j_3}^{\sigma_3}\, { e^{\im \sigma k x }}  \ , 
\quad 
X_{\ j_{\pi(1)}, \ldots,j_{\pi(3)}, k}^{ \sigma_{\pi(1)}, \ldots, \sigma_{\pi(3)},\sigma} 
=  
X_{\ j_{1}, \ldots, j_{3}, k}^{ \sigma_{1}, \ldots, \sigma_{3},\sigma} \, 
\end{equation}
 for any permutation $ \pi $ of $ \{1, 2, 3 \} $.
Given a 
subset $A \subseteq \fP_4$, we denote by $\Pi_A X$ the vector field  obtained restricting the indexes to belong to $A$, namely 
\begin{equation}
\label{def:PiA}
(\Pi_A X)(U)^\sigma := 
\!\!\!\!\!\!\!
\sum_{(\vec{\jmath}, k, \vec \sigma, - \sigma)  \in A} 
\!\!\!\!\!\!\!
X_{\ j_1, j_2, j_3, k }^{\sigma_1, \sigma_2, \sigma_3, \sigma} \ u_{j_1}^{\sigma_1} \, u_{j_2}^{\sigma_2} \, u_{j_3}^{\sigma_3}\,  { e^{\im \sigma k x }}   \ . 
\end{equation}
We now compute the projections of the  cubic vector field in \eqref{rinormalizzata}, that we denote by 
\be\label{X3}
X_3(U)^+ := |u|^2 u_x  - \sM(u)u_x+ \ii \sP(u)u \ ,
\ee
on the sets $\fR^{(n)}_\Lambda$  defined in \eqref{resj} for  $n=0,1,2$.

\begin{lemma}\label{lem:wX}
The cubic,  translation and gauge invariant vector field $X_3(U)^+$ in \eqref{X3} fulfills: 
\begin{itemize}
\item[(i)] {\bf Structure:} There exists a $2$-homogeneous $1$-operator $M_{\tt NLS}^+(U) \in \wt \cM_2^1$ such that  $X_3(U)^+ = M_{\tt NLS}^+(U)u$;
\item[(ii)] {\bf Resonances:} The projections of the  vector field $X_3(U)^+$ 
on the sets $\fR_\Lambda^{(n)}$, $n=0,1,2$,   defined in \eqref{resj} are given by
\be\label{proj.X3}
\begin{aligned}
& (\Pi_{\fR_\Lambda^{(0)}} X_3)(U)^+ =  - \im |u_1|^2 u_1 \, e^{\im x}  + \im |u_{-1}|^2 u_{-1} \, e^{-\im x} \  ,  \\
& (\Pi_{\fR_\Lambda^{(1)}} X_3) (U)^+=  0 \ , \quad (\Pi_{\fR_\Lambda^{(2)}} X_3)(U)^+  =  0 \ . 
\end{aligned}
\ee
\end{itemize}
\end{lemma}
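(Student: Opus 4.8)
The plan is to prove both items by an explicit Fourier-side computation, item $(ii)$ being precisely where the renormalization leading to \eqref{rinormalizzata} is used. For item $(i)$ I would take
$$
M_{\tt NLS}^+(U) := \big(|u|^2-\sM(u)\big)\pa_x + \ii\,\sP(u)\,,
$$
so that $M_{\tt NLS}^+(U)\,u = |u|^2 u_x - \sM(u)u_x + \ii\sP(u)u = X_3(U)^+$. This operator is bilinear in $(u^+,u^-)$, hence $2$-homogeneous, and linear in its last argument, so the only thing left is to check the bound \eqref{smoocara}. Writing $|u|^2\pa_x v$ in Fourier, its matrix coefficient on the constraint $k = j_1 - j_2 + j$ is $\ii j$, bounded by $\langle j\rangle \le \max\{\langle j_1\rangle,\langle j_2\rangle,\langle j\rangle\}$, an order-$1$ contribution with $\mu=0$; the term $-\sM(u)\pa_x$ is supported on $j_1=j_2$ (whence $k=j$) with coefficient $-\ii j$, again order $1$; and $\ii\sP(u)$ is supported on $j_1=j_2$ with coefficient $-\ii j_1$, which on that support equals, up to sign, $\max_2\{\langle j_1\rangle,\langle j_2\rangle,\langle j\rangle\}$, an order-$0$ contribution. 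Hence $M_{\tt NLS}^+(U)\in\wt\cM_2^1$.

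For item $(ii)$ I observe first that $\Pi_{\fR_\Lambda^{(1)}}X_3 = 0$ since $\fR_\Lambda^{(1)} = \emptyset$ by Lemma \ref{lem:wres}$(ii)$. By Lemma \ref{lem:wres}$(i)$ and $(iii)$, the sets $\fR_\Lambda^{(0)}$ and $\fR_\Lambda^{(2)}$ contain only permutations of index data of the form $\big((\tk,\tk,\ell,\ell),(+,-,+,-)\big)$; hence $\Pi_{\fR_\Lambda^{(0)}}X_3$ and $\Pi_{\fR_\Lambda^{(2)}}X_3$ retain, among the cubic monomials of $X_3(U)^+$, only those of integrable type $|u_p|^2 u_q\, e^{\ii q x}$ — with $p,q\in\Lambda$ in the first case, and $\{p,q\}$ split one inside and one outside $\Lambda$ in the second. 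So everything reduces to computing the coefficient $c_{p,q}$ of $|u_p|^2 u_q\,e^{\ii q x}$ in $X_3(U)^+$.

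This is a direct count. Expanding $|u|^2 u_x = \sum_{m-n+\ell=k}(\ii\ell)\,u_m\bar u_n u_\ell\, e^{\ii kx}$, the monomial $u_p\bar u_p u_q\,e^{\ii qx}$ is produced with $n=p$ and $\{m,\ell\}=\{p,q\}$, which forces $k=q$, with total coefficient $\ii(p+q)$ when $p\neq q$ (the two orderings $(m,\ell)=(p,q),(q,p)$) and $\ii p$ when $p=q$; the term $-\sM(u)u_x$ contributes $-\ii q$ to that monomial, and $\ii\sP(u)u$ contributes $-\ii p$. Therefore $c_{p,q} = \ii(p+q)-\ii q-\ii p = 0$ for $p\neq q$, while $c_{p,p} = \ii p - \ii p - \ii p = -\ii p$ — the vanishing for $p\neq q$ being exactly the effect of the renormalization. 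Since every monomial indexed by $\fR_\Lambda^{(2)}$ has $p\neq q$ (as $\Lambda$ and $\Lambda^c$ are disjoint), this gives $\Pi_{\fR_\Lambda^{(2)}}X_3\equiv 0$; and $\Pi_{\fR_\Lambda^{(0)}}X_3$ keeps only the diagonal terms $p=q\in\Lambda=\{\pm1\}$, namely $c_{1,1}|u_1|^2 u_1 e^{\ii x}+c_{-1,-1}|u_{-1}|^2 u_{-1}e^{-\ii x} = -\ii|u_1|^2 u_1 e^{\ii x} + \ii|u_{-1}|^2 u_{-1} e^{-\ii x}$, which is \eqref{proj.X3}.

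I do not foresee a genuine obstacle: the computation is routine. The only points requiring care are the combinatorial bookkeeping of which Fourier orderings feed a given integrable monomial, and the observation that the integrable configurations with all four indexes outside $\Lambda$ belong to $\fR_\Lambda^{(4)}$ (irrelevant here) rather than to $\fR_\Lambda^{(2)}$, so that $\fR_\Lambda^{(2)}$ contributes only the $p\neq q$ monomials already shown to cancel.
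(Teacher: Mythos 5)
Your argument is correct and arrives at \eqref{proj.X3}, but it takes a somewhat more economical route than the paper. For item $(ii)$ the paper first writes $X_3(U)^+$ in the symmetrized Fourier form \eqref{X3.fou}, computes the symmetrized coefficients
$X_{j_1,j_2,j_3,k}^{+,-,+,+} = \tfrac{\ii}{6}\big(j_3\,\delta_{j_1\neq j_2}+j_1\,\delta_{j_3\neq j_2}-j_2(\delta_{j_1=j_2}+\delta_{j_3=j_2})\big)$ in \eqref{X3.coeff}, and then evaluates the projections $\Pi_{\fR^{(n)}_\Lambda}$ by counting, for each resonant index class, the $3$ or $6$ permutations that contribute and summing the corresponding symmetrized coefficients. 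You instead bypass the symmetrization entirely: since $\fR^{(0)}_\Lambda$ and $\fR^{(2)}_\Lambda$ are closed under permutations (Lemma \ref{lem:wres}), the projection onto them picks up, for each integrable monomial $|u_p|^2 u_q e^{\ii q x}$, the full sum over orderings, which is precisely the total Fourier coefficient $c_{p,q}$; you then compute $c_{p,q}=\ii(p+q)-\ii q-\ii p=0$ for $p\neq q$ and $c_{p,p}=-\ii p$ directly from the three pieces of \eqref{X3}. The relation to the paper's computation is $c_{p,q}=6\,X_{p,p,q,q}^{+,-,+,+}$ ($p\neq q$) and $c_{p,p}=3\,X_{p,p,p,p}^{+,-,+,+}$. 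Your approach is slightly shorter and makes the cancellation coming from the renormalization ($-\sM(u)u_x+\ii\sP(u)u$) visually transparent, while the paper's route produces the explicit symmetrized coefficients, which fit more directly into the abstract machinery of \eqref{def:PiA}. Item $(i)$ is essentially identical to the paper; the minor imprecision of calling the $\ii\sP(u)$ contribution ``order $0$'' (it satisfies \eqref{smoocara} with $\mu=1,m=0$, hence \emph{a fortiori} with $\mu=0,m=1$) is harmless.
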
 
 \begin{proof}
 $(i)$ 
Define  $M_{\mathtt{NLS}}^+(U)$ to be  the operator 
\be\label{MNLS+}
M_{\mathtt{NLS}}^+(U)v := 
\big(|u|^2 - \sM(u) \big) \ \pa_x v  + \im \sP(u) v   \ , 
\ee
so that $M_{\mathtt{NLS}}^+(U)u = X_3(U)^+$.  To prove that
$M_{\mathtt{NLS}}^+(U) \in \wt \cM_2^1$ we write it in  Fourier as 
\be\label{MNLS}
\begin{aligned}
& M_{\mathtt{NLS}}^+(U)v  =  \sum_{\sigma_1 j_1 +\sigma_2 j_2 +j = k \atop \sigma_1 + \sigma_2 = 0 }  M_{j_1, j_2, j, k}^{\sigma_1, \sigma_2}  \, u_{j_1}^{\sigma_1}  u_{j_2}^{\sigma_2}    v_{j}  \, {e^{\im k x}} , \\
&  \quad  M_{j_1, j_2, j, k}^{\sigma_1, \sigma_2 } := 
\begin{cases}
\frac{\im }{2}j  & \mbox{ if } j_1 \neq j_2 , \ j \neq k ,   \ \sigma_1 \neq \sigma_2  \\
- \frac{\im }{2} j_1 &\mbox{ if }  j_1 = j_2 ,   \ j = k  \ , \ \sigma_1 \neq  \sigma_2 \ . \\
0 & \mbox{ otherwise } \\
\end{cases}
\end{aligned}
\ee
The coefficients $ M_{j_1, j_2, j, k}^{\sigma_1, \sigma_2 } $ are symmetric in the first two indexes and fulfill \eqref{smoocara} with $m = 1$ and $\mu = 0$.

$(ii)$ 
As we shall compute the projectors using the definition \eqref{def:PiA},  we need first to write  
 $X_3(U)^+$ in the form \eqref{X3.fou}.
So expand  $X_3(U)^+$ in \eqref{X3} in Fourier, getting 
\begin{align*}
  X_3(U)^+ = & \sum_{j_1 - j_2 + j_3 = k \atop j_1 \neq j_2 }  \im j_3   u_{j_1} \bar u_{j_2} u_{j_3} e^{\im k x}  - \sum_{ j_1 = j_2, \ j_3=k} \im j_2 \, |u_{j_2}|^2 u_{j_3} e^{\im k x} =
  \sum_{(\vec \jmath ,k, \vec \sigma,-)\in \mP_4 }  N_{\vec \jmath ,k}^{\vec\sigma,+}   u_{\vec \jmath}^{\vec \sigma} e^{\im k x},
\end{align*}
where 
$$
N_{j_1,j_2,j_3,k}^{\sigma_1,\sigma_2,\sigma_3,+}:= \ii ( j_3 \delta_{j_1 \neq j_2}- j_2\delta_{j_1=j_2}\delta_{j_3=k})\delta_{(\sigma_1,\sigma_2,\sigma_3)=(+,-,+)} \ .
$$
The  coefficients of expansion \eqref{X3.fou} are obtained  by symmetrization 
$$
X_{j_1,j_2,j_3,k}^{\sigma_1,\sigma_2,\sigma_3,+}= \frac16 \sum_{\pi \in \mS_3} N_{j_{\pi(1)},j_{\pi(2)},j_{\pi(3)},k}^{\sigma_{\pi(1)},\sigma_{\pi(2)},\sigma_{\pi(3)},+}
$$
yielding 
\be\label{X3.coeff}
X_{j_1,j_2,j_3,k}^{+,-,+,+} = 
    \frac{\im}{6} \left( j_3 \delta_{j_1 \neq j_2} + j_1 \delta_{j_3 \neq j_2} - j_2 (\delta_{j_1  = j_2} + \delta_{j_3 = j_2} ) \right) 
\ee

\noindent\underline{Projection on  $\fR_\Lambda^{(0)}$:} 
We use the definition of projections in \eqref{def:PiA}.
In view of the  characterization of $\fR_\Lambda^{(0)}$ given in \eqref{R0}, 
we must consider only those monomials with indexes of the form  $\Big( (k, k, \ell, \ell),  (+,-, +, -) \Big)$ with $k, \ell \in \{ \pm 1\}$ and their permutations.
Once the last couple $(\ell,-)$ is fixed, than either $k=\ell$, giving the index $\big((\ell, \ell, \ell, \ell), (+,-, +,-) \big) $ and its 3 permutations, or  $k=-\ell$, giving
$\big((-\ell, -\ell, \ell, \ell), (+,-, +,-) \big) $ and its 6 permutations.
Therefore we obtain
\begin{align*}
( \Pi_{\fR_\Lambda^{(0)}} X_3)(U)^+  & =
\Big(  3 X_{\ 1, 1, 1, 1}^{+, -, +, +}  \, |u_{1}|^2     u_{1}  
+6 X_{ -1, -1, 1, 1}^{+, -, +, +}  \, |u_{-1}|^2     u_{1} \, \Big) {e^{\im  x} } \\
 & \ \ \   + 
 \Big(  6 X_{\ 1, 1, -1, -1}^{+, -, +, +}  \, |u_{1}|^2     u_{-1}  
+3 X_{ -1, -1, -1, -1}^{+, -, +, +}  \, |u_{-1}|^2     u_{-1} \, \Big) {e^{-\im  x} } \\
 & \stackrel{\eqref{X3.coeff}}{=} 
- \im |u_1|^2 u_1   {e^{\im  x} } + \im |u_{-1}|^2 u_{-1}  {e^{-\im  x} }
\end{align*}
proving the first of \eqref{proj.X3}.\\
\underline{Projection on  $\fR_\Lambda^{(1)}$:} It is zero since $\fR_\Lambda^{(1)} = \emptyset$ by Lemma \ref{lem:wres} $(ii)$. \\
\underline{Projection on  $\fR_\Lambda^{(2)}$:} 
In view of the  characterization of $\fR_\Lambda^{(2)}$ in  \eqref{fR2Lambda},
the   monomials surviving the projection have indexes of the form  $\Big( (k, k, \ell, \ell),  (+,-, +, -) \Big)$ (and their permutations) with only one among $k$ and $\ell$ in $\Lambda$.
Once the last index $(\ell, -)$ is fixed in either $\Lambda$ or $\Lambda^c$, and $k$ is fixed in the complementary set,  there 
are 6 possible  permutations.
Hence we get 
\begin{align*}
( \Pi_{\fR_\Lambda^{(2)}} X_3)(U)^+  
& =
  \sum_{k \in \Lambda^c} 6 X_{k, k, 1, 1}^{+, -, +, +}  \, |u_{k}|^2     u_{1}  
{e^{\im  x} } +  \sum_{k\in \Lambda^c} 6 
 X_{k, k, -1, -1}^{+, -, +, +}  \, |u_{k}|^2     u_{-1}   {e^{-\im  x} }\\
  &  \ \ + 
\sum_{\ell \in \Lambda^c}   \sum_{k = \pm1 } 6 X_{k, k, \ell, \ell}^{+, -, +, +}  \, |u_{k}|^2     u_{\ell}  \, 
{e^{\im \ell x} }  \stackrel{\eqref{X3.coeff}}{=} 0
\end{align*}
proving the last of \eqref{proj.X3}.
 \end{proof}
For later use, we prove a lemma about the  projections  on $\fR^{(n)}_{\Lambda}$, $n=0,1,2$,  of cubic  paradifferential vector fields. Precisely we have
\begin{lemma}\label{lem:proj.para}
Let $a(Z; x, \xi)$ be a 2-homogeneous symbol in $\wt \Gamma_2^m$, $m \in \R$, with zero average and fulfilling $a(\tg_\theta Z; \cdot) = a(Z; \cdot)$ for any $\theta \in \T$, where $\tg_\theta$ in \eqref{tra}. Then 
 \be\label{ident.Op.ab}
\Pi_{\fR^{(n)}_{\Lambda} } \left[ \vOpbw{ a(Z; x, \xi)} Z \right] = 0 \ , \quad n = 0,1,2 \ . 
\ee
\end{lemma}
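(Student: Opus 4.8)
The statement asserts that the paradifferential cubic vector field $Z \mapsto \vOpbw{a(Z;x,\xi)}Z$, built from a 2-homogeneous symbol $a$ with zero average and gauge-invariant, has vanishing projection onto the resonant index sets $\fR^{(n)}_\Lambda$ for $n=0,1,2$. The plan is to write everything in Fourier coordinates, identify which monomials can possibly survive each projection, and then show that the surviving index configurations force either a momentum/gauge contradiction or a non-resonance, so that the intersection with $\fR^{(n)}_\Lambda$ is empty.

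First I would expand the symbol as in \eqref{espr.hom.sym}, $a(Z;x,\xi) = \sum_{(\vec\jmath,\vec\tau)} a^{\vec\tau}_{\vec\jmath}(\xi)\, z^{\vec\tau}_{\vec\jmath}\, e^{\ii(\vec\tau\cdot\vec\jmath)x}$ with $\vec\jmath = (j_1,j_2)$, $\vec\tau = (\tau_1,\tau_2) \in \{\pm\}^2$. The gauge-invariance hypothesis $a(\tg_\theta Z;\cdot) = a(Z;\cdot)$ forces, on each monomial, $\tau_1+\tau_2 = 0$, i.e. $\vec\tau\cdot\vec 1 = 0$ (this is the content of \eqref{sym.gauge} read on homogeneous symbols, so that $\vOpbw{a}$ has indexes restricted to $\vec\sigma\cdot 1 = 0$); the zero-average hypothesis discards the $\vec\jmath$ with $\tau_1 j_1 + \tau_2 j_2 = 0$, though this will be subsumed by the resonance analysis. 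Then using the Bony-Weyl quantization formula \eqref{BW} applied to the diagonal vectorial operator \eqref{vecop}, the $+$-component of $\vOpbw{a(Z;x,\xi)}Z$ is a sum of monomials $z^{\tau_1}_{j_1} z^{\tau_2}_{j_2} z^{+}_{j}\,\chi_2(\cdot)\, a^{\vec\tau}_{\vec\jmath}\big(\tfrac{j+k}{2}\big) e^{\ii k x}$ with the constraints $\tau_1 j_1 + \tau_2 j_2 + j = k$ and $\tau_1+\tau_2+1 = +1$, hence $\tau_1 = -\tau_2$, say $(\tau_1,\tau_2) = (\varepsilon,-\varepsilon)$. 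Thus in the notation of \eqref{X3.fou} the only possible index-types carried by this cubic vector field have $\vec\sigma$ a permutation of $(+,-,+)$ on the three input slots and $\sigma = +$ on the output — exactly one ``$-$'' among the three inputs.

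Now I would run through the three projections using Lemma \ref{lem:wres}. For $\fR^{(0)}_\Lambda$: by \eqref{R0} a surviving monomial must have all four indexes in $\Lambda = \{\pm 1\}$ and index-type a permutation of $(\tk,\tk,\ell,\ell)$ with signs $(+,-,+,-)$. Tracking which slot is the paradifferential ``output'' $j$ versus the two symbol inputs $j_1,j_2$: the constraint from \eqref{BW} is $\varepsilon(j_1 - j_2) + j = k$ (writing the two symbol-input signs as $(\varepsilon,-\varepsilon)$), but crucially the symbol slot is reserved to the pair $(j_1,j_2)$ with opposite signs, whereas in the resonant configuration the two ``$+$'' indexes and the two ``$-$'' indexes each come in an equal pair $\{\tk,\tk\}$, $\{\ell,\ell\}$. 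I would check case by case (which of the two ``$+$'' indexes is $j$ and which is one of $j_1,j_2$) that the zero-average condition $\tau_1 j_1 + \tau_2 j_2 \neq 0$, i.e. $j_1 \neq j_2$, together with the momentum and the pairing structure, is incompatible; equivalently, every resonant monomial of type \eqref{R0} forces $j_1 = j_2$ in the symbol, which is excluded by zero average — hence the projection vanishes. For $\fR^{(1)}_\Lambda$: this is immediate since $\fR^{(1)}_\Lambda = \emptyset$ by Lemma \ref{lem:wres}$(ii)$. For $\fR^{(2)}_\Lambda$: by \eqref{fR2Lambda} a surviving monomial has index-type a permutation of $(\tk,\tk,\ell,\ell)$, $(+,-,+,-)$, with $\tk \in \Lambda$, $\ell \in \Lambda^c$; again I split on which of the equal-sign pairs occupies the symbol slot $(j_1,j_2)$ versus the output $j$. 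The $e^{\ii k x}$ frequency equals the symbol frequency $\varepsilon(j_1-j_2)$ shifted by $j$; matching this against the resonant configuration and using that the two equal indexes in each pair coincide forces $j_1 = j_2$ inside the symbol, contradicting zero average; and in the remaining cases where $j_1 \neq j_2$ is allowed the resonance condition $|j_1|^\alpha - |j_2|^\alpha + |j_3|^\alpha - |j_4|^\alpha = 0$ fails by the lower bound \eqref{lower.R2}. Either way the projection is zero.

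The main obstacle is bookkeeping, not depth: one must carefully disentangle the asymmetric role of the symbol's two input indexes (which carry opposite $\pm$ signs and cannot coincide by the zero-average hypothesis) from the symmetric pairing structure $(\tk,\tk,\ell,\ell)$ that characterizes the resonant sets in \eqref{R0} and \eqref{fR2Lambda}, and to do so over all permutations without missing a case. I would organize this by first reducing, via the permutation-symmetry \eqref{symmetric} of the vector field's coefficients, to a canonical ordering of the slots, then enumerating the at most a handful of placements of the ``symbol pair'' among the four indexes; in each the combination of momentum conservation $j_1 - j_2 + j_3 - j_4 = 0$, the sign structure, and $j_1 \neq j_2$ (zero average) closes the argument, with the $n=2$, $j_1\neq j_2$ subcase additionally invoking \eqref{lower.R2} to rule out the resonance.
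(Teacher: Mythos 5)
Your plan misses the mechanism that actually does most of the work in this lemma, and the alternative reasoning you substitute for it is incorrect. The paper's proof hinges on the observation that \emph{either} the zero--average kills the coefficient $a^{+,-}_{j_1,j_2}$ \emph{or} the paradifferential cut--off $\chi_2$ vanishes; you only use the first mechanism, and the gap cannot be patched by the resonance analysis you propose.

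Concretely, for $n=0$ your claim that ``every resonant monomial of type \eqref{R0} forces $j_1=j_2$'' is false. The index $\big((1,-1,-1,1),(+,-,+,-)\big)$ satisfies both the gauge and momentum constraints, all four entries lie in $\Lambda$, and it has the pairing form $(\tk,\ell,\ell,\tk)$ with $\tk=1$, $\ell=-1$ (the pairing in \eqref{R0} groups the two copies of $\tk$ with \emph{opposite} signs, not the two $+$ slots with each other, which is where your reading of the structure goes wrong). Here $j_1=1\neq -1=j_2$, so zero average does \emph{not} kill it. What kills it is $\chi_2\!\big(j_1,j_2,\tfrac{j+k}{2}\big)=\chi_2(1,-1,0)=0$, since $\max(|j_1|,|j_2|)=1 > \delta_0\,\langle 0\rangle$. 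This is precisely the inequality \eqref{proj.op01}-type argument the paper uses, and you never invoke it.

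The same issue recurs in $n=2$. In the pairings $j_1=k\in\Lambda$, $j_2=j\in\Lambda^c$ (and its mirror), the tuple \emph{is} resonant --- it lies in $\fR^{(2)}_\Lambda$ as characterized in \eqref{fR2Lambda} --- so appealing to the non--resonance lower bound \eqref{lower.R2} is a category error: \eqref{lower.R2} applies to $\fP^{(2)}_\Lambda\setminus\fR^{(2)}_\Lambda$, but here you are projecting \emph{onto} $\fR^{(2)}_\Lambda$. These resonant configurations with $j_1\neq j_2$ are eliminated by the cut--off (when $k\in\Lambda$, $j\in\Lambda^c$ one has $\max(|j_1|,|j_2|)\sim |j|$ while $\langle \tfrac{j+k}{2}\rangle\sim|j|/2$, so $\chi_2$ vanishes), not by non--resonance. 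Without the cut--off mechanism the lemma is simply not true for a general zero--average symbol, so this is a genuine gap rather than a detail to be filled in.
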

\begin{proof}
Recalling  \eqref{vecop}, $\left( \vOpbw{ a(Z; x, \xi)} Z \right)^+ =  \Opbw{ a(Z; x, \xi)} z$. 
Using definition  \eqref{BW} specialized to quadratic  symbols  
 fulfilling $a(\tg_\theta Z; \cdot) = a(Z; \cdot)$, $\forall \theta \in \T$, and the comments right below \eqref{sym.gauge}, we get 
\begin{equation}\label{para2}
 \Opbw{ a(Z; x, \xi)} z = \sum_{ j_1-j_2 +j=k} \chi_2 \left( j_1, j_2,\frac{j+k}{2}\right) a_{j_1, j_2}^{+,-}\left(\frac{j+k}{2}\right) z_{j_1} \bar z_{j_2} z_j
{e^{\im k x}} \ .
\end{equation}
The point is that, when projecting on $\Pi_{\fR^{(n)}_{\Lambda} } $, $n = 0,1,2$, either  the cut-off  $\chi_2(\cdot, \cdot)$  or the coefficient  $a_{j_1, j_2}^{+,-}$ vanish. 
Recall that $\chi_2(\xi', \xi) \equiv 0$ whenever $|\xi'| > \frac{\la \xi \ra}{10}$.   \\
\underline{Case $n = 0$:} In this case  $j_1, j_2, j, k \in \Lambda$, and 
{$ 
\chi_2 \left( j_1, j_2,\frac{j+k}{2}\right) = 0$ }for any choice of $j_1, j_2, j, k$.\\
\underline{Case $n = 1$:} By Lemma \ref{lem:wres} $\fR_\Lambda^{(1)} = \emptyset$ and there is nothing to prove.\\
\underline{Case $n = 2$:}  By Lemma \ref{lem:wres} the indexes $j_1, j_2, j, k$ are pairwise equal. \\
Assume first that $j_1 = j_2$,  then $a_{j_1, j_1}^{+,-} = 0$ since $a(Z;\cdot)$ has zero-average in $x$.\\
The case  $j_1 = j  \in \Lambda$ and $j_2 = k \in \Lambda^c$ violates the momentum conservation, as well as $j_1 = j \in \Lambda^c$, $j_2=k \in \Lambda$.\\
In case  $j_1 = k \in \Lambda$ and $j_2 = j \in \Lambda^c$, the cut-off vanishes since
$$
 \chi_2 \left( \pm 1, j,\frac{j  +k}{2}\right)  \equiv  0 \quad \forall k \in \Lambda, \ 
j \in \Lambda^c \ . 
 $$
 Analogously  the case  $j_1 = k  \in \Lambda^c$,  $j_2 = j \in \Lambda$ is ruled out, concluding the proof.
\end{proof}

 \smallskip 
 
\noindent{\bf Identification argument.} We prove an  abstract identification argument in the spirit of \cite{BFP,BFF}. In section \ref{sec:nf} we shall conjugate equation \eqref{rinormalizzata} with an admissible transformation. Without doing explicit computations, we shall a posteriori identify the explicit form of the resonant parts of the conjugated vector field thanks to the following proposition.
 \begin{proposition}[Identification of the resonant normal form]\label{prop.ident}
 Let $\bF(U)$ be 
 a $2$-admissible transformation (see Definition \ref{admtra}) with a gain $\vr \geq 0$. There exist 
 	 $r,s_0>0$ such that, provided  $U(t) \in B_{s_0, \R}(I;r)$ is a solution of the system
 	\begin{equation}\label{U.start}
 	\partial_t U= -\ii \vOmega(D)U+ X_3(U), \qquad 
  \vOmega(D): = \begin{pmatrix}
  |D|^\alpha  & 0 \\
0 & - |D|^\alpha  
\end{pmatrix} 
 	\end{equation}
where
 	\begin{equation}\label{X.form}
 X_3(U)=M_2(U)U  \ , \quad M_2(U) \mbox{ a  matrix of operators in }\widetilde \cM^1_2 \ ,  
\end{equation} 
then the variable 
 	$ 	Z:=\mF(U) = \bF(U)U$  solves 
 	\begin{equation}\label{Z.end}
 	\partial_t Z = -\ii \vOmega(D)Z + \widetilde X_3(Z)+ \widetilde M_{\geq 4}(Z)Z \ .  	
 	\end{equation}
 	Here   $\widetilde M_{\geq 4}(Z)$ is a matrix of non-homogeneous $7$-operators in $\cM^7_{\geq 4}[r]$, whereas $\widetilde X_3(Z)$ is a
  cubic vector field fulfilling  
 	\begin{equation}\label{ab.ident}
 	\Pi_\fA \widetilde X_3= \Pi_\fA  X_3, \quad \text{for any } \fA \subseteq \fR \ 
 	\end{equation}
   where $\fR$ is the $4$-waves resonant set in \eqref{res1}.
 \end{proposition}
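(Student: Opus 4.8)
The plan is to compute $\pa_t Z$ directly from $Z=\bF(U)U$, organise the result by homogeneity in $U$, and then rewrite it in terms of $Z$ by means of the local inverse $\mF^{-1}$ furnished by Lemma~\ref{loc.inv}. Writing $\mathcal N(U):=-\ii\vOmega(D)U$, so that $\pa_t U=\mathcal N(U)+X_3(U)$, one has
\[
\pa_t Z=\di_U\bF(U)\big[\mathcal N(U)+X_3(U)\big]U+\bF(U)\big(\mathcal N(U)+X_3(U)\big).
\]
Using the expansion $\bF(U)=\uno+\bF_2(U)+\bF_{\geq4}(U)$ with $\bF_2\in\wt\cM^2_2$, $\bF_{\geq4}\in\cM^2_{\geq4}[r]$, the part of degree $1$ in $U$ is $\mathcal N(U)$; subtracting $\mathcal N(Z)=\mathcal N(U)+\mathcal N(\bF_2(U)U)+\mathcal N(\bF_{\geq4}(U)U)$ leaves, at degree $3$,
\[
\widetilde X_3(U):=X_3(U)+\di_U\bF_2(U)[\mathcal N(U)]U+\bF_2(U)\mathcal N(U)-\mathcal N\big(\bF_2(U)U\big),
\]
all the other contributions having homogeneity $\geq5$ in $U$. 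Since $\bF$ is $2$-admissible, Lemma~\ref{loc.inv} gives $U=\mF^{-1}(Z)=Z-\bF_2(Z)Z+\bG_{\geq4}(Z)Z$ with $\bG_{\geq4}\in\cM^4_{\geq4}[r']$; as $X_3$ and $\widetilde X_3$ are cubic, substituting $U=Z+O(Z^3)$ only modifies terms of degree $\geq5$, so that $\pa_t Z=\mathcal N(Z)+\widetilde X_3(Z)+\widetilde M_{\geq4}(Z)Z$, where $\widetilde M_{\geq4}(Z)Z$ gathers every term of degree $\geq5$ in $Z$.

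The core of the argument is the observation that $\widetilde X_3-X_3$ is a Lie bracket with the linear flow. Setting $\mathcal G(U):=\bF_2(U)U$, the chain rule $\di_U(\bF_2(U)U)[\widehat U]=\di_U\bF_2(U)[\widehat U]U+\bF_2(U)\widehat U$ and the linearity of $\mathcal N$ give
\[
\widetilde X_3-X_3=\di_U\mathcal G(\cdot)[\mathcal N(\cdot)]-\di_U\mathcal N(\cdot)[\mathcal G(\cdot)]=:[\mathcal N,\mathcal G].
\]
Expanding $\mathcal G$, which is cubic and translation and gauge invariant, in Fourier as in \eqref{polvect}, $\mathcal G(U)^\sigma_k=\sum_{(\vec\jmath,k,\vec\sigma,-\sigma)\in\fP_4}\mathcal G^{\vec\sigma,\sigma}_{\vec\jmath,k}\,u^{\vec\sigma}_{\vec\jmath}$ (its coefficients obeying \eqref{smoocara} since $\bF_2\in\wt\cM^2_2$), and using that $\mathcal N(U)^\sigma=-\ii\sigma|D|^\alpha u^\sigma$ multiplies the $j$-th mode of the $\sigma$-component by $-\ii\sigma|j|^\alpha$, a direct computation yields
\[
[\mathcal N,\mathcal G](U)^\sigma_k=-\ii\!\!\!\!\sum_{(\vec\jmath,k,\vec\sigma,-\sigma)\in\fP_4}\!\!\!\!\Big(\sigma_1|j_1|^\alpha+\sigma_2|j_2|^\alpha+\sigma_3|j_3|^\alpha-\sigma|k|^\alpha\Big)\mathcal G^{\vec\sigma,\sigma}_{\vec\jmath,k}\,u^{\vec\sigma}_{\vec\jmath}.
\]
Comparing with \eqref{res1}, where the four frequencies are $(j_1,j_2,j_3,k)$ carrying the signs $(\sigma_1,\sigma_2,\sigma_3,-\sigma)$, the factor in brackets vanishes precisely on $\fR$. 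Hence $\Pi_\fR[\mathcal N,\mathcal G]=0$, so $\Pi_\fA\widetilde X_3=\Pi_\fA\big(X_3+[\mathcal N,\mathcal G]\big)=\Pi_\fA X_3$ for every $\fA\subseteq\fR$, which is \eqref{ab.ident}.

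It then remains to verify that $\widetilde M_{\geq4}(Z)$ is a matrix of non-homogeneous $7$-operators in $\cM^7_{\geq4}[r]$. Each degree-$\geq5$ contribution, namely $\di_U\bF_2(U)[X_3(U)]U$, $\bF_2(U)X_3(U)$, $\bF_{\geq4}(U)(\mathcal N(U)+X_3(U))$, $\mathcal N(\bF_{\geq4}(U)U)$, $\di_U\bF_{\geq4}(U)[\mathcal N(U)+X_3(U)]U$, together with the corrections produced when $U=\mF^{-1}(Z)$ is inserted into $X_3(U)+[\mathcal N,\mathcal G](U)$, becomes, after substituting $U=\bG(Z)Z$, a non-homogeneous $m'$-operator applied to $Z$ with $m'\leq7$; this is obtained by propagating orders through Lemma~\ref{nuovetto}, using $X_3=M_2(\cdot)\cdot$ with $M_2\in\wt\cM^1_2$, that $\vOmega(D)$ has order $\alpha<1$, the order-$2$ bounds for $\bF_2$ and $\bF_{\geq4}$ and the order-$4$ bound for $\bG_{\geq4}$ from Lemma~\ref{loc.inv}, and the estimates \eqref{stima.d.adm}, \eqref{tri.est.F2} and Theorem~\ref{thm:contS}. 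I expect this last part, namely tracking the orders and, in particular, absorbing the one-derivative loss coming from $\pa_t U=\mathcal N(U)+X_3(U)$ into the $C^1$-in-$U$ regularity of the admissible transformation (which, as $m=2\geq1$, leaves enough room), to be the only delicate point; the identification of the resonant component is the short Fourier computation above.
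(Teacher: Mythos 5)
Your proposal is correct and follows essentially the same route as the paper: you expand $Z=\bF(U)U$ by homogeneity, identify the cubic correction $\widetilde X_3-X_3$ as the Lie bracket of $\mathcal G(U)=\bF_2(U)U$ with the linear flow, and read off in Fourier the resonance factor $\sigma_1|j_1|^\alpha+\sigma_2|j_2|^\alpha+\sigma_3|j_3|^\alpha-\sigma|k|^\alpha$, which vanishes on $\fR$ — this is exactly the bracket $\comm{\bF_2(Z)Z}{-\im\vOmega(D)Z}$ and the display that follows \eqref{wtX3}. The order bookkeeping for $\widetilde M_{\geq 4}$, which you correctly defer to Lemma~\ref{nuovetto}, estimates \eqref{stima.d.adm}, \eqref{tri.est.F2} and Theorem~\ref{thm:contS}, is precisely what the paper isolates as Lemma~\ref{Lem:mappa_homo}.
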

\begin{proof}
	Defining 
	$	 X(U):= -\ii \vOmega(D)U+ X_3(U)$, 
the variable $Z$ solves the equation 
$$
\pa_t Z= \mF^* X (Z):= \di_U \mF(U)\left[ X(U)\right]_{|U=\mF^{-1}(Z)},
$$
where to invert the nonlinear map $\cal F$ we used Lemma \ref{loc.inv}.

Next we provide a Taylor expansion of the push-forward vector field  $\mF^*X$.
Using the expansion \eqref{esp:F} for $\cF(U) = \bF(U)U$, we get
\begin{align}
& 	\di_U \mF(U)\left[ X(U)\right]\label{push_forward_U}\\
	= & -\ii \vOmega(D)U+ X_3(U)+ \bF_2(U)[-\ii \vOmega(D)U]+\di_U \bF_2(U)[-\ii \vOmega(D)U ]U+ M_{\geq 4}(U)U\notag 
\end{align}
where, using the structure \eqref{X.form} of $X_3(U)$ 
\begin{align}
 M_{\geq 4 }(U)W:= &-\bF_{\geq 4}(U)\ii \vOmega(D)W+\bF_{\geq 4}(U)M_2(U)W+ \di_U \bF_{\geq 4}(U)[X(U) ]W\notag\\
 &+ \bF_2(U)M_2(U)W+ \di_U \bF_2(U)[X_3(U)]W. \label{Mappa:M}
\end{align}
We prove in Lemma \ref{Lem:mappa_homo} below that  $M_{\geq 4}(U)$ is a matrix of non-homogeneous operators in $ \mM_{\geq 4 }^3[r]$.
Next we compute \eqref{push_forward_U} at 
\be\label{Gdec}
U=\mF^{-1}(Z)\stackrel{\eqref{exp.G}}{=} \bG(Z)Z, \qquad \bG(Z)-\uno  = \underbrace{- \bF_2(Z)+ \bG_{\geq 4}(Z)}_{=:\bG_{\geq 2}(Z)} \in \Sigma\cM^4_{2}[r],
\ee
obtaining 
\begin{equation*}
\mF^*X(Z)= -\ii \vOmega(D)Z+ 
\widetilde X_3(Z)+ \widetilde M_{\geq 4}(Z)Z 
\end{equation*}
where 
\begin{align}\label{wtX3}
& \widetilde X_3(Z)  :=  X_3(Z) +		 \comm{\bF_2(Z)Z}{ - \im \vOmega(D) Z } \\
\notag
 	& \ \ \comm{\bF_2(Z)Z}{ - \im \vOmega(D) Z }:= 
    \ii \vOmega(D)\bF_2(Z)Z+ \bF_2(Z)[-\ii \vOmega(D)Z]+\di_Z \bF_2(Z)[-\ii \vOmega(D)Z ]Z
    \end{align}
   and
\begin{align}
\notag
\widetilde M_{\geq 4}(Z)W=	&-\ii \vOmega(D)\bG_{\geq 4}(Z)W+ \big[M_2(\mF^{-1}(Z))\bG(Z)-M_2(Z)\big]W \\
& -\big[ \bF_2(\mF^{-1}(Z))\ii \vOmega(D)\bG(Z)- \bF_2(Z)\ii \vOmega(D)\big]W\notag  \\
&  -\big[\di_U \bF_2(\mF^{-1}(Z))[\ii \vOmega(D)\mF^{-1}(Z) ]\bG(Z)-\di_Z \bF_2(Z)[\ii \vOmega(D)Z ]\big]W\notag\\
&+M_{\geq 4}(\mF^{-1}(Z))\bG(Z)W\label{Mappa:Mtilde}
\end{align}
We prove in Lemma \ref{Lem:mappa_homo} below that $\widetilde M_{\geq 4}(Z)$ belongs to $ \mM_{\geq 4 }^7[r]$. This concludes the proof of \eqref{Z.end}. To prove \eqref{ab.ident} we note that 
 		$$
 	\comm{\bF_2(Z)Z}{ - \im \vOmega(D) Z }^\sigma  =
\!\!\!\!\!\!\! 	
 	 \sum_{(\vec{\jmath}, k, \vec \sigma, - \sigma) \in \fP_4}  \!\!  \!\!\!\!\!\!
 	- \im\left( \sigma_1 |j_1|^\alpha +\sigma_2 |j_2|^\alpha+\sigma_3 |j_3|^\alpha  - \sigma |k|^\alpha \right) 
 	\tF_{\vec{\jmath}, k}^{\vec \sigma, \sigma} \, z_{\vec{\jmath}}^{\vec \sigma}\, e^{\im \sigma k x } \ ;
 		$$
 		it then follows that, for any set $\fA \subseteq \fR$, one has 
 		$$
 		\Pi_\fA  \comm{\bF_2(Z)Z}{ - \im \vOmega(D) Z }\equiv 0
 		$$
 		which, together with \eqref{wtX3}, implies \eqref{ab.ident}.
\end{proof} 
 \begin{lemma}\label{Lem:mappa_homo}
 	There is $r>0$ such that  $M_{\geq 4}(U)$ defined in \eqref{Mappa:M} is a matrix of $3$-operators  in $ \mM^3_{\geq 4}[r]$ and  $\widetilde M_{\geq 4}(Z)$ defined in \eqref{Mappa:Mtilde} is a matrix of $7$-operators in  $ \mM^7_{\geq 4}[r]$. 
  \end{lemma}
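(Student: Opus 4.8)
The plan is to treat $M_{\geq 4}(U)$ and $\widetilde M_{\geq 4}(Z)$ by going through, one at a time, the five summands in their defining formulas \eqref{Mappa:M} and \eqref{Mappa:Mtilde}, and checking that each is a non-homogeneous $m$-operator of the claimed order and of homogeneity $\geq 4$. The bookkeeping rests on three facts recalled from earlier: the composition rule of Lemma \ref{nuovetto}-4, by which composing $m$-operators of orders $m_1,m_2$ and homogeneities $\geq p_1,\geq p_2$ produces an $m$-operator of order $m_1+\max(m_2,0)$ and homogeneity $\geq p_1+p_2$; the fact that $-\ii\vOmega(D)$ is a constant-coefficient operator in $\wt\mM^\alpha_0$ with $\alpha<1$; and the fact that, by \eqref{X.form} and \eqref{smoothing}, $X_3(U)=M_2(U)U$ with $M_2(U)\in\wt\mM^1_2$ is a cubic vector field of order $1$, i.e.\ $\|X_3(U)\|_{s-1}\lesssim_s\|U\|_{s_0}^2\|U\|_s$. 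Translation invariance and the real-to-real structure will be inherited from the building blocks and will not be commented on.

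For $M_{\geq 4}(U)$ the three composition terms $-\bF_{\geq 4}(U)\ii\vOmega(D)$, $\bF_{\geq 4}(U)M_2(U)$ and $\bF_2(U)M_2(U)$ will land, by Lemma \ref{nuovetto}-4 and the expansion \eqref{esp:F} of an admissible transformation, in $\mM^{2+\alpha}_{\geq 4}[r]$, $\mM^3_{\geq 6}[r]$ and $\wt\mM^3_4$, all contained in $\mM^3_{\geq 4}[r]$. The two differential terms $\di_U\bF_2(U)[X_3(U)]$ and $\di_U\bF_{\geq 4}(U)[X(U)]$, with $X(U)=-\ii\vOmega(D)U+X_3(U)$, need a small trick: the tame bounds \eqref{tri.est.F2} and \eqref{stima.d.adm} say that $\di_U\bF_2(U)[\,\cdot\,]$ and $\di_U\bF_{\geq 4}(U)[\,\cdot\,]$ lose $2$ derivatives on the last entry, and when we substitute into the ``direction'' a vector field of order $\leq 1$ (namely $X_3$, or $\vOmega(D)U$ of order $\alpha<1$) we absorb the extra derivative loss by applying these estimates at Sobolev index $s-1$ in place of $s$, at the price of enlarging $s_0$ by one and shrinking $r$; the outcome then satisfies exactly an estimate of the form \eqref{piove} with $m=3$ and homogeneity $\geq4$. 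Summing the five contributions gives $M_{\geq 4}(U)\in\mM^3_{\geq 4}[r]$.

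For $\widetilde M_{\geq 4}(Z)$ the new ingredient will be Lemma \ref{loc.inv}: $\mF^{-1}(Z)=\bG(Z)Z$ with $\bG(Z)\in\cM^0_{\geq0}[r']$ a $0$-operator, and, by \eqref{exp.G}, $\bG(Z)-\uno\in\Sigma\cM^{4}_2[r']$ (this is the order $2m=4$) and $\bG_{\geq4}(Z)\in\cM^{4}_{\geq4}[r']$; in particular $\mF^{-1}(Z)-Z=(\bG(Z)-\uno)Z$ has homogeneity $\geq3$. Since $\bG(Z)$ has order $0$, the substitutions $M_2(\mF^{-1}(Z))=M_2(\bG(Z)Z)$, $\bF_2(\mF^{-1}(Z))=\bF_2(\bG(Z)Z)$, and likewise $\di_U\bF_2(\mF^{-1}(Z))[\ii\vOmega(D)\mF^{-1}(Z)]$, remain non-homogeneous $m$-operators of the same orders ($1$, $2$, $2+\alpha$ respectively) as the objects evaluated at $Z$, and they expand as the corresponding object at $Z$ plus a remainder of homogeneity $\geq4$; moreover $M_{\geq4}(\mF^{-1}(Z))\in\mM^3_{\geq4}[r']$ by Lemma \ref{nuovetto}-5 and the first part. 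Then, in each of the three bracketed differences in \eqref{Mappa:Mtilde} I add and subtract the ``diagonal'' term so as to split it as (object at $\mF^{-1}(Z)$)$\,\circ\,(\bG(Z)-\uno)$ plus (object at $\mF^{-1}(Z)$ minus object at $Z$): the first piece is a composition whose order, by Lemma \ref{nuovetto}-4 and the order $4$ of $\bG(Z)-\uno$, is at most $2+\alpha+4<7$, and the second piece has homogeneity $\geq4$ and order $\leq 2+\alpha$. Together with $-\ii\vOmega(D)\bG_{\geq4}(Z)\in\mM^{4+\alpha}_{\geq4}[r']$ and $M_{\geq4}(\mF^{-1}(Z))\bG(Z)\in\mM^3_{\geq4}[r']$, all five terms lie in $\mM^{7}_{\geq4}[r']$ (the ceiling $7$ coming from $2+\alpha+4$ with $\alpha<1$), and relabelling $r'\leadsto r$ finishes.

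The only genuinely delicate points — all the rest being routine order/homogeneity counting — will be: (i) correctly tracking that substituting an order-$m'$ vector field into the direction of a differential raises the operator order by $m'$, which I handle through the $s\to s-1$ (or $s\to s-\alpha$) shift combined with an enlargement of $s_0$; and (ii) organizing the add-and-subtract decompositions in $\widetilde M_{\geq4}(Z)$ so that the contributions of homogeneity $\leq3$ cancel and only genuinely homogeneity-$\geq4$ operators survive — this is precisely what makes the order-$4$ factor $\bG(Z)-\uno$, and hence the final order $2+\alpha+4<7$, appear. One must also keep an eye on the successive shrinking of $r$ and enlargement of $s_0$ along the chain of substitutions, but only finitely many steps occur so this is harmless.
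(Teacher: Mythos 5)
Your proposal follows essentially the same route as the paper's proof: treat each of the five summands in \eqref{Mappa:M} and \eqref{Mappa:Mtilde} as a composition of $m$-operators and differentials of admissible transformations, invoke Lemma~\ref{nuovetto} to track orders under composition and substitution, use the tame derivative estimates \eqref{tri.est.F2} and \eqref{stima.d.adm} at a shifted Sobolev index (enlarging $s_0$ correspondingly), and for $\widetilde M_{\geq 4}(Z)$ exploit Lemma~\ref{loc.inv} together with a telescoping decomposition so that, after substituting $U=\mF^{-1}(Z)=\bG(Z)Z$, only homogeneity-$\geq 4$ remainders survive. Your ``add-and-subtract'' splitting is algebraically equivalent to the multilinear telescoping the paper carries out in \eqref{alg.0}, and the composition/substitution bookkeeping is the right engine.

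One inaccuracy in your order count should be corrected. When you split each bracketed difference in \eqref{Mappa:Mtilde} as (object at $\mF^{-1}(Z)$)$\circ(\bG(Z)-\uno)$ plus (object at $\mF^{-1}(Z)$ minus object at $Z$), the second piece does \emph{not} have order $\leq 2+\alpha$. Substituting $\mF^{-1}(Z)-Z=\bG_{\geq 2}(Z)Z$ — with $\bG_{\geq 2}(Z)$ of order $2m=4$ — into an internal variable of $M_2$, $\bF_2(\cdot)\ii\vOmega(D)$ or $\di_U\bF_2(\cdot)[\ii\vOmega(D)\,\cdot\,]$ raises the operator order by the order of $\bG_{\geq 2}$, by the same mechanism as in Lemma~\ref{nuovetto}-2 (stated there for smoothing operators but the order-bookkeeping principle is identical for $m$-operators). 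So that second piece has order up to $(2+\alpha)+4=6+\alpha$, not $2+\alpha$; this is exactly why the paper targets $\|\cdot\|_{s-7}$ and enlarges $s_0$ to $s_0+4$ or $s_0+5$ in \eqref{mappa.sost1}--\eqref{mappa.sost3}. Fortunately $6+\alpha<7$, so the error leaves your final conclusion $\widetilde M_{\geq 4}(Z)\in\mM^7_{\geq 4}[r]$ intact, but the intermediate claim should be fixed.
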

\begin{proof}
We need to show that each term in \eqref{Mappa:M} and \eqref{Mappa:Mtilde} fulfills \eqref{piove} with $p=4$, some $ s_0 \geq 0$ and $m$ equal $3$ or $7$.
This is proved exploiting that each term is a composition of either $m$-operators or differentials of admissible transformations and therefore satisfying 
\eqref{stima.d.adm}. 
As an example, we explicitly show  how to bound  the most difficult terms in \eqref{Mappa:M} and \eqref{Mappa:Mtilde}.
Recall that, by definition of admissible transformations,  $\bF(U)  - \uno$ is a matrix of $2$-operators in $\Sigma \cM_2^{2}[r]$ for some $ r >0$. 

We start from $\di_U \bF_{\geq 4}(U)[X(U)]W$ in \eqref{Mappa:M}.
Using \eqref{stima.d.adm} (with $s\leadsto s-1$ and $m =2$) and that $\| X(U)\|_{s-1}\lesssim \| U\|_s$, we get 
\begin{align*}
	\| \di_U \bF_{\geq 4}(U)[X(U) ]W\|_{s-3}\lesssim &\| U\|_{s_0}^3\| X(U)\|_{s_0}\|W\|_{s-1}+ \| U\|_{s_0}^3\| X(U)\|_{s-1}\|W\|_{s_0}\\
	&+\| U\|_{s_0}^2 \| U\|_{s-1}\| X(U)\|_{s_0}\| W\|_{s_0}\\
			\lesssim &\| U\|_{s_0+1}^4\|W\|_{s}+ \| U\|_{s_0+1}^3\| U\|_{s}\|W\|_{s_0+1}
\end{align*}
proving \eqref{piove} with $s_0\leadsto s_0+1$.

Now we consider the  term in the third line of  \eqref{Mappa:Mtilde}.  Using the trilinearity of $(V, V', W) \mapsto \di_U \bF_2(V)[ V']W$ and \eqref{Gdec} we decompose it as
\begin{align}
\label{alg.0}
&\big[\di_U \bF_2(\mF^{-1}(Z))[\ii \vOmega(D)\mF^{-1}(Z) ]\bG(Z)-\di_Z \bF_2(Z)[\ii \vOmega(D)Z ]\big]W\\
\notag
&=  \di_U \bF_2\left(\bG_{\geq 2}(Z)Z\right)[ \ii \vOmega(D)\mF^{-1}(Z)] \bG(Z)W +\di_U \bF_2(Z)[\ii \vOmega(D)\mF^{-1}(Z) ]\bG_{\geq 2}(Z)W \\
\notag
& \quad +\di_U \bF_2(Z)[\ii \vOmega(D)\bG_{\geq 2}(Z) ]W
\end{align}
 We bound each term in \eqref{alg.0} separately. We shall repeatedly use that $\| \vOmega(D)U\|_{s-\alpha}\leq \| U\|_s$.
First, using  \eqref{tri.est.F2} and then \eqref{smoothing}, \eqref{piove},  \eqref{stima.inv.adm} and \eqref{Gdec}, we get
\begin{align} 
\|\di_U \bF_2&\left(\bG_{\geq 2}(Z)Z\right)[ \ii \vOmega(D)\mF^{-1}(Z)] \bG(Z)W\|_{s-7}\notag\\
\lesssim & \| \bG_{\geq 2}(Z)Z\|_{s_0}\| \vOmega(D)\mF^{-1}(Z)\|_{s_0}\| \bG(Z)W\|_{s-5}+\| \bG_{\geq 2}(Z)Z\|_{s_0}\| \vOmega(D)\mF^{-1}(Z)\|_{s-5}\| \bG(Z)W\|_{s_0}\notag\\
&+\| \bG_{\geq 2}(Z)Z\|_{s-5}\|\vOmega(D)\mF^{-1}(Z) \|_{s_0}\|\bG(Z)W \|_{s_0}\notag\\
\lesssim &\| Z\|_{s_0+4}^4\|W\|_{s}+\| Z\|_{s_0+4}^3\|Z\|_{s}\| W\|_{s_0+4}.\label{mappa.sost1}
\end{align}
Similarly one obtains
\begin{equation}
\|\di_U \bF_2\left(Z\right)[ \ii \vOmega(D)\mF^{-1}(Z)] \bG_{\geq 2}(Z)W\|_{s-7} 
\lesssim \| Z\|_{s_0+4}^4\|W\|_{s}+\| Z\|_{s_0+4}^3\|Z\|_{s}\| W\|_{s_0+4}.\label{mappa.sost2}
\end{equation}
Finally, using \eqref{tri.est.F2} and then  \eqref{smoothing}, \eqref{piove}  and \eqref{Gdec}, we get 
\begin{align} 
\|\di_U \bF_2&\left(Z\right)[ \ii \vOmega(D)\bG_{\geq 2}(Z)Z] W\|_{s-7}\notag\\
\lesssim & \| Z\|_{s_0}\| \vOmega(D)\bG_{\geq 2}(Z)Z\|_{s_0}\| W\|_{s-5}+\| Z\|_{s_0}\| \vOmega(D)\bG_{\geq 2}(Z)Z\|_{s-5}\| W\|_{s_0}\notag\\
&+\| Z\|_{s-5}\|\vOmega(D)\bG_{\geq 2}(Z)Z \|_{s_0}\|W \|_{s_0}\notag\\
\lesssim &\| Z\|_{s_0+5}^4\|W\|_{s}+\| Z\|_{s_0+5}^3\|Z\|_{s}\| W\|_{s_0+5}.\label{mappa.sost3}
\end{align}
Estimates \eqref{mappa.sost1}, \eqref{mappa.sost2} and \eqref{mappa.sost3} prove that the operator in \eqref{alg.0} is a non-homogeneous $7$-operator in $\mM_{\geq 4}^{7}[r]$.
\end{proof}
 
\section{Paradifferential  normal form} \label{sec:nf}
The goal of this section is to use
paradifferential transformations and  Birkhoff normal forms, in the spirit of \cite{BD}, to 
put the quasilinear equation \eqref{rinormalizzata} into a suitable  normal form.
However, the normal form that we shall obtain is rather different from the one of \cite{BD} and of \cite{BFP, BFF, BMM, MMS}; indeed, in these papers, the paradifferential part has symbols with constant coefficients (at least at low homogeneity), and the smoothing vector field is in Birkhoff  normal form, namely supported only on resonant monomials.
On the contrary, our normal form has to 
  two important and different features,  see Theorem \ref{thm:nf}: 
$(i)$ the cubic part of the paradifferential vector field has  a  dominant transport term  with {\em variable coefficients} and supported only on resonant sites, see \eqref{VresZ}, and 
$(ii)$ the  cubic smoothing vector field is in  a suitable weak normal form,  that we call  $\Lambda$-normal form and we now introduce.

\begin{definition}[$\Lambda$-normal form] 
\label{def:wr}
Let $\Lambda = \{1,-1\}$ as in \eqref{Lambda}.  A cubic, translation and gauge invariant vector field $X(U)$ is said to be in 
\begin{itemize}
\item {\em weak-$\Lambda$ normal form} if  all its  monomials with at most two indexes outside $\Lambda$ are resonant, i.e. 
\begin{equation}
\begin{aligned}
& \Pi_{\fP^{(n)}_\Lambda} X = \Pi_{\fR^{(n)}_\Lambda} X  \ , \quad n = 0,1,2 \ ;
\end{aligned}
\end{equation}
\item {\em strong-$\Lambda$ normal form} if  in addition there are no resonant  monomials with one or two indexes outside $\Lambda$, i.e. 
\begin{equation}
\begin{aligned}
& \Pi_{\fP^{(0)}_\Lambda} X = \Pi_{\fR^{(0)}_\Lambda} X  \ ,  \quad 
 \Pi_{\fP^{(1)}_\Lambda} X = \Pi_{\fP^{(2)}_\Lambda} X  = 0 \ ,
\end{aligned}
\end{equation}
\end{itemize}
the sets $\cP^{(n)}_\Lambda$, $\fR^{(n)}_\Lambda$ being defined in \eqref{momj} and \eqref{resj}.
\end{definition}
Note that a cubic vector field in  {\em strong-$\Lambda$ normal form}  is composed by monomials 
$
u_{j_1}^{\sigma_1}  u_{j_2}^{\sigma_2} u_{j_3}^{\sigma_3} e^{\im \sigma k x}  
$
whose indexes $\big( (j_1, j_2, j_3, k), (\sigma_1, \sigma_2, \sigma_3, -\sigma) \big) $  are 
\begin{itemize}
\item either in  $\Lambda$ and resonant, i.e. $\big( (j_1, j_2, j_3, k), (\sigma_1, \sigma_2, \sigma_3, -\sigma) \big) \in  \fR^{(0)}_\Lambda$;
\item or  at least three indexes are  outside $\Lambda$, i.e. $\big( (j_1, j_2, j_3, k), (\sigma_1, \sigma_2, \sigma_3, -\sigma) \big) \in  \fP^{(3)}_\Lambda \cup \fP^{(4)}_\Lambda$.
\end{itemize}

\smallskip 
To start the  normal form procedure, 
it is convenient to write \eqref{rinormalizzata} as the system in the variable $U := \vect{u}{\bar u}$ given by 
\begin{equation}
\label{eq.U}
\pa_t U = - \im \vOmega(D) U + X_3(U) , \quad X_3(U) = 
\lvect{|u|^2   u_x - \sM(u) u_x + \im \sP(u) u}{|u|^2\bar{u_x}  - \sM(u)\bar{u_x}  - \im \sP(u) \bar u}
\end{equation}
where $\vOmega(D)$ is defined in \eqref{U.start} and, with $M_{\mathtt{NLS}}^+$ the $1$-operator in $\wt \cM_2^1$    in  \eqref{MNLS+}, 
\begin{equation}\label{X3.mop}
X_3(U) =M_{\mathtt{NLS}}(U)U  \ , \qquad  M_{\mathtt{NLS}}(U) := \begin{pmatrix}
M_{\mathtt{NLS}}^+(U) & 0 \\
0 & \bar{M_{\mathtt{NLS}}^+(U) } 
\end{pmatrix} \ .
\end{equation}
The first step is to paralinearize such system.
 \begin{lemma}[Paralinearization] \label{para} 
Fix $\vr \geq 0$ and $s_0 > \varrho +\frac32$. If $u(t) \in H^{s_0}(\T, \C)$ solves 
equation  \eqref{rinormalizzata}, then $U(t) = \vect{u(t)}{\bar u(t)}$ solves the system in paradifferential form (recall the notation in \eqref{vecop})
\begin{equation}\label{U.para}
\pa_t U= 
- \im \vOmega(D) 
U + \vOpbw{\im \, \und \tV(U;x)  \xi +\im\,  \und \td(U;x) } U
+ 
\zOpbw{b(U;x)} U
+ R_2(U) U
\end{equation}
where: \\
$\bullet$ $ \vOmega(D) $ is the matrix of Fourier multipliers in \eqref{U.start};
\\
$\bullet$ $\tV(U;x),\, \und \td(U;x),  \in \wt \cF^\R_2 $ and $ b(U;x)  \in \wt \cF_2$  are the zero-average,  2-homogeneous functions
\begin{align}
&\und \tV(U;x) := |u|^2 - \sM(u)  =  \sum_{k_1\not=k_2} u_{k_1}\bar u_{k_2} \, e^{ \im (k_1-k_2) x }, \label{vuoneunder}\\
&\und \td(U;x):= \,\textup{Im}(u_x \bar u) - \sP(u)  = 
\Im \sum_{k_1\not= k_2 } \im \,  k_1 u_{k_1} \bar u_{k_2} e^{ \im (k_1-k_2) x } \ , \\
&b(U;x):= u u_x = \sum_{k_1,k_2\in \Z} \im  \frac{k_1+k_2}{2} u_{k_1}u_{k_2}e^{\im (k_1+k_2)x}, 
\end{align}
where $\sM(u),\, \sP(u)$ are the mass and momentum defined in \eqref{MP};\\
$\bullet$ $R_2(U)$  is a real-to-real, gauge invariant matrix of smoothing operators in $\wt \cR^{-\varrho}_{2}$.
\end{lemma}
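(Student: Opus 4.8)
The plan is to paralinearize the scalar equation \eqref{rinormalizzata} for $u$ and then recover the system for $U=\vect{u}{\bar u}$ by complex conjugation, using the real-to-real structure. The linear term $-\im|D|^\alpha u$ is already a Fourier multiplier and produces $-\im\vOmega(D)U$; moreover $-\sM(u)u_x$ and $\im\sP(u)u$ are $u_x$ and $u$ multiplied by the \emph{real constants} $\sM(u),\sP(u)$, hence equal $\Opbw{-\im\sM(u)\xi}u$ and $\Opbw{\im\sP(u)}u$ exactly (recall $\Opbw{\im\xi}=\partial_x$, the cut-off satisfying $\chi(0,\cdot)\equiv1$). So the only genuine work is to paralinearize the cubic term $|u|^2u_x=u\,\bar u\,u_x$.

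First I would apply Bony's paraproduct decomposition (Lemma \ref{bony}) to the triple product, with $f=u$, $g=\bar u$, $h=u_x$, obtaining
\be\label{sk.bony}
u\,\bar u\,u_x=\Opbw{u\bar u}\,u_x+\Opbw{u\,u_x}\,\bar u+\Opbw{\bar u\,u_x}\,u+Q_1(u,\bar u)u_x+Q_2(u,u_x)\bar u+Q_3(\bar u,u_x)u,
\ee
where $Q_1,Q_2,Q_3$ are homogeneous smoothing operators of arbitrary order. For the first term one writes $\Opbw{u\bar u}\,u_x=\big(\Opbw{u\bar u}\circ\Opbw{\im\xi}\big)u$ and composes, which by Proposition \ref{teoremadicomposizione} gives, modulo a smoothing operator in $\wt\cR^{-\vr}_2$,
\be\label{sk.compA}
\Opbw{u\bar u}\circ\Opbw{\im\xi}=\Opbw{\im\xi\,u\bar u-\tfrac12\partial_x(u\bar u)}
\ee
(the symbolic expansion stops after one term because $\im\xi$ is affine in $\xi$; reaching order $-\vr$ needs $\vr+1$ terms, which is where $s_0\ge\vr+2$ enters). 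Adding to $\im\xi\,u\bar u$ the contribution $-\Opbw{\im\sM(u)\xi}$ from $-\sM(u)u_x$ yields the leading transport symbol $\im\,\und\tV(U;x)\,\xi$, with $\und\tV=|u|^2-\sM(u)$ as in \eqref{vuoneunder}.

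It then remains to collect the zeroth-order pieces. The middle term of \eqref{sk.bony}, $\Opbw{u\,u_x}\,\bar u$, is the off-diagonal contribution: setting $b(U;x):=u\,u_x\in\wt\cF_2$, it is the $(+)$-component of $\zOpbw{b(U;x)}U$. The remaining order-zero scalar terms are $\Opbw{\bar u\,u_x}u$, the correction $\Opbw{-\tfrac12\partial_x(u\bar u)}u$ from \eqref{sk.compA}, and $\Opbw{\im\sP(u)}u$; since $-\tfrac12\partial_x(u\bar u)=-\Re(u_x\bar u)$, their symbols add up to $-\Re(u_x\bar u)+\bar u\,u_x+\im\sP(u)=\im\Im(u_x\bar u)+\im\sP(u)=\im\,\und\td(U;x)$, a purely imaginary symbol with $\und\td\in\wt\cF_2^\R$, whose additive constant is pinned down by the zero-average requirement (recall \eqref{MP}). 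As for the remainders in \eqref{sk.bony}: in each, exactly one factor carries a $\partial_x$, and since the $Q_j$'s gain arbitrarily many derivatives this single derivative can be absorbed (treat $\partial_x$ as an order-one multiplier and use the composition rules for $m$-operators, Lemma \ref{nuovetto}), still leaving smoothing operators; rewriting everything in the variable $U$, these --- together with the composition remainder from \eqref{sk.compA} --- assemble into a matrix $R_2(U)\in\wt\cR^{-\vr}_2$. Since $X_3(U)$ is real-to-real and gauge and translation invariant, and Bony's decomposition preserves all three, $R_2(U)$ inherits these properties. Putting the pieces together gives $\partial_t u=-\im|D|^\alpha u+\Opbw{\im\,\und\tV\,\xi+\im\,\und\td}u+\Opbw{b}\bar u+(R_2(U)U)^+$; conjugating this and using $u^-=\bar{u^+}$, the reality of $\und\tV,\und\td$, and $\bar A=\Opbw{\bar{a(x,-\xi)}}$ produces the $(-)$-component and the matrix form \eqref{U.para} with $\vOpbw{\cdot},\zOpbw{\cdot}$ as in \eqref{vecop}.

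The step I expect to be the main (though essentially bookkeeping) obstacle is tracking the single derivative that falls on the input slot, both in \eqref{sk.compA} and in the three Bony remainders, and checking that the resulting operators land in the claimed classes $\wt\cF_2$ and $\wt\cR^{-\vr}_2$ rather than one order worse; this is controlled exactly by the structure of \eqref{sk.compA} and by the arbitrary smoothing of the $Q_j$'s. Everything else --- computing the explicit functions $\und\tV,\und\td,b$ and verifying the real-to-real, gauge and translation symmetries --- is a routine Fourier-mode computation.
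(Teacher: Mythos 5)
Your proposal follows the same route as the paper's (very brief) proof: Bony's paraproduct decomposition for the cubic term, then the composition theorem to split $\Opbw{u\bar u}\circ\partial_x$ into the transport symbol, a zero--order correction, and a smoothing remainder. Two small remarks, neither of which is a genuine gap.

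First, the claim that $-\sM(u)u_x$ and $\im\sP(u)u$ equal $\Opbw{-\im\sM(u)\xi}u$ and $\Opbw{\im\sP(u)}u$ \emph{exactly} is a slight abuse. The symbols $\sM(u)\,\im\xi$ and $\sP(u)$ are $2$-homogeneous in $U$, so their Bony--Weyl quantization \eqref{BW} uses the cutoff $\chi_2(\vec\jmath,\xi)$, which does vanish when $|\vec\jmath|$ is large compared to $\la\xi\ra$ even though $\vec\sigma\cdot\vec\jmath=0$. The identity $\chi(0,\cdot)\equiv1$ that you invoke pertains to the \emph{non-homogeneous} quantization \eqref{BWnon}, and the two quantizations differ by a smoothing operator. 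Thus, as the paper itself records, $\sM(u)u_x=\Opbw{\sM(u)\im\xi}u+R(U)u$ and $\sP(u)u=\Opbw{\sP(u)}u+R(U)u$ with $R$ smoothing; this is harmless since $R$ is absorbed into $R_2(U)$, but the word ``exactly'' is not right.

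Second, your computation of the zero-order symbol, $\im\big(\Im(u_x\bar u)+\sP(u)\big)$, is correct: since the $x$-average of $\Im(u_x\bar u)$ equals $\sum_k k|u_k|^2=-\sP(u)$, the quantity $\Im(u_x\bar u)+\sP(u)$ is precisely the zero-average part of $\Im(u_x\bar u)$, in agreement with the Fourier-sum expression for $\und\td$ in the lemma. (The closed-form expression $\und\td:=\Im(u_x\bar u)-\sP(u)$ displayed in the statement carries a sign typo; the zero-average Fourier sum is what is used throughout.) Everything else is as in the paper, just with the computations spelled out.
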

\begin{proof}
The nonlinearity $|u|^2 u_x$ is paralinearized in a  standard way using  Lemma \ref{bony} and Proposition  \ref{teoremadicomposizione}, getting  a smoothing remainder $R(U)$ whose coefficients fulfill
\eqref{smoocara} with $\mu \leadsto \varrho +1$ and $m\leadsto  - \varrho$.
Note also that, in view of the Bony quantization \eqref{regula12}, \eqref{BW} for homogeneous symbols 
\begin{align*}
\sM(u) u_x = \Opbw{\sM(u) \im \xi} u + R(U)u  \ , 
\quad
\sP(u) u = \Opbw{\sP(u)} u + R(U)u
\end{align*}
for some smoothing remainders in $\wt \cR^{-\varrho}_{2}$. 
Finally,  remark that equation \eqref{eq.U} is real-to-real and gauge invariant. Since also the paradifferential operators 
in \eqref{U.para} are  real-to-real and gauge invariant (see \eqref{prodotto} and \eqref{sym.gauge}),  by difference so is the  matrix of smoothing operators $R_2(U)$.
\end{proof}

\begin{remark}\label{rem:LWP}
 Exploiting the continuity Theorem \ref{thm:contS} and the symbolic calculus of Proposition \ref{teoremadicomposizione}, one checks easily that a solution of \eqref{U.para} (namely the paralinearization of  \eqref{rinormalizzata}) fulfills the  cubic energy estimate
\begin{equation}\label{esercizio}
     \pa_t \norm{U(t)}_s^2 \lesssim \norm{U(t)}_{s_0}^2 \norm{U(t)}_s^2
\end{equation}
 for any $s > s_0 > \frac32$. It is then standard to deduce local well-posedness in $H^s$, $s > \frac32$, for equation \eqref{U.para} --  see e.g. the scheme in
  \cite[Chapter 7]{Metivier}. Moreover, the energy estimate \eqref{esercizio} shows that initial data of size $0<\delta \ll 1$ gives rise to solution remaining of size $\sim 2 \delta$ for times of order $\delta^{-2}$.
\end{remark}

The main result of the section is the following normal form theorem.

\begin{theorem}\label{thm:nf}
There exist $s_0, r >0$ and a $2$-admissible transformation $\bF(U) \in \cM_{\geq 0}^0[r]$ with gain $3$ (see  Definition \ref{admtra}) such that
 if $U(t) \in B_{s_0, \R}(I;r)$ solves \eqref{U.para} then
the variable 
\be \label{Def:ZU}
 Z:= \mF(U):=\bF(U)U \quad \mbox{ solves }
  \ee    
	 \be \label{Z.eq}
	 \begin{aligned} 
	 	\pa_t Z= &	 	- \im \vOmega(D)Z +
		 \vOpbw{ \im  \langle \,\und \tV\, \rangle(Z;x) \xi +   \im { a}^{(\alpha)  }_2(Z;x,\xi)}Z +  {R}_2^{(\Lambda)}(Z)Z
 \\
		&  + \vOpbw{    \im \wt V_{\geq 4 }(Z;x)  \xi +
\im   \wt a^{(\alpha) }_{\geq 4} (Z;x,\xi)} Z	 + \wt {B}_{\geq 4}(Z) Z
	 \end{aligned}
	 \ee
	 where:\\
$\bullet$ $ \vOmega(D) $ is the matrix of Fourier multipliers in \eqref{U.start};\\
$\bullet$  $ \langle \,\und \tV\, \rangle(Z;x)$ is the  zero-average, real valued   function in $\wt \cF^\R_2$ defined by
		\be \label{VresZ}
		 \langle \,\und \tV\, \rangle(Z;x) := 
		2\,  \Re \Big(\sum_{n \in \N} z_{n} \, \bar{z_{-n}} \, e^{\im 2n x} \Big) \ ;
		\ee 
$\bullet$ ${ a}_2^{(\alpha)}(Z;x,\xi)$ is a  zero average, gauge-invariant, real symbol in $\wt \Gamma_2^{\alpha }$; \\
$\bullet$   $  \wt V_{\geq 4 }(Z;x) $ is a real function in $\mF_{\geq 4}^{\R}[r]$ and 
  $ \wt a^{(\alpha) }_{\geq 4} (Z;x,\xi)$  a real non-homogeneous symbol in $\Gamma^\alpha_{\geq 4}[r]$;\\
$\bullet$ ${R}_2^{(\Lambda)}(Z)$ is a real-to-real and gauge invariant matrix of smoothing operators in $\wt \cR_2^{-4}$ such that the  cubic vector field 
\be\label{Xlambda}
X^{(\Lambda)}(Z):=  {R}_2^{(\Lambda)}(Z)Z 
\ee
 is in strong-$\Lambda$ normal form (see Definition \ref{def:wr}).
Precisely, with the notation in \eqref{def:PiA}, 
\begin{equation}\label{Y.str}
\begin{aligned}
& (\Pi_{\fP^{(0)}_\Lambda} X^{(\Lambda)}) (Z) = 
\begin{pmatrix}
- \im |z_1|^2 z_1 \, e^{\im x}  + \im |z_{-1}|^2 z_{-1} \, e^{-\im x}   \\
 \im |z_1|^2 \bar{z_1} \, e^{-\im x}  - \im |z_{-1}|^2  \bar{z_{-1}} \, e^{\im x}  
\end{pmatrix}
\  ,  \\
& \Pi_{\fP_\Lambda^{(1)}} X^{(\Lambda)}=  \Pi_{\fP_\Lambda^{(2)}} X^{(\Lambda)} =  0 \ .
\end{aligned}
\end{equation}
$\bullet$ Finally $\wt B_{\geq 4}(Z)$ is a real-to-real  matrix of $0$-operators in $ \mM^0_{\geq 4}[r]$.
\end{theorem}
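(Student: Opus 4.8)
The plan is to implement a two-step normal form reduction via admissible transformations, using the composition and identification machinery already developed in Sections 2 and 3. The starting point is the paralinearized system \eqref{U.para}. The strategy is to first kill/simplify the part of the transport symbol $\und\tV(U;x)$ that is \emph{not} of the resonant form \eqref{VresZ}, and simultaneously eliminate the order-$1$ and order-$\alpha$ pieces of the paradifferential vector field that are non-resonant, and then to put the resulting cubic smoothing remainder into $\Lambda$-normal form. I would proceed as follows.

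\textbf{Step 1 (paradifferential normal form).} I would construct an admissible transformation of the form $\Phi^1(U)$, obtained by composing flows of the two types described in Lemma \ref{lem:flow.ad}: a flow with generator $\vOpbw{\frac{\beta(U;x)}{1+\tau\beta_x(U;x)}\ii\xi}$ with $\beta \in \wt\cF_2^\R$ to normalize the order-$1$ transport part, and a flow with generator $\zOpbw{g(U;x,\xi)}$, $g\in\wt\Gamma_2^0$, to remove the off-diagonal $\zOpbw{b(U;x)}U$ term, plus possibly a further $\vOpbw{\cdot}$ flow of order $\alpha$ to handle $\und\td$ and create the symbol $a_2^{(\alpha)}$. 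The key point is that the homological equations to be solved involve the first-order Melnikov divisors $\sigma_1|j_1|^\alpha+\sigma_2|j_2|^\alpha+\sigma_3|j_3|^\alpha-\sigma|k|^\alpha$; by Lemma \ref{lem:wres}, these are bounded below whenever the indices are \emph{not} in the resonant set $\fR$, and also for $\fP^{(1)}_\Lambda$ and $\fP^{(2)}_\Lambda\setminus\fR^{(2)}_\Lambda$ (with the $\alpha$-dependent loss in \eqref{lower.R2}), so the generators we build are genuine symbols of the right class. Each such $\Phi^1(U)$ is a $2$-admissible transformation by Lemmas \ref{lem:flow.ad}, \ref{flow.s.ad}, and by Lemma \ref{lem:comp} their composition $\bF(U)$ is again admissible; the conjugation rules \eqref{commurule} and Proposition \ref{teoremadicomposizione} control the remainders, which land in $\Sigma\cR^{-4}_2[r]$ (or $\mM^0_{\geq 4}[r]$ for the quartic parts), and one tracks the orders carefully to land at $\varrho=4$ as claimed. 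The surviving cubic transport symbol is then precisely $\langle\,\und\tV\,\rangle(Z;x)$ in \eqref{VresZ} by the explicit projection of $\und\tV$ onto resonant sites, using Lemma \ref{lem:proj.para} to see that the projections of the paradifferential vector fields onto $\fR^{(n)}_\Lambda$ vanish.

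\textbf{Step 2 ($\Lambda$-normal form on the smoothing remainder).} After Step 1 the equation has the form \eqref{Z.end} of Proposition \ref{prop.ident}, with a cubic smoothing vector field $\widetilde X_3(Z)$. I would now apply a further admissible transformation, a flow generated by a homogeneous smoothing operator $R(Z)\in\wt\cR^{-4}_2$ (Lemma \ref{flow.s.ad}), chosen to remove from $\widetilde X_3$ exactly the non-resonant monomials in $\fP^{(0)}_\Lambda\setminus\fR^{(0)}_\Lambda$, $\fP^{(1)}_\Lambda$, $\fP^{(2)}_\Lambda\setminus\fR^{(2)}_\Lambda$, and also the resonant monomials in $\fR^{(1)}_\Lambda,\fR^{(2)}_\Lambda$ — but the latter are empty/zero on $X_3$ by Lemma \ref{lem:wX}, and remain zero after conjugation by the identification Proposition \ref{prop.ident} (since $\fR^{(1)}_\Lambda,\fR^{(2)}_\Lambda\subseteq\fR$, formula \eqref{ab.ident} guarantees $\Pi_{\fR^{(1)}_\Lambda\cup\fR^{(2)}_\Lambda}\widetilde X_3=\Pi_{\cdot}X_3=0$ by \eqref{proj.X3}). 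The lower bounds in Lemma \ref{lem:wres} make these homological equations solvable with bounded smoothing generators. After this step, the only surviving resonant part is $\Pi_{\fP^{(0)}_\Lambda}X^{(\Lambda)}=\Pi_{\fR^{(0)}_\Lambda}X_3$, whose explicit form is read off from \eqref{proj.X3} combined with the identification \eqref{ab.ident}; this gives \eqref{Y.str}. The $\Lambda$-normal form property is then exactly Definition \ref{def:wr}.

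\textbf{Main obstacle.} The delicate point is bookkeeping of orders and homogeneities through the compositions: the transport flow is only a $2$-admissible transformation (loss of two derivatives in the variational estimates \eqref{stima.d.adm}), Proposition \ref{prop.ident} produces $7$-operators in the quartic remainder, and the $\alpha$-loss in the second-order Melnikov bound \eqref{lower.R2} must be absorbed — one must check that after all conjugations the smoothing remainder still regularizes by $\varrho=4$ and the quartic terms stay order $0$. Relatedly, one must verify that at each step the transformations preserve the real-to-real and gauge-invariance structures (Remarks \ref{rem:gauge.inv} and the symmetry discussion after \eqref{sym.gauge}), so that the final $\langle\,\und\tV\,\rangle$, $a_2^{(\alpha)}$ are real and gauge-invariant and $\wt B_{\geq4}$, $R_2^{(\Lambda)}$ are real-to-real. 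These are the steps that require care; the rest is a systematic application of the calculus developed above.
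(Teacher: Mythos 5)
Your proposal is correct and follows essentially the same route as the paper: remove the off-diagonal $\zOpbw{b}$ term, normalize the transport symbol $\und\tV$ to $\la\und\tV\ra$ via a transport-type flow, put the cubic smoothing remainder into weak-$\Lambda$ normal form with a smoothing flow, and then upgrade to strong-$\Lambda$ normal form and identify \eqref{Y.str} a posteriori by Proposition \ref{prop.ident} combined with Lemmas \ref{lem:wX} and \ref{lem:proj.para}. Two small corrections to how you describe Step 1: the homological equations there involve only two-index divisors ($|\xi|^\alpha$ for the block-diagonalization, $|j_1|^\alpha-|j_2|^\alpha$ for the transport normalization, see \eqref{beta2}), not the full four-index Melnikov divisors of Lemma \ref{lem:wres}, which enter only in the smoothing-level normal form of your Step 2; and the symbol $a_2^{(\alpha)}$ is not created by an extra flow but arises as the Egorov correction from conjugating the dispersion $-\im|\xi|^\alpha$ by the transport flow (with $\und\td$ simply absorbed into it, being a zero-order, zero-average symbol and $\alpha>0$).
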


The rest of the section is devoted to the proof of Theorem \ref{thm:nf}.


 \subsection{Block diagonalization}
The goal of this section is to remove the out-diagonal term $\zOpbw{b(U;x)}$ from equation \eqref{U.para} up to quadratic smoothing  operators and quartic bounded operators. Precisely we prove:
\begin{proposition}[Block-diagonalization]\label{propdiag}
Let  $\vr \geq  1-\alpha$. There exist $s_0, r >0$ and a $0$-admissible transformation $\Psi(U) \in \cM_{\geq 0}^0[r]$ with gain 5 (see  Definition \ref{admtra}) such that
if $U(t) \in B_{ s_0, \R}(I;r)$ solves \eqref{U.para}, then 
the variable 
\be \label{psione}
W:= \Psi(\bU)U  \quad \mbox{ solves }
\ee
	\begin{equation} \label{scalare}
		\pa_t W= 
	- \im \vOmega(D) 
	W + \vOpbw{\im \underline{\tV}(\bU;x)  \xi +\im \underline{\td}(\bU;x)} W+ R_2(\bU) W+ B_{\geq 4}(U)W
	\end{equation}
where:\\
$\bullet$ $\vOmega(D)$ is the matrix of Fourier multipliers defined in \eqref{U.start};\\
$\bullet$ 
$\underline{\tV}(\bU;x)$ and $\underline{\td}(\bU;x)$ are the zero average functions defined  in \eqref{vuoneunder};\\
	 $\bullet$  $R_2(\bU)$ is a real-to-real, gauge invariant matrix of homogeneous smoothing remainders in $  \wt{\cR}^{-\vr }_2$;\\
	  $\bullet$  $B_{\geq 4}(\bU)$ is a real-to-real matrix of non-homogeneous bounded operators   in $   \cM_{\geq 4}^0[r]$.
\end{proposition}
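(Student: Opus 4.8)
The plan is to remove the off-diagonal term $\zOpbw{b(\bU;x)}$ from \eqref{U.para} by one conjugation through the time-$1$ flow $\Psi(\bU):=\Phi^1(\bU)$ of an off-diagonal paradifferential generator $\zOpbw{g(\bU;x,\xi)}$ of \emph{negative} order $-\alpha$, tailored to the homological equation defined by $-\im\vOmega(D)$. The structural point is that, since $\vOmega(D)=\diag(|D|^\alpha,-|D|^\alpha)$ carries \emph{opposite} signs on the two diagonal slots, the commutator $[\zOpbw g,-\im\vOmega(D)]$ is governed at leading order by the \emph{anti}commutator $|D|^\alpha\Opbw g+\Opbw g|D|^\alpha$, i.e.\ by $\zOpbw{2\,\im\,|\xi|^\alpha g}$ plus lower order; so to balance $\zOpbw b$ one is led to $g\sim\tfrac{\im\,b}{2|\xi|^\alpha}$, a symbol of order $-\alpha<0$. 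Hence $\zOpbw g\in\wt\Gamma^{-\alpha}_2\subset\wt\Gamma^0_2$ and, by Lemma \ref{lem:flow.ad}$(ii)$ and Remark \ref{rem:gauge.inv}, $\Psi(\bU)$ is a gauge-invariant $0$-admissible transformation. Since $\Psi(\bU)$ depends on $\bU$, the conjugation also produces the ``derivative'' term $\di_U\zOpbw{g(\bU)}[-\im\vOmega(D)\bU]\,\bU$, again off-diagonal of order $-\alpha$ and thus not negligible at cubic order; to absorb it exactly I solve the homological equation at the level of the \emph{vector-field coefficients}, not of the principal symbol. Recalling from \eqref{wtX3} and the proof of Proposition \ref{prop.ident} that the full cubic correction is $\zOpbw b\,\bU+\comm{\zOpbw{g(\bU)}\bU}{-\im\vOmega(D)\bU}$ and that the coefficients of the bracket are $-\im\,(\sigma_1|j_1|^\alpha+\sigma_2|j_2|^\alpha+\sigma_3|j_3|^\alpha-\sigma|k|^\alpha)\,\tF^{\vec\sigma,\sigma}_{\vec\jmath,k}$ — the derivative term contributing precisely the symbol-frequency part $\sigma_1|j_1|^\alpha+\sigma_2|j_2|^\alpha$ — I choose $g$ so that $\zOpbw{g(\bU)}\bU$ has coefficients equal to those of $\zOpbw b\,\bU$ divided by $\im$ times that combination. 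Concretely, in the Bony--Weyl quantization the symbol frequency $\vec\jmath=(\ell_1,\ell_2)$ and the Weyl point $\xi$ fix the input/output frequencies as $\xi\mp\tfrac{\ell_1+\ell_2}{2}$, so on the relevant modes that combination equals $D(\ell_1,\ell_2,\xi):=|\ell_1|^\alpha+|\ell_2|^\alpha-\big|\xi-\tfrac{\ell_1+\ell_2}{2}\big|^\alpha-\big|\xi+\tfrac{\ell_1+\ell_2}{2}\big|^\alpha$, and I define $g(\bU;x,\xi)$ with Fourier coefficients $\tfrac{(\ell_1+\ell_2)/2}{D(\ell_1,\ell_2,\xi)}$, cut off to a conical neighbourhood of $\vec\jmath=0$ containing $\mathrm{supp}\,\chi_2$. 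The key estimate is that $D$ does not vanish on that support: $-D=\big|\xi-\tfrac{\ell_1+\ell_2}{2}\big|^\alpha+\big|\xi+\tfrac{\ell_1+\ell_2}{2}\big|^\alpha-|\ell_1|^\alpha-|\ell_2|^\alpha$, whose first two terms are $\gtrsim\la\xi\ra^\alpha$ by the subadditivity $|a|^\alpha+|b|^\alpha\ge(|a|+|b|)^\alpha$ for $\alpha\in(0,1)$ together with $|a|+|b|\ge|a+b|$, while the last two are $\lesssim\delta_0^\alpha\la\xi\ra^\alpha$ on $\mathrm{supp}\,\chi_2$, so $-D\gtrsim\la\xi\ra^\alpha$ provided $\delta_0$ is small enough depending on $\alpha$; thus $g\in\wt\Gamma_2^{-\alpha}$, and the $4$-wave resonances of \eqref{res1} (where $D=0$) lie outside $\mathrm{supp}\,\chi_2$, hence outside the support of $\zOpbw b$.

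With $\Psi$ fixed, I set $W:=\Psi(\bU)\bU$ and expand the push-forward equation by homogeneity exactly as in the proof of Proposition \ref{prop.ident}, with $\bF=\Psi$ and $\bF_2=\zOpbw g$. The quadratic part is $-\im\vOmega(D)W$; the cubic part is $\vOpbw{\im\,\underline{\tV}(W;x)\,\xi+\im\,\underline{\td}(W;x)}W+\big(\zOpbw{b(W;x)}W+\comm{\zOpbw{g(W)}W}{-\im\vOmega(D)W}\big)+R_2(W)W$, because $[\zOpbw g,\vOpbw{\cdot}]$ and $[\zOpbw g,R_2]$ are already quartic (so the diagonal transport and the smoothing remainder are unchanged at cubic order) and, by the choice of $g$, the parenthesis vanishes identically. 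All remaining terms — $[\zOpbw g,\vOpbw{\cdot}]$, $[\zOpbw g,\zOpbw b]$, $[\zOpbw g,R_2]$, the iterated commutators $\tfrac12[\zOpbw g,[\zOpbw g,-\im\vOmega(D)]]$, and the operator $M_{\ge4}$ from the push-forward — are of order $\le0$: since $\zOpbw g$ has order $-\alpha<0$, no such commutator raises the order (by \eqref{commurule}), the only borderline one being $[\zOpbw g,-\im\vOmega(D)]$, of order $0$ and already accounted for. By the composition calculus of Lemma \ref{nuovetto} they assemble into a real-to-real, gauge-invariant matrix $B_{\ge4}(\bU)\in\cM^0_{\ge4}[r]$, with quantitative bounds obtained as in Lemma \ref{Lem:mappa_homo}; the homogeneous smoothing part of the output is the unchanged $R_2(\bU)\in\wt\cR^{-\vr}_2$. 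This gives \eqref{scalare}.

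The hard part is the construction of the generator. One must invert $D$ \emph{exactly}, not merely at leading order, while keeping $g$ a genuine symbol in $\wt\Gamma_2^{-\alpha}$ — which is what forces the lower bound $-D\gtrsim\la\xi\ra^\alpha$ on $\mathrm{supp}\,\chi_2$ (hence a possibly small $\delta_0$ when $\alpha$ is small) and the remark that the resonant set does not meet that support — and one must track the $\bU$-dependence of $\Psi$, whose derivative enters at cubic order and cancels only because the homological equation is solved for the vector-field coefficients via the identity of Proposition \ref{prop.ident}: with the naive leading-order generator $g=\tfrac{\im b}{2|\xi|^\alpha}$ an off-diagonal remainder of order $-\alpha$ (which is $\le-(1-\alpha)$ only for $\alpha\ge\tfrac12$) would survive, and one would have to iterate.
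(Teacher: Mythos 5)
Your proposal is essentially correct, but it takes a genuinely different route from the paper's proof, and the trade-offs are worth recording. The paper's Lemma~\ref{Lem:homolo} does \emph{not} invert the full four-wave divisor; it only divides by $2\im|\xi|^\alpha$, which is trivially bounded below, and iterates: $g_2 = g^{(1)}+\cdots+g^{(p)}$, each step lowering the order of the residual by $\alpha$, so after $p>\vr/\alpha$ steps the residual $r_2$ lands in $\wt\Gamma_2^{-\vr}$ and is absorbed into the smoothing remainder $R_2$. No lower bound on $D$ and no restriction on $\delta_0$ beyond the standing $\delta_0\leq\tfrac1{10}$ are needed, and the argument works uniformly in $\alpha\in(0,1)$. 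Your route instead inverts $D(\ell_1,\ell_2,\xi)=|\ell_1|^\alpha+|\ell_2|^\alpha-|\xi-\tfrac{\ell_1+\ell_2}{2}|^\alpha-|\xi+\tfrac{\ell_1+\ell_2}{2}|^\alpha$ exactly at the level of the Bony--Weyl coefficients; your identification of $D$ with $|j_1|^\alpha+|j_2|^\alpha-|j_3|^\alpha-|k|^\alpha$ on the off-diagonal monomials is correct, and since $|D|^\alpha$ is a Fourier multiplier the anticommutator and the $\partial_t\Psi$ contribution combine at the Fourier level with no symbolic-calculus remainder, so the cancellation of $\zOpbw b$ is exact, not merely up to a $-\vr$-smoothing residual. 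The price is the lower bound $-D\gtrsim\la\xi\ra^\alpha$, which by your own estimate requires $\delta_0^\alpha<2^{\alpha-1}$, i.e.\ $\delta_0<2^{1-1/\alpha}$; for $\alpha\lesssim0.23$ this is smaller than the paper's $\tfrac1{10}$, so you must fix a smaller cut-off throughout the paradifferential calculus (an $\alpha$-dependent but otherwise harmless adjustment, since the calculus is independent of $\delta_0$ up to smoothing errors). Two smaller remarks: first, you should state explicitly that the extension of $g_{\ell_1,\ell_2}(\xi)=\frac{(\ell_1+\ell_2)/2}{D(\ell_1,\ell_2,\xi)}$ off $\mathrm{supp}\,\chi_2$ must be cut off by a $\tilde\chi_2\equiv1$ on $\mathrm{supp}\,\chi_2$ and supported in a slightly larger cone (hence the $\delta_0$-dependence is really on the larger cone); second, quoting Proposition~\ref{prop.ident}/Lemma~\ref{Lem:mappa_homo} verbatim would give $\wt M_{\geq4}\in\cM^7_{\geq4}[r]$, while the target is $B_{\geq4}\in\cM^0_{\geq4}[r]$, so one should follow the paper and compute the quartic corrections directly from the Lie expansions~\eqref{lie1}--\eqref{lie2}, where the order-$\leq0$ structure you point out (commutators with $\zOpbw g$ of order $-\alpha$, the inner anticommutator $[\zOpbw g,-\im\vOmega(D)]$ of order $0$ already absorbed) is manifest.
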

\begin{proof}
We define the map  $\Psi(U)$ as the time-1 flow $\Psi(U):= \Psi^\tau(U)_{|\tau=1}$ of the paradifferential equation
\be 
\begin{cases} 
\partial_\tau \Psi^\tau(U)= G(U)\Psi^\tau(U)\\
	\Psi^0(U)=\uno,
\end{cases}\quad \text{where} \quad G(U):= \zOpbw{g_2(U;x,\xi)}
\ee
and with the  2-homogeneous symbol $ g_2$ of the form 
 \be \label{g2}
  g_2(U;x,\xi)=\sum_{j_1,j_2\in \Z}  g_{j_1,j_2}(\xi)u_{j_1}u_{j_2}e^{\ii ( j_1+ j_2)x} \in \wt\Gamma_2^{-\alpha}
 \ee
 to be determined.  
By  Lemma \ref{lem:flow.ad},  $\Psi(U)$ is a $0$-admissible transformation with arbitrary gain, which, to be concrete,  we fix to  $5$.
Moreover, $G$ is gauge invariant (see the bullet of formula \eqref{sym.gauge}), so is $\Psi^\tau$ (Remark \ref{rem:gauge.inv}).
The variable $W=\Psi(U)U$ solves 
\begin{align}
\label{bd.1}
\partial_t W= &\Psi(U) \vOpbw{- \im |\xi|^\alpha +\im \und \tV(U;x)  \xi +\im \und \td(U;x) }\Psi(U)^{-1} W \\
\label{bd.2}
&+ \Psi(U)\left[
\zOpbw{b(U;x)}+R_2(U)\right]\Psi(U)^{-1} W \\
\label{bd.3}
& + \left(\partial_t \Psi(U)\right) \Psi(U)^{-1} W  \ .
\end{align}
We first expand \eqref{bd.1}.
The Lie expansion formula (see e.g. Lemma A.1 of \cite{BFP}) says that for any operator $A(U)$, setting $\mathtt{Ad}_{B}[A] := [B,A]$, one has 
\begin{align}\label{lie1}
\Psi(U)A(U)\Psi(U)^{-1}= A(U)+ \left[G(U),A(U)\right]+ \int_0^1 (1-\tau) 
\Psi^\tau(U) \, \mathtt{Ad}^2_{G(U)}[A(U)] \, (\Psi^\tau(U))^{-1} \, \di \tau  \ .
\end{align}
Applying this formula with $A=\vOpbw{- \im |\xi|^\alpha +\im \und \tV \xi +\im \und \td}$, using   formulas \eqref{commurule} 
we get 
\begin{align}
\eqref{bd.1} 
&=\vOpbw{- \im |\xi|^\alpha +\im \und \tV   \xi +\im \und \td } W \\
&+ \: \zOpbw{ \im (g_2\#_\vr |\xi|^\alpha+ |\xi|^\alpha \#_\vr g_2) }W +
R_2'(U)W +  B_{\geq 4}(U)W
\end{align}
where
$R_2'$ is a matrix of smoothing remainders in $\wt \cR^{-\varrho}_2$ (coming from the first of  \eqref{commurule}), and 
 the operator $B_{\geq 4}$ is given by 
\begin{equation}\label{bd.B}
\begin{aligned}
B_{\geq 4}(U):= &  \, \zOpbw{\im \big(g_2  \#_\vr \und \tV  \xi - \und \tV  \xi \#_\vr g_2\big) - \im \big(g_2 \#_\vr \und \td + \und \td \#_\vr g_2 \big)} 
+ R'(U)
\\
&
+ \int_0^1 (1-\tau) 
\Psi^\tau(U) \, \mathtt{Ad}^2_{G(U)}[\vOpbw{- \im |\xi|^\alpha +\im \und \tV   \xi +\im \und \td }] \, (\Psi^\tau(U))^{-1} \, \di \tau  \ ,
\end{aligned} 
\end{equation}
where $R'$ is a matrix of smoothing operators in $\cR^{-\varrho + (1-\alpha)}_{\geq 4}[r]$.
We claim that
$B_{\geq 4}$ is a non-homogeneous  bounded operator in $\cM^0_{\geq 4}[r]$.
Indeed, since $g_2 \in \wt \Gamma^{-\alpha}_2$,  $\und \tV$ and $\und \td$ belong to $\wt\cF^\R_2$, and $-\varrho +1-\alpha \leq 0$, 
we get  that
both the first line of \eqref{bd.B} and $\mathtt{Ad}^2_{G(U)}[\vOpbw{- \im |\xi|^\alpha +\im \und \tV   \xi +\im \und \td }]$ are matrices of  $0$-operators in 
 $\wt \cM^0_{4}$ and so  in $\cM^0_{\geq 4}[r]$ (use the 
 symbolic calculus of Proposition \ref{teoremadicomposizione} 
and the  bullets after Definition \ref{def.m-op}).
 Finally, being $\Psi^\tau$ an admissible transformation, also the  second line of \eqref{bd.B} is a  matrix of  non-homogeneous $0$-operators in $\cM^0_{\geq 4}[r]$ (see Remark \ref{rem.b.ad}-- $(2)$).

Consider now \eqref{bd.2}. 
Expanding as in \eqref{lie1} one see that the $2$-homogeneous component remains the unchanged, getting  
\begin{equation}
\begin{aligned}
\eqref{bd.2} =  \zOpbw{b(U;x)}W+ R_2(U)W+ B_{\geq 4}(U)W
\end{aligned}
\end{equation}
where $ B_{\geq 4}(U)$ is another  matrix of non-homogeneous $0$-operators in  $\cM^0_{\geq 4}[r]$.

Finally we consider line \eqref{bd.3}. This time we use the Lie expansion (Lemma A.1 of \cite{BFP})
\begin{equation}\label{lie2}
\big(\partial_t \Psi(U)\big) \Psi(U)^{-1}= \partial_t G(U)  
+\int_{0}^{1}
(1- \tau ) 
 {\Psi}^\tau (U)\, \mathtt{Ad}_{G(U)} \,[ \pa_{t}G(U)]({\Psi}^\tau(U))^{-1} \di \tau \ .
\end{equation}
Then,   using that 
$g_2(U) \equiv g_2(U,U)$ is a symmetric function of $U$,  we get 
that $\pa_t G(U)= \zOpbw{\pa_t g_2(U;x,\xi)} = 2\zOpbw{ g_2( \pa_t U, U;x,\xi)}$. Since $U$ solves equation \eqref{eq.U}, we get 
\begin{align*}
\left( \partial_t \Psi(U) \right)  \Psi^{-1}(U)
&=\zOpbw{2 g_2(-\ii \vOmega(D)U,U;x,\xi))}+ B_{\geq 4}(U)
\end{align*}
where, using also \eqref{X3.mop},
\begin{align*}
B_{\geq 4}(U):= & \zOpbw{ 2 g_2(M_{\mathtt{NLS}}(U)U,U;x,\xi)}  \\
& +\int_{0}^{1}
(1- \tau ) 
 {\Psi}^\tau (U)\, \mathtt{Ad}_{G(U)} \,[ 2\zOpbw{ g_2( -\ii \vOmega(D)U +M_{\mathtt{NLS}}(U)U , U;x,\xi)}]({\Psi}^\tau(U))^{-1} \di \tau \, 
\end{align*}
By Lemma \ref{nuovetto}, the fact that $\Psi^\tau$ is an admissible transformation, and the bullets after Definition \ref{def.m-op}, we deduce that 
$B_{\geq 4}$ is a matrix of $(-\alpha)$-operators in $\cM^{-\alpha}_{\geq 4}[r]$.

In conclusion, we get that 
\begin{align}
\partial_t W=&  \vOpbw{- \im |\xi|^\alpha +\im \und \tV(U)  \xi +\im \und \td(U) } W\notag\\
&+  \zOpbw{\ii  \big[(g_2(U)\#_\vr |\xi|^\alpha+ |\xi|^\alpha\#_\vr g_2(U) -2 g_2( \vOmega(D)U,U))\big]+b(U)}W\notag\\
 &+ (R_2(U) + R_2'(U))W+ B_{\geq 4}(U)W 
 \label{quasidiag}
\end{align}
where $B_{\geq 4}(U)$ is a matrix of $0$-operators in  $\cM^{0}_{\geq 4}[r]$.
Then the thesis follows from the following lemma.
\begin{lemma}[The out-diagonal homological equation]\label{Lem:homolo}
Let $\vr > 0$. There exists a symbol $g_2(U;x,\xi)\in \tilde \Gamma_2^{-\alpha}$ of the form \eqref{g2} such that  
\be\label{homolo}
r_2(U; \cdot):= \ii \big[(g_2(U)\#_\vr |\xi|^\alpha + |\xi|^\alpha \#_\vr g_2(U) -2 g_2( \vOmega(D)U,U))\big]+ b(U)\in \wt\Gamma_2^{-\vr}
\ee
and $r_2(U; \cdot)$ fulfills the second of \eqref{sym.gauge}.
\end{lemma}
\begin{proof}
Thanks to symbolic calculus formula \eqref{espansione2} (see also \eqref{poisson}), we have that for any $g\in \widetilde \Gamma_2^m$, $m\in \R$,
$$
\begin{cases}
\mathtt{r}[ g](U) := g(U)\#_\vr |\xi|^\alpha+ |\xi|^\alpha\#_\vr g(U)- 2 g(U)|\xi|^\alpha \in \wt \Gamma_2^{m+\alpha- 2}\\
\mathtt{f}[g](U):= 2 g( \vOmega(D)U,U) \in \widetilde \Gamma_2^{m}
\end{cases}
$$
Moreover if
$g$ fulfills the second of \eqref{sym.gauge}, so do 
  $\mathtt{r}[ g]$ and $\mathtt{f}[ g]$. 
Then the homological equation in \eqref{homolo} reads 
\be \label{homoloproof}
r_2(U)= 2 \ii  g_2(U) |\xi|^\alpha +\ii \mathtt{r} [ g_2](U)- \ii \mathtt{f}[g_2](U)+b(U)\in \widetilde \Gamma_2^{-\vr} \ ,
\ee
which we solve  iteratively  exploiting that $g \mapsto \tr[g]$ and $g \mapsto \tf[g]$ are linear. Namely we put  $g_2:= g^{(1)}+g^{(2)}+\dots+ g^{(p)}$ with 
\begin{align*}
&g^{(1)}(U;x,\xi):= -\frac{b(U;x)}{2 \ii |\xi|^\alpha}\in \widetilde \Gamma_2^{-\alpha},\\
&g^{(2)}(U;x,\xi):= - \frac{\ii \mathtt{r}[g^{(1)}](U;x,\xi)- \ii \mathtt{f}[g^{(1)}](U;x,\xi)}{2\ii |\xi|^\alpha}\in \widetilde \Gamma_2^{-2\alpha}\\
&\footnotesize \vdots\\
\normalsize
&g^{(p)}(U;x,\xi):= - \frac{\ii \mathtt{r}[g^{(p-1)}](U;x,\xi)- \ii \mathtt{f}[g^{(p-1)}](U;x,\xi)}{2\ii |\xi|^\alpha}\in \widetilde \Gamma_2^{-p\alpha}.
\end{align*}
With this choice we have 
$
r_2(U)
= \ii \mathtt{r}[g^{(p)} ](U)- \ii \mathtt{f}[g^{(p)}](U)\in \widetilde \Gamma_2^{-p\alpha}
$ 
which implies the thesis choosing $p>{\vr}/{\alpha}$.
Moreover, since $b$ fulfills the second of \eqref{sym.gauge} (recall \eqref{vuoneunder}), so does $g^{(1)}$, and by construction each $g^{(\ell)}$, $\ell \geq 2$ and the symbol  $r_2(U)$.
In particular $g_2$ has the claimed form in \eqref{g2}.
\end{proof}
Applying Lemma \ref{Lem:homolo}, equation \eqref{quasidiag} becomes
\be\label{bd.4}
\partial_t W=  \vOpbw{- \im |\xi|^\alpha +\im \und \tV(U)  \xi +\im \und \td(U) } W+ (R_2(U) + R_2'(U) + R_2''(U))W+ B_{\geq 4}(U)W
\ee
where  $R_2''(U) = \zOpbw{r_2(U;\cdot)}  \in \widetilde \mR_2^{-\vr}$ is the  paradifferential operator of order $-\varrho$ coming from the symbol in \eqref{homolo}. 
This proves the identity \eqref{scalare}, renaming  $ R_2+R_2'+R_2''  \leadsto   R_2$.

Finally we prove that the matrices of smoothing operators are gauge invariant. Indeed each operator on the right of \eqref{bd.1}--\eqref{bd.3} is gauge invariant (recall Lemma \ref{para}), as well as  the $2$-homogeneous matrix of paradifferential operators in \eqref{bd.4}.
Then, by difference, the $2$-homogeneous smoothing operators $R_2 + R_2' + R_2''$ are gauge invariant as well.

\end{proof}
\subsection{Reduction  of the highest order}
In this section we perform a transformation that reduces the symbol of  the highest order paradifferential operator
$\vOpbw{\underline{\tV}(\bU;x) \im \xi }$ to its resonant normal form. 

\begin{proposition}[Paracomposition]\label{propdiag1}
Let  $\vr \geq 1$. 
There are 
 $  s_0, r  > 0$ and 
  a $2$-admissible transformation $\Phi(U) \in \cM_{\geq 0}^0[r]$  with  gain $5$ (see  Definition \ref{admtra}) such that
if $U(t) \in B_{ s_0, \R}(I;r)$ solves \eqref{U.para}, then 
the variable 
 \be \label{Phione}
 W_1:=\Phi(U)W \stackrel{\eqref{psione}}{=}\Phi(U)\Psi(U)U \quad \mbox{ solves }
 \ee
\begin{equation}\label{teo62}
	\begin{aligned}
		\pa_t W_1 = & - \im \vOmega(D) W 	+	 \vOpbw{   \im  \langle \,\und \tV\, \rangle(U;x) \xi     +   \im V_{\geq 4 }(U;x)  \xi+   \im { a}^{(\alpha)  }_2(U;x,\xi) +
\im   a^{(\alpha) }_{\geq 4} (U;x,\xi) } W_1\\
	& + R_2(\bU)W_1+ B_{\geq 4}(U)W_1
	\end{aligned}
	\end{equation}
	where:
 
 		\noindent $\bullet$ $\vOmega(D)$ is the matrix of Fourier multipliers defined in \eqref{U.start};
  
			\noindent  	$\bullet$ $ \langle \,\und \tV\, \rangle(U;x)$ is the 
resonant part of the function $\und \tV(U;x)$ in \eqref{vuoneunder}, namely the 	
		 zero-average, real valued   function in \eqref{VresZ};
   
 		 	\noindent $\bullet$  $  V_{\geq 4 }(U;x) $ is a real function in $\mF_{\geq 4}^{\R}[r]$;
     
		 	\noindent 	$\bullet$   ${ a}_2^{(\alpha)}(\bU;x,\xi)$ is a zero average,  gauge invariant (fulfills the first of \eqref{sym.gauge}), real symbol in $\wt \Gamma_2^{\alpha }$  
		and $ a^{(\alpha) }_{\geq 4} (U;x,\xi)$  a real non-homogeneous symbol in $\Gamma^\alpha_{\geq 4}[r]$;
  
 	\noindent 	$\bullet$ $R_2(\bU)$ is a real-to-real, gauge invariant matrix of homogeneous smoothing operators in  $\wt \mR^{-\vr  }_{2}$;
  
	 	\noindent $\bullet$  $B_{\geq 4}(U)$ is a real-to-real matrix of $0$-operators in $ \mM^0_{\geq 4}[r]$.
\end{proposition}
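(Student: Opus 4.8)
The plan is to construct $\Phi(U)$ as the time-one flow of a paradifferential transport equation of type \eqref{flussoG}$(i)$, namely with generator $G(\tau,U)=\vOpbw{\tfrac{\beta(U;x)}{1+\tau\beta_x(U;x)}\,\ii\xi}$ for a suitable $\beta\in\wt\mF_2^\R$ to be fixed by a homological equation. By Lemma \ref{lem:flow.ad}$(i)$, for any such $\beta$ the map $\Phi(U):=\Phi^1(U)$ is automatically a $2$-admissible transformation, and it is gauge invariant provided $\beta(\tg_\theta U;x)=\beta(U;x)$ (Remark \ref{rem:gauge.inv}). Then, for $W$ solving \eqref{scalare} and $U$ solving \eqref{U.para}, I would compute $\pa_t W_1$ for $W_1:=\Phi(U)W$ by writing $\pa_t W_1=\big(d_U\Phi(U)[\pa_tU]\big)W+\Phi(U)\pa_tW$ and inserting the original equation $\pa_tU=-\im\vOmega(D)U+X_3(U)$. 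This produces four blocks: $\Phi(-\im\vOmega(D))\Phi^{-1}W_1$, $\Phi\,\vOpbw{\im\und\tV\xi+\im\und\td}\,\Phi^{-1}W_1$, $\Phi\big(R_2(U)+B_{\geq4}(U)\big)\Phi^{-1}W_1$, and $\big(d_U\Phi(U)[\pa_tU]\big)\Phi^{-1}W_1$, each of which I would expand by the Lie series (Lemma A.1 of \cite{BFP}), exactly as in the proof of Proposition \ref{propdiag}, using the symbolic calculus of Proposition \ref{teoremadicomposizione}, the commutator rules \eqref{commurule}, and Lemma \ref{nuovetto}.

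The crucial point is to isolate the order-$1$, $2$-homogeneous component. Since the $2$-homogeneous part of the generator is $\vOpbw{\beta(U;x)\ii\xi}$ (its higher part being $\geq4$-homogeneous), one checks that: $[\vOpbw{\beta\ii\xi},-\im\vOpbw{|\xi|^\alpha}]$ has symbol $\tfrac1\ii\{\beta\ii\xi,-\im|\xi|^\alpha\}+\dots=\im\alpha\,\beta_x(U;x)|\xi|^\alpha+\dots$, hence is of order $\alpha<1$; $[\vOpbw{\beta\ii\xi},\vOpbw{\im\und\tV\xi}]$ is $\geq4$-homogeneous; and the order-$1$, $2$-homogeneous part of $\big(d_U\Phi[\pa_tU]\big)\Phi^{-1}$ equals $\vOpbw{(d_U\beta)(U)[-\im\vOmega(D)U]\,\ii\xi}$. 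Writing $\beta(U;x)=\sum_{k_1\neq k_2}\beta_{k_1,k_2}u_{k_1}\bar u_{k_2}e^{\im(k_1-k_2)x}$, the latter coefficient is $-\im\sum_{k_1\neq k_2}(|k_1|^\alpha-|k_2|^\alpha)\beta_{k_1,k_2}u_{k_1}\bar u_{k_2}e^{\im(k_1-k_2)x}$. Therefore the new leading transport coefficient is $\und\tV(U;x)-\sum_{k_1\neq k_2}(|k_1|^\alpha-|k_2|^\alpha)\beta_{k_1,k_2}u_{k_1}\bar u_{k_2}e^{\im(k_1-k_2)x}$, and since $\und\tV$ has all coefficients equal to $1$ while $\langle\und\tV\rangle$ in \eqref{VresZ} is precisely the sum of the monomials with $k_2=-k_1$, the natural homological equation $1-(|k_1|^\alpha-|k_2|^\alpha)\beta_{k_1,k_2}=\delta_{k_2=-k_1}$ is solved by $\beta_{k_1,-k_1}:=0$ and $\beta_{k_1,k_2}:=(|k_1|^\alpha-|k_2|^\alpha)^{-1}$ for $k_1\neq k_2$, $|k_1|\neq|k_2|$. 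This leaves exactly $\langle\und\tV\rangle(U;x)$. The small-divisor estimate $||k_1|^\alpha-|k_2|^\alpha|\geq\alpha\,\max(|k_1|,|k_2|)^{\alpha-1}$ (mean value theorem, using $|k_1-k_2|\geq1$) gives $|\beta_{k_1,k_2}|\leq\alpha^{-1}\max(|k_1|,|k_2|)^{1-\alpha}$, so $\beta$ is a genuine symbol in $\wt\mF_2^\R$ with $\mu=1-\alpha$, real and gauge invariant because $\und\tV$ is.

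With $\beta$ so chosen, the rest is classification of the Lie-expansion remainders. The $2$-homogeneous pieces of order $\leq\alpha$ — namely $\vOpbw{\im\und\td}$, $\im\alpha\beta_x|\xi|^\alpha$, and the lower-order terms of the $\#_\vr$-expansion of $[\vOpbw{\beta\ii\xi},-\im\vOpbw{|\xi|^\alpha}]$ — assemble into a real, zero-average, gauge-invariant symbol $a_2^{(\alpha)}\in\wt\Gamma_2^\alpha$ (the zero-average property of each summand coming from the fact that $\beta$, hence $\beta_x$ and its derivatives, have zero average, and $\und\td$ has zero average). The $\geq4$-homogeneous transport and order-$\alpha$ contributions (commutators of $\vOpbw{\beta\ii\xi}$ with the $\geq2$-homogeneous symbols, together with the $\geq4$-homogeneous part of $d_U\beta[\pa_tU]$ and of the generator) give $V_{\geq4}\in\mF^\R_{\geq4}[r]$ and $a^{(\alpha)}_{\geq4}\in\Gamma^\alpha_{\geq4}[r]$. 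The $2$-homogeneous part of $R_2(U)$ is unchanged, since its first-order correction is a $\geq4$-homogeneous commutator which is bounded because $\beta\ii\xi$ has order $1$ and $\vr\geq1$; thus it stays in $\wt\mR^{-\vr}_2$, modulo a routine re-indexing of the regularization order (expanding the symbolic calculus deep enough costs a fixed number of derivatives). Everything $\geq4$-homogeneous and bounded — including the tails of all Lie series, which are controlled using that $\Phi$ and $\Psi$ are admissible (Remark \ref{rem.b.ad}$(2)$, Lemma \ref{nuovetto}) — is collected into $B_{\geq4}(U)\in\mM^0_{\geq4}[r]$. Reality, gauge invariance and zero-average of all new symbols propagate via \eqref{realetoreale} and \eqref{sym.gauge}. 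The main obstacle is conceptual rather than computational: recognizing that the relevant homological equation here is of Birkhoff type, driven by $\pa_t\Phi$ — not a commutator with $|D|^\alpha$, which is lower order than the transport — and checking the polynomial small-divisor bound that makes $\beta$ an admissible symbol; the remainder bookkeeping, though lengthy, is routine given Lemma \ref{lem:flow.ad}, Proposition \ref{teoremadicomposizione} and Lemma \ref{nuovetto}.
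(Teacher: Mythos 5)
Your approach is exactly the paper's: construct $\Phi(U)$ as the time-one flow of the paradifferential transport generator $\vOpbw{\tfrac{\beta(U;x)}{1+\tau\beta_x(U;x)}\ii\xi}$, fix $\beta$ by a Birkhoff-type homological equation driven by $(\pa_t\Phi)\Phi^{-1}$ so that the transport coefficient at second homogeneity collapses to $\langle\und\tV\rangle$, and then sweep the lower-order and higher-homogeneity remainders into $a_2^{(\alpha)}$, $V_{\geq4}$, $a^{(\alpha)}_{\geq4}$ and $B_{\geq4}$ via the Lie series, admissibility of $\Phi$, and the composition and Egorov-type results. The small-divisor bound $\lvert|k_1|^\alpha-|k_2|^\alpha\rvert\gtrsim\max(|k_1|,|k_2|)^{\alpha-1}$ and the resulting conclusion that $\beta\in\wt\mF_2$ with $\mu=1-\alpha$ are both correct, and the identification of the relevant homological equation as one in the coefficient of $\xi$ (not a commutator with $|D|^\alpha$, which is lower order) is the right conceptual point.

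There is, however, a dropped factor of $\im$ in your homological equation. You correctly compute that the coefficient coming from $(\pa_t\Phi)\Phi^{-1}$ is $-\im\sum_{k_1\neq k_2}(|k_1|^\alpha-|k_2|^\alpha)\beta_{k_1,k_2}u_{k_1}\bar u_{k_2}e^{\im(k_1-k_2)x}$, but then in the next line write the homological equation as $1-(|k_1|^\alpha-|k_2|^\alpha)\beta_{k_1,k_2}=\delta_{k_2=-k_1}$, losing the $\im$. The correct equation is $1-\im(|k_1|^\alpha-|k_2|^\alpha)\beta_{k_1,k_2}=\delta_{k_2=-k_1}$, with solution $\beta_{k_1,k_2}=\tfrac{1}{\im(|k_1|^\alpha-|k_2|^\alpha)}$ for $|k_1|\neq|k_2|$, exactly as in \eqref{beta2}. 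This is not merely cosmetic: your proposed $\beta_{k_1,k_2}=(|k_1|^\alpha-|k_2|^\alpha)^{-1}$ yields a purely \emph{imaginary} function (conjugating and swapping $k_1\leftrightarrow k_2$ flips the sign), which is incompatible with the requirement $\beta\in\wt\mF_2^\R$ and with generating a well-defined transport flow. With the $\im$ reinstated, $\beta$ is real, and the rest of your bookkeeping matches the paper's proof.
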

\begin{proof}

We define the transformation $\Phi(U)$ as the time-1 flow  
of the paradifferential equation 
\be\label{flusso1para}
\begin{cases} 
	\partial_\tau \Phi^\tau(U)= G(U)\Phi^\tau(U)\\
	\Phi^0(U)=\uno,
\end{cases}\quad \text{where} \quad G(U):= \vOpbw{\frac{\beta_2(U;x)}{1+\tau (\beta_2)_x(U;x)}\ii \xi}
\ee

{and $\beta_2(U,V)$  the real valued,  $2$-symmetric function 
\be\label{beta2.s}
\beta_2(U,V;x):= \sum_{j_1, j_2 \in \Z \atop \sigma_1, \sigma_2 \in {\pm}} \beta_{j_1,j_2}^{\sigma_1,\sigma_2} u_{j_1}^{\sigma_1} \, v_{j_2}^{\sigma_2} \, e^{\im (\sigma_1 j_1 + \sigma_2 j_2)x}
\ee
where
the symmetric coefficients 
$$ \beta_{j_1,j_2}^{\sigma_1,\sigma_2}:=\begin{cases}
    \frac{1}{2\ii (\sigma_1|j_1|^\alpha+\sigma_2 |j_2|^\alpha)}& \sigma_1\sigma_2=-1, \ |j_1| \neq |j_2| \\
    0& \textnormal{otherwise}
\end{cases} $$
fulfill \eqref{homosymbo} with $\mu = 1-\alpha$.
Note that
\be\label{beta2}
 \beta_2(U;x):=\beta_2(U,U;x)= \sum_{|j_1| \neq |j_2| } \frac{1}{\ii (|j_1|^\alpha- |j_2|^\alpha) }u_{j_1}\overline{u_{j_2}}e^{\ii ( j_1- j_2)x}  
\ee
 }
By  Lemma \ref{lem:flow.ad}, $\Phi$ is a $2$-admissible transformation with an arbitrary gain, which again we fix to  $5$.
 Moreover, since  $\beta_2$ fulfills the first of \eqref{sym.gauge},  $G$ as well as  $\Phi^\tau$,  $\tau \in [0,1]$, are gauge invariant (see 
 the bullet of formula \eqref{sym.gauge} and 
 Remark \ref{rem:gauge.inv}).

Recalling \eqref{scalare}, the variable $W_1:=\Phi(U)W$ solves 
\begin{align}\label{diff.1}
	\partial_t W_1= &\Phi(U) \vOpbw{- \im |\xi|^\alpha +\im \und \tV(U)  \xi +\im \und \td(U) }\Phi(U)^{-1} W_1\\
	\label{diff.2}
& 	+ 
	 \left(\partial_t \Phi(U)\right)  \Phi(U)^{-1} W_1 \\
	 \label{diff.3}
	&+ \Phi(U)\left[
	R_2(U) + B_{\geq 4}(U)\right]\Phi(U)^{-1} W_1 \ .
\end{align}
We now compute each term, starting from  \eqref{diff.1}. 
By Proposition \ref{prop:Egorov}--$2$ (with \footnotesize $\vr \leadsto \vr +\alpha$\normalsize) 
 we get 
\begin{align}
\Phi(U)& \vOpbw{- \im |\xi|^\alpha }\Phi(U)^{-1} =  \vOpbw{
- \im |\xi|^\alpha  
+ {\im a^{(\alpha)}_{2}}
 + \im a^{(\alpha) }_{\geq 4}
 } +B_{\geq 4}(U)+  R'_2(U)  
 \end{align}
 where 
 $a^{(\alpha)}_{2} $ is a real, zero average, gauge invariant  symbol in $\wt \Gamma^\alpha_2$, 
  $a^{(\alpha)}_{\geq 4}$ is a real symbol in 
$\Gamma_{\geq 4}^\alpha[r]$,  $B_{\geq 4 }= \vOpbw{ \ii a_{\geq 4}^{(\alpha -2)}}+ R_{\geq 4}$ (see \eqref{eq:conj_fou}) is a real-to-real matrix of $0$-operators in $\mM^0_{\geq 4}[r]$ and finally $R'_2(U)$ is a real-to-real,  gauge invariant matrix of  smoothing operators in $\wt \cR_2^{-\varrho}$.

Then, by Proposition \ref{prop:Egorov}--$1$, we get
 \begin{align}
 \Phi(U)& \vOpbw{\im \und \tV  \xi + \im \, \und \td  }\Phi(U)^{-1} =  \vOpbw{\im \und \tV  \xi  +\im V'_{\geq 4 }  \xi  + \im\, \und \td}  + B_{\geq 4}(U)
 \end{align}
 with $ V'_{\geq 4 } \in \cF^\R_{\geq 4}[r]$ and, thanks to $\varrho \geq 1$,   $B_{\geq 4 }$  a  real-to-real matrix of $0$-operators  in $\mM^0_{\geq 4}[r]$.
 
Next we consider the term in \eqref{diff.2}. We apply Proposition \ref{prop:Egorov}--$4$ and  get 
\begin{align}
	\big( \partial_t \Phi(U) \big) \Phi(U)^{-1}= \vOpbw{2\ii \beta_2(-\ii \vOmega(D)U, U) \xi+ \ii V''_{\geq 4}(U)\xi}+ B_{\geq 4 }(U)
\end{align}
where $V''_{\geq 4} \in \cF^\R_{\geq 4}[r]$ and, using again $\varrho \geq 1$,    $B_{\geq 4 }$  a real-to-real matrix of $0$-operators in $\mM^0_{\geq 4}[r]$.

Finally we consider line \eqref{diff.3}. By 
Proposition \ref{prop:Egorov}--$3$ and Remark \ref{rem.b.ad}-- $(2)$
\be
\eqref{diff.3} 
= R_2(U) + B_{\geq 4}(U)  
\ee
with $R_2(U) $ the same real-to-real,  gauge invariant matrix of  smoothing operators in $\wt\cR_2^{{- \varrho}}$ of Proposition \ref{propdiag} and 
with  $B_{\geq 4 }$  a real-to-real matrix of $0$-operators  in  $\mM^0_{\geq 4}[r]$. 

Altogether we have the expansion 
\begin{align}
\pa_t W_1  =  & \vOpbw{
- \im |\xi|^\alpha  
+ \im \und \tV \xi  +
2\ii \beta_2(-\ii \vOmega(D)U, U) \xi
+ \im a^{(\alpha)}_2 
 }W_{1}  +  ( R_2(U)+ R_2'(U)) W_{1} \\
 & +
 \vOpbw{
 \im V_{\geq 4} \xi 
 + \im  a^{(\alpha) }_{\geq 4}
 }W_{1}
 + B_{\geq 4}(U)W_{1}   \ .
\end{align}
One verifies  that   $\beta_2$  in \eqref{beta2} solves the homological equation
\be\label{hom.tra}
2 \beta_2(-\ii \vOmega(D)U,U;x)+ \und \tV(U;x)= 	\langle \, \und \tV \, \rangle (\bU;x) \ ,
\ee
using the expressions of $\und \tV$ in \eqref{vuoneunder},   $\vOmega(D)$ in \eqref{U.start}, and 
 $\langle \, \und \tV \, \rangle$  in \eqref{VresZ}.
 This proves the expansion in \eqref{teo62}, renaming $R_2 + R_2'\leadsto R_2$; note that we proved that it is gauge invariant being sum of gauge invariant operators. 
\end{proof}

 \subsection{The weak $\Lambda$-normal form}
 In this section we perform a Poincaré  normal form, with the goal of putting the smoothing operator $R_2(U)W_1$ in \eqref{teo62} into  weak-$\Lambda$ normal form (see Definition \eqref{def:wr}).

\begin{proposition}[Weak-$\Lambda$ normal form] \label{birkfinalone} 
Let  $\vr \geq 2-\alpha$.
There are 
 $  s_0, r  > 0$ and 
  a $0$-admissible transformation $\Upsilon(U) \in \cM_{\geq 0}^0[r]$  with gain  $\vr -1+ \alpha $ (see  Definition \ref{admtra}) such that
if $U(t) \in B_{ s_0, \R}(I;r)$ solves \eqref{U.para}, then 
the variable 
\be\label{Zetone}
	Z:= \Upsilon(U)W_1 \stackrel{\eqref{Phione}, \eqref{psione}}{=} \Upsilon(U)\Phi(U)\Psi(U)U \quad \mbox{ solves}
\ee 
	 \begin{align} \notag
	 	\pa_t Z= &	 	- \im \vOmega(D)Z +
		 \vOpbw{ \im  \langle \,\und \tV\, \rangle(U;x) \xi     +   \im V_{\geq 4 }(U;x)  \xi+   \im { a}^{(\alpha)  }_2(U;x,\xi) +
\im   a^{(\alpha) }_{\geq 4} (U;x,\xi)} Z\\
\label{birk}
& +  {R}_2^{(\Lambda)}(\bU)Z 
	 	+ {B}_{\geq 4}(\bU) Z
	 \end{align}
	 where $\langle \,\und \tV\, \rangle$, $V_{\geq 4 }$, ${ a}^{(\alpha)  }_2$ and $a^{(\alpha) }_{\geq 4}$ are the same symbols of  Proposition \ref{propdiag1}, whereas\\
$\bullet$ ${R}_2^{(\Lambda)}(\bU)$ is a real-to-real, gauge invariant matrix of smoothing operators in $\wt \cR_2^{-\varrho}$  such that the  cubic vector field $X^{(\Lambda)}(Z):=  {R}_2^{(\Lambda)}(Z)Z $ is in 
weak-$\Lambda$ normal form, namely it fulfills 
 \be\label{wnf}
 \Pi_{\fP^{(n)}_\Lambda} X^{(\Lambda)} = \Pi_{\fR^{(n)}_\Lambda} X^{(\Lambda)}   \ , \quad n = 0,1,2 \ . 
 \ee
$\bullet$ $B_{\geq 4}(U)$ is a real-to-real matrix of $0$-operators in $ \cM_{\geq 4 }^0[r]$. 
\end{proposition}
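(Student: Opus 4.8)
The plan is to remove, by a single Poincar\'e--Birkhoff step, the non-resonant monomials of the cubic smoothing vector field $R_2(U)W_1$ appearing in \eqref{teo62} whose indices have at most two components outside $\Lambda$. Accordingly, I would look for $\Upsilon(U)$ as the time-$1$ flow $\Upsilon(U):=\Upsilon^\tau(U)_{|\tau=1}$ of $\pa_\tau\Upsilon^\tau(U)=S(U)\Upsilon^\tau(U)$, $\Upsilon^0(U)=\uno$, where $S(U)$ is a homogeneous smoothing operator, real-to-real and gauge invariant, in a class $\wt\cR^{-\vr'}_2$ with $\vr'>0$ to be fixed by the homological equation below. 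By Lemma \ref{flow.s.ad} such a flow is automatically a $0$-admissible transformation, and, $S$ being gauge invariant, $\Upsilon^\tau(U)$ is gauge invariant by Remark \ref{rem:gauge.inv}. Writing $Z:=\Upsilon(U)W_1$, using the Lie expansions \eqref{lie1}--\eqref{lie2} to conjugate the operators on the right of \eqref{teo62} and to compute $(\pa_t\Upsilon(U))\Upsilon(U)^{-1}$, and recalling that the composition of the admissible transformations carried out so far makes $U=\mF^{-1}(Z)$ agree with $Z$ up to cubic and higher order terms (Lemma \ref{loc.inv}), one finds that the only new contribution at cubic ($3$-homogeneous) order to the equation for $Z$ is the bracket $\comm{S(Z)Z}{-\im\vOmega(D)Z}$, defined exactly as in \eqref{wtX3} with $\bF_2$ replaced by $S$; every other correction produced by the conjugation is a $\geq4$-homogeneous operator.

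The core step is then the homological equation: I would choose $S$ so that $\comm{S(Z)Z}{-\im\vOmega(D)Z}$ cancels the non-resonant part of $R_2(Z)Z$ lying over $\fP^{(n)}_\Lambda$, $n=0,1,2$. As in the proof of Proposition \ref{prop.ident}, in Fourier this bracket acts on the monomial indexed by $(\vec\jmath,k,\vec\sigma,-\sigma)\in\fP_4$ by multiplication by $-\im(\sigma_1|j_1|^\alpha+\sigma_2|j_2|^\alpha+\sigma_3|j_3|^\alpha-\sigma|k|^\alpha)$. Since $\fP^{(0)}_\Lambda\equiv\fR^{(0)}_\Lambda$ by Lemma \ref{lem:wres}$(i)$ there is nothing to remove at $n=0$; for $n=1,2$ I would divide the offending Fourier coefficients of $R_2$ by the corresponding divisor, setting all the other coefficients of $S$ to zero (in particular on $\fR$ and on $\fP^{(n)}_\Lambda$ for $n=3,4$). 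By Lemma \ref{lem:wres}$(ii)$--$(iii)$ these divisors are bounded below by a fixed $c>0$ on the finite set $\fP^{(1)}_\Lambda$ (where $\fR^{(1)}_\Lambda=\emptyset$) and by $c\,(\max_a|j_a|)^{-(1-\alpha)}$ on $\fP^{(2)}_\Lambda\setminus\fR^{(2)}_\Lambda$, so the division costs at most $1-\alpha$ derivatives; since $\vr\geq2-\alpha>1-\alpha$ this produces a genuine homogeneous smoothing operator $S\in\wt\cR^{-(\vr-1+\alpha)}_2$, real-to-real and gauge invariant because $R_2$ is and the divisors are real. With this choice the homological equation holds, and on every site we touched the new coefficient is either $0$ or the negative of the original; hence $R_2(Z)Z+\comm{S(Z)Z}{-\im\vOmega(D)Z}$ is realised by the smoothing operator $R_2^{(\Lambda)}(\bU)$ obtained from $R_2(\bU)$ by simply annihilating the coefficients over $\bigcup_{n=0,1,2}\bigl(\fP^{(n)}_\Lambda\setminus\fR^{(n)}_\Lambda\bigr)$. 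In particular $R_2^{(\Lambda)}(\bU)\in\wt\cR^{-\vr}_2$ is real-to-real and gauge invariant, and the cubic vector field $X^{(\Lambda)}(Z)=R_2^{(\Lambda)}(Z)Z$ satisfies \eqref{wnf}.

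It remains to gather into $B_{\geq4}(U)$ all the $\geq4$-homogeneous operators generated by the conjugation — the commutators $[S,\vOpbw{\cdots}]$ and $[S,R_2]$, the second-order Lie brackets from \eqref{lie1}, the term $2S(X_3(U),U)$ and the Duhamel remainder from \eqref{lie2}, the errors from inserting $U=\mF^{-1}(Z)$ into the coefficients, and the previously present $B_{\geq4}(U)W_1$ conjugated by $\Upsilon$, which stays in $\cM^0_{\geq4}[r]$ by Remark \ref{rem.b.ad}--$(2)$ — and to check that each has order $\le0$. By Lemma \ref{nuovetto} and the composition rules for $m$-operators, together with the order $-(\vr-1+\alpha)$ of $S$, the worst of these is a composition of $S$ with a paradifferential operator of order $1$, hence of order at most $-(\vr-1+\alpha)+1=-\vr+2-\alpha\le0$ exactly because $\vr\geq2-\alpha$; so all of them lie in $\cM^0_{\geq4}[r']$ for some $r'>0$, and $B_{\geq4}(U)$ is real-to-real since its constituents are. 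The paradifferential part of \eqref{teo62} is otherwise unchanged, its coefficients being fixed functions of $U$ untouched by the conjugation up to the $\geq4$-homogeneous commutators just absorbed. This yields \eqref{birk}.

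The step I expect to be the main obstacle is verifying that the generator $S$ solving the homological equation is a true smoothing operator and that the remainders stay non-positively ordered: this is where the small-divisor bounds of Lemma \ref{lem:wres}, in particular the $(\max_a|j_a|)^{-(1-\alpha)}$ lower bound on $\fP^{(2)}_\Lambda\setminus\fR^{(2)}_\Lambda$, must be dovetailed with the $m$-operator order calculus so as to force precisely the hypothesis $\vr\geq2-\alpha$; the combinatorics of $\fP^{(1)}_\Lambda,\fP^{(2)}_\Lambda$ are already contained in Lemma \ref{lem:wres}, and no iteration is needed since the transformation introduces no cubic monomial other than the bracket itself.
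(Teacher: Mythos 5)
Your proposal is correct and follows essentially the same route as the paper's proof: a single Poincar\'e step whose generator $S$ (the paper's $\tQ_2$) is a $2$-homogeneous smoothing operator obtained by dividing the coefficients of $R_2$ over $\bigcup_{n=1,2}\bigl(\fP^{(n)}_\Lambda\setminus\fR^{(n)}_\Lambda\bigr)$ by the small divisor, using Lemma \ref{lem:wres} for the lower bounds, so that $S\in\wt\cR^{-\vr+1-\alpha}_2$, and then checking that all $\geq4$-homogeneous corrections have order $\leq -\vr+2-\alpha\leq 0$ exactly thanks to $\vr\geq 2-\alpha$. The only cosmetic difference is that you unfold the Lie expansions by hand where the paper invokes Proposition \ref{prop:Egorov_smoothing} (which is proved by precisely those expansions), and you state the internal variable of the bracket as $Z$ rather than $U$ — the paper keeps it as $U$ and defers the substitution $U=\cF^{-1}(Z)$ to Proposition \ref{prop.megliodemax}, but this is harmless since the difference is higher order.
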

 \begin{proof}
 We look for a transformation $\Upsilon(U)$ as the time-1 flow  of the  equation 
\begin{equation}
\pa_\tau \Upsilon^\tau(U) = {\tt Q}_2(U)  \Upsilon^\tau(U)  , \quad \Upsilon^0(U)  = \uno  
\end{equation}
where ${\tt Q}_2$ is a matrix of smoothing operators  in $\wt \mR^{-\vr +1-\alpha}_{2}$ to be determined. 
By Lemma \ref{flow.s.ad}, the map $\Upsilon^\tau$ is a $0$-admissible transformation with gain $\varrho -1+ \alpha$ which is non-negative.
Recalling \eqref{teo62}, the variable $Z :=   \Upsilon(U) W_1 $ fulfills 
\begin{equation}
	\begin{aligned} 
		\pa_t Z= &   \Upsilon(U) \left( - \im \vOmega(D)  \right) \Upsilon(U)^{-1} Z +	\Upsilon(U) \vOpbw{\im \, \tm^{(1)}} \Upsilon(U)^{-1} Z   \\
& +	\Upsilon(U)  \big( R_2(\bU) + B_{\geq 4}(U) \big) \Upsilon(U)^{-1} Z   
		+\big(\pa_t \Upsilon(U) \big) \Upsilon(U)^{-1} Z
	\end{aligned}
	\end{equation}
where we set $\tm^{(1)}:=  \langle \, \und \tV \, \rangle \xi +      a_2^{(\alpha)} +  {V}_{\geq 4 } \xi +  { a^{(\alpha)}_{\geq 4}}\in \Sigma \Gamma^1_{2}[r]$.
By Proposition \ref{prop:Egorov_smoothing} (with $\vr\leadsto \vr-(1-\alpha)$) we get 
\begin{equation}\label{teo631}
	\begin{aligned}
		\pa_t Z= &    - \im \vOmega(D)Z  +	\vOpbw{\im \tm^{(1)}} Z\\
		&
		+2 \tQ_2\left(- \im \vOmega(D) U, U\right)Z  + [ \tQ_2(U), - \im \vOmega(D) ] Z +  R_2(\bU)Z \\
		& +	B_{\geq 4}(U) Z+ R_{\geq 4}(U)Z
	\end{aligned}
	\end{equation}
	where $ B_{\geq 4}(\bU)$ is a  real-to-real matrix of $0$-operators in $\cM^0_{\geq 4}[r]$ and $R_{\geq 4}(U)$ is a real-to-real matrix of smoothing operators in $\mR^{-\vr+2-\alpha}_{\geq 4}[r]$ 
	which we shall regard as a $0$-operator in $ \mM_{\geq 4}^0[r]$ since $\vr\geq 2-\alpha$. 

 To determine $\tQ_2(U)$,   expand the  vector field $R_2(U) Z$ in \eqref{teo62} in Fourier components as 
 $$
( R_2(U) Z)^\sigma_k = \sum_{ \mP_4
} R_{j_1, j_2, j, k}^{\sigma_1, \sigma_2, \sigma', \sigma}
u_{j_1}^{\sigma_1} \, u_{j_2}^{\sigma_2} z_{j}^{\sigma'}  $$
 where with the sum over $\mP_4$ we mean that the indexes $(j_1, j_2, j, k,\sigma_1, \sigma_2, \sigma', -\sigma)$ belong to $\mP_4$.  Below we use the same notation. 
 Note that this writing is possible since  $R_2(U)$ is gauge invariant. 
 
Then we define 
 $$(R^{(\Lambda)}_2(U)Z)^\sigma_k:=  \sum_{\mP_4
}  \Lambda_{j_1, j_2, j, k}^{\sigma_1, \sigma_2, \sigma', \sigma}
u_{j_1}^{\sigma_1} \, u_{j_2}^{\sigma_2} z_{j}^{\sigma'}, \quad \Lambda_{j_1, j_2, j, k}^{\sigma_1, \sigma_2, \sigma', \sigma}:=
    R_{j_1, j_2, j, k}^{\sigma_1, \sigma_2, \sigma', \sigma} \delta\Big( (j_1, j_2, j, k,\sigma_1, \sigma_2, \sigma', -\sigma)\in \mC\Big),
   $$
where $\mC:=\bigcup\limits_{n =0}^2 \fR^{(n)}_\Lambda \cup  \bigcup\limits_{n=3}^4 \fP^{(n)}_\Lambda $, {and the sets $\fP^{(n)}_\Lambda$ , $\fR^{(n)}_\Lambda$ defined in \eqref{momj}, \eqref{resj}}. 
We choose $\tQ_2(U)$ so that
\be\label{R2w}
2\tQ_2\left(- \im \vOmega(D) U,U\right)  + [ \tQ_2(U), - \im \vOmega(D) ]  +  R_2(\bU)  = R^{(\Lambda)}_2(U).
\ee
We claim that one can put, denoting $\vec{\jmath}=(j_1, j_2)$, $\vec \sigma=(\sigma_1, \sigma_2)$, 
 \be\label{tQ2.def}
( \tQ_2(U)Z )^\sigma_k  := \sum_{ \cP_4} \tQ_{\vec{\jmath} , j,   k}^{\vec \sigma, \sigma',  \sigma} \, u_{j_1}^{\sigma_1} \, u_{j_2}^{\sigma_2} z_{j}^{\sigma'} 
\ee
where
\be\label{tQ2.def2}
 \tQ_{\vec{\jmath}, j,   k}^{\vec \sigma, \sigma', \sigma}
 := 
 \begin{cases}
  \dfrac{R_{\vec{\jmath}, j,  k}^{\vec \sigma, \sigma',  \sigma}}{\im (\sigma_1 |j_1|^\alpha +\sigma_2 |j_2|^\alpha + {\sigma'} |j|^\alpha - \sigma |k|^\alpha   )}  \ , & (\vec{\jmath},j,  k, \vec\sigma , \sigma',  -\sigma) \in \bigcup\limits_{n =1}^2 \left( \fP^{(n)}_\Lambda \setminus \fR^{(n)}_\Lambda\right) \\
  0  \ , & (\vec{\jmath}, j,  k, \vec \sigma, \sigma', -\sigma) \in \mC
 \end{cases}
 \ee

 \begin{lemma}
  $\tQ_2(U)$ in \eqref{tQ2.def}--\eqref{tQ2.def2} is a matrix of smoothing operators in  
 $\wt \cR^{-\vr + 1-\alpha}_2 $ 
 fulfilling \eqref{R2w}.
 \end{lemma}
 \begin{proof}
 As $R_2(U)$ is a smoothing operator in $\wt \cR^{-\varrho}_2$, its coefficients fulfill the estimate: for some $\mu\geq 0$, $C>0$, 
\begin{equation}\label{boundR}
\abs{ R_{\vec{\jmath}, j,  k}^{\vec \sigma, \sigma',  \sigma} } \leq C 
\frac{
{\rm max}_2\{ \la j_1\ra, \la j_2 \ra,  \la j\ra \}^{\mu}}{ \max\{ \la j_1\ra, \la j_2 \ra, \la j\ra \}^{\varrho}}
   \, , \quad \forall (\vec{\jmath}, j, k, \vec \sigma, \sigma',  -\sigma) \in  \fP_4 \ ,
\end{equation}
 and satisfy the symmetric and reality properties  \eqref{M.coeff.p} and \eqref{M.realtoreal}.
 
 Consider now the coefficients $ \tQ_{\vec{\jmath}, j,   k}^{\vec \sigma, \sigma', \sigma}$ in \eqref{tQ2.def2}. 
 Clearly they satisfy the symmetric and reality properties \eqref{M.coeff.p} and \eqref{M.realtoreal}. 
 We now bound them. By 
\eqref{boundR},   Lemma \ref{lem:wres}  and the momentum relation $\sigma k= \sigma_1 j_1 + \sigma_2 j_2 + \sigma' j$, 
 $$
 \abs{  \tQ_{\vec{\jmath}, j,   k}^{\vec \sigma, \sigma', \sigma}} \leq C \frac{
{\rm max}_2\{ \la j_1\ra,\la j_2 \ra, \la j_3\ra \}^{\mu}}{ \max\{ \la j_1\ra,\la j_2 \ra, \la j_3\ra \}^{\varrho - (1-\alpha)}}\qquad  
\forall  (\vec{\jmath} , j,  k, \vec \sigma, \sigma', -\sigma) \in 
\fP^{(1)}_{\Lambda} \cup (\fP^{(2)}_\Lambda \setminus \fR^{(2)}_\Lambda ) \ ,
 $$
 (recall that  $\fR^{(1)}_\Lambda = \emptyset$).
This shows that  $\tQ_2(U)$ is a matrix of smoothing operators in  $\wt\cR_2^{-\varrho + 1-\alpha}$.

It is clear that  $\tQ_2(U)$ fulfills \eqref{R2w}, also noting that  $\Pi_{\fP^{(0)}_\Lambda}(R_2(Z)Z) =\Pi_{\fR^{(0)}_\Lambda}(R_2(Z)Z) $  in view of Lemma \ref{lem:wres} $(i)$.
 \end{proof}
 
\noindent  With such  $\tQ_2(U)$, system \eqref{teo631} reduces to \eqref{birk}.

We prove now that 
the vector field  $X^{(\Lambda)}(Z)=  {R}_2^{(\Lambda)}(Z)Z $ is in weak-$\Lambda$ normal form, i.e. it fulfills 
  \eqref{wnf}. Indeed the coefficients of the vector field $X^{(\Lambda)}$ are obtained as in \eqref{simmetrizzata} and, being the set $ \mC$ symmetric with respect to the first three indexes, they have the form 
  $$
  X_{j_1, j_2, j_3, k}^{\sigma_1, \sigma_2, \sigma_3, \sigma}= \frac13 \Big(  R_{j_1, j_2, j_3, k}^{\sigma_1, \sigma_2, \sigma_3, \sigma} + R_{j_3, j_2, j_1, k}^{\sigma_3, \sigma_2, \sigma_1, \sigma}+ R_{j_1, j_3, j_2, k}^{\sigma_1, \sigma_3, \sigma_2, \sigma} \Big) \delta\Big( (j_1, j_2, j_3, k,\sigma_1, \sigma_2, \sigma_3, -\sigma) \in \mC\Big) \ . 
  $$
Proposition \ref{birkfinalone} is proved.
 \end{proof}

 \subsection{Identification and proof of Theorem \ref{thm:nf}}
With the aid of paradifferential normal form,  we have conjugated the original system 
\eqref{eq.U} to the new  system  \eqref{birk}. 
The next steps are:  
$(i)$ to  write  \eqref{birk} as a system in the single variable $Z(t)$, and 
 $(ii)$  to compute explicitly 
$\Pi_{\fP^{(n)}_\Lambda}X^{(\Lambda)}$ in \eqref{wnf} for $n = 0,1,2$,   deducing \eqref{Y.str}.

To achieve $(i)$, recall that the map 
in  \eqref{Zetone} has the form 
 \be\label{Z=FU}
Z =  \cF(U)= \bF(U)U , \quad \bF(U):= \Upsilon(U) \Phi(U) \Psi(U)   \ . 
 \ee
{ Since $\Upsilon(U) $ is $0$-admissible with gain $\varrho - 1 + \alpha$, $\Phi(U) $ is $2$-admissible with gain $5$ and $\Psi(U) $ is $0$-admissible with gain $5$ (Propositions \ref{propdiag}, \ref{propdiag1}, \ref{birkfinalone}), by Lemma \ref{lem:comp}
the map $\bF(U)$ a $2$-admissible  with gain  $\min(3, \vr - 1 + \alpha) = 3$ provided $\vr \geq 4- \alpha$.}

Then Lemma \ref{loc.inv} ensures that $\cF$ is locally invertible in a small ball $B_{s_0'}(r')$ for some $s_0', r'>0$, with  inverse map $\cF^{-1}$  having the  structure 
 \be\label{inv.F}
 U = \cF^{-1}(Z) = \bG(Z) Z , \quad  \mbox{ with }  \bG(Z)   =  \uno + \bG_{\geq 2}(Z) , \ \ \bG_{\geq 2}(Z)  \in \Sigma\cM^{4}_{2}[r'] \ , 
 \ee
 for some $r' >0$.
 We then substitute $U$ in the internal variables of the operators in \eqref{birk}.  Consider first the 2-homogeneous operators. We have,  using Lemma \ref{nuovetto}--1, 
 \begin{align*}
  \langle \,\und \tV\, \rangle(\cF^{-1}(Z);x) \xi    -    \langle \,\und \tV\, \rangle(Z;x) \xi 
  \in \Gamma^{1}_{\geq 4}[r'] 
  ,  \quad   
    {a}^{(\alpha)}_2(\cF^{-1}(Z);x,\xi)     -  {a}^{(\alpha)}_2(Z;x,\xi)  \in \Gamma^{\alpha}_{\geq 4}[r'] 
 \end{align*}
 and, using Lemma \ref{nuovetto}--2, 
 $ {R}_2^{(\Lambda)}(\cF^{-1}(Z)) - {R}_2^{(\Lambda)}(Z) \in \cR_{\geq 4}^{- \varrho+4}[r']$.
 Then we substitute $U=\cF^{-1}(Z)$ in the non-homogeneous operators $ \vOpbw{   \im V_{\geq 4 }(U;x)  \xi+ \im   a^{(\alpha) }_{\geq 4} (U;x,\xi)}$ and $B_{\geq 4}(U)$, applying Lemma \ref{nuovetto}--1\& 5.
 In conclusion, setting $\vr:=4$,  we obtain the following:
 \begin{proposition} \label{prop.megliodemax}
There are 
 $  s_0, r  > 0$ such that
if $U(t) \in B_{ s_0, \R}(I;r)$ solves \eqref{U.para}, then 
the variable  $Z(t)$ in \eqref{Z=FU} solves  the system 
  \be \label{birk2}
	 \begin{aligned} 
	 	\pa_t Z= &	 	- \im \vOmega(D)Z +
		 \vOpbw{ \im  \langle \,\und \tV\, \rangle(Z;x) \xi +   \im { a}^{(\alpha)  }_2(Z;x,\xi)} +  {X}^{(\Lambda)}(Z)
 \\
		&  + \vOpbw{    \im \wt V_{\geq 4 }(Z;x)  \xi +
\im   \wt a^{(\alpha) }_{\geq 4} (Z;x,\xi)} Z	 + \wt {B}_{\geq 4}(Z) Z
	 \end{aligned}
	 \ee
	 where  $ \langle \,\und \tV\, \rangle$ and  ${ a}^{(\alpha)  }_2$ 
	 are the quadratic symbols  in Proposition \ref{propdiag1}, ${X}^{(\Lambda)}(Z)$ is the cubic vector field  in weak-$\Lambda$ normal form 
	  of Proposition \ref{birkfinalone} is , whereas
\begin{itemize}
\item  $  \wt  V_{\geq 4 }(Z;x) $ is a real function in $\mF_{\geq 4}^{\R}[r]$;
		\item     $ \wt a^{(\alpha) }_{\geq 4} (Z;x,\xi)$ is a real non-homogeneous symbol in $\Gamma^\alpha_{\geq 4}[r]$;
 \item $\wt B_{\geq 4}(Z)$ is a real-to-real matrix of $0$-operators in $ \cM_{\geq 4 }^0[r]$.
\end{itemize}
 \end{proposition}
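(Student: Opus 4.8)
The plan is to take system \eqref{birk}, which Proposition \ref{birkfinalone} establishes for $Z(t)=\Upsilon(U)\Phi(U)\Psi(U)U$, and to replace the internal variable $U$ appearing inside the symbols and operators of \eqref{birk} by $\cF^{-1}(Z)$. First I would record that $\bF(U):=\Upsilon(U)\Phi(U)\Psi(U)$ is the composition of the $0$-, $2$- and $0$-admissible transformations of Propositions \ref{propdiag}, \ref{propdiag1}, \ref{birkfinalone}, hence a $2$-admissible transformation by Lemma \ref{lem:comp}; then Lemma \ref{loc.inv} applies, so the nonlinear map $\cF(U)=\bF(U)U$ is locally invertible and on a ball $B_{s_0',\R}(r')$ one has $U=\cF^{-1}(Z)=\bG(Z)Z$ with $\bG(Z)=\uno+\bG_{\geq 2}(Z)$, $\bG_{\geq 2}(Z)\in\Sigma\cM^4_2[r']$ real-to-real, and $\bG(Z)$ a real-to-real matrix in $\cM^0_{\geq 0}[r']$, as recorded in \eqref{inv.F}. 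Shrinking $r$ so that $Z(t)=\cF(U(t))$ stays in this ball along the flow, the identity $U(t)=\cF^{-1}(Z(t))$ holds, so the substitution is legitimate and the task reduces to identifying the correction terms it produces.

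For the two quadratic symbols, since $\langle\,\und\tV\,\rangle(U;x)$ and $a^{(\alpha)}_2(U;x,\xi)$ are $2$-homogeneous, i.e. symmetric bilinear in $U$, expanding at $U=Z+\bG_{\geq 2}(Z)Z$ gives
\[
\langle\,\und\tV\,\rangle(\cF^{-1}(Z);x)=\langle\,\und\tV\,\rangle(Z;x)+2\,\langle\,\und\tV\,\rangle(Z,\bG_{\geq 2}(Z)Z;x)+\langle\,\und\tV\,\rangle(\bG_{\geq 2}(Z)Z,\bG_{\geq 2}(Z)Z;x),
\]
and the last two terms, obtained from the $2$-homogeneous symbol $c=\langle\,\und\tV\,\rangle\,\xi\in\wt\Gamma^1_2$ by replacing $Z$ by $M(Z)Z$ with $M=\bG_{\geq 2}\in\Sigma\cM^4_2[r']$ in one or both slots, lie in $\Gamma^1_{\geq 4}[r']$ by (iterated) Lemma \ref{nuovetto}--1; moreover they equal a real, $\xi$-independent function times $\xi$ (because $\cF^{-1}$ preserves $H^s_\R$ and $\langle\,\und\tV\,\rangle$ is real), so the correction to $\langle\,\und\tV\,\rangle(Z;x)$ lies in $\mF^{\R}_{\geq 4}[r']$. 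The same computation with $c=a^{(\alpha)}_2\in\wt\Gamma^\alpha_2$ shows $a^{(\alpha)}_2(\cF^{-1}(Z);x,\xi)-a^{(\alpha)}_2(Z;x,\xi)\in\Gamma^\alpha_{\geq 4}[r']$ and is real. Writing $R^{(\Lambda)}_2(U)$ as a symmetric bilinear map and expanding in the same way, Lemma \ref{nuovetto}--2 (with $\vr=4$, $M=\bG_{\geq 2}$) puts the corrections in $\cR^{-4+4}_{\geq 4}[r']=\cM^0_{\geq 4}[r']$, so $R^{(\Lambda)}_2(\cF^{-1}(Z))-R^{(\Lambda)}_2(Z)\in\cM^0_{\geq 4}[r']$; crucially the cubic vector field $X^{(\Lambda)}(Z):=R^{(\Lambda)}_2(Z)Z$ of Proposition \ref{birkfinalone} is left unchanged, since this correction operator is at least $4$-homogeneous and thus contributes only at homogeneity $\geq 5$.

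Next I would substitute $U=\cF^{-1}(Z)=\bG(Z)Z$ into the quartic operators $\vOpbw{\im V_{\geq 4}(U;x)\xi+\im a^{(\alpha)}_{\geq 4}(U;x,\xi)}$ and $B_{\geq 4}(U)$: Lemma \ref{nuovetto}--1 (the statement on $c_{\geq 4}(\bF(U)U;\cdot)$, applied with $\bF\leadsto\bG$) gives $V_{\geq 4}(\bG(Z)Z;x)\in\mF^{\R}_{\geq 4}[r']$ and $a^{(\alpha)}_{\geq 4}(\bG(Z)Z;x,\xi)\in\Gamma^\alpha_{\geq 4}[r']$, while Lemma \ref{nuovetto}--5 gives $B_{\geq 4}(\bG(Z)Z)\in\cM^0_{\geq 4}[r']$. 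Setting $\vr:=4$ (so that $4\geq 2-\alpha$ and all hypotheses of Propositions \ref{propdiag}, \ref{propdiag1}, \ref{birkfinalone} hold) and collecting, I would set $\wt V_{\geq 4}(Z;x)$ equal to $V_{\geq 4}(\cF^{-1}(Z);x)$ plus the $\xi$-independent correction to $\langle\,\und\tV\,\rangle$, $\wt a^{(\alpha)}_{\geq 4}(Z;x,\xi)$ equal to $a^{(\alpha)}_{\geq 4}(\cF^{-1}(Z);x,\xi)$ plus the correction to $a^{(\alpha)}_2$, and $\wt B_{\geq 4}(Z)$ equal to $B_{\geq 4}(\cF^{-1}(Z))$ plus the $\cM^0_{\geq 4}$ correction to $R^{(\Lambda)}_2$; with $s_0$, $r$ taken as the largest, resp. smallest, of the finitely many parameters produced along the way, this yields \eqref{birk2}. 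Realness of $\wt V_{\geq 4}$ and $\wt a^{(\alpha)}_{\geq 4}$ and real-to-realness of $\wt B_{\geq 4}$ are inherited from the corresponding objects of Proposition \ref{birkfinalone} together with the fact that $\cF^{-1}$ preserves $H^s_\R$, and $\langle\,\und\tV\,\rangle$, $a^{(\alpha)}_2$ (and their gauge invariance) are carried over verbatim.

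I expect this to be essentially bookkeeping, resting entirely on the substitution lemmas already available (Lemmas \ref{loc.inv} and \ref{nuovetto}). The only points requiring a little care are: (i) checking that the corrections produced by substituting into the quadratic symbol $\langle\,\und\tV\,\rangle\,\xi$ are genuinely a real $\xi$-independent function times $\xi$, so that they can be absorbed into $\wt V_{\geq 4}\,\xi$ and not merely into a generic order-one symbol; and (ii) verifying that the substitution does not alter the cubic part of the vector field, so that the weak-$\Lambda$ normal form structure of $X^{(\Lambda)}$ established in Proposition \ref{birkfinalone} transfers without any further argument.
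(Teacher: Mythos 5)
Your argument coincides with the paper's proof: both identify $\bF(U)=\Upsilon(U)\Phi(U)\Psi(U)$ as a $2$-admissible transformation via Lemma \ref{lem:comp}, invert with Lemma \ref{loc.inv} to write $U=\cF^{-1}(Z)=\bG(Z)Z$ with $\bG-\uno\in\Sigma\cM^4_2[r']$, substitute into the internal variables of \eqref{birk}, and classify the corrections using Lemma \ref{nuovetto}--1, --2 and --5 after fixing $\varrho=4$. The only difference is that you make explicit the bilinear expansion behind the application of Lemma \ref{nuovetto}--1 and the $\xi$-independence/reality of the correction to $\langle\,\und\tV\,\rangle\,\xi$, points that the paper leaves implicit; this is sound and adds clarity rather than deviating from the argument.
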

 The next step 
 $(ii)$   is   to compute explicitly 
$\Pi_{\fP^{(n)}_\Lambda}{X}^{(\Lambda)}$, $ n = 0,1,2$: 
 \begin{proposition}\label{prop.megliodemax2}
 The vector field ${X}^{(\Lambda)}(Z) $ of Proposition \ref{birkfinalone} is actually in strong-$\Lambda$ normal form (Definition \ref{def:wr}) and fulfills   \eqref{Y.str}. 
 \end{proposition}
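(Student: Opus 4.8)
The plan is to upgrade the weak-$\Lambda$ normal form of Proposition \ref{birkfinalone} to a strong one by combining it with the a-posteriori identification of Proposition \ref{prop.ident} and with the explicit computation of the resonant projections of the original cubic vector field (Lemma \ref{lem:wX}). The starting point is that, since $\langle \,\und \tV\, \rangle(Z;x)$, ${a}_2^{(\alpha)}(Z;x,\xi)$ and ${R}_2^{(\Lambda)}(Z)$ are $2$-homogeneous in $Z$ while every remaining term in \eqref{birk2} is of degree $\geq 5$, the cubic component of the conjugated vector field is
\[
\widetilde X_3(Z) \;=\; \vOpbw{ \im \langle \,\und \tV\, \rangle(Z;x)\,\xi + \im {a}_2^{(\alpha)}(Z;x,\xi)}Z \;+\; {X}^{(\Lambda)}(Z).
\]
The transformation $\bF(U) = \Upsilon(U)\Phi(U)\Psi(U)$ is a $2$-admissible transformation (a composition of admissible transformations, Lemma \ref{lem:comp}), and a solution $U(t)$ of \eqref{U.para} is also a solution of the quasilinear system \eqref{eq.U}, which has the form required by Proposition \ref{prop.ident} since $X_3(U) = M_{\mathtt{NLS}}(U)U$ with $M_{\mathtt{NLS}}(U)\in\wt\cM^1_2$ (see \eqref{X3.mop}). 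Hence Proposition \ref{prop.ident} applies to $Z = \bF(U)U$; since the push-forward vector field is uniquely determined, its cubic part is exactly the $\widetilde X_3$ above, and we obtain
\[
\Pi_\fA \widetilde X_3 = \Pi_\fA X_3 \qquad \text{for every } \fA \subseteq \fR ,
\]
with $X_3$ the cubic vector field of \eqref{eq.U}.

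Next, $\langle \,\und \tV\, \rangle\,\xi$ and ${a}_2^{(\alpha)}$ are $2$-homogeneous symbols, with zero average in $x$ and gauge invariant, so Lemma \ref{lem:proj.para} gives
\[
\Pi_{\fR^{(n)}_\Lambda}\!\left[ \vOpbw{ \im \langle \,\und \tV\, \rangle\,\xi + \im {a}_2^{(\alpha)}}Z \right] = 0, \qquad n = 0,1,2 ,
\]
whence, since $\fR^{(n)}_\Lambda \subseteq \fR$, we get $\Pi_{\fR^{(n)}_\Lambda} X^{(\Lambda)} = \Pi_{\fR^{(n)}_\Lambda}\widetilde X_3 = \Pi_{\fR^{(n)}_\Lambda} X_3$ for $n=0,1,2$. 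Invoking Lemmata \ref{lem:wres} and \ref{lem:wX}: for $n=1$ the set $\fR^{(1)}_\Lambda$ is empty, so the weak-$\Lambda$ normal form already gives $\Pi_{\fP^{(1)}_\Lambda} X^{(\Lambda)} = \Pi_{\fR^{(1)}_\Lambda} X^{(\Lambda)} = 0$; for $n=2$, Lemma \ref{lem:wX}$(ii)$ yields $\Pi_{\fR^{(2)}_\Lambda} X_3 = 0$, hence $\Pi_{\fR^{(2)}_\Lambda} X^{(\Lambda)} = 0$, and again the weak-$\Lambda$ normal form gives $\Pi_{\fP^{(2)}_\Lambda} X^{(\Lambda)} = 0$. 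This proves $\Pi_{\fP^{(1)}_\Lambda} X^{(\Lambda)} = \Pi_{\fP^{(2)}_\Lambda} X^{(\Lambda)} = 0$, so $X^{(\Lambda)}$ is in strong-$\Lambda$ normal form (Definition \ref{def:wr}).

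It remains to identify $\Pi_{\fP^{(0)}_\Lambda} X^{(\Lambda)}$. Since $\fP^{(0)}_\Lambda = \fR^{(0)}_\Lambda$ (Lemma \ref{lem:wres}$(i)$) one has $\Pi_{\fP^{(0)}_\Lambda} X^{(\Lambda)} = \Pi_{\fR^{(0)}_\Lambda} X^{(\Lambda)} = \Pi_{\fR^{(0)}_\Lambda} X_3$, whose first component is computed in Lemma \ref{lem:wX}$(ii)$ to be $-\im |z_1|^2 z_1\, e^{\im x} + \im |z_{-1}|^2 z_{-1}\, e^{-\im x}$ (applying Lemma \ref{lem:wX} with the variable $Z = \vect{z}{\bar z}$ in place of $U$); the second component follows from the real-to-real structure \eqref{X.real} of $X^{(\Lambda)}$, which is preserved under $\Pi_{\fP^{(0)}_\Lambda}$. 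This is precisely \eqref{Y.str}, completing the proof. The argument is mostly bookkeeping; the one point deserving care is the identification of the cubic part of \eqref{birk2} with the vector field $\widetilde X_3$ furnished by the abstract Proposition \ref{prop.ident} — both being the degree-three jet of the push-forward of $X_3$ under the same $2$-admissible map $\bF$ — which is what allows the resonant projections to be transported back to the explicit $X_3$.
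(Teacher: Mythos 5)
Your proof is correct and follows essentially the same route as the paper: invoke the abstract identification argument of Proposition~\ref{prop.ident} applied to \eqref{eq.U} with the composed admissible map $\bF$, kill the paradifferential contribution on $\fR_\Lambda^{(n)}$ via Lemma~\ref{lem:proj.para}, and read off the resonant projections of the original $X_3$ from Lemma~\ref{lem:wX}. The only difference is that you make the $n=0,1,2$ casework explicit (using $\fR_\Lambda^{(1)}=\emptyset$ and $\Pi_{\fR_\Lambda^{(2)}}X_3=0$), whereas the paper states the common conclusion $\Pi_{\fP^{(n)}_\Lambda}X^{(\Lambda)}=\Pi_{\fR^{(n)}_\Lambda}X_3$ in one line.
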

 \begin{proof}
 We combine the abstract identification argument of
 Proposition  \ref{prop.ident}  with the  characterization of  the resonant monomials of the original vector field $X_3$ in Lemma \ref{lem:wX}.
 
 Precisely, we apply the identification result of Proposition  \ref{prop.ident} to the starting NLS equation  \eqref{eq.U}  (which  has the required structure in \eqref{X.form} in view of \eqref{X3.mop})
and with the admissible transformation $\bF(U)$ in  \eqref{Z=FU}, getting that $Z$ fulfills an equation of the form 
\eqref{Z.end}. 
Identifying the cubic vector field of \eqref{Z.end} with the one of
   \eqref{birk2}  we get the identity
$$
 \vOpbw{ \im  \langle \,\und \tV\, \rangle(Z;x) \xi +   \im { a}^{(\alpha)  }_2(Z;x,\xi)} +  {X}^{(\Lambda)}(Z)  = \widetilde X_3(Z) \ . 
 $$
In addition,   in view of \eqref{ab.ident}, we have 
 \be\label{ident.X3}
\Pi_{\fR^{(n)}_{\Lambda} }\left( \vOpbw{ \im  \langle \,\und \tV\, \rangle(Z;x) \xi +   \im { a}^{(\alpha)  }_2(Z;x,\xi)} Z+  {X}^{(\Lambda)} \right)=\Pi_{\fR^{(n)}_{\Lambda} } X_ 3   \ , \quad n = 0,1,2 \ . 
\ee
Now we apply  Lemma \ref{lem:proj.para} to the cubic vector field 
$ \vOpbw{ \im  \langle \,\und \tV \rangle\,  \xi +   \im { a}^{(\alpha)}_2} Z$;  this can be done since the symbols 
 $\langle \,\und \tV\, \rangle(Z;x) \xi $ and ${ a}^{(\alpha)  }_2(Z;x,\xi)$ have both  zero-average (Proposition \ref{propdiag1}) and are gauge invariant (i.e. fulfills the first of \eqref{sym.gauge}).
  We conclude that 
\be\label{ident.Op}
\Pi_{\fR^{(n)}_{\Lambda} } \left[ \vOpbw{ \im  \langle \,\und \tV\, \rangle(Z;x) \xi +   \im { a}^{(\alpha)  }_2(Z;x,\xi)} Z \right] = 0 \ ,  \quad n = 0,1,2 \ , 
\ee
from which we get immediately 
\begin{align*}
\Pi_{\fP^{(n)}_{\Lambda} }{X}^{(\Lambda)}  
& \stackrel{\eqref{wnf}}{=} 
\Pi_{\fR^{(n)}_{\Lambda} }{X}^{(\Lambda)} 
\stackrel{\eqref{ident.X3},  \eqref{ident.Op}}{=}  \Pi_{\fR^{(n)}_{\Lambda}} {X}_3   \ ,  \quad n = 0,1,2 \ . 
\end{align*}
 This last vector field is computed in Lemma \ref{lem:wX}, proving   \eqref{Y.str}.

\end{proof}

\begin{proof}[Proof of Theorem \ref{thm:nf}]
It follows from Proposition \ref{prop.megliodemax} and \ref{prop.megliodemax2}. 
\end{proof}
  
 \section{The effective equation}\label{sec:effective}
The goal of this section is to  study the long-time dynamics of solutions of equation \eqref{Z.eq}  fulfilling certain upper-bounds, that we  call {\em long-time controlled}, see Definition \ref{A}. 
In view of the reality of  system \eqref{Z.eq}, we regard it as a scalar equation in $z(t)$.
We study separately the dynamics of the modes supported on $\Lambda$, namely  $z_{\pm 1}(t)$, and those supported on $\Lambda^c$. Specifically we decompose 
\be
\begin{aligned}\label{ztp}
z(t) = z^\top(t) + z^\perp(t) \ , \quad
&  z^\top(t) := z_1(t) \, e^{\im x} + z_{-1}(t) \, e^{-\im x} \ , 
\quad   z^\perp(t):=   \sum_{|j| \neq 1} z_j(t) \, e^{\im j x} \ . 
\end{aligned}
\ee

$\bullet$ {\bf Parameters:} From now on we {\bf fix} $\mathfrak{s}_0, \fr>0$ as follows: $\fs_0:=\max\{ s_0, s_0'\}$ and $\fr:=\min\{r,r'\}$  where $s_0, r >0 $ are given in Theorem \ref{thm:nf} whereas $s_0', r'>0$ are the parameters required to invert the map $ \mF$ in \eqref{Def:ZU}, see  \eqref{inv.F}.  We also fix 
\be\label{parameters}
s> 3 \fs_0 , \quad \theta \in (0, \theta_*), \ \ \theta_* := \min\left(\frac{s-3 \fs_0}{2s - \fs_0},  \frac15 \right) \ .
\ee
The first step is the following one:
\begin{lemma}\label{lem:sys}
If $Z(t)=\vect{z(t)}{\bar z(t)} \in B_{\fs_0, \R}(I;\fr)$ solves \eqref{Z.eq}, then
 the variables
 $\big(z^\top(t), z^\perp(t)\big)$  defined in \eqref{ztp} fulfill  the system
\begin{align}
\label{eq.ztop}
 \pa_t z^\top =  &- \im |D|^\alpha  z^\top  + 
Y_3^{(\Lambda)}(z^\top) +  
 Y^\top_3 (z) + Y^\top_{\geq 5}(z) \\
\label{eq.zperp}
 \pa_t z^\perp  =  & - \im |D|^\alpha  z^\perp  
+ \Opbw{  \im \,  \tm(z; x, \xi) }z^\perp +Y^\perp_3(z) 
+  Y^\perp_{\geq 5}(z)
\end{align}
where\\
$\bullet$ $Y_3^{(\Lambda)}(z)$ is the integrable vector field
\begin{equation}\label{Y.int}
Y^{(\Lambda)}_3 (z) := Y^{(\Lambda)}_3 (z^\top) =  - \im |z_1|^2 z_1 \, e^{\im x}  + \im |z_{-1}|^2 z_{-1} \, e^{-\im x}    \ ;
\end{equation}
$\bullet$ $Y^\top_3(z)$ and $Y^\perp_3(z)$ 
are  cubic smoothing vector fields fulfilling: for any $\ts \geq \fs_0$
\begin{equation}\label{Y.perp}
\norm{Y^\top_3(z)}_{\ts} \lesssim   \norm{z^\perp}_{\fs_0}^3 \,   \ , \qquad 
\norm{Y^\perp_3(z)}_{\ts + 4} \lesssim  \left( \norm{z^\top}_{\fs_0} + \norm{z^\perp}_{\fs_0} \right) \,  \norm{z^\perp}_{\fs_0} \, \norm{z^\perp}_\ts \ ;
\end{equation}
$\bullet$ $\tm(z;x, \xi)$ is the symbol in $\Sigma\Gamma^1_{\geq 2}[\fr]$ given by
\be\label{tm}
\tm(z;x, \xi):=  \langle \,\und \tV\, \rangle(Z;x) \xi +    {a}^{(\alpha)}_2(Z;x,\xi) +  
 \wt V_{\geq 4 }(Z;x)  \xi +   \wt a^{(\alpha) }_{\geq 4} (Z;x,\xi) \ ,
\ee
with $ \langle \,\und \tV\, \rangle(Z;x)$ defined in \eqref{VresZ}.\\
$\bullet$
$Y^\top_{\geq 5}(z)$ and $Y^\perp_{\geq 5}(z)$ are non-homogeneous vector fields fulfilling the estimate:  for any $\ts\geq \fs_0$ there are  $C>0$, $\tr:=\tr(\ts) \in (0,\fr)$ and for any  $z\in B_{\fs_0}(\tr)\cap {H}^{\ts}(\T,\C)$, 
\be\label{Y5}
\norm{Y^\top_{\geq 5}(z)}_{\ts} + \norm{Y^\perp_{\geq 5}(z)}_{\ts} \leq C  \norm{z}^4_{\fs_0} \norm{z}_\ts  \ . 
\ee
\end{lemma}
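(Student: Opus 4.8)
The plan is to project equation \eqref{Z.eq} onto the tangential space spanned by $e^{\pm\ii x}$ and onto its orthogonal complement $\{|j|\neq 1\}$, and then track each term of \eqref{Z.eq} through these two projections. Denote by $\Pi^\top$ and $\Pi^\perp$ the two $L^2$-orthogonal projectors, so that $z^\top = \Pi^\top z$ and $z^\perp = \Pi^\perp z$. Since $|D|^\alpha$ is a Fourier multiplier it commutes with both projectors, which produces the linear terms $-\ii|D|^\alpha z^\top$ and $-\ii|D|^\alpha z^\perp$ in \eqref{eq.ztop}--\eqref{eq.zperp}. The real work is to split the cubic vector field $X^{(\Lambda)}(Z) = R_2^{(\Lambda)}(Z)Z$, the paradifferential term $\vOpbw{\im\langle\und\tV\rangle\xi + \im a_2^{(\alpha)}}Z$, and the higher-order remainders $\vOpbw{\im\wt V_{\geq 4}\xi + \im\wt a^{(\alpha)}_{\geq 4}}Z + \wt B_{\geq 4}(Z)Z$ according to whether they feed the $z^\top$ or the $z^\perp$ equation.

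First I would handle the leading paradifferential term. The key structural input is Lemma \ref{lem:proj.para}: since $\langle\und\tV\rangle(Z;x)\xi$ and $a_2^{(\alpha)}(Z;x,\xi)$ are zero-average and gauge invariant, the projections $\Pi_{\fR^{(n)}_\Lambda}$, $n=0,1,2$, of $\vOpbw{\cdots}Z$ vanish; combined with the relation \eqref{rela:para} (a paradifferential operator spreads Fourier support only mildly) one sees that $\Pi^\top\bigl(\vOpbw{\im\langle\und\tV\rangle\xi + \im a_2^{(\alpha)}}Z\bigr)$ is actually of the form $Y^\top_{\geq 5}(z)$, i.e. it only contributes a quintic-or-higher term. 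Indeed, writing $Z = Z^\top + Z^\perp$ and expanding the symbol, the purely tangential piece is killed exactly by the cut-off $\chi_2$ (all frequencies $\pm 1$, so $|\xi'| = 2 > \langle\xi\rangle/10$), and the pieces with some tangential and some normal input either land outside $\Lambda$ or produce, after substituting the full nonlinear expression for the internal $Z$ via Lemma \ref{nuovetto}, contributions of degree $\geq 5$. For the orthogonal projection, $\Pi^\perp\bigl(\vOpbw{\im\langle\und\tV\rangle\xi + \im a_2^{(\alpha)}}Z\bigr)$ is massaged into $\vOpbw{\im\tm(z;x,\xi)}z^\perp + Y^\perp_3(z) + Y^\perp_{\geq 5}(z)$: the symbol $\tm$ in \eqref{tm} is precisely $\langle\und\tV\rangle\xi + a_2^{(\alpha)} + \wt V_{\geq 4}\xi + \wt a^{(\alpha)}_{\geq 4}$, so the point is that replacing $Z$ by $Z^\top+Z^\perp$ inside $\vOpbw{\cdot}$ and keeping only the action on $z^\perp$ costs at most a cubic smoothing error (the terms where the operator acts on $z^\top$, or where $z^\perp$ appears inside the symbol and $z^\perp$ is the argument, give the $Y^\perp_3$ and $Y^\perp_{\geq 5}$ contributions). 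The bound $\norm{Y^\perp_3(z)}_{\ts+4}\lesssim(\norm{z^\top}_{\fs_0}+\norm{z^\perp}_{\fs_0})\norm{z^\perp}_{\fs_0}\norm{z^\perp}_\ts$ comes from the smoothing estimate \eqref{smoothing} applied to the $\vr=4$-smoothing operators, after noting such error terms always carry at least two powers of $z^\perp$-type quantities.

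Next I would treat the smoothing vector field $X^{(\Lambda)}(Z)$. By Proposition \ref{prop.megliodemax2} it is in strong-$\Lambda$ normal form, so \eqref{Y.str} gives $\Pi_{\fP^{(0)}_\Lambda}X^{(\Lambda)} = (-\ii|z_1|^2 z_1 e^{\ii x} + \ii|z_{-1}|^2 z_{-1}e^{-\ii x}, \ \text{c.c.})$ and $\Pi_{\fP^{(1)}_\Lambda}X^{(\Lambda)} = \Pi_{\fP^{(2)}_\Lambda}X^{(\Lambda)} = 0$. Hence $\Pi^\top X^{(\Lambda)}(Z)$ — which only sees monomials with output index in $\Lambda$, i.e. $k\in\Lambda$ — decomposes as the $\fP^{(0)}_\Lambda$ piece, giving exactly $Y^{(\Lambda)}_3(z^\top)$ in \eqref{Y.int}, plus the monomials with output in $\Lambda$ but three or four inputs outside $\Lambda$ (these lie in $\fP^{(3)}_\Lambda\cup\fP^{(4)}_\Lambda$), which constitute $Y^\top_3(z)$: being cubic with at least three $z^\perp$-factors, the estimate $\norm{Y^\top_3(z)}_\ts\lesssim\norm{z^\perp}_{\fs_0}^3$ follows from the homogeneous smoothing bound \eqref{smoothing} (with $m=-4\le 0$ and $\ts$ arbitrary, using that all three inputs are low-mode-free and the output sits at $k=\pm 1$, so no loss of derivatives). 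Symmetrically $\Pi^\perp X^{(\Lambda)}(Z)$, by strong-$\Lambda$ normal form, has no monomials with fewer than two inputs outside $\Lambda$ mapping to output outside $\Lambda$, giving a cubic vector field with at least one $z^\perp$-factor and extra smoothing; absorbing it into $Y^\perp_3(z)$ yields the stated bound. Finally, the genuinely non-homogeneous terms $\vOpbw{\im\wt V_{\geq 4}\xi + \im\wt a^{(\alpha)}_{\geq 4}}Z$ and $\wt B_{\geq 4}(Z)Z$ are of degree $\ge 5$ in $Z$; projecting them with $\Pi^\top$ and $\Pi^\perp$ and estimating via \eqref{stimapar2}, \eqref{piove} (with $p\ge 4$) gives the bound $\norm{\cdot}_\ts\le C\norm{z}_{\fs_0}^4\norm{z}_\ts$ required for $Y^\top_{\geq 5}$ and $Y^\perp_{\geq 5}$. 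Collecting all pieces into the four vector fields $Y^{(\Lambda)}_3$, $Y^{\top/\perp}_3$, $Y^{\top/\perp}_{\geq 5}$ proves the lemma.

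The main obstacle I expect is bookkeeping the low-versus-high homogeneity and low-versus-high frequency structure of the error terms simultaneously — in particular, verifying carefully that $\Pi^\top$ of the leading paradifferential term really is purely quintic (this is where the cut-off $\chi_2$ and the mild frequency spread \eqref{rela:para} must be used in tandem with the gauge/zero-average structure), and that the $z^\perp$-equation's error term $Y^\perp_3$ genuinely gains the factor $\norm{z^\perp}_{\fs_0}$ rather than merely $\norm{z}_{\fs_0}$, which is crucial for the later $L^2$-based bootstrap on $z^\perp$. These are not conceptually hard but require invoking the precise normal-form output of Theorem \ref{thm:nf} (strong-$\Lambda$ normal form, zero-average gauge-invariant symbols) exactly where it matters, and keeping the smoothing orders consistent so that $\vr=4$ suffices everywhere.
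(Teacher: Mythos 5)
Your architecture is correct — project onto $\Pi^\top$, $\Pi^\perp$, use the strong-$\Lambda$ normal form for $X^{(\Lambda)}$ and the smoothing estimates \eqref{smoothing}, \eqref{piove} — and your treatment of $Y^\top_3$, $Y^{(\Lambda)}_3$, and the $\geq 5$ terms is essentially right. But the treatment of the paradifferential piece, and the index count for $Y^\perp_3$, contain genuine gaps.

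First, you claim that the cubic error terms in $Y^\perp_3$ partly come from the paradifferential term ("the terms where the operator acts on $z^\top$... give the $Y^\perp_3$ and $Y^\perp_{\geq 5}$ contributions"). This is not what happens, and it cannot happen if the stated bound on $Y^\perp_3$ is to hold. The paper proves (directly from the cut-off $\chi_2$, $\chi$ in the Bony quantization, using the same numerics as \eqref{proj.op01}) that $\Pi^\perp \Opbw{\im\tm}\Pi^\top = 0$ \emph{identically}, not just modulo higher degree: if $k\in\Lambda^c$ and $j\in\Lambda$, the frequency spread $|j-k|\geq 1$ forces the cut-off to vanish because $\chi(\xi',\xi)\equiv 0$ for $|\xi'|\geq\langle\xi\rangle/10$. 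If instead a cubic term of the form $\Pi^\perp\Opbw{\im\tm_2(z)}z^\top$ survived, it would generically carry only one power of $z^\perp$ (the one inside $\tm_2$), producing an error of size $\norm{z^\top}_{\fs_0}\norm{z^\perp}_{\fs_0}\norm{z^\top}_{\ts}$ or worse — not of the form \eqref{Y.perp}, and fatal for the $L^2$-bootstrap you mention. So the vanishing of this cross-projection is a needed structural identity, not an approximation. Likewise, the mechanism you describe for $\Pi^\top\Opbw{\im\tm}z$ being $\geq 5$ — "substituting the full nonlinear expression for the internal $Z$ via Lemma \ref{nuovetto}" — is off: $Z$ is the argument, there is no substitution. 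The correct mechanism is again the cut-off killing the $\tm_2$ contribution (that is what \eqref{proj.op}, \eqref{proj.op2} encode), leaving only $\Pi^\top\Opbw{\im\tm_{\geq 4}}\Pi^\top z$, which is $\geq 5$ for trivial degree-counting reasons.

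Second, for $Y^\perp_3$ you write that the strong-$\Lambda$ normal form gives "a cubic vector field with at least one $z^\perp$-factor". That is not enough, and you yourself flag the issue at the end without resolving it. The bound $\norm{Y^\perp_3(z)}_{\ts+4}\lesssim(\norm{z^\top}_{\fs_0}+\norm{z^\perp}_{\fs_0})\norm{z^\perp}_{\fs_0}\norm{z^\perp}_\ts$ requires \emph{two} powers of $z^\perp$. This is exactly what $\Pi_{\fP^{(2)}_\Lambda}X^{(\Lambda)}=0$ buys: for an output $k\in\Lambda^c$, the surviving monomials are in $\fP^{(3)}_\Lambda\cup\fP^{(4)}_\Lambda$, i.e.\ at most one of $(j_1,j_2,j_3)$ lies in $\Lambda$, hence at least two of the inputs are $z^\perp$. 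If only $\fP^{(1)}_\Lambda$ were excluded (which would give merely "at least one $z^\perp$"), one would get an unacceptable $\norm{z^\top}_{\fs_0}^2\norm{z^\perp}_\ts$ contribution. Be precise on this count — it is the single most important place where the strong (as opposed to weak) $\Lambda$-normal form is used.
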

\begin{proof}
We introduce the projectors
$$
\Pi^\top z := \sum_{j = \pm 1} z_j \, e^{\im j x}  \ , \quad \Pi^\perp z := \sum_{j \neq  \pm 1} z_j \, e^{\im j x} \ 
$$
and compute the projections of the first component of each term in system \eqref{Z.eq}. Since $(-\im \vOmega(D))^+ =  -\im |D|^\alpha$ is a Fourier multiplier, it commutes with the projectors. 
So consider  the paradifferential vector field $\big(\vOpbw{\im \tm } Z \big)^+  =  \Opbw{\im \, \tm } z$.  We  decompose
\begin{align}
\Opbw{\im \, \tm }  & = 
\Pi^\top\Opbw{\im \, \tm } \Pi^\top  + \Pi^\top\Opbw{\im \, \tm } \Pi^\perp  + \Pi^\perp \Opbw{\im \, \tm } \Pi^\top  + \Pi^\perp\Opbw{\im \, \tm } \Pi^\perp  \ .
\end{align}
Writing  
$
\tm_2(z; x, \xi):=  \langle \,\und \tV\, \rangle(Z;x) \xi +    {a}^{(\alpha)}_2(Z;x,\xi)$, $\tm_{\geq 4}(z; x, \xi):=  \wt V_{\geq 4 }(Z;x)  \xi +   \wt a^{(\alpha) }_{\geq 4} (Z;x,\xi)$, 
we 
claim that 
\begin{align}
\label{proj.op}
& \Pi^\top \Opbw{\im \, \tm } \Pi^\top  = \Pi^\top \Opbw{\im \, \tm_{\geq 4} } \Pi^\top \ , \\
\label{proj.op2}
&  \Pi^\top\Opbw{\im \, \tm } \Pi^\perp = \Pi^\perp \Opbw{\im \, \tm } \Pi^\top = 0 \ , \\
\label{proj.op3}
& \Pi^\perp\Opbw{\im \, \tm } \Pi^\perp  = \Opbw{\im \, \tm } \Pi^\perp  \ .
\end{align}
\underline{Proof of \eqref{proj.op}}. We shall exploit that the symbol $\tm_2(z; x, \xi)$ has zero average in $x$ (see Theorem \ref{thm:nf}).
Using the definition  \eqref{BW} for $2$-homogeneous paradifferential operators applied to the  quadratic, gauge invariant, zero-average symbol $\tm_2(z; \cdot)$ we get
 \be\label{proj.op00}
\begin{aligned}
\Pi^\top  \Opbw{\im \, \tm_2 (z; x, \xi)} \Pi^\top  z = & 
\!\!\!\!\!\! \sum_{ j_1-j_2 +j=k  \atop j_1 \neq j_2, \ j, k \in \Lambda } \!\!\!  \chi_2 \left( j_1, j_2,\frac{j+k}{2}\right)  \im \, \tm_{j_1, j_2}^{+,-}\left(\frac{j+k}{2}\right) z_{j_1} \bar z_{j_2} z_j
\, {e^{\im k x}}  \ .
\end{aligned}
\ee
We  show that the cut-off is always vanishing.
Indeed, recalling that   $\chi_2(\xi', \xi) \equiv 0$ when $|\xi'| \equiv \max(|\xi'_1|, |\xi'_2|) \geq \langle \xi \rangle/10$, and using 
$\max(|j_1|, |j_2|) \geq 1$ (as $j_1, j_2$ cannot be both $0$), $ j = k-j_1 +j_2$ and $k \in \Lambda=\{\pm 1\}$, one has 
\be\label{proj.op01}
\frac{1}{10}{\langle \frac{j_1 - j_2 \pm 2}{2} \rangle} 
= \frac{1}{10}\big(1+  \frac{|j_1 - j_2 \pm 2|}{2} \big) 
\leq \frac{4 + 2 \max(|j_1|, |j_2|)}{20}  \leq \frac{3 \max(|j_1|, |j_2|)}{10} \leq \max(|j_1|, |j_2|)  \ , 
\ee
proving that 
$\chi_2 \left( j_1, j_2,\frac{j+k}{2}\right) \equiv 0$.
Consequently $\Pi^\top  \Opbw{\im \, \tm_2 } \Pi^\top  =0$ and \eqref{proj.op} follows.

\noindent \underline{Proof of \eqref{proj.op2}}. 
Again we write   explicitly the action of $\Pi^\top\Opbw{\im \, \tm } \Pi^\perp $, using the quantization \eqref{BW} for the $2$-homogeneous symbol $\tm_2(z; \cdot)$ and \eqref{BWnon} for the non-homogeneous symbol  $\tm_{\geq 4}(z; \cdot)$, getting 
  \be\label{proj.op20}
\begin{aligned}
\Pi^\top  \Opbw{\im \, \tm (z; \cdot)} \Pi^\perp  z = & 
\!\!\!\!\!\! \sum_{ j_1-j_2 +j=k  \atop j_1 \neq j_2, \ j \in \Lambda^c,   k \in \Lambda } \!\!\!  \chi_2 \left(j_1, j_2,\frac{j+k}{2}\right)  \im \, \tm_{j_1, j_2}^{+,-}\left(\frac{j+k}{2}\right) z_{j_1} \bar z_{j_2} z_j
\, {e^{\im k x}} 
 \\
&  + \!\!\!\!\!\! \sum_{j \in \Lambda^c, \,   k \in \Lambda } \!\!\!  \chi \left(k-j,\frac{j+k}{2} \right)
\im \, \hat{\tm}_{\geq 4}\left(z; k-j, \frac{k+j}{2}\right) \, z_j \, {e^{\im k x}} \ .
\end{aligned}
\ee
Arguing as in \eqref{proj.op01},  the first line of \eqref{proj.op20} vanishes. 
To deal with the second line, recall that also  $\chi(\xi', \xi) \equiv 0$ when $|\xi'| \geq \langle \xi \rangle/10$, so when $k \in \Lambda$ and $j \in \Lambda^c$ (so $|j-k|\geq 1$)
\be\label{proj.op21}
\frac{1}{10}{\langle \frac{j + k }{2} \rangle} 
= \frac{1}{10}\big(1+  \frac{|j  \pm 1|}{2} \big) 
\leq \frac{3 + |j|}{20}  \leq \frac{4 + |j - k| }{20} \leq \frac{ |j - k| }{4} \leq |j-k|  \ , 
\ee
proving that 
$\chi \left( k-j, \frac{j+k}{2}\right) \equiv 0$.
In conclusion, also the second line of \eqref{proj.op20} vanishes, proving the first of \eqref{proj.op2}.
The second identity is analogous exchanging the roles of $j$ and $k$.

\noindent \underline{Proof of \eqref{proj.op3}.} It follows writing $\Pi^\perp = \uno - \Pi^\top$ and using the first of \eqref{proj.op2}. 

This concludes the analysis of the projection of the  paradifferential vector field 
$ \Opbw{\im \, \tm } z$.

 We pass to the cubic vector field $X^{(\Lambda)}(Z)$ in \eqref{Xlambda}. 
We set
$$
Y_3^{(\Lambda)}(z):= (\Pi_{\cP^{(0)}_\Lambda} X^{(\Lambda)})(Z)^+ \ , 
$$
which has the claimed form \eqref{Y.int} in view of \eqref{Y.str}.
Then we put 
$$
Y_3^\top(z) := \Pi^\top  \left( X^{(\Lambda)}(Z)^+ - (\Pi_{\cP^{(0)}_\Lambda} X^{(\Lambda)})(Z)^+ \right) \ , \quad
Y_3^\perp(z) := \Pi^\perp X^{(\Lambda)}(Z)^+ .
$$
To prove estimates \eqref{Y.perp} we exploit 
that $X^{(\Lambda)}(Z)$ is in strong-$\Lambda$ normal form, see \eqref{Y.str}. 
 
 \noindent \underline{Estimate of $Y_3^\top(z)$.} By definition 
$$
Y_3^\top(z) = \sum_{k \in \Lambda}  \ \ \sum_{(\vec{\jmath}, k, \vec \sigma, -) \in \cP \setminus \cP^{(0)}_\Lambda } X_{\vec{\jmath}, k }^{\vec \sigma, +} \, z_{\vec{\jmath} }^{\vec \sigma} \, e^{\im k x}  \ , \quad
\vec{\jmath} = (j_1, j_2, j_3)  , \quad \vec\sigma=(\sigma_1, \sigma_2, \sigma_3) \ . 
$$
By \eqref{Y.str},   $ \Pi_{\fP_\Lambda^{(1)}} X^{(\Lambda)}=  \Pi_{\fP_\Lambda^{(2)}} X^{(\Lambda)} =  0 $, so, since 
$k \in \Lambda$, 
the only possibly remaining monomials are those with
$(\vec{\jmath}, k, \vec \sigma, -) \in  \cP^{(3)}_\Lambda$ and in addition $\vec{\jmath} \in (\Lambda^c) ^3$.
Then, recalling \eqref{Xlambda}, $Y_3^\top(z) = \Pi^\top \left(R^{(\Lambda)}(Z^\perp)Z^\perp\right)^+$,  $Z^\perp:=\vect{z^\perp}{\bar z^\perp}$, and the 
 first estimate \eqref{Y.perp} follows from 
 $\norm{Y_3^\top(z) }_{\ts} \lesssim \norm{Y_3^\top(z) }_{L^2}  $ and
 estimate \eqref{smoothing}. 

\noindent\underline{Estimate of  $Y^\perp_3(z)$.}   Again by \eqref{Y.str},   we expand $Y^\perp_3(z)$ as
$$
Y_3^\perp(z) = \sum_{k \in \Lambda^c}  \ \ \sum_{(\vec{\jmath}, k, \vec \sigma, -) \in \cP^{(3)}_\Lambda \cup \cP^{(4)}_\Lambda } X_{\vec{\jmath}, k }^{\vec \sigma, +} \, z_{\vec{\jmath} }^{\vec \sigma} \, e^{\im k x}  \ .
$$
Then either $(i)$  two indexes among $(j_1, j_2, j_3)$ belong to $\Lambda^c$ and  one to $\Lambda$, or $(ii)$ all three indexes belong to $\Lambda^c$. 
Consequently $Y_3^\perp(z) =  \Pi^\perp \left(R^{(\Lambda)}(Z^\perp)Z^\perp 
+ R^{(\Lambda)}(Z^\perp)Z^\top + 2 R^{(\Lambda)}(Z^\perp, Z^\top)Z^\perp  
\right)^+$, $Z^\top:=\vect{z^\top}{\bar z^\top}$. 
The 
 second  estimate \eqref{Y.perp} follows again from estimate \eqref{smoothing} (with $m \leadsto -4$), using also the trivial bound $\norm{z^\top}_{\ts} \leq C_{\ts, \fs_0} \norm{z^\top}_{\fs_0}$.
This concludes the analysis of the projection of $X^{(\Lambda)}(Z)$. 

Finally we consider the projections of the vector field $\wt B_{\geq 4}(Z)Z$ in \eqref{Z.eq}.
We put 
\begin{align*}
& Y^\top_{\geq 5}(z) := \Pi^\top \big(\wt B_{\geq 4}(Z) Z\big)^+ +  \Pi^\top \Opbw{\im \, \tm_{\geq 4} } \Pi^\top z \ , \quad 
Y^\perp_{\geq 5}(z) := \Pi^\perp \big(\wt B_{\geq 4}(Z) Z\big)^+  \ . 
\end{align*}
\underline{Estimate of $Y^\perp_{\geq 5}(z)$.}  It follows
since $ \wt B_{\geq 4}(Z)$ is a matrix of non-homogeneous $0$-operators in $\cM^0_{\geq 4}[r]$, see \eqref{piove}. \\
\underline{Estimate of $Y^\top_{\geq 5}(z)$.}  As the previous one, using  also \eqref{stimapar2} and $\norm{\Pi^\top z}_\ts \lesssim \norm{z}_{\ts-1}$.
\end{proof}

The next step is to extract an effective system driving the dynamics of particular solutions of 
 \eqref{eq.ztop}--\eqref{eq.zperp}  which we call {\em long-time controlled}, see Definition \ref{A} below. 
These solutions have two main features: $(i)$ the initial data is supported mostly on $\Lambda$ and $(ii)$ 
 they have a  large a-priori bound on the high norm 
 $\norm{\cdot}_s$ for long  times. 
These features allow us to propagate smallness of  both tangential and normal modes in the   low norm $\norm{\cdot}_{\fs_0}$ for  long times, 
and moreover to ensure that the normal modes keep having a size  much smaller than the  tangential ones, i.e.  
$\norm{ z^\perp(t) }_{\fs_0}  \ll \norm{z^\top(t)}_{L^2}$, see \eqref{boot}, \eqref{z.s0}. 
 This is possible because of the normal form procedure of the previous section, and in particular because
\begin{itemize}
\item[(i)] the leading term in the dynamics of the  low modes $z^\top(t)$ in  \eqref{eq.ztop} is  the  cubic integrable vector field $Y_3^{(\Lambda)}(z^\top)$ (the non-explicit cubic term $Y^\top_3(z) = \cO((z^\perp)^3)$, hence its size  is much smaller);   
\item[(ii)] in  equation \eqref{eq.zperp} for 
 $z^\perp(t)$, the term 
 $\Opbw{  \im \,  \tm(z; x, \xi) }z^\perp$ is skew-adjoint, hence it  vanishes in a 
 $L^2$-energy estimate; 
 consequently the dominant term  becomes 
 $Y_3^\perp(z)$ 
 which, in view of \eqref{Y.perp}, fulfills the quadratic estimate 
 $\norm{Y_3^\perp(z)}_{\fs_0} \lesssim \norm{z^\top}_{\fs_0} \norm{z^\perp}_{\fs_0}^2$ and therefore has a very small size. 
To obtain such estimate is the reason why we put   $X^{(\Lambda)}(Z)$ in \eqref{Xlambda} in strong-$\Lambda$ normal form, namely it does not contain monomials of the form
$z_{j_1}^{\sigma_1} z_{j_2}^{\sigma_2} z_{j_3}^{\sigma_3} e^{\im j x}$ supported in  $\cP^{(2)}_\Lambda$. Otherwise,  $Y^\perp_3(z)$ would have had monomials  with exactly two frequencies among $(j_1, j_2, j_3)$ in $\Lambda$ and one in  $\Lambda^c$, and the estimate in \eqref{Y.perp} would have had  an  additional term 
$\norm{z^\top}_{\fs_0}^2 \norm{z^\perp}_{s}$, which is  too large for the bootstrap lemma \ref{lem:boot} below.  
%
\end{itemize}

We now introduce precisely the notion of long-time controlled solutions.

\begin{definition}[{\bf Long-time controlled solutions}]\label{A}
Let  $s, \theta$ as in \eqref{parameters}. Let also $T_\star>0$ and $ \e \in (0,\fr)$. 
We say that a solution $z(t) \in H^s(\T, \C)$ of system  \eqref{eq.ztop}--\eqref{eq.zperp} is {\em long-time controlled}  with parameters $(s, \theta, T_\star, \e)$ if 
\begin{itemize}
\item[$(A1)$] at time $0$ fulfills
\begin{equation}
\label{app.ass1}
\norm{ z^\top(0, \cdot)}_{L^2} \leq \e    \ , \qquad
\norm{ z^\perp(0, \cdot)}_{L^2} \leq \e^{3}   \ ; 
\end{equation}
\item[$(A2)$] it 
 exists   over the time interval $[0, T_\star]$
where  it fulfills  
the large  a-priori bound
\begin{equation}
\label{app.ass2}
\sup_{0 \leq t \leq T_\star } 
\norm{z(t)}_s \leq  \e^{-\theta}    \ . 
\end{equation}
\end{itemize}
\end{definition}
One crucial property of any long-time controlled solution is that
its   low norm $\norm{\cdot}_{\fs_0}$  is automatically  small for all $0 \leq t \leq T_\star$, as we shall now prove.

\begin{lemma}[\bf Bootstrap lemma]\label{lem:boot}
Let  $s, \theta$ as in \eqref{parameters}. Fix also 
 $T_0 >0$.   
There exists $\e_\star = \e_\star(\theta, T_0) >0$ such that for any $\e \in (0, \e_\star)$ the following holds true. 

Let $z(t)$ be a solution  of  \eqref{eq.ztop}--\eqref{eq.zperp} which is long-time controlled   with parameters $(s, \theta, T_\star, \e)$ (according to Definition \ref{A}) and with 
\be\label{Tstar.T0}
T_\star\leq \frac{T_0}{\e^2}\log\left(\frac{1}{\e} \right) \ .
\ee
Then 
  $z(t)$ fulfills  the improved $L^2$-bound
\begin{equation}
\label{boot}
\norm{z^\top(t)}_{L^2} \leq 2 \e  \ , \qquad 
\norm{z^\perp(t)}_{L^2} \leq \e^{3- \frac{ 3}{2}\theta}  \ ,  \quad \forall  0\leq t \leq  T_\star 
\end{equation}
and the improved low-norm bound 
\begin{equation}
\label{z.s0}
\norm{z(t)}_{\fs_0} \leq 3 \e \ , \qquad 
\norm{z^\perp(t)}_{\fs_0} \leq \e^{2}  \ \ ,  \quad \forall  0 \leq t \leq  T_\star \  . 
\end{equation}
\end{lemma}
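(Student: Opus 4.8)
The plan is a standard continuity argument. Fix $T_0>0$; for $\e$ small (to be chosen in terms of $\theta$ and $T_0$) let $T^\sharp\in[0,T_\star]$ be the supremum of the times $\tau$ for which, on $[0,\tau]$, the solution obeys the four \emph{bootstrap bounds}
\be\label{boot.plan}
\norm{z^\top(t)}_{L^2}\leq 2\e,\quad \norm{z^\perp(t)}_{L^2}\leq\e^{3-\frac32\theta},\quad \norm{z(t)}_{\fs_0}\leq 3\e,\quad \norm{z^\perp(t)}_{\fs_0}\leq\e^2.
\ee
At $t=0$ all four hold with room to spare: the first two are \eqref{app.ass1}; since $z^\top$ is supported on the two Fourier modes $\pm1$ one has $\norm{z^\top(0)}_{\fs_0}=\norm{z^\top(0)}_{L^2}\leq\e$, while interpolating $\norm{z^\perp(0)}_{L^2}\leq\e^3$ against $\norm{z^\perp(0)}_s\leq\e^{-\theta}$ (from \eqref{app.ass2}) yields $\norm{z^\perp(0)}_{\fs_0}\leq\e^{3-(3+\theta)\fs_0/s}$, where $\theta<\theta_*\leq\frac{s-3\fs_0}{2s-\fs_0}\leq\frac{s-3\fs_0}{\fs_0}$ forces the exponent to exceed $2$. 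Hence $T^\sharp>0$, and it will be enough to prove that on $[0,T^\sharp]$ the bounds \eqref{boot.plan} improve, say to $\frac32\e$, $\frac12\e^{3-\frac32\theta}$, $2\e$, $\frac12\e^2$ respectively; by continuity this forces $T^\sharp=T_\star$, which is exactly \eqref{boot}--\eqref{z.s0}.

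The heart of the argument is an $L^2$ energy estimate that exploits the structure produced by the normal form of Section \ref{sec:nf}. The operator $-\im|D|^\alpha$ is skew-adjoint on $L^2$; so is $\Opbw{\im\tm(z;x,\xi)}$, because $\tm$ is a \emph{real} symbol and hence $\overline{\im\tm}=-\im\tm$, see \eqref{A1b}; and $\Re\la Y^{(\Lambda)}_3(z^\top),z^\top\ra_{L^2}=0$ by a direct computation, the $e^{\pm2\im x}$ cross-terms averaging to zero and the diagonal term $-\im|z_1|^4+\im|z_{-1}|^4$ being purely imaginary. Testing \eqref{eq.ztop} against $z^\top$ and \eqref{eq.zperp} against $z^\perp$, only the smoothing remainders survive, so $\frac{\di}{\di t}\norm{z^\top}_{L^2}\leq\norm{Y^\top_3(z)}_{L^2}+\norm{Y^\top_{\geq5}(z)}_{L^2}$ and similarly for $z^\perp$. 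Under \eqref{boot.plan}, \eqref{Y.perp} gives $\norm{Y^\top_3(z)}_{L^2}\lesssim\norm{z^\perp}_{\fs_0}^3\leq\e^6$ and $\norm{Y^\perp_3(z)}_{L^2}\lesssim(\norm{z^\top}_{\fs_0}+\norm{z^\perp}_{\fs_0})\norm{z^\perp}_{\fs_0}^2\lesssim\e^5$, while \eqref{Y5} gives $\norm{Y^\top_{\geq5}(z)}_{L^2}+\norm{Y^\perp_{\geq5}(z)}_{L^2}\lesssim\norm{z}_{\fs_0}^5\lesssim\e^5$. Thus both derivatives are $\lesssim\e^5$, and integrating over $[0,T_\star]$ with $T_\star\leq T_0\e^{-2}\log(\e^{-1})$ from \eqref{Tstar.T0} produces increments $\lesssim T_0\e^3\log(\e^{-1})$; for $\e$ small this improves $\norm{z^\top(t)}_{L^2}$ to $\leq\frac32\e$ and $\norm{z^\perp(t)}_{L^2}$ to $\leq\e^3\big(1+CT_0\log(\e^{-1})\big)\leq\frac12\e^{3-\frac32\theta}$, since $\e^{-\frac32\theta}$ beats any power of $\log(\e^{-1})$.

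To pass to the low norm I would use again that $z^\top$ lives on $\{\pm1\}$, so $\norm{z^\top(t)}_{\fs_0}=\norm{z^\top(t)}_{L^2}\leq\frac32\e$, and for $z^\perp$ interpolate the improved $L^2$ bound against the a-priori bound \eqref{app.ass2}:
\be
\norm{z^\perp(t)}_{\fs_0}\leq\norm{z^\perp(t)}_{L^2}^{1-\fs_0/s}\norm{z^\perp(t)}_s^{\fs_0/s}\leq\e^{\rho},\qquad \rho:=3-\tfrac{3\fs_0}{s}-\tfrac32\theta+\tfrac{\theta\fs_0}{2s}.
\ee
The choice $\theta<\theta_*\leq\frac{s-3\fs_0}{2s-\fs_0}\leq\frac{2(s-3\fs_0)}{3s}$ (the last step being $3s\leq 2(2s-\fs_0)$, i.e. $s\geq2\fs_0$) is precisely what makes $\rho>2$, so $\norm{z^\perp(t)}_{\fs_0}\leq\e^\rho\leq\frac12\e^2$ for $\e$ small, and then $\norm{z(t)}_{\fs_0}\leq\frac32\e+\frac12\e^2\leq2\e$. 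All four bounds in \eqref{boot.plan} being strictly improved on $[0,T^\sharp]$, continuity gives $T^\sharp=T_\star$, establishing \eqref{boot} and \eqref{z.s0}.

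The only delicate point is the bookkeeping of the $\e$-exponents in the interpolation and the check that the threshold $\theta_*$ in \eqref{parameters} is exactly the one required; everything else is routine. The substantive input is structural and is the payoff of the normal-form reduction of Section \ref{sec:nf}: that the leading transport operator $\Opbw{\im\tm}$ in \eqref{eq.zperp} is skew-adjoint, so $z^\perp$ does not grow in $L^2$ at leading order, and that the coupling term $Y^\perp_3(z)$ is quadratically small in $z^\perp$ because $X^{(\Lambda)}$ was put in \emph{strong}-$\Lambda$ normal form. Had it only been in weak-$\Lambda$ normal form, $Y^\perp_3(z)$ would have carried a term $\sim\norm{z^\top}_{\fs_0}^2\norm{z^\perp}_s\sim\e^{2-\theta}$, far too large to close the bootstrap.
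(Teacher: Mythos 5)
Your proof is correct and follows essentially the same strategy as the paper: a bootstrap/continuity argument, an $L^2$ energy estimate exploiting the skew-adjointness of $-\im|D|^\alpha$ and $\Opbw{\im\tm}$, the smoothing estimates \eqref{Y.perp}, \eqref{Y5} from the strong-$\Lambda$ normal form, and Sobolev interpolation between $L^2$ and $H^s$ to recover the $H^{\fs_0}$ bound; the exponent bookkeeping and the role of $\theta_*$ match the paper's.

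The only genuine differences are organizational. You frame the closure as an explicit continuity argument (defining $T^\sharp$ as the supremum of bootstrap times and checking all four bounds improve by a fixed factor), whereas the paper runs a slightly looser two-bound bootstrap: it assumes a \emph{weaker} $L^2$ hypothesis on $z^\perp$, namely $\norm{z^\perp(t)}_{L^2}\leq\e^{3-2\theta}$, derives the $\fs_0$ bounds from it by interpolation before touching the energy estimates, and then proves the strictly stronger $L^2$ conclusion $\e^{3-\frac32\theta}$. Your version bootstraps directly on the exponent $3-\tfrac32\theta$ and improves the constant by a factor $\tfrac12$; this needs $\rho>2$ strictly (versus the paper's $\rho\geq 2$), which as you verify still follows from $\theta<\theta_*$. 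You also differentiate $\norm{\cdot}_{L^2}$ rather than $\norm{\cdot}_{L^2}^2$, a cosmetic difference. Neither variant is a material departure; both give the stated bounds, and both isolate the same structural input (skew-adjointness of the transport, and the quadratic-in-$z^\perp$ bound on $Y_3^\perp$ coming from the strong-$\Lambda$ normal form).
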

\begin{proof}
The proof is by a bootstrap argument. 
We assume the bound 
\begin{equation}
\label{boot2}
 \norm{z^\top(t)}_{L^2} \leq 10 \e  , \quad 
\norm{z^\perp(t)}_{L^2} \leq \e^{3- 2 \theta}  \ ,  \quad \forall 0 \leq t \leq T_\star
\end{equation}
and show that, provided $\e \in (0, \e_\star)$ with $\e_\star$  sufficiently small,   the better bound \eqref{boot} holds.

First we   bound $\norm{z^\perp(t)}_{\fs_0}$.
This is  done interpolating the bound on 
$\norm{z^\perp(t)}_{L^2}$ that we have by the 
bootstrap assumption \eqref{boot2} 
 and the   large bound  that we have on $\norm{z^\perp(t)}_{s}$  in \eqref{app.ass2},  being $z(t)$ long-time controlled by assumption.
We obtain 
\begin{align}
\label{z.perp.s0}
\norm{z^\perp(t)}_{\fs_0} 
& \leq  \norm{z^\perp(t)}_{L^2}^{1-\frac{\fs_0}{s}} \, 
\norm{z^\perp(t)}_{s}^{\frac{\fs_0}{s}} \stackrel{\eqref{boot2}, \eqref{app.ass2} }{\leq}
 \e^{3 - \theta(2-\frac{\fs_0}{s}) - 3 \frac{\fs_0}{s}}  \leq \e^2 \
\end{align}
which is possible for $s, \theta$ as in \eqref{parameters}. 
Using again the first of \eqref{boot2}  we also get 
\begin{equation}
\label{boot3}
\norm{z(t)}_{\fs_0} \leq 11 \e \ , \qquad 
\forall 0\leq t  \leq T_\star \ . 
\end{equation}
Next we consider  $\norm{z^\top(t)}_{L^2}$ and prove the  improved estimate \eqref{boot}.
Recall that the function $z^\top(t)$ fulfills equation \eqref{eq.ztop}; since $Y^{(\Lambda)}_3(z)$ is integrable, we get that for all times  $ 0 \leq t \leq T_\star$
\begin{align*}
\frac{\di}{\di t} \norm{z^\top(t)}_{L^2}^2  & = 2\underbrace{ \Re \la -\ii |D|^\alpha z^\top+Y^{(\Lambda)}_3(z) , z^\top \ra }_{ = 0}
+ 2 \Re \la Y^{\top}_3(z) + Y^\top_{\geq 5}(z), z^\top \ra \\
& \stackrel{\eqref{Y.perp}, \eqref{Y5}}{ \leq}
C \big( \norm{z^\perp(t)}_{\fs_0}^3 + 
\norm{z(t)}_{\fs_0}^5  \big) \, \norm{z^\top(t)}_{L^2}  \stackrel{\eqref{z.perp.s0}, \eqref{boot3},  \eqref{boot2}}{\leq} C \e^6  \ . 
\end{align*}
Then, since $z(t)$ is long-time controlled, its initial datum $z^\top(0)$ is bounded by   \eqref{app.ass1}; hence  for all times  $ 0 \leq t  \leq T_\star \leq \frac{T_0}{\e^2}\log\left(\frac{1}{\e} \right)$, 
\begin{equation}\label{boot.res}
 \norm{z^\top(t)}_{L^2}^2 \leq  \norm{z^\top(0)}_{L^2}^2 + |t| C \e^6 \leq \e^2 +  C T_0 \e^4 \log(\e^{-1}) \leq 4\e^2 
\end{equation}
provided $ 0< \e \leq \e_\star$ and $\e_\star$ is sufficiently small. This proves the first estimate in \eqref{boot}.

Next we bound  $\norm{z^\perp(t)}_{L^2}$.  
We exploit that the paradifferential operator in  equation \eqref{eq.zperp} is   skew-adjoint, 
so we get, for all times $ 0 \leq t  \leq T_\star \leq  \frac{T_0}{\e^2}\log\left(\frac{1}{\e} \right)$, 
\begin{align*}
\frac{\di}{\di t} \norm{z^\perp(t)}_{L^2}^2 
& = 
2\underbrace{ \Re \la \Big(-\ii |D|^\alpha +\Opbw{\im \tm(z;\cdot)}\Big) z^\perp , z^\perp \ra }_{ = 0}+
 2 \Re \la  Y^\perp_3(z) + Y^\perp_{\geq 5}(z), z^\perp \ra \\
& \stackrel{\eqref{Y.perp}, \eqref{Y5}}{ \leq}
C \left( \norm{z(t)}_{\fs_0}  \, \norm{z^\perp(t)}_{\fs_0}^2  
+
 \norm{z(t)}_{\fs_0}^5
\right)
\norm{z^\perp(t)}_0 \\
& 
 \stackrel{\eqref{boot3}, \eqref{z.perp.s0},\eqref{boot2}}{\leq} C \e^{8- 2 \theta}  \ .
\end{align*}
Again, being $z(t)$ long-time controlled, its initial datum $z^\perp(0)$ fulfills  \eqref{app.ass1}; hence for all times 
$ 0 \leq t \leq T_\star  \leq \frac{T_0}{\e^2}\log\left(\frac{1}{\e} \right)$ we bound
\begin{equation}\label{boot.res2}
 \norm{z^\perp(t)}_{L^2}^2 \leq  \norm{z^\perp(0)}_{L^2}^2 + |t|  C \e^{8-2\theta} \leq \e^6 +  C T_0 \e^{6- 2\theta} \log(\e^{-1}) \leq \e^{2(3-\frac32 \theta)} \ ,
\end{equation}
which is true shrinking $\e_\star$. 
Estimates  \eqref{boot.res} and \eqref{boot.res2} prove  \eqref{boot}. 
{This verifies the bootstrap assumption and so, by  \eqref{z.perp.s0},  also the second of   \eqref{z.s0}. Together with \eqref{boot}, we get also the first of \eqref{z.s0}.}
\end{proof}

A second important property of  any long-time controlled solution is that it fulfills an {\em effective equation} with a very precise structure: up to higher order corrections, for long times,  
the modes $z_{\pm 1}(t)$ rotate with constant speed, whereas  $z^\perp(t)$ fulfills a  linear Schr\"odinger equation
whose  Hamiltonian 
$
- \im |D|^\alpha  + \im \Opbw{\tv(x-\tJ_1 t) \xi }
$
{\em does not have constant coefficients}. 
We shall show, in the next section,  that this Hamiltonian is actually responsible for the growth of Sobolev norms of the solution. 
Precisely we prove the following result:
\begin{proposition} \label{prop:eff}
Let $s, \theta$ as in \eqref{parameters}. 
Fix also 
 $T_0 >0$.   
There exists $\e_\star = \e_\star(s, \theta, T_0) >0$ such that for any $\e \in (0, \e_\star)$ the following holds true. 
Let $z(t)$ be a solution  of  \eqref{eq.ztop}--\eqref{eq.zperp} which is long-time controlled  with parameters $(s, \theta, T_\star, \e)$  (see Definition \ref{A}) and with $T_\star$ fulfilling 
 \eqref{Tstar.T0}. 
Then   $z(t) = (z_{1}(t), z_{-1}(t),  z^\perp(t)) $ fulfills the system
\be\label{eff.sys}
\begin{cases}
 \pa_t z_1 = - \im \big( 1+ |z_1(0)|^2\big) z_1  + \td_1(t) \\
 \pa_t z_{-1}  = - \im (1-  |z_{-1}(0)|^2 \big)  z_{-1} + \td_{-1}(t) \\
 \pa_t z^\perp  = - \im |D|^\alpha z^\perp  + \im \Opbw{\fv(x - \tJ_1 t) \xi + \tV(t;x) \xi + \tb(t;x, \xi)}z^\perp + Y(t)
\end{cases}
\ee
where\\
$\bullet$ $\tJ_1$ is the real number
 \begin{equation}\label{J1}
\tJ_1:=  \frac{|z_1(0)|^2 + |z_{-1}(0)|^2}{2} \ , 
\end{equation}
$\bullet$ the real valued function  $ \fv(x)$ is given by
\begin{equation}\label{V0}
 \fv(x) :=  2 \Re \left(z_1(0) \, \bar{z_{-1}(0)} \, e^{\im 2 x} \right) 
 \end{equation}
whereas the real valued, time dependent function $ \tV(t;x)$ fulfills the estimate
\begin{equation}\label{Vpert}
\norm{ \tV(t; \cdot)}_{W^{2,\infty}}  \leq C \e^{{4}- \theta} \ , \quad
\forall 0 \leq t \leq T_\star \ ;
\end{equation}
$\bullet$ the real valued symbol $\tb(t;x , \xi) \in \Gamma^\alpha_{W^{2,\infty}}$ fulfills the estimate  (recall \eqref{seminorm}): for every $n \in \N_0$, there is $C_n >0$ such that 
\begin{equation}\label{bpert}
 \, \abs{ \tb(t; \cdot) }_{\alpha, W^{2, \infty}, n}  \leq C_n \e^2 \ , \quad
 \forall 0 \leq t \leq T_\star \ ;
\end{equation}
$\bullet$ the functions $\td_{\pm 1}(t)$ fulfill the estimates
\be\label{d.bound}
\abs{\td_{\pm 1}(t)} \leq \e^{5-\theta}   \ , \quad
 \forall 0 \leq t \leq T_\star \ ;
\ee
$\bullet$ the vector field $Y(t) \equiv Y(t, x)$ fulfills the estimate 
\begin{equation}\label{Y(t,U)}
\norm{Y(t; \cdot)}_{s } \leq C  \e^{3-\theta} \ , \quad
 \forall 0 \leq t \leq T_\star \ .
\end{equation}
\end{proposition}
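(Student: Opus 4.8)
The strategy is to start from the system \eqref{eq.ztop}--\eqref{eq.zperp} provided by Lemma \ref{lem:sys}, and to successively replace each time-dependent coefficient by its leading (well-understood) part plus a controlled remainder, using crucially the a priori smallness provided by the bootstrap Lemma \ref{lem:boot}. Since $z(t)$ is long-time controlled and $T_\star$ obeys \eqref{Tstar.T0}, we may invoke Lemma \ref{lem:boot} and dispose of the improved bounds \eqref{boot} and \eqref{z.s0}; these are the quantitative backbone of everything below. First I would isolate the dynamics of the tangential modes $z_{\pm1}(t)$: projecting equation \eqref{eq.ztop} on $e^{\pm \im x}$ and using the explicit form \eqref{Y.int} of $Y_3^{(\Lambda)}$ together with $|1|^\alpha=1$, one gets $\pa_t z_{\pm 1} = -\im(1 \pm |z_{\pm1}(t)|^2) z_{\pm1} + (\text{projection of } Y^\top_3 + Y^\top_{\geq 5})$. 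The error term is $\cO(\norm{z^\perp}_{\fs_0}^3 + \norm{z}_{\fs_0}^4\norm{z}_{\fs_0}) = \cO(\e^6 + \e^5\cdot\e^{-\theta}\cdot\e) $ using \eqref{Y.perp}, \eqref{Y5}, \eqref{z.s0} and \eqref{app.ass2}; in fact $\norm{Y_3^\top(z)}_{\fs_0}\lesssim \e^6$ by \eqref{Y.perp} and \eqref{z.s0}, while $\norm{Y^\top_{\geq 5}(z)}_{\fs_0}\lesssim \norm{z}_{\fs_0}^4\norm{z}_{\fs_0}\lesssim \e^{4}\cdot 3\e \lesssim \e^{5}$, so the combined error is $\lesssim \e^5$, consistent with \eqref{d.bound} after perhaps absorbing a constant into $\e_\star$. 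Then I would show that $|z_{\pm1}(t)|^2$ is nearly constant: indeed $\frac{\di}{\di t}|z_{\pm1}|^2 = 2\Re(\bar z_{\pm1}\td_{\pm1})$, so $||z_{\pm1}(t)|^2 - |z_{\pm1}(0)|^2| \lesssim \int_0^t |z_{\pm1}|\,|\td_{\pm1}| \lesssim T_\star \e\cdot\e^5 \lesssim \e^{4}\log(\e^{-1})$, which after a further loss is $\lesssim \e^{4-\theta}$. Substituting $|z_{\pm1}(t)|^2 = |z_{\pm1}(0)|^2 + \cO(\e^{4-\theta})$ into the tangential equations absorbs the correction into $\td_{\pm1}(t)$, giving the first two lines of \eqref{eff.sys} with the bound \eqref{d.bound}.

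Next I would treat the symbol $\tm(z;x,\xi)$ appearing in \eqref{eq.zperp}. By \eqref{tm}, $\tm = \langle\und\tV\rangle(Z;x)\xi + a_2^{(\alpha)}(Z;x,\xi) + \wt V_{\geq4}(Z;x)\xi + \wt a^{(\alpha)}_{\geq4}(Z;x,\xi)$. The quartic-and-higher pieces $\wt V_{\geq4}, \wt a^{(\alpha)}_{\geq4}$ are non-homogeneous of their respective orders, hence their seminorms are $\cO(\norm{z}_{\fs_0}^4) = \cO(\e^4)$ by the estimates after Definition \ref{def:sfr}; these go into $\tV(t;x)$ and $\tb(t;x,\xi)$ respectively, contributing terms bounded by $\cO(\e^4)$, well within \eqref{Vpert} and \eqref{bpert}. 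The quadratic symbol $a_2^{(\alpha)}(Z;x,\xi)$, being $2$-homogeneous in $Z$, has $W^{2,\infty}$-seminorms $\lesssim \norm{z}_{\fs_0}^2 \lesssim \e^2$, so it is placed entirely into $\tb(t;x,\xi)$, yielding \eqref{bpert}. The main term is $\langle\und\tV\rangle(Z;x)\xi = 2\Re\big(\sum_{n\in\N} z_n(t)\bar z_{-n}(t)\,e^{\im 2nx}\big)\xi$ by \eqref{VresZ}. Here I would split off the $n=1$ contribution, namely $2\Re(z_1(t)\bar z_{-1}(t)e^{\im 2x})\xi$, from the tail $\sum_{n\geq 2}$. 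The tail involves only modes $z_n$ with $|n|\geq 2$, i.e. normal modes, so by \eqref{z.s0} its $W^{2,\infty}$-norm is $\lesssim \norm{z^\perp(t)}_{\fs_0}^2 \lesssim \e^4$; this also goes into $\tV(t;x)$. For the $n=1$ term, I would replace $z_{\pm1}(t)$ by the pure rotations $e^{-\im t(1\pm|z_{\pm1}(0)|^2)}z_{\pm1}(0)$: writing $z_{\pm1}(t) = e^{-\im t(1\pm|z_{\pm1}(0)|^2)}z_{\pm1}(0) + \rho_{\pm1}(t)$, Duhamel's formula and \eqref{d.bound} give $|\rho_{\pm1}(t)| \lesssim \int_0^t|\td_{\pm1}| \lesssim T_\star\e^5 \lesssim \e^{3}\log(\e^{-1}) \lesssim \e^{3-\theta}$. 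Since $|z_{\pm1}(0)|\leq\e$, the difference between $z_1(t)\bar z_{-1}(t)$ and the product of the rotations is $\cO(\e\cdot\e^{3-\theta}) = \cO(\e^{4-\theta})$. Finally one computes $z_1(0)e^{-\im t(1+|z_1(0)|^2)}\cdot\overline{z_{-1}(0)}\,e^{\im t(1-|z_{-1}(0)|^2)} = z_1(0)\bar z_{-1}(0)\,e^{-\im t(|z_1(0)|^2+|z_{-1}(0)|^2)}$, so that $2\Re(z_1(0)\bar z_{-1}(0)e^{-\im 2\tJ_1 t}e^{\im 2x}) = 2\Re(z_1(0)\bar z_{-1}(0)e^{\im 2(x-\tJ_1 t)}) = \fv(x-\tJ_1 t)$ with $\tJ_1,\fv$ as in \eqref{J1}, \eqref{V0}. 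All discarded pieces, of total size $\cO(\e^{4-\theta})$ in $W^{2,\infty}$ after multiplying by the fixed factor coming from the $\xi$-homogeneous symbol, are collected into $\tV(t;x)$, yielding \eqref{Vpert}.

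It remains to assemble the vector field $Y(t)$ for the $z^\perp$ equation. It consists of: the cubic smoothing term $Y^\perp_3(z)$, which by \eqref{Y.perp} (with $\ts = s$, noting $s+4 > s$ so the bound is even better in $H^s$) obeys $\norm{Y^\perp_3(z)}_s \lesssim (\norm{z^\top}_{\fs_0} + \norm{z^\perp}_{\fs_0})\norm{z^\perp}_{\fs_0}\norm{z^\perp}_s \lesssim \e\cdot\e^2\cdot\e^{-\theta} = \e^{3-\theta}$ using \eqref{z.s0} and \eqref{app.ass2}; the higher-order term $Y^\perp_{\geq5}(z)$, which by \eqref{Y5} satisfies $\norm{Y^\perp_{\geq5}(z)}_s \lesssim \norm{z}_{\fs_0}^4\norm{z}_s \lesssim \e^4\cdot\e^{-\theta} = \e^{4-\theta}$; and the paradifferential operators $\Opbw{\im(\text{error symbols})}z^\perp$ arising from the symbol replacements above. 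For the latter, by the continuity estimate \eqref{cont00} of Theorem \ref{thm:contS}$(i)$, an operator $\Opbw{\im\, c(x,\xi)}$ with $c\in\Gamma^1_{W^{2,\infty}}$ (after accommodating that $\xi$-order-$1$ symbols need $\norm{\cdot}_{s}\to\norm{\cdot}_{s-1}$; one uses $\norm{z^\perp}_s \leq \e^{-\theta}$) and $|c|_{1,L^\infty,4}\lesssim\e^{4-\theta}$ applied to $z^\perp$ produces something of $H^{s-1}$-norm — but since in \eqref{eff.sys} we only need an $H^s$-bound and the discarded symbols are genuinely of order $1$, I would instead keep the leading transport symbol $\fv(x-\tJ_1 t)\xi$ inside the operator and put the \emph{order-zero part} of the $\alpha$-order corrections into $\tb$ while routing only the truly negligible $W^{2,\infty}$-small order-$1$ transport errors into $Y(t)$ via \eqref{cont00}, yielding $\norm{Y(t)}_s \lesssim \e^{4-\theta}\cdot\e^{-\theta} \lesssim \e^{3-\theta}$ for $\theta$ small as in \eqref{parameters}. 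Summing all contributions gives \eqref{Y(t,U)}. \textbf{The main obstacle} is the bookkeeping in this last step: one must decide carefully which pieces of the replaced coefficients remain inside the paradifferential operator (as $\tV(t;x)\xi + \tb(t;x,\xi)$, where they are harmless precisely because in the next section they appear inside a skew-adjoint operator and are shown to be perturbative by the commutator argument) and which are pushed into the remainder $Y(t)$ — and in the latter case the order-$1$ nature of the transport forces one either to absorb a derivative against the a priori bound $\norm{z^\perp}_s\leq\e^{-\theta}$, losing one power of $\e^{-\theta}$, or to verify that such order-$1$ error symbols actually have small enough $W^{2,\infty}$-seminorms ($\cO(\e^{4-\theta})$) that even after the loss the bound \eqref{Y(t,U)} survives, which is exactly where the constraint $\theta < \theta_\star$ in \eqref{parameters} is used.
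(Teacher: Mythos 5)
Your proposal follows the same overall strategy as the paper: invoke the bootstrap Lemma \ref{lem:boot} via \eqref{Tstar.T0} to get \eqref{boot}--\eqref{z.s0}; write the tangential equations with $|z_{\pm1}(0)|^2$ in place of $|z_{\pm1}(t)|^2$ and absorb the difference into $\td_{\pm1}$; split $\la\,\und\tV\,\ra(Z;x)$ into the $n=1$ mode and the $n\geq 2$ tail; replace $z_{\pm1}(t)$ by the rotations $e^{-\im t(1\pm|z_{\pm1}(0)|^2)}z_{\pm1}(0)$ via Duhamel; and compute that the rotated $n=1$ mode reproduces $\fv(x-\tJ_1 t)$. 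All the estimates you record for $\td_{\pm1}$, $\rho_{\pm1}$, $\tV_1$, $\tV_2$, $\wt V_{\geq4}$, $a_2^{(\alpha)}$, $\wt a^{(\alpha)}_{\geq4}$, $Y_3^\perp$, and $Y^\perp_{\geq5}$ are correct and match the paper's.

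However, the final paragraph about assembling $Y(t)$ is confused and, if followed literally, would fail. You write that $Y(t)$ should include ``the paradifferential operators $\Opbw{\im(\text{error symbols})}z^\perp$ arising from the symbol replacements'' and then discuss routing ``$W^{2,\infty}$-small order-$1$ transport errors into $Y(t)$ via \eqref{cont00}.'' This is a false move: an operator $\Opbw{c(t;x)\xi}$ of order one maps $H^{s+1}\to H^s$, so $\norm{\Opbw{c\,\xi}z^\perp}_s$ requires a control of $\norm{z^\perp}_{s+1}$, which the a priori bound \eqref{app.ass2} does \emph{not} provide; the $W^{2,\infty}$-smallness of $c$ cannot compensate for the lost derivative, and the constraint $\theta<\theta_*$ in \eqref{parameters} plays no role here either. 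In fact this is a non-issue which the proposed form of \eqref{eff.sys} already resolves for you: the error symbol $\tV(t;x)\xi$ is meant to \emph{stay inside} the paradifferential operator (so that in Section \ref{sec:mourre} it is treated by the skew-adjointness/commutator argument, not by operator-norm estimates), and consequently $Y(t)$ is simply $Y_3^\perp(z(t))+Y^\perp_{\geq5}(z(t))$ with \emph{no} paradifferential contribution at all. Once you accept the decomposition
\[
\tm(z(t);x,\xi)=\fv(x-\tJ_1 t)\xi + \tV(t;x)\,\xi + \tb(t;x,\xi),
\]
there is nothing left to push into $Y(t)$, and \eqref{Y(t,U)} follows directly from \eqref{Y.perp}, \eqref{Y5}, \eqref{z.s0}, \eqref{app.ass2} exactly as you computed. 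So the ``main obstacle'' you identify does not exist, and the resolution you propose for it is the one step of your argument that does not hold.
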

\begin{proof}
We shall use that $z(t)$, being long-time controlled  with parameters $(s, \theta, T_\star, \e)$ and with $T_\star$ fulfilling 
 \eqref{Tstar.T0}, satisfies the bounds \eqref{boot}, \eqref{z.s0}. 
 
\noindent\underline{Equations for $z_{\pm 1}(t)$.}
Write equation  \eqref{eq.ztop} in components,  using the explicit expression of $Y_3^{(\Lambda)}$ in \eqref{Y.int}, to get the coupled system
\be\label{z1.z-1}
\begin{cases}
 \pa_t z_1 = - \im z_1 - \im |z_1|^2 z_1  + \la Y_3^\top(z) + Y^\top_{\geq 5}(z),  e^{\im x} \ra \\
 \pa_t z_{-1}  = - \im z_{-1} + \im |z_{-1}|^2 z_{-1} + \la  Y_3^\top(z) + Y^\top_{\geq 5}(z), e^{-\im x} \ra \ . 
\end{cases}
\ee
Consider the equation for $z_1$. We write it as
\be\label{z1(t)}
\begin{aligned}
\pa_t z_1 & = - \im ( 1+  |z_1(0)|^2)  z_1   +  \td_1(t) , \\
& \quad 
\td_1(t) := -\im \left(|z_1(t)|^2 - |z_1(0)|^2 \right) z_1(t) 
+ \la Y_3^\top(z) + Y^\top_{\geq 5}(z), e^{\im x} \ra 
\end{aligned}
\ee
giving the first equation in \eqref{eff.sys}. We prove now that $\td_1(t)$ fulfills the bound claimed in \eqref{d.bound}.  
First,  using the first of \eqref{z1.z-1} and assumption \eqref{Tstar.T0}, we get for all times $ 0 \leq t \leq T_\star \leq \frac{T_0}{\e^2}\log\left(\frac{1}{\e} \right)$ 
 \begin{align*}
\frac{\di}{\di t} |z_1(t)|^2 & = 2 \Re  \left( \la Y_3^\top(z)+ Y^\top_{\geq 5}(z), e^{\im x} \ra \,  \bar z_1 \right) \\
&
 \stackrel{\eqref{Y.perp}, \eqref{Y5}}{ \leq}
C \big( \norm{z^\perp(t)}_{\fs_0}^3 + 
\norm{z(t)}_{\fs_0}^5  \big) \, \norm{z^\top(t)}_{0}  \stackrel{\eqref{z.s0}, \eqref{boot3},  \eqref{boot}}{\leq} C \e^6 \ ,
 \end{align*}
 which implies, on the same time scale, 
\be\label{controlz1}
\abs{ \big( |z_1(t)|^2 - |z_1(0)|^2 \big)  } \leq  C |t|  \e^6 \leq C  T_0 \, \e^4 \log(\e^{-1})  \ .
 \ee
Hence we get that $\td_1(t)$ in \eqref{z1(t)} is bounded for $0 \leq t \leq T_\star \leq \frac{T_0}{\e^2}\log(\e^{-1})$ by 
 \be\label{d1.bound}
 \abs{\td_1(t)} \leq 
 \abs{ \big( |z_1(t)|^2 - |z_1(0)|^2 \big) z_1(t) } +
 \abs{\la Y_3^\top(z)+ Y^\top_{\geq 5}(z), e^{\im x} \ra }  
 \stackrel{\eqref{controlz1}, \eqref{boot}}{\leq} C  T_0 \, \e^5 \log(\e^{-1}) + C \e^5 \ , 
 \ee
proving  \eqref{d.bound} provided $\e_\star$ is sufficiently small.
  An analogous argument proves that $z_{-1}(t)$ fulfills the second of \eqref{eff.sys}.

A consequence, which we shall use in a moment, is that 
\begin{equation}
\label{z1z-1.new}
z_{\pm 1}(t) = \tz_{\pm 1}(t) + r_{\pm 1}(t) \ , \quad \mbox{ where } 
\tz_{\pm 1}(t):= e^{- \im t(1\pm |z_{\pm 1}(0)|^2)} z_{\pm 1}(0)
\end{equation}
whereas 
\begin{equation}\label{r1}
r_{\pm 1} (t) := \int_0^t e^{- \im (t - \tau) (1 \pm |z_{\pm 1}(0)|^2)}  \, \td_{\pm 1} (\tau)  \, \di \tau  \ 
\end{equation}
 fulfill, by \eqref{d1.bound}, \eqref{Tstar.T0} and eventually shrinking again $\e_\star$,  the bounds 
\begin{equation}
\label{r.bound}
|r_{\pm 1} (t)| \leq \e^{{3}-\theta} , \qquad \forall 0 \leq t \leq T_\star \ .
\end{equation}

\noindent\underline{Equation for $z^\perp(t)$.} We start from equation \eqref{eq.zperp} and we substitute the explicit expression of $z_{\pm1}(t)$ in \eqref{z1z-1.new}. Consider first the symbol $\tm (z;x, \xi)$ in \eqref{tm}. We shall extract from its component  $\la \, \und \tV \, \ra(Z; x)$, defined in \eqref{VresZ}, the main contribution which is the one supported on $z_{\pm 1}(t)$.
Precisely
\begin{align*}
\la \, \und \tV \, \ra(Z(t); x) & = 2\,  \Re \left( z_{1}(t) \, \bar{z_{-1}(t)} \, e^{\im 2 x} \right) 
 + 2\,  \Re \Big( \sum_{n \geq 2} z_{n}(t) \, \bar{z_{-n}(t)} \, e^{\im 2n x} \Big)\\
 & \stackrel{\eqref{z1z-1.new}}{= }
 \underbrace{ 2\,  \Re \left( z_{1}(0) \, \bar{z_{-1}(0)} \, e^{\im 2 x-  2\tJ_1 t} \right) }_{=\fv(x-\tJ_1 t) \mbox{ by } \eqref{V0}, \eqref{J1}}
+ \underbrace{2\,  \Re \left( 
\Big( \tz_{1}(t) \, \bar{r_{-1}(t)} + r_1(t) \bar{\tz_{-1}(t)} +
r_1(t) \bar{r_{-1}(t)} \Big)
 \, e^{\im 2 x} \right)}_{=: \tV_1(t; x)} \\
 & 
 +\underbrace{  2\,  \Re \Big( \sum_{n \geq 2} z_{n}(t) \, \bar{z_{-n}(t)} \, e^{\im 2 n  x} \Big)}_{=: \tV_2(t; x)}\ .
\end{align*}
The functions  $\tV_1(t; x)$ and $\tV_2(t;x)$ fulfill, by  \eqref{app.ass1}, \eqref{r.bound} and \eqref{z.s0}, the bounds
\be\label{est.tV12}
\norm{\tV_1(t; \cdot)}_{W^{2,\infty}} \leq C  \e^{4-\theta} \ , \quad 
\norm{\tV_2(t; \cdot)}_{W^{2,\infty}} \leq C \e^{4}  \ , \quad
\forall 0 \leq t \leq T_\star \ .
\ee
Then we write  $\tm(z;\cdot)$ in \eqref{tm} as 
$$
\tm(z(t); x, \xi)= \fv(x-\tJ_1 t)\xi +  \underbrace{(\tV_1(t; x) +  \tV_2(t; x) + \wt V_{\geq 4}(z(t); x) )}_{=: \tV(t; x)}\xi 
+
\underbrace{  {a}^{(\alpha)}_2(z(t);x,\xi) +  
   \wt a^{(\alpha) }_{\geq 4} (z(t);x,\xi) }_{=: \tb(t; x, \xi)}
$$
We bound  $\tV(t;x)$ using  estimates  \eqref{est.tV12} for $\tV_1$ and $\tV_2$, and that
$$
\norm{\wt V_{\geq 4}(z(t); \cdot )}_{W^{2, \infty}} \stackrel{\eqref{nonhomosymbo}}{\leq} C \norm{z(t)}_{\fs_0}^4 \stackrel{\eqref{z.s0}}{\leq} C \e^4 \ , \quad
\forall 0 \leq t \leq T_\star \ ,
$$
getting the claimed bound \eqref{Vpert}.

The bound \eqref{bpert} for $\tb(t;x, \xi)$ follows from \eqref{nonhomosymbo.homo}, \eqref{nonhomosymbo} and \eqref{z.s0}.

Finally we put
$$
Y(t, z):= Y_3^\perp(z(t)) + Y^\perp_{\geq 5}(z(t))
$$
which fulfills the estimates \eqref{Y(t,U)} by \eqref{Y.perp}, \eqref{Y5} and using \eqref{z.s0} and \eqref{app.ass2}.

\end{proof}
 
 \section{Instability via paradifferential Mourre theory}\label{sec:mourre}
The goal of this section is to give sufficient conditions on the initial datum $z(0)$ 
ensuring that, 
if the corresponding solution $z(t)$ is long-time controlled, {then} its high $H^s$-norm  undergoes Sobolev norm explosion,  becoming  larger than $\e^{-\theta}$.
We will achieve this via a  positive commutator estimate.

We will focus on the  third equation in \eqref{eff.sys}; actually  it is more convenient to work with  the translated variable
\be\label{zeta}
{
\zeta(t,x):= z^\perp \big( t,  x +\tJ_1 t \big) }\ , \quad \tJ_1 \mbox{ in } \eqref{J1} \ .
\ee
Clearly one has   
\be\label{zetaez}
\norm{\zeta(t, \cdot)}_s = \norm{z^\perp(t, \cdot)}_s \ , \quad \forall t , \quad \forall s \in \R \ , 
\ee
so it is equivalent to prove growth of Sobolev norms for $\zeta(t)$ and $z^\perp(t)$.
The equation fulfilled by 
 $\zeta(t)$ is easily derived from the third of \eqref{eff.sys} as 
\begin{align}\label{eq.zeta}
 \pa_t \zeta  = &  - \im |D|^\alpha \zeta   + \im \Opbw{{(\tJ_1+\fv(x ))} \xi }\zeta + \im \Opbw{ \wt \tV(t; x) \xi + \wt \tb(t; x,  \xi)}\zeta  + \wt Y(t)
 \end{align}
where we defined  the real valued function $\wt \tV(t; x)$, the real valued symbol  
$\wt \tb(t; x,  \xi)$ and the vector field $ \wt Y(t; x)$ as
\be
\wt \tV(t; x) := \tV(t, x+ \tJ_1 t) \ , \quad 
\wt \tb(t; x,  \xi):= \tb(t; x+ \tJ_1 t,  \xi) \ , \quad
 \wt Y(t; x) :=   Y(t; x+ \tJ_1 t) \ . 
\ee
It  follows, by \eqref{Vpert}, \eqref{bpert} and \eqref{Y(t,U)},  the estimates 
\be\label{est.706}
\norm{ \wt\tV(t; \cdot)}_{W^{2,\infty}}  \leq  C \e^{{4}- \theta} \ , 
\quad
\vert \wt \tb(t; \cdot) \vert_{\alpha, W^{2, \infty}, n}  \leq C_n \e^2  \ , 
\quad
\norm{\wt Y(t; \cdot)}_{s } \leq C  \e^{3-\theta}  \ , \quad
\forall \ 0 \leq t \leq T_\star \ .
\ee
\subsection{The Mourre operator}
The leading term in  equation \eqref{eq.zeta} is the {\em non-constant coefficient transport} operator
 \begin{equation}\label{fH}
  \Opbw{ \big(\tJ_1+\fv(x)\big) \xi}  \ , \quad \tJ_1 \mbox{ in } \eqref{J1}  \ , \ \ \ \fv(x) \mbox{ in } \eqref{V0} \ .
\end{equation}
 The crucial point is that, provided $z_{1}(0)$ and $z_{-1}(0)$ fulfill
  \be\label{cond.z1}
 \tJ_1\equiv   \frac{|z_1(0)|^2 + |z_{-1}(0)|^2}{2} < 2  |z_1(0) | \, |{z_{-1}(0)} |  \ ,
 \ee
 corresponding to the function 
 $\tJ_1 + \fv(x)$ having a zero, 
the  operator  $ \Opbw{ \big(\tJ_1+\fv(x)\big) \xi}$
admits a Mourre-conjugate operator, namely  an operator $\fA$ such that the commutator
$\im [\fA,  \Opbw{ \big(\tJ_1+\fv(x)\big) \xi}]$ is positive. 
Actually this also shows that the operator in \eqref{fH} has a non-trivial absolutely continuous spectrum, although we shall not exploit directly this property.

Precisely, take $s$ as in \eqref{parameters} and $\tR \gg 1$ (to be fixed later) and   define the (formally) self-adjoint operator
\begin{equation}
\label{fA}
\begin{aligned}
\fA := \fA_{s,\tR}&  := \Opbw{\fa (x, \xi)} , \quad \fa(x, \xi):=\ta(x) \, |\xi| ^{2s} \, \eta^2_{\tR}(\xi)  \\ 
&\mbox{ where }  \ta(x):= -  \Im  \, \left( z_{1}(0) \,  \bar{z_{-1}(0)} \, e^{\im 2  x }\right) 
\end{aligned}
\end{equation}
and
$\eta_\tR(\xi)$  the
smooth step function 
 \be \label{etaR}
 \eta_\tR(\xi):= \eta\left(\frac{\xi}{\tR}\right),\quad \eta(y):= 
 \begin{cases}
0& \mbox{ if } y\leq 1\\ 
 \dfrac{e^{-\frac{1}{y-1}}}{e^{-\frac{1}{y-1}} + e^{-\frac{1}{2-y}}}& \mbox{ if } y \in (1,2) \\
 1& \mbox{ if } y \geq 2
 \end{cases}
  \ .
 \ee
Note that 
 $\fa(x, \xi)$ is a symbol in   $ \Gamma^{2s}_{W^{2, \infty}}$, and for any $n \in \N_0$, there is $C_n >0$ such that 
\begin{equation}\label{a.sem}
| \fa |_{2s , W^{2,\infty},  n } \leq  C_{s,n} \,   |z_1(0)|\,|z_{-1}(0)|  \ , \qquad 
| \fa |_{ 2s +1 , W^{2,\infty},  n } \leq C_{s,n} \frac{  |z_1(0)|\,|z_{-1}(0)| }{\tR}  \ ,
\end{equation} 
 as it follows from its definition and from  Lemma \ref{lem:cutoff} with $a \leadsto \ta(x)|\xi|^{2s} \eta_\tR(\xi)$, $m \leadsto 2s$, $N \leadsto 2$ and $\nu \leadsto 1$.
 Moreover we will ensure that $|z_1(0) z_{-1}(0)| >0$, so that $\fA$ is non trivial, see Remark \ref{rem:nontriv}.
\smallskip
 
 The choice of the function $\fa(x,\xi)$ in \eqref{fA} is motivated by the fact that it is an escape function for the symbol $(\tJ_1 + \fv(x))\xi$ of the operator in \eqref{fH}; precisely one has the following result:
 \begin{lemma}\label{lem:pb}
Fix $s, \tR >1$. Let $\fa(x,\xi)$ as in \eqref{fA} and $\tJ_1$, $\fv(x)$ as in \eqref{J1}, \eqref{V0}. Then
\be\label{comm.av0}
\{ \fa(x,\xi) , \, \big(\tJ_1 + \fv(x) \big)\xi \} = 
\tI_1\, |\xi|^{2s} \,  \eta_\tR^2(\xi)   + a(x,\xi)  
\ee
where $\tI_1$ is the real number
\be\label{tI1}
\tI_1:=  2  |z_1(0)| \, |z_{-1}(0)| \, \Big( 2  |z_1(0)| \, |z_{-1}(0)| - \frac{|z_1(0)|^2 + |z_{-1}(0)|^2}{2}  \Big)
 \ee
 whereas  $ a(x,\xi)$ is a smooth, non-negative symbol having the  structure
\be\label{sym.a}
 a(x,\xi) =  a_1(x)  \psi_1(\xi)^2 +  a_2(x) \psi_2(\xi)^2 \ .
\ee
Here  $ a_j(x)$, $j=1,2$, are  smooth, real valued,  non-negative functions fulfilling
\be\label{est.aj}
\norm{a_j(x)}_{W^{3,\infty}} \leq C \left( |z_1(0)|^4 +  \, |z_{-1}(0)|^4 \right) \ , 
\ee
and 
 $\psi_j(\xi)$, $j = 1,2$, are smooth, real valued symbols in $\wt \Gamma^s_0$ with  support in $[\tR, +\infty)$.
 \end{lemma}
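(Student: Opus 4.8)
The plan is to establish \eqref{comm.av0} by a direct computation of the Poisson bracket \eqref{poisson}; the structure \eqref{sym.a}--\eqref{est.aj} will then drop out after using two elementary identities linking $\fv$ and $\ta$.

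First I would note that, since $\eta_\tR(\xi)=\eta(\xi/\tR)$ vanishes for $\xi\le\tR$, the symbol $\fa(x,\xi)=\ta(x)|\xi|^{2s}\eta_\tR^2(\xi)$ is supported in $\{\xi\ge\tR>0\}$, so on its support one may drop all absolute values and $\eta_\tR'\ge0$ (as $\eta$ is non-decreasing). Writing $h(x,\xi):=(\tJ_1+\fv(x))\xi$ and differentiating, one finds
\[
\{\fa,h\}=\big[\,2s\,\ta(x)\fv'(x)-\ta'(x)(\tJ_1+\fv(x))\,\big]\,|\xi|^{2s}\eta_\tR^2(\xi)+2\,\ta(x)\fv'(x)\,|\xi|^{2s+1}\eta_\tR(\xi)\eta_\tR'(\xi).
\]

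The next step is to simplify the two coefficients. From $\fv(x)=2\Re\big(z_1(0)\bar{z_{-1}(0)}e^{\im 2x}\big)$ and $\ta(x)=-\Im\big(z_1(0)\bar{z_{-1}(0)}e^{\im 2x}\big)$ one reads off
\[
\fv'(x)=4\,\ta(x),\qquad \ta'(x)=-\fv(x),\qquad \ta(x)^2+\tfrac14\fv(x)^2=\rho^2,\quad \rho:=|z_1(0)|\,|z_{-1}(0)| .
\]
Plugging these in, the first coefficient becomes
\[
2s\,\ta\fv'-\ta'(\tJ_1+\fv)=8s\,\ta^2+\tJ_1\fv+\fv^2=(8s-4)\ta^2+\tJ_1\fv+4\rho^2=\tI_1+a_1(x),
\]
where, using $4\rho^2-\tI_1=2\rho\tJ_1$ (immediate from \eqref{tI1}), we set $a_1(x):=(8s-4)\ta(x)^2+\tJ_1\big(\fv(x)+2\rho\big)$; and the second coefficient equals $2\ta\fv'=8\,\ta(x)^2=:a_2(x)$.

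Finally I would check positivity, regularity and size. Since $s>1$, $\tJ_1>0$ and $|\fv(x)|\le2\rho$, both $a_1$ and $a_2$ are non-negative; they are smooth (trigonometric polynomials in $2x$), and $\|a_j\|_{W^{3,\infty}}\lesssim|z_1(0)|^4+|z_{-1}(0)|^4$ because $|\ta|,|\fv|,\rho\lesssim|z_1(0)|^2+|z_{-1}(0)|^2$, $\rho\tJ_1\le(|z_1(0)|^2+|z_{-1}(0)|^2)^2$, and $x$-differentiation only brings fixed constants. Setting $\psi_1(\xi):=|\xi|^s\eta_\tR(\xi)$ and $\psi_2(\xi):=\big(|\xi|^{2s+1}\eta_\tR(\xi)\eta_\tR'(\xi)\big)^{1/2}\ge0$, the displayed formula for $\{\fa,h\}$ reads exactly $\{\fa,h\}=\tI_1|\xi|^{2s}\eta_\tR^2(\xi)+a_1(x)\psi_1(\xi)^2+a_2(x)\psi_2(\xi)^2$, which is \eqref{comm.av0}--\eqref{sym.a}. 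Both $\psi_j$ are supported in $[\tR,+\infty)$, and $\psi_1\in\wt\Gamma^s_0$ is clear; the only delicate point — the mild technical obstacle of the proof — is the $C^\infty$-regularity of $\psi_2$: at $\xi=\tR$ both $\eta_\tR$ and $\eta_\tR'$ are flat, and at $\xi=2\tR$ so is $\eta_\tR'$, so $\eta_\tR\eta_\tR'$ vanishes to infinite order at both ends of its support and its square root is smooth there, while on $(\tR,2\tR)$ it is the square root of a strictly positive smooth function; a direct check of the $\xi$-derivatives then gives $\psi_2\in\wt\Gamma^s_0$ (in fact $\psi_2$ is compactly supported). This yields \eqref{comm.av0}--\eqref{est.aj} and concludes the proof.
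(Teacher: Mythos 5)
Your computation is correct and follows the paper's overall scheme: direct evaluation of the Poisson bracket using \eqref{poisson}, then splitting off the constant $\tI_1$ and writing the remainder in the Garding-ready form $a_1\psi_1^2 + a_2\psi_2^2$, with exactly the same $\psi_1 = |\xi|^s\eta_\tR$, $\psi_2^2 = |\xi|^{2s}\eta_\tR\,\tfrac{\xi}{\tR}(\eta')_\tR$, and $a_2 = 2\ta\fv_x$ as in \eqref{gar.sym}. Where you genuinely diverge is in the treatment of $a_1$. The paper defines $a_1 := (\ta\fv_x - \fv\ta_x - \tJ_1\ta_x - \tI_1) + (2s-1)\ta\fv_x$ and deduces $a_1 \geq 0$ from the chain of lower bounds in \eqref{av-va}, which rests on $|\ta_x| \leq 2|z_1(0)||z_{-1}(0)|$. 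You instead record the exact algebraic identities $\fv' = 4\ta$, $\ta' = -\fv$, $\ta^2 + \tfrac14\fv^2 = \rho^2$ (all immediate from $\ta = -\Im(w)$, $\fv = 2\Re(w)$ with $w = z_1(0)\bar{z_{-1}(0)}e^{2\im x}$, $|w|=\rho$) together with $4\rho^2 - \tI_1 = 2\rho\tJ_1$, and land on the closed form $a_1 = (8s-4)\ta^2 + \tJ_1(\fv + 2\rho)$. Non-negativity is then immediate term by term, and the $W^{3,\infty}$ estimate \eqref{est.aj} drops out transparently. This identity-based route is cleaner than the paper's and removes any appearance of slack in the argument; both routes of course produce the same function $a_1$.

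One caution on the smoothness of $\psi_2$. Your sentence asserting that $\eta_\tR\eta_\tR'$ ``vanishes to infinite order at both ends of its support and its square root is smooth there'' reads as if infinite-order vanishing of a non-negative smooth function implies smoothness of its square root; that implication is not valid in general, and would require an argument even in the present exponential-type case. The paper does not rely on any such implication: it writes out $\sqrt{\eta'(y)}$ explicitly from the formula \eqref{etaR} for $\eta$ and checks smoothness by inspection, and separately observes that $\sqrt{y\,\eta(y)}$ is smooth on its support. You gesture at this with ``a direct check of the $\xi$-derivatives,'' which is indeed the correct justification; you should replace the heuristic about infinite-order vanishing with that direct check (as the paper does), since the heuristic alone is not a proof.
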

 \begin{proof}
We compute, using \eqref{poisson},  \eqref{fA}, \eqref{J1}, \eqref{V0} and denoting  $(\eta')_\tR(\xi):= \eta'(\xi/\tR)$,
\begin{align}
\notag
 \{ \fa(x, \xi)  ,  &  \big(\tJ_1 + \fv(x)\big)\xi  \} = 
\left(  2 s\,  \ta \,  \fv_x - \fv \, \ta_x  - \tJ_1 \ta_x   \right)   |\xi|^{2s} \, \eta_\tR^2      + \frac{2}{\tR}\ta  \fv_x \, |\xi|^{2s}\xi  \, \eta_\tR \,  (\eta')_\tR \\
\label{comm.av}
& =
\left(   \ta \,  \fv_x - \fv \, \ta_x  - \tJ_1 \ta_x \right) |\xi|^{2s} \, \eta_\tR^2 + (2s-1) \ta \fv_x   |\xi|^{2s} \eta_\tR^2 
+  2 \ta  \fv_x \, |\xi|^{2s}\eta_\tR \, \frac{\xi}{\tR}   \,  (\eta')_\tR \ .
\end{align}
Now, using the explicit definition of $\ta(x) $ in \eqref{fA}, of $\fv(x)$ in \eqref{V0} and of $\tJ_1$ in \eqref{J1} and that $\ta_x(x)=- 2 \Re  \, \left( z_{1}(0) \,  \bar{z_{-1}(0)} \, e^{\im 2  x }\right) $, $\fv_x(x)=-4 \Im \left(z_1(0) \, \bar{z_{-1}(0)} \, e^{\im 2 x} \right) $,
we get the lower bound
\begin{align}
\notag
\ta \fv_x - \fv \ta_x  - \tJ_1 \ta_x 
& = 4\,  \Im \left( z_{1}(0) \,  \bar{z_{-1}(0)} \, e^{\im 2  x } \right)^2  + 4 \,  \Re \left( z_{1}(0) \,  \bar{z_{-1}(0)} \, e^{\im 2  x } \right)^2  - \ta_x \tJ_1 \\
&  \geq 
4 \, | z_{1}(0)|^2 \,  | z_{-1}(0)|^2 - 2 \tJ_1 |z_1(0)| \, |z_{-1}(0)| \\
\label{av-va}
& \geq 2  |z_1(0)| \, |z_{-1}(0)| \  \Big( 2  |z_1(0)| \, |z_{-1}(0)| - \tJ_1 \Big) \equiv \tI_1 \ , 
\end{align}
where to pass from the first to the second line  we also used that
$$
|\ta_x| \leq 2  |z_1(0)| \, |z_{-1}(0)| \ .
$$ 
Hence, adding and subtracting $\tI_1 |\xi|^{2s} \eta_\tR^2(\xi)$ in  \eqref{comm.av}, we get the claimed formula \eqref{comm.av0} with 
\begin{align}\label{gar.sym}
 a(x, \xi) := & 
\underbrace{\left(\ta \fv_x - \fv \ta_x  - \tJ_1 \ta_x - \tI_1 
+(2s-1) \ta \fv_x 
  \right)}_{=:a_1(x)} \underbrace{|\xi|^{2s} \eta_\tR^2 }_{=:\psi_1(\xi)^2} +   \underbrace{2 \ta  \fv_x }_{=:a_2(x)}\, \underbrace{|\xi|^{2s}\eta_\tR \, \frac{\xi}{\tR}   \,  (\eta')_\tR}_{=:\psi_2(\xi)^2}   \ .
\end{align}
Note that both  $a_1(x)$ and $a_2(x)$ are non-negative functions in view of  \eqref{av-va} and the fact that  $\ta \fv_x = 4\,  \Im \left( z_{1}(0) \,  \bar{z_{-1}(0)} \, e^{\im 2  x } \right)^2 \geq 0$. 
They clearly are smooth, and  estimate \eqref{est.aj} follows from the definitions of $\ta(x), \fv(x)$ in \eqref{fA}, \eqref{V0}, of $\tJ_1$ in \eqref{J1} and $\tI_1$ in \eqref{tI1}.

We claim that the functions  $\psi_1(\xi)= |\xi|^s \eta_\tR$ and $\psi_2(\xi) = |\xi|^s \sqrt{\eta_\tR \, \frac{\xi}{\tR} (\eta')_\tR}$ are smooth symbols in $\wt \Gamma^{s}_0$ supported in $[\tR, \infty)$.
We prove the claim only for $\psi_2$ since the one  for $\psi_1$ is trivial.
First notice that $\psi_2$ is well defined since, by  \eqref{etaR}, one has $\xi (\eta')_\tR \geq 0$.
Define
 $$
f(y):= \sqrt{\eta(y) \, y \eta'(y)} \ , \quad \textup{supp}(f) \subset [1,2] \ .
$$
 Then  $\psi_2(\xi)=|\xi|^s f(\xi/\tR)$ and is supported in $[\tR, 2\tR]$. So we are left to prove that 
$f(y)$ is a smooth function. It is easy to see that $\sqrt{y \eta(y)}$ is smooth on its support.  The function 
$$
\sqrt{\eta'(y)} =
\begin{cases}
0 \ , & y\leq 1\\
\dfrac{\sqrt{2 y^2-6 y+5}}{e^{-\frac{1}{2-y}}+e^{-\frac{1}{y-1}} } \cdot \dfrac{e^{-\frac{1}{2(y-1)}}}{y-1} \cdot  
\dfrac{e^{-\frac{1}{2(2-y)}}}{2-y}   \ , & y \in (1,2) \\
0 \ ,& y\geq 2
\end{cases}
$$
 is smooth by direct inspection.
 \end{proof}


%
%

Thanks to Lemma \ref{lem:pb}, we now prove that the  commutator between  $\fA$ in \eqref{fA} and $\Opbw{(\tJ_1 + \fv(x))\xi}$   is a non-negative operator up to a small remainder.
In the following, given two operators $\fA, \fB$,  we write $\fA \geq \fB$ with the meaning $\la \fA u, u \ra \geq \la \fB u, u \ra $ for any $u \in \bigcap_s H^s$.
  Precisely we have: 
\begin{lemma}\label{lem:pos.comm}
Fix $s, \tR > 1$.  Let $\fA\equiv \fA_{s, \tR}$ be defined in \eqref{fA}. Then:
\begin{itemize}
\item[(i)] {\bf Positive commutator:}  Let $\tJ_1$ in \eqref{J1} and $\fv(x)$ in \eqref{V0}. One has  
\begin{equation}\label{mourre}
\im \big[\fA,   \Opbw{ \big( \tJ_1 + \fv(x)\big) \xi} \big] \geq \tI_1   \, \Opbw{  |\xi|^{2 s} \eta^2_\tR(\xi)}  + \fR
\end{equation}
with $\tI_1$ in \eqref{tI1} and the operator  $\fR\colon H^{s} \to H^{-s}$ with estimate
\begin{equation}\label{mourre.R}
\norm{\fR  u}_{-s } \leq C_{s}  \frac{ |z_1(0)|^4 +|z_{-1}(0)|^4}{\tR}  \norm{u}_s    \ . 
\end{equation}
\item[(ii)] {\bf Upper bound:} One has 
 \begin{equation}\label{upper.bound}
 \fA \leq 2 |z_1(0)|\,|z_{-1}(0)|\,  \,      \Opbw{|\xi|^{2s} \eta_\tR^2(\xi)}   + \fR
\end{equation}
with $\fR\colon H^{s} \to H^{-s}$  satisfying the estimate
\begin{equation}\label{upper.R}
\norm{\fR  u}_{-s } \leq C_{s}  \frac{ |z_1(0)|^2 + |z_{-1}(0)|^2}{\tR^2}  \norm{u}_s    \ . 
\end{equation}
\end{itemize}
\end{lemma}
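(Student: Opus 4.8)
The plan is to establish both operator inequalities at the level of symbols, using the paradifferential calculus of Proposition~\ref{teoremadicomposizione} to pass from operators to symbols, Lemma~\ref{lem:pb} to identify the relevant Poisson bracket, and a sharp--G\aa rding--type argument — realizing the pertinent non-negative symbols as operator squares up to a controlled remainder — to extract the positivity. A recurring mechanism is that every symbol entering the computation carries a factor $\eta_\tR$ or $(\eta')_\tR$, hence is supported (in $\xi$) in $[\tR,+\infty)$; by~\eqref{rela:para} the corresponding Bony--Weyl operators map into functions spectrally supported on $|\xi|\gtrsim\tR$, so that any operator $\fR$ of order $2s-\nu$ with this property obeys $\norm{\fR u}_{-s}\lesssim\tR^{-\nu}\norm{\fR u}_{\nu-s}\lesssim\tR^{-\nu}(\cdots)\norm{u}_s$. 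This is exactly what will produce the gains $\tR^{-1}$ in~\eqref{mourre.R} and $\tR^{-2}$ in~\eqref{upper.R}.

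For part~$(i)$, the first step is to apply Proposition~\ref{teoremadicomposizione}$(i)$ with $\fa\in\Gamma^{2s}_{W^{2,\infty}}$ (in fact $C^\infty$ in $x$) and $(\tJ_1+\fv(x))\xi\in\Gamma^1_{W^{2,\infty}}$, keeping enough terms of the Moyal expansion (say $\#_3$; the subleading even term cancels in the commutator, the next is of order $2s-2$): one obtains $\im[\fA,\Opbw{(\tJ_1+\fv(x))\xi}]=\Opbw{\{\fa(x,\xi),(\tJ_1+\fv(x))\xi\}}+\fR_0$, where $\fR_0$ has order $\le 2s-2$, is spectrally supported on $|\xi|\gtrsim\tR$, and — by the seminorm bounds~\eqref{a.sem} — is controlled by $|z_1(0)|^4+|z_{-1}(0)|^4$. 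Lemma~\ref{lem:pb} then gives $\{\fa,(\tJ_1+\fv)\xi\}=\tI_1\,|\xi|^{2s}\eta_\tR^2(\xi)+a(x,\xi)$ with $a=a_1\psi_1^2+a_2\psi_2^2$ as in~\eqref{sym.a}. It remains to show $\Opbw{a}\ge-\fR'$ with $\fR'$ small. For each $j\in\{1,2\}$, since $a_j\ge0$ is $C^{1,1}$ with $\norm{a_j}_{W^{2,\infty}}\lesssim|z_1(0)|^4+|z_{-1}(0)|^4$ by~\eqref{est.aj}, Glaeser's inequality yields $\sqrt{a_j}\in W^{1,\infty}$ with $\norm{\partial_x\sqrt{a_j}}_{L^\infty}\lesssim\norm{a_j}_{W^{2,\infty}}^{1/2}$; hence $\sqrt{a_j}\,\psi_j\in\Gamma^s_{W^{1,\infty}}$ is real, $\Opbw{\sqrt{a_j}\,\psi_j}$ is self-adjoint by~\eqref{A1b}, and $\Opbw{\sqrt{a_j}\,\psi_j}^2\ge0$. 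Applying Proposition~\ref{teoremadicomposizione}$(i)$ with $\varrho=1$ and using that the Poisson bracket of a symbol with itself vanishes, one gets $\Opbw{a_j\psi_j^2}=\Opbw{\sqrt{a_j}\,\psi_j}^2-\fR_j$ with $\fR_j$ of order $\le2s-1$, spectrally supported on $|\xi|\gtrsim\tR$, and $\norm{\fR_j u}_{-s}\lesssim\tR^{-1}(|z_1(0)|^4+|z_{-1}(0)|^4)\norm{u}_s$. Summing, $\Opbw{a}\ge-\fR_1-\fR_2$ on $\bigcap_s H^s$, and collecting everything yields~\eqref{mourre} with $\fR:=\fR_0-\fR_1-\fR_2$ obeying~\eqref{mourre.R}.

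For part~$(ii)$, I would set $c(x):=2|z_1(0)|\,|z_{-1}(0)|-\ta(x)$. Since $|\ta(x)|\le|z_1(0)|\,|z_{-1}(0)|$ and $|z_1(0)\,z_{-1}(0)|>0$ (Remark~\ref{rem:nontriv}), one has $c(x)\ge|z_1(0)|\,|z_{-1}(0)|>0$; hence $\sqrt c$ is $C^\infty$ and $\sqrt c\,|\xi|^s\eta_\tR\in\Gamma^s_{W^{2,\infty}}$ is real. Running the sum-of-squares manipulation of the previous paragraph, now with $\varrho=2$ (permitted because $\sqrt c$ is smooth), gives $\Opbw{c(x)\,|\xi|^{2s}\eta_\tR^2}=\Opbw{\sqrt c\,|\xi|^s\eta_\tR}^2-\fR$ with $\fR$ of order $\le2s-2$, spectrally supported on $|\xi|\gtrsim\tR$, and $\norm{\fR u}_{-s}\lesssim\tR^{-2}(|z_1(0)|^2+|z_{-1}(0)|^2)\norm{u}_s$ (the $W^{3,\infty}$ norm of $\sqrt c$, hence of the Moyal remainder's symbol, being controlled by $|z_1(0)|\,|z_{-1}(0)|\lesssim|z_1(0)|^2+|z_{-1}(0)|^2$ thanks to the lower bound on $c$). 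Since $\Opbw{\sqrt c\,|\xi|^s\eta_\tR}^2\ge0$ and $\Opbw{c\,|\xi|^{2s}\eta_\tR^2}=2|z_1(0)|\,|z_{-1}(0)|\,\Opbw{|\xi|^{2s}\eta_\tR^2}-\fA$, this yields~\eqref{upper.bound}--\eqref{upper.R}.

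The main obstacle is the positivity step in part~$(i)$ for $a_2(x)=2\ta(x)\fv_x(x)$: this function vanishes at isolated points, so $\sqrt{a_2}$ is merely Lipschitz, which forces $\varrho=1$ in the sum-of-squares identity and is precisely why~\eqref{mourre.R} carries $\tR^{-1}$ rather than $\tR^{-2}$; one then has to verify carefully that the resulting order-$(2s-1)$ Moyal remainder, measured from $H^s$ to $H^{-s}$, still retains both the factor $\tR^{-1}$ (from the high-frequency localization) and the correct power $|z_1(0)|^4+|z_{-1}(0)|^4$ of the data (from Glaeser's inequality and~\eqref{est.aj}). A cleaner variant, which avoids Glaeser and would even give $\tR^{-2}$, is to exploit the explicit representations of $a_1,a_2$ as (sums of) squares of \emph{smooth} trigonometric functions, readable off from the computation~\eqref{av-va}--\eqref{gar.sym} in the proof of Lemma~\ref{lem:pb} — e.g.\ $a_2=\big(2\sqrt2\,\Im(z_1(0)\bar{z_{-1}(0)}e^{\im2x})\big)^2$, while $a_1$ is a non-negative combination of $\big(\Im(z_1(0)\bar{z_{-1}(0)}e^{\im2x})\big)^2$ and of the square of a smooth multiple of $\cos(x+\tfrac12\arg(z_1(0)\bar{z_{-1}(0)}))$ — and to run the operator-square argument factor by factor.
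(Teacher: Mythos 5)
Your argument is correct and reaches the stated bounds, but it differs from the paper's proof in two places worth noting. First, your mechanism for gaining powers of $\tR$ is a spectral-support argument: since every symbol involved carries a factor $\eta_\tR$ or $(\eta')_\tR$, and paradifferential operators essentially preserve Fourier support by \eqref{rela:para}, each remainder maps into functions concentrated on $|k|\gtrsim\tR$, and then $\norm{\cdot}_{-s}\lesssim\tR^{-\nu}\norm{\cdot}_{\nu-s}$. The paper instead never invokes this localization; it gets the factor $\tR^{-1}$ in \eqref{mourre.R} quantitatively from the seminorm inequality \eqref{a.sem}, by treating $\fa$ as a symbol of order $2s+1$ (rather than $2s$) whose $\Gamma^{2s+1}_{W^{2,\infty}}$ seminorm is $\lesssim\tR^{-1}|z_1(0)||z_{-1}(0)|$; the $\tR^{-1}$ in the final estimate is then entirely carried by the composition remainder $\breve R$, not by the positivity step. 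Second, and more substantively, for the positivity of $\Opbw{a}$ you factor $a_j=(\sqrt{a_j})^2$ and use $\Opbw{\sqrt{a_j}\psi_j}^2\ge0$, which forces you to appeal to Glaeser's inequality to get $\sqrt{a_j}\in W^{1,\infty}$ (and caps the Moyal expansion at $\varrho=1$). The paper circumvents this entirely: its G\aa rding Lemma \ref{garding} exploits the \emph{separation of variables} in $a(x)\psi^2(\xi)$ to write $\Opbw{\psi}\,a\,\Opbw{\psi}\ge0$ — here $a$ acts as a non-negative multiplication operator, so no square root of $a$ is ever needed, and the remainder (difference between $\Opbw{a\psi^2}$ and $\psi(D)\,a\,\psi(D)$) is of order $2s-2$ with a uniform $\tR^{-2}$ gain. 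That lemma thus requires only $a\ge0$ in $W^{3,\infty}$, is insensitive to the vanishing of $a$, and yields a better remainder bound than what Glaeser permits; this matters structurally because it keeps the positivity step as a clean self-contained appendix lemma that is reused in part $(ii)$. Your ``cleaner variant'' — exhibiting $a_1,a_2$ explicitly as non-negative combinations of squares of \emph{smooth} trigonometric functions and squaring $\Opbw{f_j\psi_j}$ with $f_j$ smooth rather than $\sqrt{a_j}$ — would recover the $\tR^{-2}$ gain without Glaeser, and is in spirit closer to the paper's $\psi(D)\,a\,\psi(D)$ mechanism, but it hard-codes the specific trigonometric structure of $\ta,\fv$ and is therefore less modular than the paper's G\aa rding lemma. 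Both routes deliver \eqref{mourre}--\eqref{upper.R}; the paper's is slightly more economical and reusable, yours is more self-contained but heavier on casework.
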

\begin{proof}
$(i)$ First note that   $\big(\tJ_1 +\fv(x)\big) \xi $ is 
 a symbol in $\Gamma^1_{W^{2, \infty}}$ with seminorm 
\begin{equation}\label{fv.sem}
| \big( \tJ_1 + \fv(x)\big)  \xi |_{1, W^{2, \infty}, 7} \leq C \left(|z_1(0)|^2 +  |z_{-1}(0)|^2 \right)  \ .
\end{equation}
We now compute the commutator between $\fA$ and $\Opbw{\big( \tJ_1+ \fv(x)\big) \xi} $.
We use the composition Theorem \ref{teoremadicomposizione}  $(i)$  regarding $\fa(x, \xi)$ as a symbol in $\Gamma^{2s+1}_{W^{2, \infty}}$ (so putting $m \leadsto    2s+1$, $m'  \leadsto  1$, $\varrho  \leadsto  2$);   we get 
\begin{equation}\label{mourre1}
\im  [\fA,   \Opbw{\big( \tJ_1 +  \fv(x) \big) \xi} ] = \Opbw{ \{ \fa(x, \xi) \,   ,  \big( \tJ_1+ \fv(x)\big) \xi  \}} + \breve R 
\end{equation}
where the operator $\breve R \colon H^{s} \to H^{ -s}$  satisfies 
\begin{align}\label{breveR}
\norm{\breve R u}_{-s}& \lesssim  | \fa |_{2s +1, W^{2,\infty}, 7}  \, 
| \big( \tJ_1 + \fv(x) \big) \xi |_{1, W^{2,\infty}, 7} \, 
\norm{u}_{s}    \stackrel{\eqref{a.sem}, \eqref{fv.sem}}{\lesssim} \frac{ |z_1(0)|^4+ |z_{-1}(0)|^4}{\tR} \norm{u}_{s} \ . 
\end{align}
Back to formula \eqref{mourre1}, the Poisson bracket  $\{ \fa(x, \xi) \,   ,  \big( \tJ_1+ \fv(x)\big) \xi  \}$ was already computed in \eqref{comm.av0}, hence 
\begin{equation}\label{mourre20}
\Opbw{ \{ \fa(x, \xi)\,   ,  \big(\tJ_1 +  \fv(x) \big) \xi  \}}    = 
\tI_1     \Opbw{  |\xi|^{2\ts} \eta^2_\tR}  +  \Opbw{a(x,\xi)}  \  
\end{equation}
with  $a(x, \xi)$ a  smooth, non-negative symbol having  the  structure  \eqref{sym.a}. Thanks to these properties we bound the operator $\Opbw{a}$ from below using the  
 strong Garding inequality \ref{garding}, getting 
\begin{equation}\label{mourre30}
\la\Opbw{a} u, u \ra   \geq - C   
\frac{\norm{a_1}_{W^{3,\infty}} + \norm{a_2}_{W^{3,\infty}}}{\tR^2} \norm{u}_s^2  \stackrel{\eqref{est.aj} }{\geq }
-C \frac{|z_1(0)|^4 + |z_{-1}(0)|^4}{\tR^2} \la \la D\ra^{2s} u, u \ra \ . 
\end{equation}
We conclude  by \eqref{mourre1}, \eqref{mourre20}, \eqref{mourre30} that 
\begin{equation}\label{mourre2}
\im  [\fA,   \Opbw{ \big( \tJ_1 + \fv(x) \big) \xi} ]   \geq   
\tI_1   \, \Opbw{  |\xi|^{2\ts} \eta^2_\tR} + \fR \ , \quad 
\fR := \breve R  -C \frac{|z_1(0)|^4 + |z_{-1}(0)|^4}{\tR^2}\la D\ra^{2s} 
\end{equation}
where 
the operator $\fR \colon H^{s} \to H^{-s }$ fulfills the  estimate \eqref{mourre.R}. 

$(ii)$ Define the positive symbol
$\tilde a(x,\xi):= \left( 2 |z_1(0)|\,|z_{-1}(0)| -\ta(x)\right)  |\xi|^{2s} \eta_\tR^2(\xi)$ and 
apply again Garding's inequality \ref{garding}.
\end{proof}

\subsection{Growth of Sobolev norms}
We now give sufficient conditions on the initial data  
of a 
 long-time controlled solution $z(t)$  ensuring  growth of Sobolev norms. 
\begin{definition}[{\bf Well-prepared data}]\label{B} 
Fix  $s, \theta$ as in \eqref{parameters}. Fix also   $\nu_0\in (0, \frac12)$, $ \e >0$. 

We say that an initial datum $z(0) \in H^s(\T, \C)$ is {\em well prepared} with parameters $(s, \theta, \nu_0, \e)$ if 
\begin{itemize} 
\item[{\rm (B1)}]  On the modes on $\Lambda$ 
\begin{equation}\label{rho1-1}
2 |z_1(0) | \, |{z_{-1}(0)} |  - \frac{|z_1(0)|^2 + |z_{-1}(0)|^2}{2}  \geq  \nu_0 \e^2  \ ;
\end{equation}
\item[{\rm (B2)}] On the modes on $\Lambda^c$
\be\label{tRbound}
 \la \fA_{s,\tR }  z^\perp(0),   z^\perp(0) \ra > \e^{3-3\theta} \ , \quad \mbox{with } 
\tR := \e^{-(3+\theta)/(1-\alpha)} \ 
\end{equation}
and $\fA_{s,\tR }$ in \eqref{fA}.
\end{itemize}
\end{definition}

\begin{remark}\label{rem:nontriv}
Condition \eqref{rho1-1} ensures that $|z_1(0) z_{-1}(0)| >0$, hence both $\fv(x)$ in \eqref{V0} and the symbol $\fa(x,\xi)$ in \eqref{fA} are non-trivial.
\end{remark}

The next result proves that a solution $z(t)$ which is long-time controlled for times $ {T_0}{\e^{-2}} \log\left({\e^{-1}} \right) $ with $T_0$ sufficiently large and whose initial datum is well-prepared, undergoes growth of Sobolev norms. 
Precisely: 
 \begin{proposition}\label{prop:instab} 
 Fix  $s, \theta$ as in \eqref{parameters}. Fix also   $\nu_0\in (0, \frac12)$.
 There exists $\e_1 = \e_1(s, \theta, \nu_0)>0$ such that for any  $\e \in (0, \e_1)$, the following holds true. 
 Let $z(t)\in H^s(\T, \C)$ be  a solution of system  \eqref{eq.ztop}--\eqref{eq.zperp}  such that 
 \begin{itemize}
 \item[(i)] it is long-time controlled with parameters $(s, \theta, T_\star, \e)$ (see Definition \ref{A}),
with 
\be\label{T0}
T_\star = \frac{T_0}{\e^2} \log\left(\frac{1}{\e} \right)  \ , \quad T_0  := \frac{1}{\nu_0}  \ ;
\ee
\item[(ii)] its initial datum $z(0)\in H^s(\T, \C)$ is well-prepared with parameters $(s, \theta, \nu_0, \e)$ (see Definition \ref{B}).
 \end{itemize}
  Then 
the solution  $z(t)$ undergoes growth of Sobolev norms, i.e. 
 \begin{equation}\label{zeta.grows}
\sup_{|t | \leq  T_\star } \norm{z(t)}_s \geq  \frac{1}{ \e^{\theta}} \ .
\end{equation}

 \end{proposition}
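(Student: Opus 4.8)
The idea is a standard positive-commutator/Virial argument, run on the translated normal mode $\zeta(t)$ solving \eqref{eq.zeta}, but carefully tracking the $\e$-dependence so that the leading transport $\Opbw{(\tJ_1+\fv(x))\xi}$ dominates all other terms once the frequency cutoff $\tR = \e^{-(3+\theta)/(1-\alpha)}$ is in force. Introduce the Mourre functional $\cA(t) := \la \fA_{s,\tR}\,\zeta(t),\zeta(t)\ra$, where $\fA_{s,\tR}$ is the self-adjoint operator in \eqref{fA}. By the well-preparedness hypothesis (B2) and \eqref{zeta}, $\cA(0) = \la \fA_{s,\tR} z^\perp(0), z^\perp(0)\ra > \e^{3-3\theta}$. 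The plan is to prove a differential inequality of the form $\frac{\di}{\di t}\cA(t) \geq c\, \tI_1\, \cA(t) - (\text{small errors})$ along the long-time-controlled solution, integrate it to get exponential growth of $\cA(t)$ on the time scale $T_\star$, and then use the upper bound \eqref{upper.bound}--\eqref{upper.R} together with \eqref{boot}--\eqref{z.s0} to convert growth of $\cA$ into growth of $\|\zeta(t)\|_s = \|z^\perp(t)\|_s$, reaching a contradiction with the a-priori bound $\|z(t)\|_s \leq \e^{-\theta}$ unless \eqref{zeta.grows} holds.

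First I would differentiate $\cA(t)$ in time using \eqref{eq.zeta}: since $\fA$ is self-adjoint and time-independent, $\frac{\di}{\di t}\cA(t) = 2\Re\la \fA\, \pa_t\zeta, \zeta\ra$, which splits into four contributions. The dispersive term $-\im|D|^\alpha$ and the leading transport term together produce $\Re\la \im[\fA, -\im|D|^\alpha + \Opbw{(\tJ_1+\fv)\xi}]\zeta,\zeta\ra$; Lemma \ref{lem:pos.comm}(i) gives $\im[\fA,\Opbw{(\tJ_1+\fv)\xi}] \geq \tI_1 \Opbw{|\xi|^{2s}\eta_\tR^2} + \fR$ with $\|\fR u\|_{-s} \lesssim \e^{-(3+\theta)/(1-\alpha)\cdot(-1)}(|z_1(0)|^4+|z_{-1}(0)|^4)/\tR \lesssim \e^4/\tR$, i.e. genuinely small; and the commutator $\im[\fA,-\im|D|^\alpha]$ is a paradifferential operator of order $2s-1+\alpha < 2s$ whose contribution, after Gårding, is controlled by $\tR^{-(1-\alpha)}\|z^\perp\|_s^2 \lesssim \e^{3+\theta}\|z^\perp\|_s^2$ by the choice of $\tR$. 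The lower-order transport/symbol term $\im\Opbw{\wt\tV(t;x)\xi + \wt\tb(t;x,\xi)}$ contributes a commutator with $\fA$ of order $\leq 2s$, bounded using \eqref{est.706} by $C(\e^{4-\theta} + \e^2)\|z^\perp\|_s^2$; here I must be slightly careful that the $\e^2$ coming from $\wt\tb$ (order $\alpha$, so the commutator has order $2s-1+\alpha$, again saving $\tR^{-(1-\alpha)}$) is in fact $\e^2 \cdot \e^{3+\theta}\|z^\perp\|_s^2$, hence negligible relative to $\tI_1 \sim \nu_0\e^4 \cdot \e^0$... wait — the key quantitative point is $\tI_1 \gtrsim \nu_0\e^2 \cdot |z_1(0)||z_{-1}(0)| \gtrsim \nu_0\e^4$ by \eqref{rho1-1} and \eqref{tI1}, while the positive main term is $\tI_1\la \Opbw{|\xi|^{2s}\eta_\tR^2}\zeta,\zeta\ra \gtrsim \tI_1 (\cA(t)/(|z_1(0)||z_{-1}(0)|)) \gtrsim \nu_0\e^2\,\cA(t)$ after using \eqref{upper.bound} to compare $\la\Opbw{|\xi|^{2s}\eta_\tR^2}\zeta,\zeta\ra$ with $\cA(t)$. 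Finally the forcing $\wt Y(t)$ contributes $2\Re\la\fA\wt Y(t),\zeta\ra$, bounded by $\|\fA\zeta\|_{-s}\|\wt Y(t)\|_s \lesssim |z_1(0)||z_{-1}(0)|\,\|\zeta\|_s\cdot\e^{3-\theta} \lesssim \e^2\cdot\e^{-\theta}\cdot\e^{3-\theta} = \e^{5-2\theta}$ using \eqref{a.sem}, \eqref{est.706} and the a-priori bound; this is small compared to $\nu_0\e^2\cdot\cA(0) \sim \nu_0\e^{5-3\theta}$ for $\theta$ small enough.

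Assembling these estimates, and writing $\cA(t) \gtrsim \e^{3-3\theta}$ as long as it has not yet decayed (which a bootstrap on $\cA(t) \geq \tfrac12\e^{3-3\theta}$ excludes while the errors remain subordinate), I obtain on $[0,T_\star]$ a bound of the shape
\begin{equation}\label{plan.diffineq}
\frac{\di}{\di t}\cA(t) \geq c_0\,\nu_0\,\e^2\,\cA(t) - C\e^{5-2\theta},
\end{equation}
where $c_0>0$ is absolute. Since $\cA(0) > \e^{3-3\theta} \gg \e^{3-2\theta} \geq (C/(c_0\nu_0))\e^{3-2\theta}$ for $\e$ small (here I use $\theta < \theta_* \leq 1/5$ so that the forcing is genuinely lower order), Grönwall gives $\cA(t) \geq \tfrac12\cA(0)\,e^{c_0\nu_0\e^2 t}$. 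Taking $t = T_\star = \e^{-2}\log(\e^{-1})/\nu_0$ as in \eqref{T0} yields $\cA(T_\star) \geq \tfrac12\e^{3-3\theta}\,e^{c_0\log(\e^{-1})} = \tfrac12\e^{3-3\theta-c_0}$. On the other hand, by \eqref{upper.bound}--\eqref{upper.R}, $\cA(t) \leq 2|z_1(0)||z_{-1}(0)|\,\||D|^s\eta_\tR(D)\zeta(t)\|_0^2 + C\e^2\tR^{-2}\|\zeta(t)\|_s^2 \lesssim \e^2\|z^\perp(t)\|_s^2$ (the error being lower order). Hence $\|z^\perp(T_\star)\|_s^2 \gtrsim \e^{1-3\theta-c_0}$, i.e. $\sup_{t\leq T_\star}\|z(t)\|_s \gtrsim \e^{(1-3\theta-c_0)/2}$, which exceeds $\e^{-\theta}$ for $\e$ small provided $c_0$ is chosen so that $(1-3\theta-c_0)/2 < -\theta$, i.e. $c_0 > 1-\theta$ — but $c_0$ is a fixed absolute constant coming from the positive commutator, and this is where the actual argument must instead run a \emph{bootstrap}: assuming $\|z(t)\|_s \leq \e^{-\theta}$ on $[0,T_\star]$ (which is exactly long-time-controlledness), the above chain produces $\|z(T_\star)\|_s \geq \e^{-\theta}$, contradicting strict inequality unless growth has already occurred, hence \eqref{zeta.grows}. \textbf{The main obstacle} I anticipate is precisely this bookkeeping: making sure every error term (the $\fR$'s, the $|D|^\alpha$-commutator, $\wt\tV$, $\wt\tb$, $\wt Y$) is quantitatively dominated by $\nu_0\e^2\cA(t)$ throughout $[0,T_\star]$ — this forces the specific choice $\tR = \e^{-(3+\theta)/(1-\alpha)}$ (so that $\tR^{-(1-\alpha)} = \e^{3+\theta}$ beats the a-priori growth $\|z\|_s^2\le\e^{-2\theta}$ with room to spare) and the restriction $\theta<\theta_*$, and it requires the comparison $\la\Opbw{|\xi|^{2s}\eta_\tR^2}\zeta,\zeta\ra \sim \cA(t)/(|z_1(0)||z_{-1}(0)|)$ from Lemma \ref{lem:pos.comm}(ii) to close the Grönwall loop on $\cA$ itself rather than on $\|\zeta\|_s$.
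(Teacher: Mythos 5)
Your plan follows exactly the paper's proof: introduce $\cA(t) = \la\fA_{s,\tR}\zeta(t),\zeta(t)\ra$, differentiate along \eqref{eq.zeta}, isolate the leading positive commutator via Lemma \ref{lem:pos.comm}(i), use the upper bound in Lemma \ref{lem:pos.comm}(ii) to close the loop on $\cA$ itself, show that the dispersive term and the lower-order symbols are suppressed by the frequency cutoff $\tR^{-(1-\alpha)} = \e^{3+\theta}$, and integrate the resulting differential inequality. The error budget you sketch is the right one (the dominant error is $\e^{5-2\theta}$ from the forcing $\wt Y$), and the key structural point — that the $\Opbw{|\xi|^{2s}\eta_\tR^2}$ lower bound must be fed back through \eqref{upper.bound} to obtain a Gr\"onwall inequality in $\cA$, not in $\|\zeta\|_s^2$ — is correctly identified.

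However, there is a genuine gap in the final step, and your proposed fix does not close it. You write the differential inequality as $\frac{\di}{\di t}\cA \geq c_0\nu_0\e^2\cA - C\e^{5-2\theta}$ with an unspecified absolute constant $c_0$, observe that the conclusion requires $c_0 > 1-\theta$, and then claim this can be salvaged by a ``bootstrap'' contradiction. It cannot: if $c_0 < 1-\theta$, the chain only produces $\|z(T_\star)\|_s \gtrsim \e^{(1-3\theta-c_0)/2}$, which for such $c_0$ is \emph{strictly smaller} than $\e^{-\theta}$, so there is no contradiction with the a-priori bound $\|z(t)\|_s \leq \e^{-\theta}$. The point you are missing is that $c_0 = 1$ \emph{exactly}: the two $\gtrsim$'s in your sketch hide no absolute constants. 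Indeed the positive part of $\im[\fA,\Opbw{(\tJ_1+\fv)\xi}]$ in \eqref{mourre} is precisely $\tI_1\,\Opbw{|\xi|^{2s}\eta_\tR^2}$, the upper bound \eqref{upper.bound} is precisely $\fA \leq 2|z_1(0)||z_{-1}(0)|\,\Opbw{|\xi|^{2s}\eta_\tR^2} + O(\tR^{-2})$, and their ratio is
\begin{equation}
\frac{\tI_1}{2|z_1(0)||z_{-1}(0)|} = 2|z_1(0)||z_{-1}(0)| - \tJ_1 \stackrel{\eqref{rho1-1}}{\geq}\nu_0\e^2 \,,
\end{equation}
with no loss of a multiplicative factor. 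Hence the differential inequality is $\frac{\di}{\di t}\cA \geq \nu_0\e^2\big(\cA - C\e^{3-2\theta}\big)$, and since $T_0 = 1/\nu_0$ gives $e^{\nu_0\e^2 T_\star} = \e^{-1}$, one gets $\cA(T_\star) \geq \tfrac12\e^{2-3\theta}$ and thus $\|z(T_\star)\|_s^2 \gtrsim \e^{-3\theta}$, which indeed exceeds $\e^{-2\theta}$ for $\e$ small — no bootstrap needed. This precise matching between Lemma \ref{lem:pos.comm}(i) and (ii) is the reason the lemma has two parts, and it is the crux you glossed over.
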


The first  step to prove such result is to define the $\cA$-functional 
\begin{equation}\label{cA}
\cA(t):= \langle \fA_{s,\tR} \,  \zeta(t), \zeta(t) \rangle, \quad \fA_{s,\tR}  \mbox{ in } \eqref{fA} \ ,\quad \zeta(t)  \mbox{ in } \eqref{zeta} \end{equation}
and exploit Lemma \ref{lem:pos.comm} to give a {\em lower bound} on the   time derivative  $\frac{\di}{\di t} \cA(t)$.
Precisely we have:
 \begin{lemma}\label{lem:dAdt}
Under the same assumptions of Propositon \ref{prop:instab}, there are a constant $C>0$ and $ \e_1=\e_1(s,\theta,\alpha,\nu_0)>0$ such that if $ \e \in (0,\e_1)$
 the $\cA$- functional in  \eqref{cA}, with $\tR$ in \eqref{tRbound}
  fulfills:  then  
\begin{align}\label{dAt}
\frac{\di}{\di t} \cA(t)\geq & \, {\e^2 \nu_0} \left(  \cA(t)
 -  C  \e^{3-2\theta}   \right)  \ , \qquad \forall  0\leq t \leq \frac{T_0}{\e^2}\log\left(\frac{1}{\e} \right) \ .
\end{align}
 \end{lemma}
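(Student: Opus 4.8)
The strategy is to differentiate $\cA(t) = \la \fA \zeta(t), \zeta(t)\ra$ in time, insert the equation \eqref{eq.zeta} for $\zeta$, and isolate the commutator term, which by Lemma \ref{lem:pos.comm} is coercive. Concretely, since $\fA$ is self-adjoint and time-independent,
$$
\frac{\di}{\di t}\cA(t) = 2\Re\la \fA \pa_t \zeta, \zeta\ra = 2\Re\la \fA\big(-\im|D|^\alpha\zeta + \im\Opbw{(\tJ_1+\fv(x))\xi}\zeta + \im\Opbw{\wt\tV\xi + \wt\tb}\zeta + \wt Y\big), \zeta\ra \ .
$$
The term coming from $\im\Opbw{(\tJ_1+\fv(x))\xi}$ contributes $\Re\la \fA \im \Opbw{(\tJ_1+\fv)\xi}\zeta,\zeta\ra = \frac12\la \im[\fA,\Opbw{(\tJ_1+\fv)\xi}]\zeta,\zeta\ra$, because the ``half'' of the operator not in the commutator is skew and cancels after taking real part (using that $\Opbw{(\tJ_1+\fv)\xi}$ is skew-adjoint up to smoothing, or more precisely: $2\Re\la \fA \im B\zeta,\zeta\ra = \la(\fA\im B - \im B^*\fA)\zeta,\zeta\ra = \la \im[\fA,B]\zeta,\zeta\ra + \la\im(B-B^*)\fA'\dots\ra$ — this requires care with the adjoint error). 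Here I would use $\Opbw{(\tJ_1+\fv(x))\xi}$ being essentially skew-adjoint: by \eqref{A1b}, its adjoint is $\Opbw{\overline{(\tJ_1+\fv(x))\xi}} = -\Opbw{(\tJ_1+\fv(x))\xi}$ since $\tJ_1+\fv$ is real and $\xi$ odd, so $B^* = -B$ exactly at the symbol level (the Weyl quantization of a real symbol that is odd in $\xi$ is skew-adjoint). Then $2\Re\la \fA\im B\zeta,\zeta\ra = \la \im[\fA,B]\zeta,\zeta\ra$ cleanly. Applying Lemma \ref{lem:pos.comm}(i), this term is bounded below by $\tI_1\la\Opbw{|\xi|^{2s}\eta_\tR^2}\zeta,\zeta\ra - C_s\frac{|z_1(0)|^4+|z_{-1}(0)|^4}{\tR}\norm{\zeta}_s^2$.

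\textbf{Bounding the error terms.} The remaining three contributions must be shown to be small. For $-\im|D|^\alpha$: $2\Re\la\fA(-\im|D|^\alpha)\zeta,\zeta\ra = \la\im[\fA,|D|^\alpha]\zeta,\zeta\ra$ (again using skew-adjointness of $-\im|D|^\alpha$ and self-adjointness of $\fA$); by symbolic calculus $[\fA,|D|^\alpha]$ has order $2s+\alpha-1$ with seminorm controlled by $|\fa|_{2s,\cdot}\cdot|\,|\xi|^\alpha|_{\alpha,\cdot}\lesssim |z_1(0)||z_{-1}(0)|$, and since it acts $H^s\to H^{-s+(1-\alpha)}$... actually I need it to act $H^s\to H^{-s}$ with a gain, so I compare with $\la D\ra^{2s}$: $|\la[\fA,|D|^\alpha]\zeta,\zeta\ra| \lesssim |z_1(0)||z_{-1}(0)|\,\tR^{\alpha-1}\norm{\zeta}_s^2$ once I localize to frequencies $\geq\tR$ via $\eta_\tR$. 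For $\im\Opbw{\wt\tV\xi+\wt\tb}$: using $2\Re\la\fA\im\Opbw{\wt\tV\xi+\wt\tb}\zeta,\zeta\ra$, bound by $\norm{\fA\zeta}_{-s}\cdot\norm{\Opbw{\wt\tV\xi+\wt\tb}\zeta}_s$-type estimates; by \eqref{est.706} the symbol $\wt\tV\xi+\wt\tb$ has seminorms $\lesssim\e^2$ (dominated by the $\e^2$ from $\wt\tb$; the $\wt\tV\xi$ part is even smaller, $\e^{4-\theta}$), giving a contribution $\lesssim \e^2\norm{\zeta}_s^2 \lesssim \e^2\cdot\e^{-2\theta}$ by the a-priori bound \eqref{app.ass2} (with $\norm{\zeta}_s=\norm{z^\perp}_s\leq\e^{-\theta}$). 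Finally the $\wt Y$ term: $|2\Re\la\fA\wt Y,\zeta\ra|\leq 2\norm{\fA\zeta}_{-s}\norm{\wt Y}_s \lesssim |z_1(0)||z_{-1}(0)|\norm{\zeta}_s\cdot\e^{3-\theta}\lesssim \e^2\cdot\e^{-\theta}\cdot\e^{3-\theta} = \e^{5-2\theta}$.

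\textbf{Assembling.} Now I use the well-preparedness: by (B1), $\tI_1 = 2|z_1(0)||z_{-1}(0)|(2|z_1(0)||z_{-1}(0)| - \tJ_1)\geq 2|z_1(0)||z_{-1}(0)|\cdot\nu_0\e^2$; and (needing also a lower bound on $|z_1(0)||z_{-1}(0)|$ — this follows since the total $L^2$ mass on $\Lambda$ is $\sim\e$ from (A1) and (B1) forces it not to concentrate on one mode; I'd argue $|z_1(0)||z_{-1}(0)|\gtrsim\e^2$ or at least that $\tI_1\gtrsim\nu_0\e^4$... I should be careful here, but the quadratic form $\la\Opbw{|\xi|^{2s}\eta_\tR^2}\zeta,\zeta\ra$ together with the upper bound Lemma \ref{lem:pos.comm}(ii), $\fA\leq 2|z_1(0)||z_{-1}(0)|\Opbw{|\xi|^{2s}\eta_\tR^2}+\fR$, gives $\la\Opbw{|\xi|^{2s}\eta_\tR^2}\zeta,\zeta\ra \geq \frac{1}{2|z_1(0)||z_{-1}(0)|}(\cA(t) - C\tR^{-2}\norm{\zeta}_s^2)$). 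Combining, the commutator lower bound becomes $\tI_1\la\Opbw{|\xi|^{2s}\eta_\tR^2}\zeta,\zeta\ra \geq \frac{\tI_1}{2|z_1(0)||z_{-1}(0)|}(\cA(t) - C\tR^{-2}\e^{-2\theta}) \geq \nu_0\e^2\cA(t) - \nu_0\e^2\cdot C\tR^{-2}\e^{-2\theta}$. Since $\tR = \e^{-(3+\theta)/(1-\alpha)}$, the choice makes $\tR^{\alpha-1} = \e^{3+\theta}$ and $\tR^{-2}$ is even smaller, so all the error terms ($\e^{-2\theta}\cdot\tR^{\alpha-1}\lesssim\e^{3-\theta}$, $\e^{2-2\theta}$, $\e^{5-2\theta}$, $\nu_0\e^2\cdot C\tR^{-2}\e^{-2\theta}$) are each $\lesssim\e^2\nu_0\cdot\e^{3-2\theta}$ for $\e$ small, using $\theta<\theta_*\leq\frac15$. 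Collecting: $\frac{\di}{\di t}\cA(t)\geq \nu_0\e^2\cA(t) - C\nu_0\e^2\e^{3-2\theta}$, which is \eqref{dAt}. The main obstacle I anticipate is the bookkeeping to ensure that every error term — especially the adjoint-commutator errors from $\Opbw{\wt\tV\xi}$ which is not exactly skew (the symbol $\wt\tV(t;x)\xi$ is real and odd in $\xi$, so it IS skew, good) and the lower bound $|z_1(0)||z_{-1}(0)|\gtrsim\e^2$ extracted from (A1)+(B1) — really does beat $\nu_0\e^2\cdot\e^{3-2\theta}$ uniformly, which forces the precise calibration $\tR=\e^{-(3+\theta)/(1-\alpha)}$ to be used sharply.
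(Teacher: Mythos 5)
Your overall strategy — differentiate $\cA(t)$, isolate the commutator $\im[\fA,\Opbw{(\tJ_1+\fv)\xi}]$, apply Lemma~\ref{lem:pos.comm}, convert back to $\cA(t)$ via the upper bound (ii), and compare with the error terms — is exactly the paper's approach. Your handling of the leading term, the $\wt Y$ remainder, and the dispersion term $-\im|D|^\alpha$ (where you correctly exploit both the commutator structure and the high-frequency localization of $\fa$, yielding the $\tR^{\alpha-1}$ gain) all match.

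However, there is a genuine gap in your treatment of the $\im\Opbw{\wt\tV\xi + \wt\tb}$ contribution. You bound it by a direct product ``$\norm{\fA\zeta}_{-s}\cdot\norm{\Opbw{\wt\tV\xi+\wt\tb}\zeta}_s$'', getting $\e^{2}\norm{\zeta}_s^2\lesssim\e^{2-2\theta}$. This is wrong for two reasons. First, the bound is ill-formed: $\Opbw{\wt\tV\xi+\wt\tb}$ has order $1$ (or $\alpha$ for the $\wt\tb$ part alone), so estimating $\norm{\Opbw{\cdot}\zeta}_s$ requires control of $\norm{\zeta}_{s+1}$, which you do not have. Second, and more importantly, even if you fix the order issue, $\e^{2-2\theta}$ does \emph{not} satisfy $\e^{2-2\theta}\lesssim\nu_0\e^{5-2\theta}$ as you claim in your final list of error terms: since $2-2\theta<5-2\theta$, one has $\e^{2-2\theta}\gg\e^{5-2\theta}$ as $\e\to 0$, so this error overwhelms the good term. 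The fix is precisely what you already did for $|D|^\alpha$ but omitted here: since $\wt\tV\xi$ and $\wt\tb$ are real symbols odd (respectively real-valued) so that $\im\Opbw{\wt\tV\xi+\wt\tb}$ is skew-adjoint, you get the commutator structure $\la\im[\fA,\Opbw{\wt\tV\xi+\wt\tb}]\zeta,\zeta\ra$; for the order-$\alpha$ piece $\wt\tb$ (and likewise for $|\xi|^\alpha$), the support of $\fa$ on $|\xi|\geq\tR$ then yields the extra $\tR^{-(1-\alpha)}=\e^{3+\theta}$ factor via the commutator estimate \eqref{est.aR.comp} with $\nu=1-\alpha$, bringing the contribution down to $C_s\e^{5-\theta}$; for the order-$1$ piece $\wt\tV\xi$, the commutator already has order $2s$ and the seminorm $\e^{4-\theta}$ of $\wt\tV$ gives $\e^{6-3\theta}$, both of which now do beat $\nu_0\e^{5-2\theta}$.

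A minor further inconsistency: in your final error list you write ``$\e^{-2\theta}\tR^{\alpha-1}\lesssim\e^{3-\theta}$'', which likewise fails against $\nu_0\e^{5-2\theta}$; but in the preceding line you correctly kept the $|z_1(0)||z_{-1}(0)|\lesssim\e^2$ factor from $|\fa|_{2s}$, which restores $\e^{5-\theta}$. You should carry that factor through consistently.

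Finally, your worry about needing a lower bound on $|z_1(0)||z_{-1}(0)|$ is unnecessary: when converting $\tI_1\la\Opbw{|\xi|^{2s}\eta_\tR^2}\zeta,\zeta\ra$ back to $\cA(t)$ via the upper bound, the factor $2|z_1(0)||z_{-1}(0)|$ cancels exactly against its appearance in $\tI_1$, leaving the clean $\nu_0\e^2$ from (B1); one only needs $|z_1(0)||z_{-1}(0)|>0$ (guaranteed by (B1)) to perform the division, as the paper notes in Remark~\ref{rem:nontriv}.
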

 \begin{proof}
First note that if $z(t)$ is a long-time controlled  solution with parameters $(s, \theta, T_\star, \e)$ 
and has initial datum well prepared with parameters $(s, \theta, \nu_0, \e)$ then  the translated solution 
 $\zeta(t)$ defined in \eqref{zeta} is long-time controlled and has initial data well-prepared with the same parameters. 
 
 From now on we shall simply denote $\fA\equiv \fA_{s,\tR}$.
 Since $\zeta(t)$ fulfills \eqref{eq.zeta}, we compute 
\begin{align}
\label{I}
\frac{\di}{\di t} \cA(t)  = & \la \im [ \fA, \Opbw{\big( \tJ_1 + \fv(x) \big) \xi}]  \zeta, \zeta\ra \\
\label{II}
&  + \la \im  [ \fA,\Opbw{ \wt \tV(t;x) \xi}] \zeta, \zeta \ra  \\
\label{III}
& + \la \im [ \fA,\Opbw{ - |\xi|^\alpha + \wt \tb(t; x, \xi)}] \zeta, \zeta \ra\\
\label{IV}
& + 2\Re\,  \la \fA \wt Y(t) , \zeta \ra 
\end{align}
We shall use that, for well-prepared data, 
 the number $\tI_1$ in \eqref{tI1} fulfills (see \eqref{rho1-1})
 \be\label{lowI1}
 \tI_1 \geq 2 | z_1(0)|\, | z_{-1}(0)| \nu_0  \, \e^2,
 \ee whereas for long-time controlled solutions (see \eqref{app.ass1}), one has  
 \be \label{z1-1}
 |z_1(0)|^2+ |z_{-1}(0)|^2\leq \e^2.
 \ee
We first estimate the term \eqref{I}  from below using Lemma \ref{lem:pos.comm}.
  Precisely we get 
\begin{align}
\notag
 \la \im [ \fA, \Opbw{(\tJ_1+\fv(x)) \xi}]  \zeta, &\zeta \ra 
  \stackrel{\eqref{mourre},\eqref{mourre.R}}{\geq }  \tI_1 \,  \,  \la  \Opbw{  |\xi|^{2 s} \eta^2_\tR(\xi)} \zeta, \zeta \ra - C_s \frac{\e^{4}}{\tR} \norm{\zeta}_s^2\\
 &\stackrel{\eqref{lowI1}}{\geq}  2 | z_1(0)|\, | z_{-1}(0)| \nu_0  \, \e^2   \,  \la  \Opbw{  |\xi|^{2 s} \eta^2_\tR(\xi)} \zeta, \zeta \ra - C_s \frac{\e^{4}}{\tR} \norm{\zeta}_s^2 \\
 \label{1259}
& \stackrel{\eqref{upper.bound}, \eqref{upper.R}}{  \geq }
{\nu_0}  \, \e^2  \cA(t)  - C_s \frac{\e^{4}}{\tR}\norm{\zeta}_s^2   \ . 
\end{align}
Next  we estimate \eqref{II} from above. We first use  estimate  \eqref{est.aR.comp} (with $\nu =0$, $m' = 1$, $m = 2s$),
\begin{equation}\label{II.est}
\abs{\eqref{II}} \leq | \fa |_{2s, W^{2,\infty}, 7} \, |\wt \tV(t,\cdot) |_{1, W^{2,\infty}, 7} \, \norm{\zeta}_s^2
\stackrel{\eqref{a.sem}, \eqref{est.706}, \eqref{z1-1}}{\leq }
C_s \e^{6-\theta}\norm{\zeta}_s^2 \ . 
\end{equation}
Next we estimate \eqref{III} from above.  We use again estimate  \eqref{est.aR.comp} (this time with $\nu = 1-\alpha$, $m' = \alpha$, $m = 2s$, thinking  $\fa(x,\xi)$ as a symbol in $\Gamma^{2s + 1-\alpha}_{W^{2, \infty}}$  supported on high frequencies) to bound
\begin{equation}\label{III.est}
\abs{\eqref{III}} \leq \frac{1}{\tR^{1-\alpha}} \, | \fa |_{2s, W^{2,\infty}, 7} \  | |\xi|^\alpha + \wt\tb(t, \cdot) |_{\alpha, W^{2,\infty}, 7} \, \norm{\zeta}_s^2
\stackrel{\eqref{a.sem}, \eqref{est.706} }{\leq}
C_s  \frac{\e^{2}}{\tR^{1-\alpha}}\norm{\zeta}_s^2 \ . 
\end{equation}
Finally we estimate \eqref{IV} from above.
We use estimate \eqref{cont00} 
to bound
\begin{align}
\abs{\eqref{IV}}  & \leq 
\norm{\fA \wt Y(t)}_{-s} \norm{\zeta}_{s} 
{\leq}
C_s | \fa |_{2s, L^{\infty}, 7} \norm{ \wt Y(t)}_{s}\norm{\zeta}_{s} 
\stackrel{\eqref{a.sem}, \eqref{est.706}}{\leq}
C_s  \e^{5-\theta} \norm{\zeta}_{s} \ . 
\label{IV.est}
\end{align}
Then \eqref{dAt} follows from \eqref{1259}, 
 \eqref{II.est},  \eqref{III.est} and  \eqref{IV.est}, choosing $\tR$ as in \eqref{tRbound}, and using that $\zeta(t)$, being long-time controlled, fulfills $\norm{\zeta(t)}_s \leq \e^{-\theta}$ and provided $ \e$ is sufficiently small.
 \end{proof}

We are finally able to prove Proposition \ref{prop:instab}. 
 
 \begin{proof}[Proof of Proposition \ref{prop:instab}]
Let $z(t)\in H^s(\T, \C)$ be  a solution of system  \eqref{eq.ztop}--\eqref{eq.zperp}  
whose initial datum $z(0)\in H^s(\T;\C)$ is well-prepared 
with parameters $(s, \theta, \nu_0,  \e)$ and which 
 is long-time controlled with parameters 
$(s, \theta, T_\star, \e)$, $T_\star$ in \eqref{T0}. 
By Lemma \ref{lem:dAdt}, provided $\e>0$  is sufficiently small, the functional $\cA(t)$  in \eqref{cA} fulfills the inequality \eqref{dAt}. Integrating in time, we get 
\begin{equation}
\label{A(t).lower}
\cA(t) \geq e^{ \nu_0 \e^2 t} \, \left( \cA(0) - C \e^{3-2\theta} \right) + C \e^{3-{2\theta}}  \ , \qquad 0\leq t \leq \frac{T_0}{\e^2}\log\left(\frac{1}{\e} \right) \ .
\end{equation}
A sufficient  condition for $\cA(t)$ to grow in time is that $\cA(0) >  C \e^{3-2\theta}$; this condition is  fulfilled for well-prepared initial data provided $\e$ is sufficiently small; indeed by \eqref{tRbound}
$$\cA(0) = \la \fA \zeta(0), \zeta(0) \ra = \la \fA  z^\perp(0),  z^\perp(0) \ra  >
\e^{3-3 \theta} >
 2 C \e^{3-2\theta}  \ . $$
{Then, using also the penultimate of the above inequalities,  $\cA(0) - C \e^{3-2\theta} >\e^{3-3 \theta}-  C \e^{3-2\theta} > \frac12\e^{3-3\theta}$, and we get from \eqref{A(t).lower}, the definition \eqref{cA} and the continuity Theorem \ref{thm:contS} 
\begin{equation}
\begin{aligned}
\frac{1}{2} \e^{3-3\theta} \, e^{{\nu_0}\e^2 t} \leq \cA(t)   \leq  \norm{\fA_{s,\tR}\zeta(t)}_{-s} \norm{\zeta(t)}_s 
\stackrel{\eqref{fA}, \eqref{a.sem}}{\leq} C_s \e^2 \norm{\zeta(t)}_s^2 \stackrel{\eqref{zetaez}}{\leq} C_s \e^2 \norm{z(t)}_s^2 
\end{aligned}
\end{equation}
for some $C_s >1$.}
Hence, when $t = \frac{T_0}{\e^2} \log\left(\frac{1}{\e}\right)$, eventually shrinking $\e$, one gets 
\begin{equation}
\label{}
 \norm{z(t)}_s^2 \geq 
\frac{1}{2 C_s} \e^{1-3\theta} \, e^{ \nu_0 T_0 \log(\e^{-1})} \stackrel{\eqref{T0}}{\geq}
 \frac{1}{\e^{2\theta}}
\end{equation}
yielding \eqref{zeta.grows}.
 \end{proof}

 \subsection{Conclusion and proof of Theorem \ref{thm:main}}
 Fix  $s, \theta$ as in \eqref{parameters}.
 We give now an example of a  well-prepared initial data.  
 \begin{lemma}\label{lem:z0.wp}
Let $\rho_1, \rho_{-1}>0$ in the non-empty region limited by
\be\label{realz0}
\rho_1^2 + \rho_{-1}^2 \leq 1 \ , \qquad 
 \nu_0 := 2 \rho_1 \rho_{-1}  - \frac{\rho_1^2 + \rho_{-1}^2}{2}  > 0    \ .
\ee
There exists $\e_0 >0$ and, for any  $\e \in (0, \e_0)$, an interval $I(\e)$ such that  the initial datum 
 \be\label{z0}
 z(0):= \e \rho_1 e^{\im x }  + \e \rho_{-1} e^{-\im x} + \rho\, e^{\im 3\tN x} + \im \rho \,  e^{\im (3\tN + 2) x} \ , \quad \tN := \lceil \tR  \rceil
 \ee
with  $\tR =   \e^{-(3+\theta)/(1-\alpha)}  $  and $\rho \in I(\e)$, fulfills:
\begin{itemize}
	\item {\bf well-prepared:} $z(0)$ in \eqref{z0} is a well-prepared initial datum with parameters $(s, \theta, \nu_0, \e)$ (according to Definition  \ref{B});
	\item  {\bf $L^2$-smallness:} the bounds in  \eqref{app.ass1}  holds true; 
	\item {\bf $H^s$-smallness:} $z(0)$ fulfills the high norm bound 
	\be\label{z0.s}
	\norm{z(0)}_s \leq \e^\theta  \ . 
	\ee
\end{itemize}  
 \end{lemma}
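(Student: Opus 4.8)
This is a statement purely about the initial datum, so no dynamical input is needed. Decompose $z(0)=z^\top(0)+z^\perp(0)$ as in \eqref{ztp}: then $z_1(0)=\e\rho_1$ and $z_{-1}(0)=\e\rho_{-1}$ are real and positive, while $z^\perp(0)$ is supported on the two Fourier modes $3\tN$ and $3\tN+2$, with coefficients $\rho$ and $\im\rho$ respectively. The $L^2$- and $H^s$-smallness are then immediate Fourier-side computations: $\norm{z^\top(0)}_{L^2}^2=\e^2(\rho_1^2+\rho_{-1}^2)\leq\e^2$ by \eqref{realz0}, and $\norm{z^\perp(0)}_{L^2}^2=2\rho^2$, so \eqref{app.ass1} holds whenever $\rho\leq\e^3/\sqrt2$; moreover $\norm{z(0)}_s^2=\e^2(\rho_1^2+\rho_{-1}^2)+\rho^2\big((3\tN)^{2s}+(3\tN+2)^{2s}\big)$, and since for $\e$ small $\tR=\e^{-(3+\theta)/(1-\alpha)}$ is large, so that $3\tN+2\leq 4\tR$, and $\e^2(\rho_1^2+\rho_{-1}^2)\leq\tfrac12\e^{2\theta}$, the bound \eqref{z0.s} follows provided $\rho\lesssim_s\e^{\theta}\tR^{-s}=\e^{\theta+s(3+\theta)/(1-\alpha)}$. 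Condition (B1) of Definition \ref{B} holds with equality, since $2|z_1(0)||z_{-1}(0)|-\tfrac12(|z_1(0)|^2+|z_{-1}(0)|^2)=\e^2\nu_0$, and $\nu_0\in(0,\tfrac12)$ because $2\rho_1\rho_{-1}\leq\rho_1^2+\rho_{-1}^2$ and \eqref{realz0}.

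The substantial part is condition (B2), i.e. the lower bound for $\la\fA_{s,\tR}z^\perp(0),z^\perp(0)\ra$. Since $z_{\pm1}(0)$ are real and positive, the function $\ta$ in \eqref{fA} equals $\ta(x)=-\e^2\rho_1\rho_{-1}\sin(2x)$, whose only nonzero Fourier modes are at $\pm2$. Hence, quantizing the concrete symbol $\fa(x,\xi)=\ta(x)|\xi|^{2s}\eta_\tR^2(\xi)\in\Gamma^{2s}_{W^{2,\infty}}$ via \eqref{BWnon}, the quadratic form $\la\fA_{s,\tR}z^\perp(0),z^\perp(0)\ra$ only sees the matrix entries of $\fA_{s,\tR}$ between the modes $3\tN$ and $3\tN+2$, and on those entries both cut-offs are trivial: $|k-j|=2\ll\delta_0\langle\tfrac{j+k}{2}\rangle$ because $\tfrac{j+k}{2}\sim 3\tN\geq3\tR$, and $\eta_\tR(\tfrac{j+k}{2})=1$ because $\tfrac{j+k}{2}\geq3\tR$. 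A short explicit computation then gives
\begin{equation}
\la\fA_{s,\tR}z^\perp(0),z^\perp(0)\ra=\e^2\rho_1\rho_{-1}\,\rho^2\,(3\tN+1)^{2s}>0 \ ;
\end{equation}
the relative phase $\im$ between the two modes of $z^\perp(0)$ is precisely what makes the two off-diagonal contributions add up constructively — with the opposite sign they would cancel exactly, reflecting the fact that $\im e^{\im(3\tN+2)x}$ points along the escape direction of the Mourre symbol in \eqref{comm.av0}. Using $3\tN+1\geq3\tR$, condition \eqref{tRbound} is therefore implied by $\rho^2>\e^{1-3\theta}\big(\rho_1\rho_{-1}(3\tR)^{2s}\big)^{-1}$, i.e. $\rho\gtrsim_s\e^{(1-3\theta)/2+s(3+\theta)/(1-\alpha)}$.

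It remains to take $I(\e)$ to be the interval between this last lower bound and the smaller of the two upper bounds $\rho\leq\e^3/\sqrt2$ and $\rho\lesssim_s\e^{\theta+s(3+\theta)/(1-\alpha)}$ above. This interval is nonempty for $\e$ small because the lower-bound exponent strictly dominates both upper-bound exponents: it exceeds $\theta+s(3+\theta)/(1-\alpha)$ exactly when $(1-3\theta)/2>\theta$, i.e. $\theta<\tfrac15$, which holds by \eqref{parameters}; and it exceeds $3$ because $(1-3\theta)/2>0$ while $s(3+\theta)/(1-\alpha)>3$, since $s>3\fs_0>\tfrac92$ and $1/(1-\alpha)>1$. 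Fixing $\e_0$ small enough that, for all $\e\in(0,\e_0)$, this interval is nonempty, $\tR$ is large enough for the trivialisation of the cut-offs and for $3\tN+2\leq4\tR$, and the absorption $\e^2(\rho_1^2+\rho_{-1}^2)\leq\tfrac12\e^{2\theta}$ holds, every $\rho\in I(\e)$ produces an initial datum with all three claimed properties. The only genuinely non-routine step is the explicit evaluation of $\la\fA_{s,\tR}z^\perp(0),z^\perp(0)\ra$ together with the recognition that, thanks to the $\im$-phase, it is a strictly positive multiple of $\rho^2$; the rest is exponent bookkeeping in $\e$.
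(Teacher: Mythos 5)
Your proof is correct and follows essentially the same route as the paper's: compute $\la\fA z^\perp(0),z^\perp(0)\ra$ explicitly on the two high Fourier modes, observe that the cut-offs $\chi$ and $\eta_\tR$ are identically $1$ there since the frequencies lie well above $\tR$, obtain $\e^2\rho_1\rho_{-1}\rho^2(3\tN+1)^{2s}$, and then do the same exponent bookkeeping to show the three constraints on $\rho$ are compatible using $\theta<\tfrac15$ and $s>\tfrac92$. The only genuine addition over the paper is your observation that the relative $\im$-phase between the modes $3\tN$ and $3\tN+2$ is what makes the two off-diagonal contributions add rather than cancel (since $\hat\ta_2+\hat\ta_{-2}=0$), which is a worthwhile remark that the paper leaves implicit.
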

 \begin{proof}
We first prove that each of the three claimed properties gives a restriction on the choice of 
 $ \rho$. Then we prove that such conditions are compatible.

  {\bf \underline{Well-prepared}:}
 Condition (B1) follows immediately from \eqref{realz0}. We now check condition (B2).
 Using the definition of paradifferential operator in 
\eqref{BW}, the form of $\fA$ in \eqref{fA} and of $z(0)$ in \eqref{z0},  we get 
 \begin{align*}
 \la \fA \Pi^\perp z(0), \Pi^\perp z(0) \ra & = 
 \sum_{k } {\e^2 \rho_{1} \rho_{-1}} \, \abs{k+1}^{2s} \, \eta_\tR^2\big(k+1\big) \, \chi_2 \big(1, - 1, k+1 \big) \,  \Im \big(\bar z_{k}^\perp(0) \, z_{k+2}^\perp(0)\big) \\
 & =  {\e^2 \rho_{1} \rho_{-1}} \, \abs{3\tN+1}^{2s} \, \underbrace{\eta_\tR^2\big(3\tN+1\big)}_{=1} \, \underbrace{\chi_2 \big(1, -1, 3\tN+1 \big)}_{=1} \,  \rho^2  = 
 \e^2 \rho_{1} \rho_{-1} \abs{3\tN+1}^{2s} \,  \rho^2 \ .
 \end{align*}
 Then \eqref{tRbound} is fulfilled provided $\rho_{1} \rho_{-1} 3^{2s} \tR^{2s} \rho^2 \geq \e^{1-3\theta}$, which using \eqref{tRbound} gives 
\be \label{prima}
\rho \geq\,  \frac{\e^{\frac12-\frac32 \theta + s\frac{3+\theta}{1-\alpha}}}{3^s \sqrt{\rho_{1} \rho_{-1}}}  .
\ee
This proves that $z(0)$ is well prepared.

 {\bf \underline{$L^2$-smallness}:} The first condition in \eqref{app.ass1} is satisfied thanks to the first assumption in \eqref{realz0} and the second condition in \eqref{app.ass1} is satisfied provided that 
\be \label{seconda}
\rho \leq \frac{\e^3}{\sqrt{2}}.
\ee 
\indent {\bf \underline{$H^s$-smallness}:} The condition \eqref{z0.s} is satisfied provided that 
$$(\rho_1^2+\rho_{-1}^2)\e^2\leq \frac{\e^{2\theta}}{2} \quad \text{and} \quad     \rho^2 (3\tN+1)^{2s}+  \rho^2 (3\tN+3)^{2s}\leq \frac{\e^{2\theta}}{2}.$$ 
The first condition follows automatically from \eqref{realz0} and taking $\e$ sufficiently small, while the second one, using $\tN \leq \tR +1$ and \eqref{tRbound}, is fulfilled for example for 
\be \label{terzera}
 \rho \leq \frac{\e^{\theta+s\frac{3+\theta}{1-\alpha}}}{6^{s} 2 }.
 \ee
 Note also that, since $ s\geq 3\fs_0 \geq 1$, for $\e$ small enough the second condition \eqref{seconda} is less restrictive than the third one \eqref{terzera}. 
 Note that, provided  $\e$ is small enough and using   $\theta<\frac15$,  
 conditions \eqref{prima} and \eqref{terzera} are compatible.  Then,  taking 
 $$
 \rho \in I(\e):=\Big( \frac{1}{3^s \sqrt{\rho_{1} \rho_{-1}}} \e^{\frac12-\frac32 \theta + s\frac{3+\theta}{1-\alpha} } ,\,  \frac{\e^{\theta+s\frac{3+\theta}{1-\alpha}}}{6^{s} 2 } \Big),
 $$
the datum  $z(0)$ satisfies all the claimed conditions.
 \end{proof}

 We now show that any solution of system \eqref{Z.eq} with a well prepared initial datum as in 
 Lemma \ref{lem:z0.wp} undergoes Sobolev norm explosion. Precisely we have:
 \begin{lemma}\label{lem:cresce}
  Fix  $s, \theta$ as in \eqref{parameters}. 
 There exists $\e_2 >0$ such that, provided $\e \in (0, \e_2)$ the following holds true. Let $z(0) \in H^s(\T, \C)$ as in  Lemma \ref{lem:z0.wp} and so well-prepared with parameters $(s, \theta, \nu_0, \e)$, for some $\nu_0 \in (0, \tfrac12)$.  
 Consider the solution $z(t)$ of system  \eqref{eq.ztop}--\eqref{eq.zperp}  with initial datum $z(0)$.
 Denote by  
 \be\label{T1}
0<T_1:= T_1(\e; z(0)) := \inf\left\lbrace t \geq 0 \colon \ \ \norm{z(t)}_s \geq \e^{-\theta} \right\rbrace \ .
 \ee
 Then $T_1$ is finite and bounded by $T_1 \leq \frac{T_0}{\e^2}\log\left(\frac{1}{\e} \right)$, $T_0 = \nu_0^{-1}$. 
 Moreover one has
 \be\label{cresce}
\sup_{0 \leq t \leq T_1}  \norm{z(t)}_{\fs_0} \leq 3\e \ , 
 \quad  \norm{z(0)}_s \leq \e^\theta \ , \quad
\norm{z(T_1)}_s \geq \e^{-\theta} \ .
\ee
 \end{lemma}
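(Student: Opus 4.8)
The strategy is to combine the instability criterion of Proposition \ref{prop:instab} with the bootstrap machinery of Lemma \ref{lem:boot}, using $z(0)$ from Lemma \ref{lem:z0.wp} as the well-prepared datum. First I would invoke local well-posedness: since $z(0) \in H^s(\T,\C)$ with $s > 3\fs_0 > \frac32$, the solution $z(t)$ of \eqref{eq.ztop}--\eqref{eq.zperp} exists locally, and by Lemma \ref{lem:z0.wp} it satisfies $\norm{z(0)}_s \leq \e^\theta < \e^{-\theta}$, so the set $\{t \geq 0 : \norm{z(t)}_s \geq \e^{-\theta}\}$ does not contain $0$; hence $T_1$ as defined in \eqref{T1} is strictly positive (possibly $+\infty$ a priori). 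On the time interval $[0, \min(T_1, \frac{T_0}{\e^2}\log(\e^{-1})))$, by the very definition of $T_1$ as an infimum, the a-priori bound $\sup_t \norm{z(t)}_s \leq \e^{-\theta}$ holds, so $z(t)$ is long-time controlled with parameters $(s, \theta, T_\star, \e)$ for any $T_\star$ in this range (condition $(A1)$ follows from the $L^2$-smallness in Lemma \ref{lem:z0.wp}, condition $(A2)$ from the definition of $T_1$).

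The key step is to rule out $T_1 > \frac{T_0}{\e^2}\log(\e^{-1})$ by contradiction. Suppose $T_1 > \frac{T_0}{\e^2}\log(\e^{-1})$, with $T_0 = \nu_0^{-1}$ as in \eqref{T0}. Then $z(t)$ is long-time controlled on $[0, T_\star]$ with $T_\star = \frac{T_0}{\e^2}\log(\e^{-1})$, and its initial datum is well-prepared with parameters $(s, \theta, \nu_0, \e)$. Applying Proposition \ref{prop:instab} (with $\e$ small enough, $\e < \e_1(s,\theta,\nu_0)$), we obtain $\sup_{|t| \leq T_\star} \norm{z(t)}_s \geq \e^{-\theta}$. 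But this contradicts the fact that $\norm{z(t)}_s < \e^{-\theta}$ strictly for all $t < T_1$, since $T_\star < T_1$. Hence $T_1 \leq \frac{T_0}{\e^2}\log(\e^{-1})$, proving $T_1$ is finite with the claimed bound. The boundary bound $\norm{z(T_1)}_s \geq \e^{-\theta}$ then follows by continuity of $t \mapsto \norm{z(t)}_s$ (which holds as long as the solution exists in $H^s$) and the definition of $T_1$ as an infimum: $\norm{z(T_1)}_s = \e^{-\theta}$ exactly, hence $\geq \e^{-\theta}$.

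For the low-norm bound, on $[0, T_1]$ the solution is long-time controlled with parameters $(s, \theta, T_1, \e)$ and $T_1 \leq \frac{T_0}{\e^2}\log(\e^{-1})$ satisfies \eqref{Tstar.T0} with this $T_0$; hence by the bootstrap Lemma \ref{lem:boot} (with $\e < \e_\star(\theta, T_0)$), estimate \eqref{z.s0} gives $\sup_{0 \leq t \leq T_1} \norm{z(t)}_{\fs_0} \leq 3\e$, which is the first claim in \eqref{cresce}. The middle claim $\norm{z(0)}_s \leq \e^\theta$ is just \eqref{z0.s}. Taking $\e_2 := \min(\e_0, \e_1, \e_\star)$ (with $\e_0$ from Lemma \ref{lem:z0.wp}) ensures all the above applies.

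\textbf{Main obstacle.} The only delicate point is the interplay between local existence in $H^s$ and the definition of $T_1$ as a \emph{supremal} time of control: one must ensure that the solution actually extends in $H^s$ up to (and slightly past) $T_1$, so that $t \mapsto \norm{z(t)}_s$ is continuous there and the infimum is attained as an equality $\norm{z(T_1)}_s = \e^{-\theta}$. This is a standard continuation argument — as long as $\norm{z(t)}_s$ stays bounded (below $\e^{-\theta}$), the local theory extends the solution — but it must be stated carefully so that $T_1$ is not an artifact of blow-up strictly before the $H^s$-norm reaches $\e^{-\theta}$. Since the bootstrap Lemma \ref{lem:boot} controls the low norm $\norm{z(t)}_{\fs_0}$ on the whole range where the a-priori bound $\norm{z(t)}_s \leq \e^{-\theta}$ holds, and the $H^s$-norm is controlled by the a-priori bound itself, no blow-up can occur strictly before $T_1$, so the continuation is legitimate.
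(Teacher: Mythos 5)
Your proof is correct and follows essentially the same route as the paper: set up the long-time controlled structure on $[0,T_1]$, obtain the bound $T_1 \leq \frac{T_0}{\e^2}\log(\e^{-1})$ by contradiction via Proposition \ref{prop:instab}, and apply the bootstrap Lemma \ref{lem:boot} to get the low-norm bound. Your extra remarks about continuation in $H^s$ and continuity of $t \mapsto \norm{z(t)}_s$ are sound and slightly more explicit than the paper's terse treatment of the last two inequalities in \eqref{cresce}; the only small omission is that the paper also includes $\fr$ in the definition $\e_2 := \min(\e_\star, \e_0, \e_1, \fr)$ to guarantee the solution remains in the ball where the normal form and local theory apply.
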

 \begin{proof}
 Define $\e_2:= \min(\e_\star, \e_0, \e_1,\fr)$ with
$\e_\star$ of Lemma \ref{lem:boot},  
  $\e_0$ of Lemma \ref{lem:z0.wp} and $\e_1$ of Proposition \ref{prop:instab}.
First note that the solution  $z(t)$ is long-time controlled with parameters $\big(s, \theta, T_1, \e \big)$ (see Definition \ref{A}); indeed condition (A1)  holds true in view of the $L^2$-smallness of Lemma \ref{lem:z0.wp}, whereas condition (A2)  holds true with $T_\star  \leadsto  T_1$ by the minimality of $T_1$. \\
We now show that  $T_1$ is finite and  bounded by $\frac{T_0}{\e^2}\log\left(\frac{1}{\e} \right)$.
 Assume by contradiction that $T_1 > {T_0}{\e^{-2}}\log\left(\e^{-1} \right)$. Then, by the very definition of $T_1$, 
 $$
\sup_{0 \leq t \leq {T_0}{\e^{-2}}\log\left(\e^{-1} \right) } \norm{z(t)}_s \leq \e^{-\theta} \ ,
 $$
 namely the solution $z(t)$ is long-time controlled also with parameters $\big(s, \theta, \frac{T_0}{\e^2}\log\left(\frac{1}{\e} \right), \e \big)$. 
 Then, since by Lemma \ref{lem:z0.wp} the initial data $z(0)$ is well prepared, Proposition \ref{prop:instab} applies and therefore
 $$
 \sup_{0 \leq t \leq  {T_0}{\e^{-2}}\log\left(\e^{-1} \right) } \norm{z(t)}_s \geq \e^{-\theta} \ ,
 $$
 contradicting the minimality of $T_1$. This proves that $T_1 \leq \frac{T_0}{\e^2}\log\left(\frac{1}{\e} \right)$. \\
To control the low norm $\norm{z(t)}_{\fs_0}$, we apply the 
 bootstrap lemma \ref{lem:boot}  with the parameter $T_\star =T_1$ that we have just proved satisfy the required condition \eqref{Tstar.T0}.
 The last two inequalities of \eqref{cresce} follow by  \eqref{z0.s} and \eqref{T1}.
 \end{proof}

 We conclude with:
 \begin{proof}[Proof of Theorem \ref{thm:main}]
 Recall that the variables $u(t)$ and $z(t)$ are related by  the admissible transformation $Z(t) =\cF(U(t))\equiv \bF(U(t))U(t)$ in \eqref{Z=FU}. 
  By Lemma \ref{loc.inv}, the map $Z=\cF(U)$ is locally invertible provided  $\norm{Z}_{\fs_0}\leq r'$ is   sufficiently small, 
 and has the form
 $\cF^{-1}(Z)= \bG(Z)Z$ for some $\bG(Z)$ fulfilling the bound in \eqref{lin.est.F}.

  So consider  $Z(0) = \vect{z(0)}{\bar z(0)}$ with $z(0)$ as in Lemma \ref{lem:z0.wp} and therefore fulfilling  $\norm{Z(0)}_{\fs_0} \leq \e^\theta \leq \fr$. We  define
 $$
 U(0) := \cF^{-1}(Z(0))=\bG(Z(0))Z(0) \ .
  $$
We take  $U(0)$ as the initial data for equation \eqref{eq:main};
by \eqref{stima.inv.adm}, its Sobolev norm 
 \be
 \norm{U(0)}_s \leq C_s \norm{Z(0)}_s \stackrel{\eqref{cresce}}{\leq} C_s \e^\theta   \ . 
 \ee
 Consider now the solution $U(t)$ of \eqref{eq:main} with initial data $U(0)$. 
 By Theorem \ref{thm:nf},  $Z(t) =\cF(U(t))$  is the solution of equation \eqref{Z.eq} with initial datum $Z(0)$ of Lemma \ref{lem:z0.wp}; consequently,  in view of Lemma \ref{lem:sys} and Lemma \ref{lem:cresce}, $z(t)$ has a small $H^{\fs_0}$-norm  for all times $0 \leq t \leq T_1$, but large $H^s$-norm  at time $T_1$. 
 We deduce that 
 $ U(t)  = \cF^{-1}(Z(t))$ fulfills the bound
 $$
 \norm{U(t)}_{\fs_0} \leq C_{\fs_0} \norm{Z(t)}_{\fs_0} \leq C_{\fs_0} \e < \fr , \quad \forall  0 \leq t \leq T_1 \ .  
 $$ 
At time $T_1$, 
 we bound from below 
 the $H^s$-norm of $U(T_1)$  using the identity $Z(T_1) = \cF(U(T_1))$, the fact that 
 $\norm{U(T_1)}_{\fs_0} \leq \fr $ and 
   estimate  \eqref{lin.est.F}, to get 
 \be
 \norm{U(T_1)}_s \geq  C_s^{-1} \norm{Z(T_1)}_s  \stackrel{\eqref{cresce}}{\geq }  C_s^{-1}  \e^{-\theta}   \ .
 \ee
Given arbitrary $\delta \in (0,1)$ and $K \geq  1$, shrink  $\e$ to conclude the proof of  Theorem \ref{thm:main}.
 \end{proof}

\smallskip

\appendix 
 
\section{High frequency paradifferential calculus}
In this section we consider paradifferential operators with symbols supported only on high frequencies and prove a commutator estimate and a Garding inequality keeping track of the size of the support of the symbols.

 \begin{lemma}\label{lem:cutoff}
 Let $N \in \N_0$, $m \in \R$ and $\tR\geq 1$. If  $a \in \Gamma^m_{W^{N, \infty}}$, then 
 $$
 a_\tR(x, \xi):= a(x, \xi)\,  \eta_\tR(\xi) , \quad \eta_\tR \mbox{ in }  \eqref{etaR}$$ is a symbol in $\Gamma^{m+\nu}_{W^{N, \infty}}$ for any $\nu \geq 0$ with quantitative bound
 \begin{equation}\label{a.etaR.sem}
| a_\tR |_{m + \nu, W^{N, \infty}, n} \leq C_{n}\,  \tR^{-\nu} \, | a |_{m , W^{N, \infty}, n} \quad \text{for any } n\in \N_0 . 
\end{equation}
In addition, if $N \geq 2$ and  $b \in \Gamma^{m'}_{W^{2, \infty}}$, $m' \in \R$, one has the commutator estimate
\begin{equation}\label{est.aR.comp}
\norm{ [ \Opbw{ a_\tR}, \Opbw{b} ] u }_{s - m - m' - \nu +1 } \leq C 
\tR^{- \nu} \,  | a |_{m , W^{2, \infty}, 7}  \, | b |_{m' , W^{2, \infty}, 7} \, \norm{u}_s  \ .
\end{equation}
 \end{lemma}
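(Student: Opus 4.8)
\textbf{Proof plan for Lemma \ref{lem:cutoff}.}
The plan is to verify the two claims separately, starting from the elementary symbol bound \eqref{a.etaR.sem} and then feeding it into the composition theorem to obtain the commutator estimate \eqref{est.aR.comp}.

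First I would prove \eqref{a.etaR.sem}. The key observation is that the cut-off $\eta_\tR(\xi)$ is supported in $\{ |\xi| \geq \tR\}$, where $\langle \xi \rangle \sim |\xi| \geq \tR$, so on the support one has $\langle \xi \rangle^{-\nu} \leq C_\nu \tR^{-\nu}$ for any $\nu \geq 0$. Combined with the derivative bounds on $\eta$ (which by the chain rule satisfy $|\partial_\xi^\ell \eta_\tR(\xi)| \leq C_\ell \tR^{-\ell} \leq C_\ell \langle \xi\rangle^{-\ell}$ on the support, uniformly in $\tR \geq 1$), the Leibniz rule applied to $\partial_\xi^\beta (a(\cdot,\xi)\eta_\tR(\xi))$ gives, for each $\beta \leq n$,
\begin{equation*}
\| \partial_\xi^\beta a_\tR(\cdot, \xi)\|_{W^{N,\infty}} \leq \sum_{\beta_1 + \beta_2 = \beta} \binom{\beta}{\beta_1} \| \partial_\xi^{\beta_1} a(\cdot, \xi)\|_{W^{N,\infty}} |\partial_\xi^{\beta_2}\eta_\tR(\xi)| \leq C_n | a|_{m, W^{N,\infty}, n} \, \langle \xi\rangle^{m - |\beta|} \mathbf{1}_{|\xi|\geq \tR},
\end{equation*}
and then bounding $\langle \xi \rangle^{m-|\beta|} = \langle \xi\rangle^{m+\nu - |\beta|}\langle\xi\rangle^{-\nu} \leq \tR^{-\nu}\langle\xi\rangle^{m+\nu-|\beta|}$ on the support yields \eqref{a.etaR.sem}. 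This step is routine; no obstacle is expected.

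For \eqref{est.aR.comp}, I would first rewrite $a_\tR$ as a symbol of the higher order $m + \nu$ via the bound just proved: $| a_\tR|_{m+\nu, W^{2,\infty}, n} \leq C_n \tR^{-\nu} |a|_{m, W^{2,\infty}, n}$. Then I apply Proposition \ref{teoremadicomposizione}$(i)$ (with $\varrho = 1$) twice, to $\Opbw{a_\tR}\Opbw{b}$ and to $\Opbw{b}\Opbw{a_\tR}$, so that
\begin{equation*}
[\Opbw{a_\tR}, \Opbw{b}] = \Opbw{a_\tR \#_1 b - b \#_1 a_\tR} + R(a_\tR, b) - R(b, a_\tR).
\end{equation*}
Since $a_\tR \#_1 b - b\#_1 a_\tR$ equals $\tfrac{1}{\ii}\{a_\tR, b\}$ up to a symbol of order $(m+\nu) + m' - 2$ (see the bullet after Definition \ref{def:as.ex}), and the Poisson bracket $\{a_\tR, b\}$ lies in $\Gamma^{(m+\nu)+m'-1}_{W^{1,\infty}}$ with seminorm controlled by $|a_\tR|_{m+\nu, W^{2,\infty}, 8}| b|_{m', W^{2,\infty}, 8}$ via \eqref{poi.est}, the principal part of the commutator maps $H^s \to H^{s - (m+\nu) - m' + 1}$ with the stated constant. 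The remainders $R(a_\tR, b)$, $R(b, a_\tR)$ are bounded via \eqref{comp020} with $N(1)$ some fixed integer $\leq 7$, gaining one derivative relative to order $(m+\nu) + m'$, hence also mapping $H^s \to H^{s - (m+\nu) - m' + 1}$ with norm $\lesssim |a_\tR|_{m+\nu, W^{1,\infty}, 7}|b|_{m', W^{1,\infty}, 7}$; absorbing the $\tR^{-\nu}$ factor from $|a_\tR|_{m+\nu, \cdot} \lesssim \tR^{-\nu}|a|_{m, \cdot}$ gives the claim. The only point requiring a little care is bookkeeping of the number of seminorms needed ($7$ in the statement): one should check that treating $a_\tR$ as an order-$(m+\nu)$ symbol only costs finitely many extra $\xi$-derivatives of $\eta_\tR$, all of which are uniformly bounded in $\tR \geq 1$, so no blow-up in the constants occurs. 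This index-counting is the main (mild) obstacle, but it is purely bookkeeping rather than conceptual.
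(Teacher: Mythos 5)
Your plan reproduces the paper's argument almost verbatim: verify \eqref{a.etaR.sem} via Leibniz plus the support condition $|\xi|\geq\tR$ for the cut-off, then decompose the commutator into a Poisson-bracket principal part and a composition remainder, absorbing the $\tR^{-\nu}$ by regarding $a_\tR$ as an order-$(m+\nu)$ symbol.

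The one genuine point of divergence is the choice of expansion order in Proposition \ref{teoremadicomposizione}$(i)$: you take $\varrho=1$, the paper takes $\varrho=2$. This is not an arbitrary choice on the paper's side. The Proposition only records the explicit value $N(2)=7$, and the hypothesis $N\geq 2$ in the statement of Lemma \ref{lem:cutoff} is tailored precisely so that $a_\tR\in\Gamma^{m+\nu}_{W^{2,\infty}}$ and the $\varrho=2$ remainder estimate \eqref{comp020} can be invoked with the stated seminorm index $7$. Your version needs $N(1)\leq 7$, which you acknowledge but which the paper never states; strictly speaking this leaves a small gap in tracking the index $7$ that appears in \eqref{est.aR.comp}. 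Using $\varrho=2$ costs nothing: the $k=2$ terms in $a_\tR\#_2 b - b\#_2 a_\tR$ cancel by the symmetry of the Weyl expansion, leaving exactly the Poisson bracket, while the remainder $R^{-2}(a_\tR,b)$ then gains two derivatives rather than one, which is more than enough for the target space $H^{s-m-m'-\nu+1}$. Switching to $\varrho=2$ removes the only soft spot in your argument; otherwise it matches the paper's.
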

 \begin{proof}
For any $\alpha, \beta \in \N_0$, $\alpha \leq N$, $\beta \leq n$, we have 
 \begin{align*}
\abs{( \pa_x^\alpha  \pa_\xi^\beta a_\tR(x, \xi) }
&  \lesssim  \sum_{\beta_1 + \beta_2 = \beta} 
 \abs{\pa_x^\alpha \pa_\xi^{\beta_1} a(x, \xi)}  \, \abs{\pa_\xi^{\beta_2} \eta_\tR (\xi)} \\
 & \lesssim
 \sum_{\beta_1 + \beta_2 = \beta}  |a|_{m, W^{N,\infty}, n}\la \xi \ra^{m - \beta_1} \frac{1}{\tR^{\beta_2}}  \abs{\eta^{(\beta_2)}\big(\frac{\xi}{\tR}\big) } \\
 & \lesssim |a|_{m, W^{N,\infty}, n}\sum_{\beta_1 + \beta_2 = \beta} \la \xi \ra^{m - \beta_1- \beta_2 + \nu } \sup_{\xi} \abs{\la \xi \ra^{-\nu}
    \la \frac{\xi}{\tR} \ra^{\beta_2} \eta^{(\beta_2)}\big(\frac{\xi}{\tR}\big) }  \\
 & \lesssim \frac{1}{\tR^{\nu}} |a|_{m, W^{N,\infty}, n}  \, \la \xi \ra^{m - \beta + \nu}\ ,
 \end{align*}
 where in the last step we used that the function $ \la \frac{\xi}{\tR} \ra^{\beta_2} \eta^{(\beta_2)}\big(\frac{\xi}{\tR}\big)$ is uniformly bounded on $\R$ and has support on $\xi \geq \tR$.
 
 We prove now \eqref{est.aR.comp}.  By {Proposition \ref{teoremadicomposizione}} with $\varrho =2$ we have
 $$
 [ \Opbw{ a_\tR}, \Opbw{b} ]  = \Opbw{ \{a_\tR, b \} } + R^{-2}(a_\tR, b).
 $$
 We now bound both terms in the above equation regarding $a_\tR$ as a symbol in $\Gamma^{m+\nu}_{W^{N,\infty}}$ and 
  $ \{a_\tR, b \}$ as a symbol in $ \Gamma^{m + m'+\nu-1}_{W^{N-1,\infty}}$.  By \eqref{cont00} and \eqref{poi.est}, we get 
\begin{align}
\notag
\norm{\Opbw{ \{a_\tR, b \} } u }_{s- m - m' -\nu +1}
& \lesssim \abs{ \{a_\tR, b \}}_{m+m'+\nu-1, L^\infty, 4} \ \norm{u}_s \\
& \lesssim
|  a_\tR|_{m+ \nu , W^{1,\infty}, 5} | b |_{ m', W^{1,\infty}, 5}   \, \norm{u}_s \\
& \stackrel{\eqref{a.etaR.sem}}{\lesssim}
\tR^{- \nu} \,  | a |_{m , W^{1,\infty}, 5}  \, | b |_{m' , W^{1,\infty}, 5} \, \norm{u}_s  \ .
\label{est.aR.comp1}
\end{align}
Next we estimate the norm of $R^{-2}(a_\tR, b)$ using \eqref{comp020}:
\begin{align}
\notag
\norm{R^{-2}(a_\tR, b)u }_{s-m - m' -\nu +2}
&  \lesssim
| a_\tR |_{m + \nu, W^{2,\infty}, 7}  \, | b|_{m', W^{2,\infty}, 7}  \, \norm{u}_s \\
\label{est.aR.comp2} 
&\stackrel{\eqref{a.etaR.sem}}{\lesssim}
\tR^{- \nu} \,  | a |_{m , W^{2,\infty}, 7}  \, | b |_{m' , W^{2,\infty}, 7} \, \norm{u}_s 
\end{align}
In conclusion \eqref{est.aR.comp} follows from \eqref{est.aR.comp1}, \eqref{est.aR.comp2}.
 \end{proof}
 
In the following we shall  use a  well-known cancellation which is a direct consequence of Proposition \ref{teoremadicomposizione}: if 
$a\in \Gamma^{m}_{W^{2,\infty}}$,  $b\in \Gamma^{m'}_{W^{2,\infty}}$, with $m, m' \in \R$, then  
\begin{equation}\label{autoag}
\Opbw{b}\circ \Opbw{a}\circ \Opbw{ b}= \Opbw{ab^2} + R^{-2}(a,b),  
\end{equation}
where $R^{-2}(a,b)$  is a bounded operator  $H^s \to  H^{s-(m+2m')+2}$, $\, \forall s\in \R$, satisfying, for any 
$ u \in H^s  $,
\begin{equation}\label{restoR0}
\|R^{-2}(a,b)\|_{s-(m+m') + 2} \lesssim  |a|_{m,W^{2,\infty},8} \ 
|b|_{m',W^{2,\infty},8}^2
 \  \| u \|_s.
\end{equation}

 In the next lemma we prove a simplified version of the strong Garding inequality adapted to our setting. 
  \begin{lemma}[\bf Strong Garding's inequality]
\label{garding}
Let $\tR \geq 1$, $a(x) \in W^{3,\infty}$ and $a(x) \geq 0$. Let $\psi(\xi) \in \wt\Gamma^{m}_0$, $m > 0$,  a real valued Fourier multiplier  with $\textup{supp } \psi \subseteq  [\tR, +\infty)$. Then there is $C >0$ such that 
\begin{equation}
\label{gard1}
\la \Opbw{a(x) \psi^2(\xi)} u , u \ra \geq - C \frac{\norm{a}_{W^{3,\infty}}}{\tR^2} \norm{u}_{m}^2  \ .
\end{equation}
\end{lemma}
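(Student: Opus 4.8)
The plan is to prove the strong Gårding inequality \eqref{gard1} by the classical Fefferman--Phong-style argument adapted to this very simple setting where the symbol is a product $a(x)\psi^2(\xi)$ with $a\geq 0$. The key idea is to write $a(x)\psi^2(\xi)$ as essentially the square of the symbol $\sqrt{a(x)}\,\psi(\xi)$ and use the composition formula \eqref{autoag} to control the error. First I would set $b(x,\xi):=\sqrt{a(x)}\,\psi(\xi)$. Since $a\in W^{3,\infty}$ and $a\geq 0$, the function $\sqrt{a}$ is only $W^{1,\infty}$ in general (the square root of a nonnegative $C^2$ function is Lipschitz but not better), but this costs us only one derivative, which is why the hypothesis asks for $W^{3,\infty}$ rather than $W^{2,\infty}$: two derivatives are needed to apply the composition theorem, plus one is lost taking the square root, and the remainder involves seminorms of order up to $3$. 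Concretely $b(\cdot,\xi)\in W^{1,\infty}$ with $\|\sqrt a\|_{W^{1,\infty}}\lesssim \|a\|_{W^{2,\infty}}^{1/2}$ by the standard estimate $|\partial_x\sqrt a|\lesssim |\partial_x a|/\sqrt a \lesssim \|a\|_{W^{2,\infty}}^{1/2}$ (this uses $|\partial_x a|^2\lesssim a\,\|\partial^2_x a\|_{L^\infty}$, valid for nonnegative $C^2$ functions).

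The main computation is then to apply \eqref{autoag}--\eqref{restoR0} with $a\leadsto \psi^2(\xi)$ and $b\leadsto \sqrt{a(x)}$: we get
\be
\Opbw{\sqrt{a(x)}}\,\Opbw{\psi^2(\xi)}\,\Opbw{\sqrt{a(x)}} = \Opbw{a(x)\psi^2(\xi)} + R^{-2}
\ee
where $R^{-2}\colon H^m\to H^{-m+2}$ — wait, more precisely, regarding $\psi^2$ as a symbol of order $2m$ and $\sqrt a$ of order $0$, the remainder maps $H^s\to H^{s-2m+2}$. Therefore
\be
\la \Opbw{a(x)\psi^2(\xi)} u, u\ra = \la \Opbw{\psi^2(\xi)}\,\Opbw{\sqrt{a(x)}}u,\,\Opbw{\sqrt{a(x)}}^* u\ra - \la R^{-2} u, u\ra.
\ee
The first term is handled by noting that $\Opbw{\sqrt a}^* = \Opbw{\sqrt a}$ up to an order $-1$ operator (since $\sqrt a$ is real and $\xi$-independent, by \eqref{A1b} we have $\Opbw{\sqrt a}^*=\Opbw{\sqrt a}$ exactly, actually, because a real $x$-only symbol is self-adjoint), and $\Opbw{\psi^2(\xi)}$ is a Fourier multiplier with symbol $\psi(\xi)^2\geq 0$, so $\la \Opbw{\psi^2(\xi)} w, w\ra = \sum_k \psi(k)^2|w_k|^2\geq 0$ for any $w$. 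Hence the first term is nonnegative and can be dropped. The remaining term $|\la R^{-2}u,u\ra|\lesssim \|R^{-2}u\|_{-m}\|u\|_m$, and by \eqref{restoR0} we have $\|R^{-2}u\|_{-m}\lesssim |\psi^2|_{2m, W^{2,\infty},8}\,|\sqrt a|_{0,W^{2,\infty},8}^2\,\|u\|_m$. Using the support condition $\mathrm{supp}\,\psi\subseteq[\tR,\infty)$ together with Lemma \ref{lem:cutoff} (or directly, since $\psi$ vanishes for $\xi<\tR$, we gain $\tR^{-2}$ by writing $\psi^2(\xi) = (\psi^2(\xi)\la\xi\ra^2/\la\xi\ra^2)$ and bounding $\la\xi\ra^{-2}\leq\tR^{-2}$ on the support), we extract the factor $\tR^{-2}$; and $|\sqrt a|_{0,W^{2,\infty},8}^2\lesssim \|\sqrt a\|_{W^{2,\infty}}^2\lesssim \|a\|_{W^{3,\infty}}$ by the square-root estimate iterated twice. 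This yields exactly \eqref{gard1}.

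The step I expect to be the main (though modest) obstacle is the bookkeeping of the square-root estimate: one must verify $\|\sqrt a\|_{W^{2,\infty}}^2\lesssim \|a\|_{W^{3,\infty}}$, which requires $|\partial^2_x\sqrt a|\lesssim \|a\|_{W^{3,\infty}}^{1/2}$; this is the classical lemma that for $0\leq a\in C^3$ one has $|\partial_x^2\sqrt a|^2\lesssim \|\partial^2_x a\|_{L^\infty}+\|\partial^3_x a\|_{L^\infty}\cdot(\text{something})$ — in fact the clean statement is that if $f\geq 0$ is $C^2$ with bounded second derivative then $f^{1/2}$ is Lipschitz with $\|f'/\sqrt f\|_\infty^2\le 2\|f''\|_\infty$, and an analogous but more delicate bound at second order needs $C^3$ control, with constants depending only on $\|a\|_{W^{3,\infty}}$. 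Once this elementary real-analysis fact is in hand, the rest is a direct application of the composition calculus \eqref{autoag}--\eqref{restoR0}, the self-adjointness of $\Opbw{\sqrt a}$, and the positivity of the Fourier multiplier $\Opbw{\psi^2}$, plus the gain of $\tR^{-2}$ from the high-frequency support of $\psi$. I would also remark that no cutoff subtlety arises because $\Opbw{\psi^2(\xi)}$ with $\psi$ a Fourier multiplier is literally diagonal in Fourier basis, so its positivity is exact rather than up to smoothing.
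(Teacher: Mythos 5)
Your plan hinges on the claim that $\sqrt a$ inherits $W^{2,\infty}$-regularity (more precisely, $\|\sqrt a\|_{W^{2,\infty}}^2\lesssim\|a\|_{W^{3,\infty}}$) so that the composition remainder \eqref{restoR0} applies with $b=\sqrt a$. This is false. For nonnegative $a$, Glaeser's inequality controls only the first derivative: $|\partial_x\sqrt a|\leq\sqrt{2\|\partial_x^2 a\|_{L^\infty}}$, so $\sqrt a\in W^{1,\infty}$. But $\sqrt a$ is generically \emph{not} in $W^{2,\infty}$ no matter how smooth $a$ is: already $a(x)=x^2$ near a zero gives $\sqrt a=|x|$, whose distributional second derivative $2\delta_0$ is not in $L^\infty$. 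This is the classical Glaeser/Bony--Colombini--Pernazza obstruction, which is precisely why sharp and Fefferman--Phong Gårding inequalities cannot be proved by naively writing $a=(\sqrt a)^2$ — one must decompose $a$ as a sum of squares of functions whose regularity is controlled, not take a single square root. Since the composition estimate \eqref{restoR0} demands $W^{2,\infty}$ control of the outer symbol, your argument breaks at exactly this step, and there is no fix within your scheme short of assuming $a$ bounded away from zero (which is the uninteresting case, and is not what the lemma is used for: in \eqref{sym.a} the coefficients $a_j(x)$ are nonnegative and do vanish).

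The paper avoids the square root entirely by sandwiching $\Opbw{a}$ between two copies of $\Opbw{\psi}=\psi(D)$ rather than sandwiching $\Opbw{\psi^2}$ between two copies of $\Opbw{\sqrt a}$. Positivity then comes from the exact identity $\la\psi(D)\,a(x)\,\psi(D)u,u\ra=\tfrac1{2\pi}\int_\T a(x)|\psi(D)u|^2\,\di x\geq 0$ (using $a\geq 0$ and the self-adjointness of the real Fourier multiplier $\psi(D)$), which requires no regularity of $\sqrt a$ at all. The composition remainder is controlled by viewing $\psi$ as a symbol of order $m+1$ with seminorms gaining $\tR^{-1}$, and the remaining correction is the cut-off error $\Opw{a_\chi-a}$, whose $H^{-1}\to H^1$ bound is where the $W^{3,\infty}$-norm of $a$ actually enters. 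Note that your reading of the role of $W^{3,\infty}$ ("one derivative lost taking the square root plus two for composition") is also off: the extra derivative is spent controlling the cutoff correction, not a square root.

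Your observation that $\Opbw{\sqrt a}$ is exactly self-adjoint (real, $\xi$-independent symbol) and that $\Opbw{\psi^2}$ is exactly a nonnegative Fourier multiplier are both correct and would be useful in a setting where $a$ is bounded below; they are just not enough here.
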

\begin{proof}
Arguing as in 
Lemma \ref{lem:cutoff} one shows that, for any $n \in \N_0$, 
\begin{equation}\label{psi.m+1}
|\psi|_{m+1, L^\infty, n} \leq C_n \frac{1}{\tR} |\psi|_{m, L^\infty, n} \ .
\end{equation}
We apply now the composition formula \eqref{autoag} regarding $\psi(\xi)$ as a symbol in $ \wt\Gamma^{m+1}_0$: 
\be \label{positive1}
\Opbw{\psi} \circ \Opbw{a} \circ \Opbw{\psi} = \Opbw{a \psi^2} + \fR_1
\ee
with $\fR_1\colon H^{m} \to H^{-m} $ fulfilling, by \eqref{restoR0},
\be\label{R1.estA}
\norm{\fR_1 u}_{-m} {\lesssim }
\norm{a}_{W^{2, \infty}} \, |\psi|_{m+1, L^\infty, 8}^2 \norm{u}_m
\stackrel{\eqref{psi.m+1}}{\lesssim} \frac{1}{\tR^2 }\norm{a}_{W^{2,\infty}}\norm{u}_m \ .
\ee
Then observe that $\Opbw{\psi} = \Opw{\psi} = \psi(D)$ and $\Opbw{a} = 
\Opw{a} + \Opw{a_\chi - a}$, where $a_\chi$ is the cut-offed symbol defined  in \eqref{regula12}, so 
\be \label{positive2}
\Opbw{\psi} \circ \Opbw{a} \circ \Opbw{\psi} = 
\psi(D)  \circ \Opw{a} \circ \psi(D) + \fR_2
\ee
where $\fR_2:= \psi(D)  \circ \Opw{a_\chi -a} \circ \psi(D)  \ . $
Now we prove that $\fR_2$ is bounded $H^{m}\to H^{-m}$.
First note that, 
 by the definitions \eqref{regula12} and \eqref{BWnon}, for any $ v\in H^{-1}$, 
\begin{align*}
\norm{\Opw{a_\chi -a} v}_{1}^2 & \lesssim  \sum_{j} \la j \ra^2 \abs{\sum_k \widehat a_{j-k} \, \Big(1- \chi\big(k-j, \frac{j+k}{2} \big)  \Big) \,  v_k}^2 \\
& \lesssim \sum_{j}  \left|\sum_k  \la j-k \ra^2  \, |\widehat a_{j-k}|  \, \Big(1- \chi\big(k-j, \frac{j+k}{2} \big) \Big)\, \frac{1}{\la k \ra}| v_k| \right|^2\\
& \lesssim \sum_{j}  \abs{\sum_k  \la j-k \ra^2 | \widehat a_{j-k} | \, \frac{1}{\la k \ra}|  v_k| }^2\\
& \lesssim 
\norm{a}_{3}^2 \norm{v}_{-1}^2 \lesssim 
\norm{a}_{W^{3,\infty}}^2 \, \norm{v}_{-1}^2
\end{align*}
where to pass from the first to the second line we used that, on the support  of $1- \chi\big(k-j, \frac{j+k}{2} \big)$,  one has 
$$\la k \ra , \la j \ra \lesssim \la j - k \ra + \la j+k \ra \lesssim \la j - k \ra  \ , $$
and to pass from the third to the last line we used Young's inequality for convolution of sequences.\\
Thus we get, for any $u \in H^m$,
\begin{align}
\norm{\fR_2 u}_{-m}
&  \lesssim 
|\psi|_{m+1,L^\infty,0} \, \norm{\Opw{a_\chi -a} \circ \psi(D)  u}_{1}\notag\\
& \lesssim |\psi|_{m+1,L^\infty,0} \, \norm{a}_{W^{3,\infty}} \, \norm{\psi(D)  u}_{-1} \notag\\
& \lesssim |\psi|_{m+1,L^\infty,0}^2 \, \norm{a}_{W^{3,\infty}} \, \norm{u}_{m}  
\stackrel{\eqref{psi.m+1}}{\lesssim} \frac{1}{\tR^2} \norm{a}_{W^{3,\infty}} \norm{u}_m \ . \label{R2.estA}
\end{align}
In conclusion, combining \eqref{positive1} and \eqref{positive2} and  since $\Opw{a} = a \geq 0$ and $\psi(D)$ is self-adjoint,   we have that
$$
0 \leq \la \psi(D) \circ a \circ \psi(D) u, u \ra = \la \Opbw{a \psi^2} u, u \ra + \la (\fR_1 - \fR_2) u, u \ra
$$
and \eqref{gard1} follows by \eqref{R1.estA} and \eqref{R2.estA}.
\end{proof}

\section{Flows and conjugations}\label{sec:flows}
In this section we collect some  results about the conjugation of 
paradifferential operators and smoothing remainders under flows, following  \cite{BD,BFP,BMM, MMS}. 

\smallskip

\noindent{\bf Conjugation by a flow generated by a real symbol of order one.} \label{sec:flow-1}
Given a function  $\beta\in \wt \mF_2^\R$ gauge invariant, i.e. $\beta(\tg_\theta U;\cdot) = \beta(U;\cdot)$ for any $\theta \in \T$,  consider the flow  $\Phi^\tau(u)$, $\tau\in[-1,1]$
defined by \eqref{flusso1para}.
It is standard (see e.g.  Lemma 3.22 in \cite{BD}) that, for any  
	$U \in B_{s_0,\R}(r)$ with $s_0>0$ sufficiently large and $r>0$ sufficiently small, 
	the operator 
	$ \Phi^\tau (U)\in\cL\pare{H^s(\T, \C^2)} $ for any $s \in \R$ with the quantitative estimate:  there is a constant $C(s)>0$ such that	for any $ W\in H^{s}(\T, \C^2)$,
	$\norm{ \Phi^\tau (U) W }_{s}+\norm{ \Phi^\tau (U)^{-1} W }_{s}
		 \leq   C \pare{ s }  \| W \|_{s}.$ 
\smallskip 
Following \cite{BD}, we define  the  path of diffeomorphism of $ \T $ via 
\begin{equation}
	\label{eq:diffeo}
	\Psi \pare{ U, \tau ;x } \defeq  x + \tau \beta\pare{U; x}
	 \quad \text{with inverse}
	 \quad 
	 \Psi^{-1}\pare{ U, \tau;y }  \defeq y + \breve{\beta}\pare{U, \tau;y} 
	, \quad 
	  \   \breve{\beta}\in  \cF_{\geq 2}^\R\bra{r} 
\end{equation}
and set 
$\Psi(U;x)\defeq\Psi (U, 1;x)$.

\begin{proposition}[Conjugations for a transport flow]
	\label{prop:Egorov}
	Let $m\in \R$, $\varrho>0$,and let $\Phi(U)$ be the flow generated by \eqref{flusso1para}. 
	\begin{enumerate}
		
		\item \label{item:Egorovi} {\bf Space conjugation of a para-differential operator:} Let $a \in  \Sigma\Gamma^{m}_2\bra{r}$ be a real symbol and
		$a^{\pare{m}}\pare{U;x, \xi} \defeq \left.  a\pare{U; y, \xi \ \partial_y\Psi^{-1}\pare{U;y}}\right|_{y=\Psi\pare{U;x}}
			\in  \Sigma\Gamma^{m}_2\bra{r}$.
		Then
		\begin{equation}
			\label{eq:conj_generic_symbol}
			\begin{aligned}
				\Phi(U) \circ \vOpbw{a\pare{U;x,\xi}} \circ \Phi(U)^{- 1} = & \  \vOpbw{a^{\pare{m}}\pare{U;x, \xi} 
					+ a^{\pare{m-2}}_{\geq 4 }  \pare{U;x, \xi}
				} + R_{\geq 4}(U)
				\\
				= & \ \vOpbw{a\pare{U;x,\xi} 
					+ a^{\pare{m}}_{\geq 4 }\pare{U;x, \xi}} + R_{\geq 4}(U),
			\end{aligned} 
		\end{equation} 
		where 
		 $a^{\pare{m-2}}_{\geq 4 }\pare{U;x,\xi}$ and {$ a^{\pare{m}}_{\geq 4}\pare{U;x,\xi}$} are  non-homogeneous real symbols  in $ \Gamma^{m-2}_{\geq 4}\bra{r} $respectively  $\Gamma^m_{\geq 4}\bra{r}$, whereas $ R_{\geq 4}(U)$ is a real-to-real  matrix of smoothing operators in $ \cR^{-\vr+m }_{\geq 4}\bra{r} $.
		In addition if $ a\pare{U;x,\xi} = V\pare{U;x} \xi $ for some $ V\in \wt \cF^\R_2\bra{r} $  then in \eqref{eq:conj_generic_symbol} $ a^{\pare{ m-2 }}_{\geq 4 }\equiv 0 $   and $ a^{\pare{ m }}_{\geq 4} \pare{U;x, \xi} = {V}'_{\geq 4} \pare{U;x}  \xi $ for a suitable function $ {V}'_{\geq 4}\in \cF_{\geq 4}^\R\bra{r} $.

\item {\bf Space conjugation of a Fourier multiplier} Let $\omega(\xi) \in \wt\Gamma^\alpha_0$ be a real Fourier multiplier. 
		Then
		\begin{equation}
			\label{eq:conj_fou}
			\begin{aligned}
				\Phi(U) \circ \vOpbw{\im \omega} \circ \Phi(U)^{- 1} 
				= & \ \vOpbw{\im \big( \omega  
					+ a^{\pare{\alpha}}_{2 }\pare{U;x, \xi} + a^{\pare{\alpha}}_{\geq 4 }\pare{U;x, \xi} + a_{\geq 4}^{(\alpha -2)}(U; x, \xi)\big)} \\
					& + R_2(U) + R_{\geq 4}(U),
			\end{aligned} 
		\end{equation} 
		where
  
  \noindent $ \bullet $ $a^{\pare{\alpha }}_{2 }\pare{U;x, \xi}$ is a real, zero-average, gauge invariant symbol in $\wt\Gamma^\alpha_2$;
  
			\noindent $ \bullet $
		 $a^{\pare{\alpha}}_{\geq 4 }\pare{U;x, \xi}$ is  a real non-homogeneous symbol in $\Gamma^\alpha_{\geq 4}[r]$ and $a_{\geq 4}^{(\alpha-2)}(U; x, \xi)$ is a non-homogeneous symbol in $\Gamma^{\alpha-2}_{\geq 4}[r]$;
		
   \noindent $ \bullet $ $R_2(U)$ is a real-to-real, gauge invariant  matrix of smoothing operators in $\wt\cR^{-\varrho+m}_2$, and $R_{\geq 4}(U)$ is a real-to-real matrix of non-homogeneous smoothing operators in $\cR^{-\varrho+m}_{\geq 4}$.
		
		\item{\bf Space conjugation of a smoothing remainder:} If $ R_2(U)$ is a real-to-real matrix of  smoothing operators in $ \widetilde \cR^{-\vr}_2\bra{r} $ then 
		\begin{equation*}
			\Phi \pare{U} \circ R_2 \pare{U} \circ \Phi\pare{U}^{-1} = R_2(U)+ R_{\geq 4 }\pare{U},
		\end{equation*}
		where $ R_{\geq 4}(U)$ is a real-to-real matrix of smoothing operators in $\cR^{-\vr+1}_{\geq 4}\bra{r} $. 
		
		\item {\bf Conjugation of $\partial_t$:} If $ U $ is a solution of \eqref{eq.U} then
		\begin{equation}\label{detconju}		
			(\partial_t \Phi\pare{U})\,  \Phi\pare{U}^{-1} =  \ii \  \vOpbw{  2 \beta  \pare{ -\ii   \vOmega (D)U,U; x }\ \xi+ \ii \ \fV_{\geq 4}\pare{ U;x } \xi } + R_{\geq 4} \pare{U}, 
		\end{equation}
		where  $ \vOmega (D)$  is the matrix of  real Fourier multipliers  in \eqref{U.start}, $ \fV_{\geq 4}\pare{U;x}$  is a real function in $\cF_{\geq 4}^\R\bra{r}$ and $ R_{\geq 4}\pare{U}$ is a real-to-real matrix of smoothing operators in $  \cR^{-\vr}_{\geq 4}\bra{r}$.

	\end{enumerate} 
 \end{proposition}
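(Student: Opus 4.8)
\textbf{Proof strategy for Proposition \ref{prop:Egorov}.}

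The plan is to prove the four statements by a uniform Egorov-type argument: conjugation by the flow $\Phi^\tau(U)$ of a para-differential operator $A$ is governed by the Heisenberg-type ODE $\partial_\tau \big( \Phi^\tau A (\Phi^\tau)^{-1}\big) = \Phi^\tau [G,A](\Phi^\tau)^{-1}$, where $G=\vOpbw{\frac{\beta_2}{1+\tau(\beta_2)_x}\ii\xi}$ is the generator in \eqref{flusso1para}. Since the commutator $[G,\vOpbw{a}]$ is again, by the symbolic calculus \eqref{commurule}, a para-differential operator of order $m$ (the principal symbol of $G$ has order one) up to smoothing remainders of order $-\varrho+m$, one iterates a Lie series and checks convergence after finitely many steps modulo $\cR^{-\varrho+m}_{\geq 4}[r]$. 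The key algebraic point for item \ref{item:Egorovi} is that the transport-type structure makes the Lie series resum exactly into the composition with the diffeomorphism $\Psi(U;x)=x+\beta(U;x)$: this is the classical fact that $\exp(\tau \beta\partial_x)$ acting on symbols by pullback is the flow of the transport vector field. Concretely I would first verify the claim for the homogeneous truncation (the ``$2$-homogeneous'' part), reducing to the well-known Egorov statement (Lemma 3.22 and Proposition 3.24 in \cite{BD}), and then upgrade to $\Sigma\Gamma^m_2[r]$ by treating the $\geq 4$-homogeneous corrections as perturbative remainders, using the admissibility of $\Phi(U)$ (Lemma \ref{lem:flow.ad}) together with the composition rules in Lemma \ref{nuovetto} and Proposition \ref{teoremadicomposizione}.

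First I would establish item \ref{item:Egorovi}: applying the above pullback identity to the real symbol $a(U;x,\xi)\in\Sigma\Gamma^m_2[r]$, the leading term becomes $a^{(m)}(U;x,\xi)=a\big(U;y,\xi\,\partial_y\Psi^{-1}\big)\big|_{y=\Psi(U;x)}$, which lies in $\Sigma\Gamma^m_2[r]$ since $\partial_y\Psi^{-1}-1\in\cF^\R_{\geq 2}[r]$ by \eqref{eq:diffeo}; the subleading terms in the asymptotic expansion of the conjugation produce a symbol $a^{(m-2)}_{\geq 4}$ of order $m-2$ and $\geq 4$-homogeneous (because the first correction is quadratic in $\beta$, hence quartic in $U$), plus a remainder in $\cR^{-\varrho+m}_{\geq 4}[r]$; the second form of \eqref{eq:conj_generic_symbol} follows by splitting $a^{(m)} = a + (a^{(m)}-a)$ and noticing $a^{(m)}-a\in\Gamma^m_{\geq 4}[r]$. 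For the special transport case $a=V(U;x)\xi$, the pullback identity gives $a^{(m)}(U;x,\xi)=V(\Psi(U;x))\,\partial_x\Psi(U;x)\,\xi$, which is again of the form $V'(U;x)\xi$, hence no order-$(m-2)$ correction appears: this is exactly the statement $a^{(m-2)}_{\geq 4}\equiv 0$ and $a^{(m)}_{\geq 4}(U;x,\xi)=V'_{\geq 4}(U;x)\xi$.

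Then item 2 follows by applying item \ref{item:Egorovi} to $a=\omega(\xi)\in\wt\Gamma^\alpha_0$ (regarded as a symbol of order $\alpha$), and extracting the homogeneity: the pullback $\omega(\xi\,\partial_y\Psi^{-1})$ Taylor-expands around $\xi$, giving $\omega(\xi)$ plus a quadratic term $a^{(\alpha)}_2(U;x,\xi)=\omega'(\xi)\,\xi\,\partial_x(-\breve\beta)$-type contribution of order $\alpha$, which is zero-average and gauge invariant because $\beta$ is (recall $\beta$ fulfils the first of \eqref{sym.gauge} and has zero average in view of \eqref{beta2}), plus higher-homogeneity corrections of orders $\alpha$ and $\alpha-2$; the remainders split into the homogeneous $\wt\cR^{-\varrho+m}_2$ and non-homogeneous $\cR^{-\varrho+m}_{\geq 4}$ pieces. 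Item 3 is immediate from the Lie series: $\Phi R_2 \Phi^{-1}=R_2 + \int_0^1\Phi^\tau[G,R_2](\Phi^\tau)^{-1}\di\tau$, and $[G,R_2]$ gains one order of smoothing loss (since $G$ has order one) but is $\geq 4$-homogeneous, so it lands in $\cR^{-\varrho+1}_{\geq 4}[r]$ after invoking Lemma \ref{nuovetto}-3 and the admissibility of $\Phi^\tau$. For item 4, one differentiates $\Phi^\tau$ in $t$ using $\partial_t G(U)=2G(\partial_t U,U)$-type bilinearity and that $U$ solves \eqref{eq.U}, so $\partial_t U=-\ii\vOmega(D)U+M_{\mathtt{NLS}}(U)U$; the Lie expansion \eqref{lie2}, together with $\partial_t\beta_2(U)=2\beta_2(\partial_t U,U)$, yields the principal term $2\beta_2(-\ii\vOmega(D)U,U;x)\xi$, the contribution of $M_{\mathtt{NLS}}(U)U$ being quartic hence absorbed into $\fV_{\geq 4}(U;x)\xi$, and all the nested commutators into $\cR^{-\varrho}_{\geq 4}[r]$.

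\textbf{Main obstacle.} The delicate point is item \ref{item:Egorovi}: one must show that the \emph{full} (non-truncated, non-homogeneous) conjugation equals the pullback symbol $a^{(m)}$ exactly up to an order-$(m-2)$, $\geq 4$-homogeneous symbol and a genuine smoothing remainder — not just up to finitely many steps of an asymptotic expansion. The clean way is to not resum by hand but to exploit that $\Phi(U)$ is a \emph{para-composition} operator associated to the diffeomorphism $\Psi(U;\cdot)$ in the sense of Alinhac/Bony, for which the Egorov-type statement with explicit principal symbol is already available (Proposition 3.24 of \cite{BD}); the work then reduces to bookkeeping the homogeneity degrees and the smoothing orders of the error terms, using that every correction carries at least two extra powers of $U$ (coming from $\beta_2\in\wt\cF_2^\R$) and that $\varrho$ is taken large enough. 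A secondary technical nuisance is tracking gauge invariance and zero-average of the produced symbols through the pullback, but this is automatic since $\Psi(\tg_\theta U;\cdot)=\Psi(U;\cdot)$ and $\partial_x$-integrals are preserved under the change of variables $x\mapsto\Psi(U;x)$.
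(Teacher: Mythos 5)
Your overall Egorov-by-flow strategy coincides with the paper's, and your treatment of items 3 and 4 matches: item 3 conjugates a homogeneous smoothing remainder and cites the appropriate lemmas, and item 4 differentiates $\Phi^\tau$ in $t$, uses the bilinearity $\partial_t \beta_2(U) = 2\beta_2(\partial_t U,U)$ with $\partial_t U = -\ii\vOmega(D)U + M_{\mathtt{NLS}}(U)U$, and absorbs the cubic part of $\partial_t U$ together with the nested commutators into $\fV_{\geq 4}$ and $R_{\geq 4}$ — exactly as in the paper. For item 1 both you and the paper defer to the existing Egorov-type paracomposition results (you cite Lemma 3.22 and Prop. 3.24 of \cite{BD}, the paper cites Lemmas A.4–A.5 of \cite{BFP}; these play the same role).

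The step that needs tightening is item 2. You say "item 2 follows by applying item \ref{item:Egorovi} to $a=\omega(\xi)$," but item \ref{item:Egorovi} is stated for $a\in\Sigma\Gamma^m_2[r]$, which by Definition \ref{def:sfr} forces $a_0(\xi)=0$; a Fourier multiplier $\omega\in\wt\Gamma^\alpha_0$ is excluded. This is not a cosmetic mismatch: item 2's conclusion is genuinely richer than item 1's. Conjugating $\omega$ produces a nontrivial $2$-homogeneous symbol $a_2^{(\alpha)}$ (with the extra claims that it is zero-average and gauge invariant) and a $2$-homogeneous smoothing remainder $R_2\in\wt\cR^{-\varrho+m}_2$, whereas item 1's error terms are all $\geq 4$-homogeneous. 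A bare Taylor expansion of the pullback $\omega(\xi\,\partial_y\Psi^{-1})$ captures only the Poisson-bracket ($k=1$) term and does not by itself explain why the remaining quadratic-in-$U$ error is a genuine $-\varrho+m$ smoothing remainder nor why the higher-order Moyal terms land correctly. The paper resolves this with an operator-level device: it sets $P^\tau(U):=\Phi^\tau(U)\vOpbw{\im\omega}(\Phi^\tau)^{-1}$, Taylor-expands $P^1$ in $\tau$ at $\tau=0$ via the Heisenberg equation $\partial_\tau P^\tau = [G(U,\tau),P^\tau]$, observes that the integral remainder $\int_0^1(1-\tau)\partial_\tau^2 P^\tau\,\di\tau$ carries two factors of $G$ and is hence manifestly $\geq 4$-homogeneous, and then identifies the $2$-homogeneous piece as the single commutator $[\vOpbw{\im\beta\xi},\vOpbw{\im\omega}]$, whose symbol and smoothing error are read off from the composition Theorem \ref{teoremadicomposizione} (producing the explicit formula \eqref{corr.w}). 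Gauge invariance and zero-average of $a_2^{(\alpha)}$ then follow directly from the corresponding properties of $\beta$ in \eqref{beta2}. Your argument is recoverable by this refinement, but as written it elides the part of the proof that distinguishes items 1 and 2.

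A small computational slip: in the transport case $a=V(U;x)\xi$ the pullback is $a^{(m)}(U;x,\xi)=V(U;\Psi(U;x))\,\xi/\partial_x\Psi(U;x)$ (since $\partial_y\Psi^{-1}(U;y)|_{y=\Psi(U;x)}=1/\partial_x\Psi(U;x)$ by the chain rule), not $V(\Psi)\,\partial_x\Psi\,\xi$. This does not affect the conclusion that $a^{(m-2)}_{\geq 4}\equiv 0$ and $a^{(m)}$ retains the form $V'(U;x)\xi$, but the displayed formula is off.
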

	\begin{proof}
 During the proof we shall denote $b:= \frac{\beta}{1+\tau \beta_x}$.
 
\noindent 	1.  Follows by Lemmas A4 and A5 in \cite{BFP}. 
			
\noindent 2. 	We first define the operator 
			$
			P^\tau(U):= \Phi^\tau \pare{U} \circ \vOpbw{\im\omega}  \circ \big(\Phi^\tau\pare{U}\big)^{-1}.
			$ 
			Note that $P^\tau(U)$ is gauge invariant being composition of gauge invariant operators. 
By Theorem 3.27 in \cite{BD} (actually adapting that result when the function $\beta$ is $2$-homogeneous rather than $1$-homogeneous), we have for any $ \tau\in[0,1]  $
\begin{equation}\label{P.tau}
\begin{aligned}
P^\tau(U) = &\vOpbw{\im \omega_\Phi^{(\alpha)} + \im \omega^{(\alpha-2)}} + R(U,\tau) \\
=&\vOpbw{\im \omega+ \im \omega_2^{(\alpha)}+ \im a_{\geq 4}^{(\alpha)}+\im \omega_2^{(\alpha-2)}+\im  a_{\geq 4}^{(\alpha-2)}} + R_2(U,\tau)+ R_{\geq 4}(U,\tau)
 \end{aligned}
\end{equation} 
 where $\omega_\Phi^{(m)} = \omega+ \omega_2^{(\alpha)}+ a_{\geq 4}^{(\alpha)}$ is a real symbol in $ \Sigma\Gamma^\alpha_0[r]$,
  $\omega^{(\alpha-2)}=\omega_2^{(\alpha-2)}+ a_{\geq 4}^{(\alpha-2)}$ is a symbol in $ \Sigma\Gamma^{\alpha-2}_2[r]$ and $R = R_2+ R_{\geq 4} \in \Sigma\cR^{-\varrho+\alpha}_2[r]$.
 
To identify the  quadratic component of $P^1(U)$  we use the Taylor expansion 
$
P^1(U) = P^0(U) + \pa_\tau P^\tau(U)\vert_{\tau =0}
+\int_0^1(1-\tau) \pa_\tau^2 P^\tau(U) \di \tau 
$
and  exploit  that $P^\tau(U)$  fulfills the  Heisenberg equation 
			$
			\partial_\tau P^\tau(U)= [G(U,\tau),{P^\tau(U)}] , \quad P^0(U) = \vOpbw{\im\omega}$ .	
Using that $G(U,0) = \vOpbw{\im b(U)\xi}$ and the  paradifferential structure  of $P^\tau(U)$ in 
\eqref{P.tau}, we obtain 
$$
P^1(U) = \vOpbw{\im \omega} + [\vOpbw{\im b(U)\xi}, \vOpbw{\im \omega}] 
+ M_{\geq 4}(U)
$$
with $M_{\geq 4}(U)$ a $\alpha$-operator in $\cM_{\geq 4}^{\alpha}[r]$.
Now we use the composition Theorem \ref{teoremadicomposizione} (with $\vr\leadsto \vr+1$) and formula  \eqref{espansione2} to expand the commutator as 
\begin{equation}\label{P12}
P^1(U) = \vOpbw{\im \omega + \im a_2^{(\alpha)}} + R_2(U) + M_{\geq 4}(U)
\end{equation}
with 
$a_2^{(\alpha)}(U;x, \xi) $ the real, zero-average symbol 
			\begin{equation}\label{corr.w}
			a^{\pare{m}}_{2 }\pare{U;x, \xi} := \sum_{k=1}^{\varrho+1} \frac{(-1)^k -1}{2^k k!} (D_x^k \beta)\, (\pa_\xi^k \omega )\,  \im \xi  \in \wt \Gamma^\alpha_2 \ ,
			\end{equation}
and $R_2(U) \in \wt \cR^{-\varrho+\alpha}_2$.
Identifying the quadratic  components of $P^1(U)$ in \eqref{P.tau} and \eqref{P12} we get that 
$$
\Opbw{\omega_2^{(\alpha)} + \omega_2^{(\alpha-2)}}= \Opbw{a_2^{(\alpha)}} + \tilde R_2(U)
$$
and therefore we get the thesis.
Since $\beta(U)$ is gauge invariant (fulfills the first of \eqref{sym.gauge}), so is   $a_2^{(\alpha)}$ in \eqref{corr.w}.
Finally,  since $P^1(U)$ is gauge invariant, also $R_2(U)$ in \eqref{P12} is gauge invariant by difference. 

\noindent 3.		It follows as in \cite[Remark at pag. 89]{BD}  (see also \cite[Proposition A.2]{MMS} for details).
			
\noindent 4.
		{	Differentiating 
            $$
\begin{cases} 
	\partial_\tau \Phi^\tau(U(t))= G(U(t))\Phi^\tau(U(t))\\
	\Phi^0(U(t))=\uno,
\end{cases}$$
with respect to time, we get that 
  $\pa_t \Phi^\tau(U(t))$ fulfills the variational equation 
\be 
\begin{cases}
\partial_\tau  \, \left(\pa_t \Phi^\tau (U(t))\right) =G(U(t)) \, \left( \pa_t \Phi^\tau(U(t))\right) 
+ \left( \pa_t G(U(t))\right) \,  \Phi^\tau(U(t)) \\
\pa_t \Phi^0(U(t))=0 
\end{cases}  \ , 
\ee
whose solution is given by the  Duhamel formula 
\begin{align}
\left(\pa_t \Phi^\tau(U(t))\right)=& \Phi^\tau(U(t)) \int_0^\tau  \Phi^{\tau_1}(U(t))^{-1} \left(  \pa_t G(U(t))\right)   \Phi^{\tau_1} (U(t))\, {\rm d}\tau_1  \ .
\end{align}
Evaluating at $\tau =1$,   applying $\Phi^1(U)^{-1}$ to the right 
and using that in our case $\pa_t G(U(t)) = \vOpbw{ \partial_t b(U,\tau_1;x) \im \xi } $
yields
			\be \label{conju}
			\big(\pa_t \Phi^1\pare{U} \big) \Phi^1\pare{U}^{-1}= \int_0^1  \Phi^1 \pare{U} \left[\Phi^{\tau_1} \pare{U}\right]^{-1} \vOpbw{ \partial_t b(U,\tau_1;x) \im \xi }   \Phi^{\tau_1} \pare{U}[\Phi^1 \pare{U}]^{-1}\, \di \tau_1.
			\ee
}
			We claim that  
			\be\label{betamaggiore}
			\partial_t b(U,\tau;x)= \beta(-\ii  \vOmega(D)U;x)+ \fV_{\geq 4}(U,\tau;x), \quad \fV_{\geq 4}\in \mF_{\geq 4}^{\R}[r] \ .
			\ee
			Differentiating  $b(U(t),\tau;x)$ with respect to $ t $ and using that, by equation \eqref{eq.U},  $\pa_t \beta(U) = 2 \beta(\pa_t U, U) = 2\beta(X(U), U)$ 
   with $X(U) = - \im \vOmega(D) U + X_3(U)$ we get  
			\begin{align*}
& 				\pa_t  b(U,\tau;x)
				= 2\beta\pare{-\ii  \vOmega(D) U, U  ; x}\\
				& + 
				\underbrace{
				2\beta\pare{M_{\mathtt{NLS}}\pare{U}U, U  ; x}  - 2\tau \left[ \frac{\beta\pare{U ; x}\beta_x\pare{X\pare{U},U; x}}{(1+\tau \beta_x\pare{U; x })^2}
				+
				\frac{\beta_x\pare{U; x}\beta\pare{X\pare{U}, U; x}}{(1+\tau \beta_x\pare{U; x })}\right]}_{=:\fV_{\geq 4}(U,\tau;x)} \ .
			\end{align*}
			Then \eqref{betamaggiore} follows  using Lemma \ref{nuovetto}--$1$ for each internal composition, getting that 
			$
			\fV_{\geq 4}(U,\tau;x)$
			is a function in $\mF_{\geq 4}^{\R}[r]$.

	\end{proof}

\paragraph{Conjugation by flows generated by linear smoothing operators.} 
In this section we study the conjugation rules for a flow 
$\Upsilon(U)\defeq \Upsilon^{\tau}(U)\vert_{\tau =1}$ generated by 
\begin{align}\label{flusso smoothing}
	\partial_\tau \Upsilon(U) = Q(U) \circ \Upsilon^\tau(U),  \ \ 
	\Phi^0_{Q}(U) = \uno \,,
\,
\end{align}
with $Q(U)$ a matrix of smoothing operators in $\wt\cR^{-\varrho}_2$.
We denote the inverse of $\Phi_{Q}(u)$ as 
$\Upsilon(U)^{- 1} = \left. \Upsilon^\tau(U)\right|_{ \tau = - 1}$. 

\smallskip

The following result is a small variation of \cite[Proposition A.5]{MMS} and we omit the proof.
\begin{proposition}[Conjugation by flows generated by smoothing operators]\label{prop:Egorov_smoothing}
	Let $m\in \R$, $\varrho, \vr', r >0$. Let   $Q(U)$ be a matrix of smoothing operators in $\widetilde{\mathcal{R}}^{-\varrho}_2$ and $\Upsilon (U)$ be the flow generated by $ Q(U) $ as in \eqref{flusso smoothing}. 
	Then the following holds:
	
	\begin{itemize}
		\item[$i)$] {\bf Space conjugation:} \label{item:Egorov_smoothingi} If $ a\in  \Sigma\Gamma^m_2[r] $,   then 
		\begin{align}\label{eq:conj_symbol_smoothing1}
			\Upsilon(U)\circ \vOpbw{a\pare{U;x,\x}} \circ  \Upsilon(U)^{-1}
			-  \vOpbw{a \pare{U;x,\x}}    &\in  \cR^{-\vr + \max\{m,0\}}_{\geq 4 }[r],\\
			\label{eq:conj_symbol_smoothing2}
			\Upsilon(U)\circ  (-\im \vOmega(D) )  \circ \Upsilon(U)^{-1}- \pare{- \im \vOmega(D)+ [Q(U), -\im \vOmega(D)] } & \in  \cR^{-\vr + \alpha}_{\geq 4 }[r] \ . 
		\end{align}
These matrices of operators are real-to-real provided $Q(U)$ is. 
		\item[$ii)$] {\bf Conjugation of smoothing operators:} \label{item:Egorov_smoothingii}If $ R(U)$ is a real-to-real matrix of smoothing operators in $\Sigma\cR^{-\vr'}_2[r] $, then 
		\be \label{bottadevita}
		\Upsilon(U)\circ R\pare{U}  \circ  \Upsilon(U)^{-1} - R\pare{U} \in  \cR^{-\min\{\vr,\vr'\}}_{\geq 4 }[r] \ 
		\ee
		and it is real-to-real.
		\item [$iii)$]  \label{item:Egorov_smoothing_iii} {\bf Conjugation of $\partial_t$:} If $ U $ is a  solution of \eqref{eq.U} then
		\begin{equation}\label{claimed}
			(\partial_t   \Upsilon\pare{U})\circ\Upsilon\pare{U}^{- 1} -  2  Q\pare{ -\im \vOmega(D) U, U} \in  \cR^{-\vr+1}_{\geq  4}\bra{r} \ 
		\end{equation}
		and it is real-to-real.
	\end{itemize}
\end{proposition}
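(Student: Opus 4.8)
The plan is to follow the scheme of \cite[Proposition A.5]{MMS}, the only delicate point being the tracking of orders. All three items rest on the Lie/Duhamel expansion of a conjugation (see \cite[Lemma A.1]{BFP}): setting $\mathtt{Ad}_{Q}[A]:=[Q(U),A]$ and iterating the identity $\Upsilon(U)A\Upsilon(U)^{-1}=A+\int_0^1\Upsilon^\tau(U)[Q(U),A]\Upsilon^\tau(U)^{-1}\di\tau$, one obtains for every $L\in\N$
\begin{equation*}
\Upsilon(U)A\Upsilon(U)^{-1}=\sum_{\ell=0}^{L-1}\frac{1}{\ell!}\,\mathtt{Ad}_{Q}^{\ell}[A]+\frac{1}{(L-1)!}\int_0^1(1-\tau)^{L-1}\,\Upsilon^\tau(U)\,\mathtt{Ad}_{Q}^{L}[A]\,\Upsilon^\tau(U)^{-1}\,\di\tau .
\end{equation*}
Two structural facts drive the estimates. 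First, since $Q(U)\in\wt\cR^{-\vr}_2$ is $2$-homogeneous, each application of $\mathtt{Ad}_Q$ raises the homogeneity degree by two and improves the order: by Lemma \ref{nuovetto}--$3$ when commuting with a paradifferential operator (here one uses $\vr$ large enough) and by Lemma \ref{nuovetto}--$4$ when commuting with another smoothing operator. Second, $\Upsilon^\tau(U)$ is a $0$-admissible transformation (Lemma \ref{flow.s.ad}), hence bounded on every $H^s$ by \eqref{flowsmoothestimate}; therefore conjugation by it sends a degree $\geq 4$ smoothing operator of a given order into another one of the same order, since any further commutator with the degree-$2$ operator $Q$ only raises the homogeneity.

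For the first formula of $(i)$ I would take $L=1$: since $a\in\Sigma\Gamma^m_2[r]$ has vanishing constant-coefficient part, $\vOpbw{a}$ is of homogeneity $\geq 2$, so $\mathtt{Ad}_Q[\vOpbw{a}]$ is already of homogeneity $\geq 4$ and, by Lemma \ref{nuovetto}--$3$, smoothing of order $-\vr+m\leq-\vr+\max\{m,0\}$; conjugating by $\Upsilon^\tau(U)$ and integrating then gives the assertion. For the Fourier multiplier $-\im\vOmega(D)$ (order $\alpha$, homogeneity $0$) I would instead take $L=2$: the first commutator $[Q(U),-\im\vOmega(D)]$ is kept explicit, while $\mathtt{Ad}_Q^2[-\im\vOmega(D)]$ has homogeneity $\geq 4$ and, since $[Q(U),-\im\vOmega(D)]\in\wt\cR^{-\vr+\alpha}_2$ is itself smoothing, Lemma \ref{nuovetto}--$4$ places it in $\cR^{-\vr+\alpha}_{\geq 4}[r]$ (using $\alpha<1$). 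Item $(ii)$ is the $L=1$ expansion applied to $R(U)\in\Sigma\cR^{-\vr'}_2[r]$: the correction $\mathtt{Ad}_Q[R(U)]$ has homogeneity $\geq 4$ and, applying Lemma \ref{nuovetto}--$4$ to $Q\circ R(U)$ and $R(U)\circ Q$ separately (which only grant a $\max\{\cdot,0\}$ gain), is smoothing of order $-\min\{\vr,\vr'\}$, a property preserved under conjugation by $\Upsilon^\tau$.

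For $(iii)$ I would differentiate \eqref{flusso smoothing} in $t$; since $Q(U)$ does not depend on $\tau$, Duhamel's formula yields $(\partial_t\Upsilon(U))\Upsilon(U)^{-1}=\int_0^1\Upsilon^\sigma(U)\,(\partial_t Q(U))\,\Upsilon^{-\sigma}(U)\,\di\sigma$. Bilinearity and symmetry of $Q$ together with \eqref{eq.U} and \eqref{X3.mop} give $\partial_t Q(U)=2Q(-\im\vOmega(D)U,U)+2Q(M_{\mathtt{NLS}}(U)U,U)$, where by Lemma \ref{nuovetto}--$2$ the first summand is a $2$-homogeneous smoothing operator (in $\wt\cR^{-\vr+\alpha}_2$) and the second lies in $\cR^{-\vr+1}_{\geq 4}[r]$, the $+1$ being the order of $M_{\mathtt{NLS}}$. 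Expanding $\Upsilon^\sigma(U)\cdot\Upsilon^{-\sigma}(U)$ once more about the identity, the $2$-homogeneous term survives exactly as $2Q(-\im\vOmega(D)U,U)$ and every further contribution, being a commutator with the $2$-homogeneous $Q$, has homogeneity $\geq 4$ and order $\leq-\vr+1$ (again $\alpha<1$ is used), while the conjugation of the $\cR^{-\vr+1}_{\geq 4}[r]$ term by $\Upsilon^\sigma$ stays in the same class. All the real-to-real statements are then automatic: $Q(U)$ real-to-real forces $\Upsilon^\tau(U)$ real-to-real, and $\cR^{-\bullet}_{\geq 4}[r]$ is stable under composition and under conjugation by $\Upsilon^\tau$.

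The one genuine difficulty I anticipate is precisely this bookkeeping of orders in compositions where one factor has negative order: Lemma \ref{nuovetto}--$4$ only guarantees a gain $\max\{m',0\}$, which is exactly the source of the apparently suboptimal losses $\max\{m,0\}$ in $(i)$ and $\min\{\vr,\vr'\}$ in $(ii)$; one must also verify that the standing hypotheses push every iterated-commutator remainder into the asserted class and, crucially, that conjugating by $\Upsilon^\tau$ never lowers the homogeneity degree below four.
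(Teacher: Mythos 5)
The paper itself omits the proof of this proposition, deferring to \cite[Proposition A.5]{MMS}; your Lie/Duhamel scheme is exactly the strategy of that reference, and your order bookkeeping via Lemma \ref{nuovetto} in items $(i)$--$(iii)$ is essentially right. There is, however, one genuinely incomplete step in your overview: you assert that since $\Upsilon^\tau(U)$ is a $0$-admissible transformation ``hence bounded on every $H^s$ by \eqref{flowsmoothestimate}, therefore conjugation by it sends a degree $\geq 4$ smoothing operator of a given order into another one of the same order.'' Boundedness on every $H^s$ is not enough. Writing $\Upsilon^\tau R \Upsilon^{-\tau}v = \Upsilon^\tau z$ with $z := R(U)\Upsilon^{-\tau}v$ and applying \eqref{flowsmoothestimate} at regularity $s+\varrho''$ (where $-\varrho''$ is the order of $R$) produces a term $\|z\|_{s_0}\|U\|_{s+\varrho''}\|U\|_{s_0}$: the internal variable $U$ is measured at regularity $s+\varrho''>s$, which the definition \eqref{piove} of $\cM^{-\varrho''}_{\geq 4}[r]$ does not allow. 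The ``further commutators raise the homogeneity'' explanation at the end of your second paragraph addresses degree, not order, and does not repair this.

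The correct mechanism is that $\Upsilon^\tau(U)-\uno=\int_0^\tau Q(U)\,\Upsilon^\sigma(U)\,\di\sigma$ inherits the smoothing $-\vr$ of $Q$. Writing $\Upsilon^\tau z = z + \int_0^\tau Q(U)\,\Upsilon^\sigma(U)z\,\di\sigma$, the outer $Q(U)$ (of order $-\vr$) downgrades the regularity at which $U$ must be controlled from $s+\varrho''$ to $s+\varrho''-\vr\leq s$. This is where the constraint $\varrho''\leq\vr$ enters, and it is automatically met in each case you treat (one has $\varrho''=\vr-\max\{m,0\}$, $\varrho''=\vr-\alpha$, $\varrho''=\min\{\vr,\vr'\}$, $\varrho''=\vr-1$, all $\leq\vr$). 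So your conclusions are correct, but the reason conjugation by $\Upsilon^\tau$ preserves the smoothing order is the smoothing carried by $\Upsilon^\tau-\uno$, not the homogeneity count; the mere-boundedness argument you invoke does work for $0$-operators (cf. Remark \ref{rem.b.ad}) but fails verbatim for negative-order ones, and the proposal should make this extra expansion explicit.
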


\footnotesize{
\noindent {\bf Acknowledgments.}
We thank R. Grande for useful discussions, and the anonymous referee for the careful and in depth reading of the manuscript.
A. Maspero is supported by the European Union  ERC CONSOLIDATOR GRANT 2023 GUnDHam, Project Number: 101124921
and  by  PRIN 2022 (2022HSSYPN)   ``TESEO - Turbulent Effects vs Stability in Equations from Oceanography'', and GNAMPA.
F.\ Murgante is supported by the ERC STARTING GRANT 2021  HamDyWWa, Project Number: 101039762.
 Views and opinions expressed are however those of the authors only and do not necessarily reflect those of the European Union or the European Research Council. Neither the European Union nor the granting authority can be held responsible for them.
}

\vspace{-1em}
{\footnotesize 
}
\end{document}